\newtheorem{thm}{Theorem}[section]
\newtheorem{cor}[thm]{Corollary}
\newtheorem{lem}[thm]{Lemma}
\newtheorem{prop}[thm]{Proposition}
\newtheorem*{thma}{Theorem A}
\newtheorem*{thmb}{Theorem B}
\newtheorem*{thmc}{Theorem C}
\newtheorem*{thmd}{Theorem D}
\newtheorem*{thme}{Theorem E}
\theoremstyle{definition}
\newtheorem{defn}[thm]{Definition}
\newtheorem{rem}[thm]{Remark}
\newtheorem{exa}[thm]{Example}
\begin{document}

\title[Horosymmetric varieties]{Kähler geometry of horosymmetric varieties, 
and application to Mabuchi's K-energy functional}

\author{Thibaut Delcroix}

\date{}

\begin{abstract}
We introduce a class of almost homogeneous varieties contained in the class of spherical 
varieties and containing horospherical varieties as well as complete symmetric 
varieties. We develop Kähler geometry on these varieties, with applications to 
canonical metrics in mind, as a generalization of the Guillemin-Abreu-Donaldson 
geometry of toric varieties. Namely we associate convex functions with hermitian 
metrics on line bundles, and express the curvature form in terms of this function,
as well as the corresponding Monge-Ampère volume form and scalar curvature.  
We then provide an expression for the Mabuchi functional and derive as an 
application a combinatorial sufficient condition of properness similar to 
one obtained by Li, Zhou and Zhu on 
group compactifications.
\end{abstract}

\maketitle

\section*{Introduction}

Toric manifolds are complex manifolds equipped with an action of $(\mathbb{C}^*)^n$ 
such that there is a point with dense orbit and trivial stabilizer. 
The Kähler geometry of toric manifolds plays a fundamental role in Kähler geometry 
as a major source of examples as well as a testing ground for conjectures. It 
involves strong interactions with domains as various as convex analysis, 
real Monge-Ampère equations, combinatorics of polytopes, 
algebraic geometry of toric varieties, etc.
The study of Kähler metrics on toric manifolds relies strongly on works of 
Guillemin, Abreu, then Donaldson. They have developped, using Legendre transform 
as a main tool, a very precise setting including: 
\begin{itemize}
\item a model behavior for smooth Kähler metrics,
\item a powerful expression of the scalar curvature, 
\item applications to the study of canonical Kähler metrics \emph{via} the 
ubiquitous Mabuchi functional.
\end{itemize}
This setting allowed Donaldson to prove the Yau-Tian-Donaldson conjecture for 
constant scalar curvature Kähler (cscK) metrics on toric surfaces. That is, he showed that 
existence of cscK metrics in a given Kähler class corresponding to a polarization 
of a toric surface $X$ by an ample line bundle $L$ is equivalent to K-stability 
of $(X,L)$ with respect to test configurations that are equivariant with respect 
to the torus action and further translated this condition into a condition on the 
associated polytope.

Our goal in this article is to generalize this setting to a much larger class of 
varieties, that we introduce: the class of \emph{horosymmetric varieties}. 

Let $G$ be a complex connected linear reductive group. A normal algebraic $G$-variety 
$X$ is called spherical if any Borel subgroup $B$ of $G$ acts with an open 
dense orbit on $X$. Major subclasses of spherical varieties are given by 
biequivariant group compactifications, and horospherical varieties. A horospherical 
variety is a $G$-variety with an open dense orbit which is a $G$-homogeneous fibration 
over a generalized flag manifold with fiber a torus $(\mathbb{C}^*)^r$. 
The author's previous work on spherical varieties has highlighted how 
they provide a richer source of examples than toric varieties, with several 
examples of behavior which cannot appear for toric varieties. 
While it was possible to work on the full class of spherical manifolds, 
from the point of view of algebraic geometry, for 
the question of the existence of Kähler-Einstein metrics thanks to the 
proof of the Yau-Tian-Donaldson conjecture for Fano manifolds, it is necessary 
to develop Guillemin-Abreu-Donaldson theory to treat more general questions.
It seems a very challenging problem to do this uniformly for all spherical 
varieties. 
On the other hand, the author did develop part of this setting for group 
compactifications and horospherical varieties. 

Group compactifications do not share a nice property that toric, horospherical 
and spherical varieties possess and frequently used in Kähler geometry: 
a codimension one invariant irreducible 
subvariety in a group compactification leaves the class of group compactifications.
We introduce the class of horosymmetric varieties as a natural subclass of spherical 
varieties containing horospherical varieties, group compactifications and more 
generally equivariant compactifications of symmetric spaces, which possesses the 
property of being closed under taking a codimension one invariant irreducible 
subvariety. 
The definition is modelled on the description of orbits of wonderful compactifications 
of adjoint (complex) symmetric spaces by De Concini and Procesi: they all have a dense orbit 
which is a homogeneous fibration over a generalized flag manifold, whose fibers 
are symmetric spaces. 
We say a normal $G$-variety is \emph{horosymmetric} if it admits an open dense orbit 
which is a homogeneous fibration over a generalized flag manifold, whose fibers 
are symmetric spaces. 
Such a homogeneous space is sometimes called a parabolic induction 
from a symmetric space.
Here we allow symmetric spaces under reductive groups, thus 
recovering horospherical varieties by considering $(\mathbb{C}^*)^r$ as 
a symmetric space for the group $(\mathbb{C}^*)^r$ and the involution 
$\sigma(t)=t^{-1}$. 
For the sake of giving precise statement in this introduction, let us introduce some 
notations. A horosymmetric homogeneous space is a homogeneous space $G/H$ such that 
there exists 
\begin{itemize}
\item a parabolic subgroup $P$ of $G$, with unipotent radical $P^u$, 
\item a Levi subgroup $L$ of $P$,
\item and an involution of complex groups $\sigma:L\rightarrow L$, 
\end{itemize}
such that $P^u\subset H$ and $(L^{\sigma})^0\subset L\cap H$ as a finite index subgroup, 
where $L^{\sigma}$ denotes the subgroup of elements fixed by $\sigma$ 
and $(L^{\sigma})^0$ its neutral connected component. 

Spherical varieties in general admit a combinatorial description: there is on 
one hand a complete combinatorial caracterization of spherical homogeneous spaces by Losev \cite{Los09}
and on the other hand a combinatorial classification of embeddings of 
a given spherical homogeneous space by Luna and Vust \cite{LV83,Kno91}. 
General results about parabolic induction allow to derive easily the 
information about a horosymmetric homogeneous spaces from the information 
about the symmetric space fiber. For symmetric spaces, most of the information 
is contained in the restricted root system. Choose a torus $T_s\subset L$ on 
which the involution acts as the inverse, and maximal for this property. It 
is contained in a maximal $\sigma$-stable torus $T$ in $L$.  
Then consider the restriction of 
roots of $G$ (with respect to $T$) to $T_s$. Let $\Phi_s$ denote the subset 
of roots whose restriction are not identically zero. 
The restrictions of the roots in $\Phi_s$ form a (possibly non reduced) root system called 
the restricted root system, 
with corresponding notions of restricted Weyl group $\bar{W}$, restricted Weyl chambers, etc.
Let $\mathfrak{Y}(T_s)$ denote the group of one-parameter subgroups of $T_s$, and 
identify $\mathfrak{a}_s=\mathfrak{Y}(T_s)\otimes \mathbb{R}$ with the Lie algebra of the 
non-compact part of the torus $T_s$.
The image of $\mathfrak{a}_s$ by the exponential, then by the action 
on the base point $x\in X$, intersects every orbit of a maximal compact subgroup 
$K$ of $G$ along restricted Weyl group orbits (see Section~\ref{sec_homogeneous} for details). 

Let $\mathcal{L}$ be a $G$-linearized line bundle on a horosymmetric homogeneous space. 
It is determined by its isotropy character $\chi$.  
To a $K$-invariant metric $h$ on $\mathcal{L}$ we associate a function 
$u:\mathfrak{a}_s\rightarrow \mathbb{R}$, called the 
\emph{toric potential}, which together with $\chi$ totally encodes the metric. 
One of our main result is the derivation of an expression of the curvature form 
$\omega$ of $h$ in terms of its toric potential. 
We achieve this for the general case, but let us only give the statement in the 
nicer situation where the restriction of $\mathcal{L}$ to the symmetric space 
fiber is trivial. 
By fixing a choice of basis of a complement of $\mathfrak{h}$ in $\mathfrak{g}$, 
we may define reference real (1,1)-forms $\omega_{\heartsuit,\diamondsuit}$ indexed by 
couples of indices in $\{1,\ldots,r\}\cup\Phi_s^+\cup\Phi_{Q^u}$ (where $r=\mathrm{dim}(T_s)$, $\Phi_s^+$ is 
the intersection of $\Phi_s$ with some system of positive roots, $\Phi_{Q^u}=-\Phi_{P^u}$
is the set of opposite of roots of $P^u$) that form a pointwise basis, 
and we express the curvature form in these terms. Given a root $\alpha$ of $G$, 
we denote by $\alpha^{\vee}$ its associated coroot. 

\begin{thma}
Assume that the restriction of $\mathcal{L}$ to the symmetric 
fiber is trivial. Let $a\in \mathfrak{a}_s$ be such that $\beta(a)\neq 0$ for all 
$\beta \in \Phi_s$. Then 
\begin{align*} 
\omega_{\exp(a)H} = & \sum_{1\leq j_1,j_2\leq r} 
\frac{1}{4}d^2_au(l_{j_1},l_{j_2}) 
\omega_{j_1,j_2}  
+ \sum_{\alpha\in \Phi_{Q^u}} 
\frac{-e^{2\alpha}}{2}(d_au-2\chi)(\alpha^{\vee})
\omega_{\alpha,\alpha} \\
& + \sum_{\beta \in \Phi_s^+} 
\frac{d_au(\beta^{\vee})}{\sinh(2\beta(a))} 
\omega_{\beta,\beta}.
\end{align*}
\end{thma}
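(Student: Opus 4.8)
The plan is to compute the curvature form $\omega$ pointwise, reducing by equivariance to a dense family of ``polar'' points and then performing an explicit second-order computation in a well-chosen holomorphic chart. Since $h$ is $K$-invariant, $\omega$ is a $K$-invariant real $(1,1)$-form; by the description of $K$-orbits recalled above, every $K$-orbit meeting the open orbit meets $\exp(\mathfrak{a}_s)\cdot x_0$, and within this set the points $\exp(a)\cdot x_0$ with $\beta(a)\neq 0$ for all $\beta\in\Phi_s$ are dense. Hence it suffices to prove the identity at $x_a:=\exp(a)\cdot x_0$ for such generic $a$. As the reference forms $\omega_{\heartsuit,\diamondsuit}$ are obtained from the base point by transporting the fixed basis of a complement $\mathfrak{m}$ of $\mathfrak{h}$ in $\mathfrak{g}$ under the local orbit map, the statement amounts to expanding, in that fixed basis, the pullback of $\omega$ to a neighbourhood of $0$ in $\mathfrak{m}$ under a holomorphic chart centred at $x_a$.

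Next I would set up the local potential. Let $s$ be the $B$-semiinvariant rational section of $\mathcal{L}$ used to define the toric potential, so that $\phi:=-\log\lVert s\rVert_h^2$ is smooth near $x_a$, $\omega=\tfrac{i}{2}\partial\bar\partial\phi$ up to the usual normalisation, and on $\exp(\mathfrak{a}_s)\cdot x_0$ one has $\phi(\exp(b)\cdot x_0)=u(b)$ up to an affine function of $b$ prescribed by $\chi$; this is precisely where the hypothesis that $\mathcal{L}$ restricts trivially to the symmetric fibre enters, since then $u$ depends on $b$ alone and $\chi$ is pulled back from $P$. I would then build a holomorphic chart $F$ centred at $x_a$ from a basis of $\mathfrak{m}$ adapted to the restricted root decomposition: a basis $(l_1,\dots,l_r)$ of $\mathfrak{a}_s$, one vector attached to each $\beta\in\Phi_s^+$ lying in the $(-1)$-eigenspace of $\sigma$ inside $\mathfrak{l}$, and one root vector $X_\alpha\in\mathfrak{g}_\alpha$ for each $\alpha\in\Phi_{Q^u}$ (recall $\mathfrak{g}_{-\alpha}\subset\mathfrak{p}^u\subset\mathfrak{h}$), exponentiated, acted on $x_a$ and corrected so as to be holomorphic; its differential at $0$ realises exactly the indexing $\{1,\dots,r\}\cup\Phi_s^+\cup\Phi_{Q^u}$.

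The heart of the proof is then a second-order Taylor expansion of $\phi\circ F$ at $0$, computed block by block, together with the vanishing of the mixed blocks. Along $\mathfrak{a}_s\otimes\mathbb{C}$ the function $\phi\circ F$ depends only on the real coordinate, equals $b\mapsto u(a+b)$ up to an affine term, and contributes $\tfrac14 d^2_au(l_{j_1},l_{j_2})$ to the coefficient of $\omega_{j_1,j_2}$. Along $\mathfrak{g}_\alpha$ for $\alpha\in\Phi_{Q^u}$, since $\mathrm{Ad}(\exp a)$ scales $\mathfrak{g}_\alpha$ by $e^{\alpha(a)}$, the section $s$ transforms by $\chi$ along the corresponding unipotent one-parameter subgroup, and the opposite direction is tangent to $H$, the $\partial\bar\partial$ of $\phi\circ F$ at $0$ in this direction produces the coefficient $\tfrac{-e^{2\alpha(a)}}{2}(d_au-2\chi)(\alpha^\vee)$ of $\omega_{\alpha,\alpha}$: the first derivative $d_au(\alpha^\vee)$ appears because conjugating the displacement back inside $\exp(\mathfrak{a}_s)$ is already a first-order effect, while $-2\chi(\alpha^\vee)$ comes from the isotropy character. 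Along the $\beta$-direction, $\beta\in\Phi_s^+$, both $\mathfrak{g}_\beta$ and $\mathfrak{g}_{-\beta}$ contribute, since the relevant $(-1)$-eigenvector of $\sigma$, combined with the Cartan involution defining $K$, mixes them; conjugating by $\exp(a)$ produces $\cosh(\beta(a))$ and $\sinh(\beta(a))$, and the second-order term yields the coefficient $d_au(\beta^\vee)/\sinh(2\beta(a))$ of $\omega_{\beta,\beta}$, the denominator being the familiar Jacobian factor of the Cartan decomposition. Finally, the stabiliser of $x_a$ in $K$ contains (a conjugate of) the compact part of $T_s$ together with the reductive group attached to the multiplicities of the restricted roots, and invariance of $\omega_{x_a}$ under it forces, by weight considerations, all mixed coefficients — toric versus root, and distinct roots — to vanish. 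Collecting the three blocks gives the formula.

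The main obstacle is the $\Phi_s^+$ block: obtaining the precise factor $\sinh(2\beta(a))$ and checking that only the first derivative $d_au(\beta^\vee)$, and no term involving the Hessian of $u$, survives. This requires carrying the second-order expansion of the composition of the non-linear chart with $u$ using the $\mathfrak{sl}_2$-triples attached to the restricted roots and the exact compatibility between $\sigma$, the Cartan involution and $\mathfrak{a}_s$; keeping track of which root vectors lie in $\mathfrak{h}$, and of the correction making the chart holomorphic, is where sign and normalisation errors are most likely. By contrast the toric block is the classical toric computation, and the $\Phi_{Q^u}$ block is, up to the coupling with $d_au$, the horospherical computation already carried out by the author.
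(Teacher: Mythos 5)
Your overall route is the same as the paper's: reduce by $K$-invariance (Proposition~\ref{prop_fundamental_domain}) to the points $\exp(a)H$ with $\beta(a)\neq 0$, and compute $i\partial\bar\partial$ of the potential in the chart given by the chosen basis $l_j,e_\alpha,\tau_\beta$ of a complement of $\mathfrak{h}$ — the paper does exactly this, pulling back to $G$ via the quasipotential and organizing the second-order expansion with Baker--Campbell--Hausdorff (Lemmas~\ref{lem_prep1} and~\ref{lem_prep_2}). Your treatment of the three diagonal blocks is a correct sketch of what that computation yields.

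The genuine gap is your justification of block-diagonality. You claim that invariance of $\omega_{\exp(a)H}$ under the stabilizer of the point in $K$ forces all mixed coefficients (toric versus root, and distinct roots) to vanish by weight considerations, without using the hypothesis that $\mathcal{L}$ is trivial on the symmetric fiber. This cannot be right, because it would prove too much: for a $K$-invariant metric on a line bundle that is \emph{not} trivial on the fiber, on the very same homogeneous space and at the very same point, the curvature is in general not diagonal in this basis. The general formula of Theorem~\ref{thm_curv} has nonzero $\Omega_{j,\bar\beta}$, $\Omega_{\alpha_1,\bar\alpha_2}$ and $\Omega_{\beta_1,\bar\beta_2}$ proportional to values of $\chi$ on certain brackets, and the paper exhibits explicit nonvanishing instances (e.g.\ Example~\ref{exa_curv_P1xP1}, where $\omega_{1,\bar\beta}$ appears with coefficient $2(m-k)(1-\tanh^2(2t))$, and the AIII examples with nonzero $\Omega_{\alpha_1,\bar\alpha_2}$). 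Since those metrics are also $K$-invariant, stabilizer-invariance alone does not kill the off-diagonal terms. Concretely, the stabilizer of $\exp(a)H$ in $K$ for generic $a$ has Lie algebra only the compact form of $\mathfrak{t}^\sigma\oplus\bigoplus_{\alpha\in\Phi_L^\sigma}\mathfrak{g}_\alpha$: it does not contain the compact part of $T_s$ (an element $\exp(ib)$, $b\in\mathfrak{a}_s$, lies in $\exp(a)H\exp(-a)$ only if $\exp(2ib)=e$), and real restricted roots have trivial restriction to $T^\sigma$, so the relevant weights do not even separate the toric directions from those $\beta$-directions. What actually makes the mixed terms vanish under the hypothesis of Theorem~A is that the second-order corrections produced by the non-linear chart land in $\mathfrak{p}^u\oplus\bigl([\mathfrak{l},\mathfrak{l}]\cap\mathfrak{h}\bigr)$, on which the isotropy character $\chi$ is trivial when $\mathcal{L}|_{L/L\cap H}$ is trivial; verifying this requires the same bracket-by-bracket bookkeeping as the diagonal terms (it is the bulk of the paper's proof of Theorem~\ref{thm_curv}), not a symmetry argument. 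Your proposal therefore needs this part redone: either carry out the mixed-term expansions and show each surviving contribution is $\chi$ of an element of that subalgebra, or invoke the general formula of Theorem~\ref{thm_curv} and then use the triviality hypothesis to kill the extra coefficients, as Corollary~\ref{cor_curv} does.
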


The previous theorem concerns only horosymmetric homogeneous spaces. 
To move on to horosymmetric varieties, we need more input from the 
general theory of spherical varieties. To a $G$-linearized line bundle 
on a horosymmetric variety are associated several polytopes. 
Of major importance is the (algebraic) moment polytope $\Delta^+$, obtained as the closure of the 
set of suitably normalized highest weights of the spaces of multisections 
of $\mathcal{L}$, seen as $G$-representations. It lies in the real 
vector space $\mathfrak{X}(T)\otimes \mathbb{R}$, where $\mathfrak{X}(T)$
denotes the group of characters of $T$.
The main application of the moment polytope to Kähler geometry is that it 
controls the asymptotic behavior of toric potentials, which in the case 
of positively curved metrics further allows to give a formula for 
integration with respect to the Monge-Ampère of the curvature from, 
in conjunction with the previous theorem. 
Again we do not state 
our results in their most general form in this introduction but 
prefer to give the general 
philosophy in a situation which is simpler than the general one. 

\begin{thmb}
Assume that $\mathcal{L}$ is an ample $G$-linearized line bundle on a 
non-singular horosymmetric variety $X$. Assume furthermore that it 
admits a global $Q$-semi-invariant holomorphic section, where $Q$ is the parabolic 
opposite to $P$ with respect to $T$. Let $h$ be a smooth $K$-invariant 
metric on $\mathcal{L}$ with positive curvature $\omega$ and toric potential $u$. 
Then 
\begin{enumerate}
\item $u$ is smooth, $\bar{W}$-invariant and strictly convex, 
\item $a\mapsto d_au$ defines a diffeomorphism from $\mathrm{Int}(-\mathfrak{a}_s^+)$ onto 
$\mathrm{Int}(2\chi-2\Delta^+)$. 
\end{enumerate}
Let $\psi$ denote a $K$-invariant function on $X$, integrable with 
respect to $\omega^n$. 
Let $dq$ denote the Lebesgue measure on the affine span of $\Delta^+$,
normalized by the lattice $\chi+\mathfrak{X}(T/T\cap H)$. 
Then there exist a constant $C$, independent of $h$ and $\psi$, such that 
\[
\int_X\psi\omega^n  = 
C\int_{\Delta^+} \psi(d_{2\chi-2q}u^*)
P_{DH}(q) dq. 
\]
where 
$P_{DH}(q)=\prod_{\alpha\in\Phi_{Q^u}\cup \Phi_s^+} \kappa(\alpha,q)/\kappa(\alpha,\rho)$ ,
$\rho$ is the half sum of positive roots of $G$ and $u^*$ is the convex 
conjugate of $u$.
\end{thmb}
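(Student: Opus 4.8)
The plan is to reduce everything to the open orbit, where Theorem A applies, and then to control the boundary behaviour using the moment polytope. First I would establish the convexity and asymptotic statements in (1) and (2). Since $\mathcal{L}$ is ample and $X$ is nonsingular, positivity of $\omega$ forces the restriction of $\omega$ to the flat $\exp(\mathfrak{a}_s)H$ to be positive; via Theorem A (after reducing to the case where the restriction of $\mathcal{L}$ to the symmetric fiber is trivial by twisting, which only shifts $u$ by a linear function determined by $\chi$), positivity of the coefficient $\tfrac14 d^2_au(l_{j_1},l_{j_2})$ of $\omega_{j_1,j_2}$ on the torus directions gives strict convexity of $u$ on the set where all $\beta(a)\neq 0$; $\bar W$-invariance comes from the fact that $K$ meets the flat in $\bar W$-orbits, so $u$ extends to a smooth strictly convex $\bar W$-invariant function on all of $\mathfrak{a}_s$. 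For the image of the gradient, the key point is that the asymptotic slopes of $u$ are governed by the moment polytope: the $Q$-semi-invariant section trivializes $\mathcal{L}$ over the open $B^-$-orbit, the toric potential then differs from the log of the $h$-norm of this section by an affine term, and the standard spherical-variety dictionary (highest weights of multisection spaces fill out $\Delta^+$, suitably normalized) identifies the recession behaviour of $u$ with the polytope $2\chi-2\Delta^+$. Because $u$ is smooth and strictly convex with this prescribed asymptotic cone, $d_au$ is a diffeomorphism onto the interior of $2\chi-2\Delta^+$; restricting to the chosen Weyl chamber $-\mathfrak{a}_s^+$ gives the stated diffeomorphism.

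Next I would prove the integration formula. The complement of the open orbit is a union of lower-dimensional subvarieties, hence $\omega^n$-null, so $\int_X\psi\,\omega^n$ may be computed on $G/H$. Using the $KAK$-type decomposition attached to the symmetric fibration — the image of $\mathfrak{a}_s$ under $\exp$ followed by the action on the base point meets each $K$-orbit in a $\bar W$-orbit — one writes $\omega^n$ in the coframe $\{\omega_{\heartsuit,\diamondsuit}\}$. Because $K$-invariant integration over $G/H$ disintegrates over the slice $\exp(-\mathfrak{a}_s^+)H$ with a Jacobian that is exactly the product $\prod_{\alpha\in\Phi_{Q^u}\cup\Phi_s^+}$ of the root-weight factors (this is the Weyl-type integration formula for the relevant homogeneous space, and is where $P_{DH}$ comes from, normalized by dividing by $\kappa(\alpha,\rho)$ so that the density is correct), one gets
\[
\int_X\psi\,\omega^n = C'\int_{-\mathfrak{a}_s^+}\psi\,\big(\det D\big)(a)\,\prod_{\alpha}\kappa(\alpha,a)\,da,
\]
where $\det D(a)$ is the determinant of the endomorphism of the coframe read off from Theorem A: it is the product over $j_1,j_2$ of the Hessian block $\tfrac14 d^2_au$, times $\prod_{\alpha\in\Phi_{Q^u}}\big(-\tfrac12 e^{2\alpha}(d_au-2\chi)(\alpha^\vee)\big)$, times $\prod_{\beta\in\Phi_s^+}\big(d_au(\beta^\vee)/\sinh(2\beta(a))\big)$.

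The last step is the change of variables $q = \chi - \tfrac12 d_au$, i.e. $d_au = 2\chi-2q$, which by part (2) is a diffeomorphism from $\mathrm{Int}(-\mathfrak{a}_s^+)$ onto $\mathrm{Int}(\Delta^+)$. Its Jacobian is $\det\big(-\tfrac12 d^2_au\big)$, which cancels the Hessian block in $\det D(a)$ up to a constant power of $2$; meanwhile $\psi\circ\exp$ becomes $\psi$ evaluated at the point with toric potential gradient $2\chi-2q$, i.e. $\psi(d_{2\chi-2q}u^*)$ by Legendre duality between $u$ and $u^*$. The remaining factors must reorganize into $P_{DH}(q)$: one checks that for $\alpha\in\Phi_{Q^u}$ the factor $(d_au-2\chi)(\alpha^\vee)=-2q(\alpha^\vee)$ is (up to sign and the normalization $\kappa(\alpha,\rho)$) exactly $\kappa(\alpha,q)/\kappa(\alpha,\rho)$, while for $\beta\in\Phi_s^+$ the product of $d_au(\beta^\vee)=2(\chi-q)(\beta^\vee)$ with the surviving $\kappa(\beta,a)$ and the $1/\sinh(2\beta(a))$ likewise collapses — after absorbing the $e^{2\alpha}$ and $\sinh$ terms against the Weyl Jacobian $\prod\kappa(\alpha,a)$ — to $\kappa(\beta,q)/\kappa(\beta,\rho)$, the exponential-in-$a$ dependence dropping out entirely. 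Collecting the numerical constants into a single $C$ independent of $h$ and $\psi$ yields the formula.

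I expect the main obstacle to be the bookkeeping in this final reorganization: matching the mixed trigonometric/exponential Jacobian coming from Theorem A and the Weyl-type integration formula against the purely polynomial Duistermaat–Heckman density $P_{DH}(q)$, and in particular verifying that all $a$-dependent transcendental factors cancel and that the lattice normalization of $dq$ by $\chi+\mathfrak{X}(T/T\cap H)$ is the one that makes $C$ genuinely metric-independent. A secondary technical point is justifying the disintegration of $K$-invariant integration over $G/H$ with the correct constant and the correct fundamental domain $-\mathfrak{a}_s^+$, i.e. handling the $\bar W$-action and the (possibly non-reduced) restricted root system carefully.
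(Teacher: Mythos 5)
Your overall architecture for the integration formula matches the paper's (Proposition~\ref{prop_inthoro}): discard the boundary, disintegrate $K$-invariant integration over the flat, insert the Monge--Amp\`ere expression coming from Theorem~A, and perform the Legendre change of variables. But the disintegration step, which is the engine of the computation, is misstated, and it is exactly the step you leave unresolved. The $K$-invariant integration over $G/H$ does not have the polynomial density $\prod_{\alpha\in\Phi_{Q^u}\cup\Phi_s^+}\kappa(\alpha,a)$ on the slice $\exp(-\mathfrak{a}_s^+)H$: by Proposition~\ref{prop_integration_horosymmetric_space} the density relative to the reference volume $dV_H$ is $J_H(a)=\prod_{\beta\in\Phi_s^+}|\sinh(2\beta(a))|$ (only restricted roots enter; the $\Phi_{Q^u}$-directions are integrated over the compact base $G/P$ and contribute, besides a constant, only the exponential factor $e^{2\sum_{\alpha\in\Phi_{Q^u}}\alpha(a)}$ relating $dV_H$ to the coordinate volume form, Proposition~\ref{prop_relation_volume_forms}). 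In particular $P_{DH}$ does not come from a Weyl-type integration formula at all: it is a density in the momentum variable $q$, produced only after the substitution $d_au=2\chi-2q$ from the linear factors $(2\chi-d_au)(\alpha^{\vee})$ and $d_au(\beta^{\vee})$ of Corollary~\ref{cor_MA}. The cancellation you hope for runs the other way round: the $e^{2\alpha(a)}$ of Theorem~A cancel against the exponential in $dV_H$, and the $1/\sinh(2\beta(a))$ cancel against $J_H(a)$, so the integrand is purely algebraic in $u$ already before the change of variables. With the density you wrote, the transcendental factors would survive and the formula would not close; since you yourself flag this bookkeeping as the main open obstacle, this is a genuine gap rather than a slip of notation.

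For parts (1)--(2) you also take a different route from the paper, which passes to the discoloration, restricts the metric to the toric subvariety $Z$, and imports everything (smoothness, strict convexity, the two-sided bound on $u-2v^t_{\mathcal{L}}$ via the Batyrev--Tschinkel metric, and the gradient image) from toric K\"ahler geometry (Propositions~\ref{prop_loc_bounded_behavior} and~\ref{prop_positive_metrics}), then converts $-2\Delta^t\cap\bar{C}^+$ into $2\chi-2\Delta^+$ by Corollary~\ref{cor_ASSE}. Your route could be made to work, but two points are asserted rather than proved: strict convexity of $u$ across the walls $\beta(a)=0$ (Theorem~A gives positive-definiteness of the Hessian block only off the walls), and the two-sided identification of the recession behaviour of $u$ with $2\chi-2\Delta^+$ --- boundedness of $|s|_h$ for $B$-semi-invariant sections of powers of $\mathcal{L}$ yields only linear lower bounds, and the matching upper bound (equivalently the containment of the image of $d_au$ in the polytope) is exactly what the paper extracts from the special-divisor and Batyrev--Tschinkel comparison, where the sign conventions are delicate (see the remark after Proposition~\ref{prop_toric_subvar}). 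Finally, the preliminary ``twist to make $\mathcal{L}$ trivial on the symmetric fiber'' is neither available in general nor needed: a nonzero global $Q$-semi-invariant section is nonvanishing on the open $Q$-orbit, which contains the fiber $P\cdot eH=L\cdot eH$, so the hypothesis of Theorem~B already forces $\chi$ to vanish on $[\mathfrak{l},\mathfrak{l}]\cap\mathfrak{h}$ and Corollary~\ref{cor_MA} applies directly.
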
 

The two theorems above form a strong basis to attack Kähler geometry 
questions on horosymmetric varieties. They are for example all that is needed 
to study the existence of Fano Kähler-Einstein metrics with the strategy 
following the lines of Wang and Zhu's work on toric Fano manifolds. 
They also allow to push further, namely to compute an expression of the 
scalar curvature, to compute an expression of the (log)-Mabuchi functional, 
then to obtain a coercivity criterion for this functional, in the line 
of work of Li-Zhou-Zhu for group compactifications. 

The Mabuchi functional 
is a functional on the space of Kähler metrics in a given class, 
whose smooth minimizers if they exist should be cscK metrics. 
There are several extensions of this notion, in particular 
the log-Mabuchi functional, related to the existence 
of log-Kähler-Einstein metrics. 
A natural way to search for minimizers of this functional is to 
try to prove its properness, or coercivity, with respect to the 
$J$-functional. The $J$-functional is another standard functional in Kähler 
geometry, which may be considered as a measure of distance from a 
fixed reference metric in the space of Kähler metrics.

We provide in this paper an application of our setting to this problem 
of coercivity of the Mabuchi functional, obtaining a very general, 
but at the same time far from optimal, coercivity criterion for the 
Mabuchi functional on horosymmetric varieties. 
We work under several simplifying assumptions to carry out the proof 
while keeping a reasonnable length for the paper, but expect that 
several of these assumptions can be removed with a little work. 

Instead of stating these assumptions in this introduction, let us 
state the result in three examples of situations where they are satisfied.  
They are as follows, in all cases, $G$ is a complex connected linear 
reductive group and $X$ is a smooth projective $G$-variety.
\begin{enumerate}
\item The manifold $X$ is a group compactification, that is, $G=G_0\times G_0$ 
and there exists a point $x\in X$ with stabilizer $\mathrm{diag}(G_0)\subset G$ 
and dense orbit. We may consider any ample $G$-linearized line bundle on $X$. 
\item The manifold $X$ is a homogeneous toric bundle under the action of $G$, 
that is there exists 
a projective homogeneous $G/P$ and a $G$-equivariant surjective morphism 
$X\rightarrow G/P$ with fiber isomorphic to a toric variety, under the action 
of $P$ which factorizes through a torus $(\mathbb{C}^*)^r$. 
We consider any ample $G$-linearized 
line bundles on $X$.
\item The manifold $X$ is a toroidal symmetric variety of type AIII($r$, $m>2r$), that 
is, $m$ and $r$ are two positive integers with $m>2r$, $G=\mathrm{SL}_m$, 
there exists a point $x\in X$ with dense orbit, whose orbit is isomorphic 
to $\mathrm{SL}_m/\mathrm{S}(\mathrm{GL}_r\times \mathrm{GL}_{m-r})$, and 
there exists a dominant $G$-equivariant morphism from $X$ to the wonderful 
compactification of this symmetric space. We may consider any ample 
$G$-linearized line bundle 
which restricts to a trivial line bundle on the dense orbit. 
\end{enumerate}

Let $\Theta$ be a $G$-equivariant boundary, that is, an \emph{effective} 
$\mathbb{Q}$-divisor
$\Theta=\sum_Y c_yY$ where $Y$ runs over all $G$-stable irreducible 
codimension one submanifolds of $X$. We assume furthermore 
that the support of $\Theta$ 
is simple normal crossing and $c_Y<1$ for all $Y$. In particular, 
the pair $(X,\Theta)$ is klt. 
It follows from the combinatorial description of horosymmetric varieties 
that to each $Y$ as above is associated an element $\mu_Y$ of 
$\mathfrak{Y}(T_s)$. 
Let $\Delta^+$ be the moment polytope of $\mathcal{L}$, and let 
$\lambda_0$ be a well chosen point in $\Delta^+$ (see Section~\ref{sec_mabuchi}). 
Let $\tilde{\Delta}_Y^+$ denote the bounded cone
with vertex $\lambda_0$ and base the face of $\Delta^+$ whose outer normal 
is $-\mu_Y$ in the affine space $\chi+\mathfrak{X}(T/T\cap H)\otimes \mathbb{R}$.
Let $\chi^{ac}$ denote the restriction of the character 
$\sum_{\alpha\in \Phi_{Q^u}}\alpha$ of $P$ to $H$, 
and set 
\[
\Lambda_Y=\frac{-c_Y+1-\sum_{\alpha\in\Phi_{Q^u}\cup \Phi_s^+}\alpha(\mu_Y)}{
\sup\{p(\mu_Y),p\in\chi-\Delta^+\}},
\] 
\[
I_H(a)=\sum_{\beta\in \Phi_s^+}\ln \sinh(-2\beta(a))
-\sum_{\alpha\in\Phi_{Q^u}}2\alpha(a),
\] 
and 
\[
\bar{S}_{\Theta}=\bar{S}-n\sum_Yc_Y\mathcal{L}|_Y^{n-1}/\mathcal{L}^n.
\]
Let $\mathrm{Mab}_{\Theta}$ denote the log-Mabuchi functional in 
this setting, and write $\mathrm{Mab}_{\Theta}(u)$ for its value on 
the hermitian metric with toric potential $u$. For the question 
of coercivity, the log-Mabuchi functional matters only up to 
normalizing additive and multiplicative constants, so we ignore 
these in the statement.

\begin{thmc}
Let $p=p(q):=2(\chi-q)$, then we have  
\begin{align*}
\mathrm{Mab}_{\Theta}(u) = &
\sum_Y \Lambda_Y 
\int_{\tilde{\Delta}^+_Y} (nu^*(p)-u^*(p)\sum \frac{\chi(\alpha^{\vee})}{q(\alpha^{\vee})} +d_pu^*(p))P_{DH}(q)dq \\
& 
+\int_{\Delta^+} u^*(p)(\sum \frac{\chi^{ac}(\alpha^{\vee})}{q(\alpha^{\vee})}-\bar{S}_{\Theta})P_{DH}(q)dq 
-\int_{\Delta^+}I_H(d_pu^*)P_{DH}(q)dq \\
&
-\int_{\Delta^+}\ln\det (d^2_pu^*)P_{DH}(q)dq 
\end{align*}
where $u^*$ denotes the Legendre transform, or convex conjugate, of $u$.
\end{thmc}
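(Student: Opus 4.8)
The plan is to start from the Chen--Tian decomposition of the (log-)Mabuchi functional
\[
\mathrm{Mab}_{\Theta}(\varphi)=\mathrm{Ent}_{\Theta}(\varphi)+E^{\eta}(\varphi)+\bar S_{\Theta}\,E(\varphi),
\]
where $\mathrm{Ent}_{\Theta}$ is the entropy relative to a reference measure $\mu_{\Theta}$ with logarithmic singularities along $\Theta$, $E$ is the Aubin--Mabuchi energy, and $E^{\eta}$ is the twisted Monge--Amp\`ere energy attached to a fixed smooth representative $\eta$ of $c_1(K_X+\Theta)$. Since coercivity only sees $\mathrm{Mab}_{\Theta}$ up to an affine function of the $J$-functional and up to a positive multiplicative constant, I am free to fix any convenient $K$-invariant reference metric $\omega_0$ (with smooth strictly convex toric potential $u_0$) and any convenient $\mu_{\Theta}$; affine-in-$u^*$ and constant contributions appearing along the way will be discarded silently. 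The two workhorses are Theorem~A (the curvature form in terms of the toric potential) and Theorem~B (the change of variables $\int_X\psi\,\omega^n=C\int_{\Delta^+}\psi(d_{2\chi-2q}u^*)P_{DH}(q)\,dq$, together with the Legendre correspondence $a\mapsto d_au$ between $\mathrm{Int}(-\mathfrak a_s^+)$ and $\mathrm{Int}(2\chi-2\Delta^+)$); I also use the scalar curvature formula established earlier in the paper.

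For the entropy term I evaluate $\omega^n$ on the open orbit. By Theorem~A the volume form $\omega^n$ at $\exp(a)H$ is the product of the $\mathfrak a_s$-block $\det\big(\tfrac14 d^2_au(l_{j_1},l_{j_2})\big)$, the $\Phi_{Q^u}$-block $\prod_{\alpha\in\Phi_{Q^u}}\tfrac{-e^{2\alpha(a)}}{2}(d_au-2\chi)(\alpha^{\vee})$ and the $\Phi_s^+$-block $\prod_{\beta\in\Phi_s^+}\tfrac{d_au(\beta^{\vee})}{\sinh(2\beta(a))}$, times a fixed background volume form built from the $\omega_{\heartsuit,\diamondsuit}$. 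Dividing by $\mu_{\Theta}$, which has the same shape up to the exponents being shifted along $\Theta$ and by the parabolic contribution, the background form and the purely geometric $\sinh$ and exponential factors cancel, leaving
\[
\log\!\big(\omega^n/\mu_{\Theta}\big)(\exp(a)H)=\log\det\big(d^2_au\big)-I_H(a)+(\text{affine in }a).
\]
Feeding this into Theorem~B, writing $a=d_pu^*$ with $p=2(\chi-q)$ and using the Legendre identity $\det(d^2_au)=\det(d^2_pu^*)^{-1}$, the entropy term becomes exactly $-\int_{\Delta^+}I_H(d_pu^*)P_{DH}(q)\,dq-\int_{\Delta^+}\log\det(d^2_pu^*)P_{DH}(q)\,dq$, i.e. the last two lines of the statement.

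It remains to identify $E^{\eta}(\varphi)+\bar S_{\Theta}\,E(\varphi)$ with the first two lines. The energy $E$ is classical: integrating its defining variation $\dot E=\int_X\dot\varphi\,\omega_\varphi^n$ along the family $tu$, and applying Theorem~B with Legendre duality, gives $E(\varphi)=-\int_{\Delta^+}u^*(p)P_{DH}(q)\,dq$ up to an affine term, whence the contribution $-\bar S_{\Theta}\int_{\Delta^+}u^*(p)P_{DH}(q)\,dq$. For $E^{\eta}$ I invoke the combinatorial description of the canonical divisor of a horosymmetric variety: $-K_X$ equals $\sum_Y Y$ plus the $B$-stable non-$G$-stable divisor encoding $\sum_{\alpha\in\Phi_{Q^u}\cup\Phi_s^+}\alpha$, so the class $c_1(K_X+\Theta)$ has, in the toric-potential picture, a support function whose facet data are the $\mu_Y$ (taking the value $\sup\{p(\mu_Y):p\in\chi-\Delta^+\}$ at the vertex $\lambda_0$) and whose weighting is $P_{DH}=\prod\kappa(\alpha,q)/\kappa(\alpha,\rho)$. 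Plugging a reference representative of $\eta$ into $E^{\eta}(\varphi)=\tfrac1{n!}\sum_{j=0}^{n-1}\int_X\varphi\,\eta\wedge\omega_0^j\wedge\omega_\varphi^{n-1-j}$, applying Theorem~B and integrating by parts over $\Delta^+$, splits the result into a bulk integral over $\Delta^+$ — where the derivative falling on $P_{DH}$ produces the logarithmic derivatives $\sum\chi(\alpha^{\vee})/q(\alpha^{\vee})$ and $\sum\chi^{ac}(\alpha^{\vee})/q(\alpha^{\vee})$ — and a sum of facet terms indexed by the divisors $Y$, each an integral over the cone $\tilde\Delta_Y^+$ of $nu^*(p)-u^*(p)\sum\chi(\alpha^{\vee})/q(\alpha^{\vee})+d_pu^*(p)$ against $P_{DH}$, with coefficient $\Lambda_Y$ equal to the ratio of the $Y$-coefficient $-c_Y+1-\sum\alpha(\mu_Y)$ of $K_X+\Theta$ to $\sup\{p(\mu_Y):p\in\chi-\Delta^+\}$. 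Collecting the four groups of terms gives the asserted identity.

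The main obstacle is this last step. Extracting the correct reference representative $\eta$ from the combinatorial anticanonical data of a horosymmetric variety — faithfully tracking the parabolic-induction contribution and the role of the $Q$-semi-invariant section assumed in Theorem~B — and then carrying out the integration by parts on $\Delta^+$ so that each facet labelled by a divisor $Y$ contributes exactly the cone integral over $\tilde\Delta_Y^+$ with weight $\Lambda_Y$, while the facets ``at infinity'' contribute nothing, is the delicate computation; one must also reconcile the Lebesgue normalization of $dq$, the choice of base point $\lambda_0$ and the constant $C$ of Theorem~B so that all pieces assemble with the stated normalization-free coefficients. The entropy and $\bar S_{\Theta}\,E$ terms, by contrast, follow rather mechanically from Theorems~A and~B together with Legendre duality.
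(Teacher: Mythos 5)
Your proposed route (Chen--Tian decomposition into entropy, twisted energy and $\bar S_\Theta E$) is genuinely different from the paper's proof, which integrates the variational definition of $\mathrm{Mab}_\Theta$ along a path, substitutes the explicit scalar curvature formula, and then recognizes total $t$-derivatives after repeated applications of the divergence theorem on the polytope $\Delta'$. In principle such a decomposition could be made to work, but as written your argument has a genuine gap: the entire content of the first line of the formula --- the facet terms $\sum_Y\Lambda_Y\int_{\tilde\Delta^+_Y}\bigl(nu^*-u^*\sum\chi(\alpha^{\vee})/q(\alpha^{\vee})+d_pu^*(p)\bigr)P_{DH}\,dq$ together with the $\chi^{ac}$ bulk term --- is exactly the step you defer as ``the main obstacle''. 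This is where all the real work lies. In the paper it requires (i) the boundary asymptotics of positively curved metrics along facets from Section~\ref{subsec_metric_facet} (the limits $\lim u_{i,j}(d_{sp}u^*)\nu_j=0$ and $\lim\tilde u_j(d_{sp}u^*)(\mu_Y)_j=n_Y$, plus Legendre duality at a facet), which is how the divisor coefficients $n_Y-c_Y$, hence $\Lambda_Y$, enter; (ii) the vanishing of $P_{DH}'$ on restricted Weyl walls so that only the facets labelled by $G$-stable divisors contribute boundary terms; and (iii) assumption (T) guaranteeing $\Delta^+=\bigcup_Y\tilde\Delta^+_Y$ so the final divergence-theorem step converts each facet integral into a cone integral. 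None of these ingredients appear in your proposal, so the identity is not actually established.

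A second, related problem is your ground rule of ``silently discarding affine-in-$u^*$ and constant contributions''. Theorem~C is an exact identity up to a normalizing constant, and its linear-in-$u^*$ part is precisely what carries $\Lambda_Y$, $\chi^{ac}$, $\bar S_\Theta$ and later drives the coercivity criterion; you may discard metric-independent constants, but not terms affine in $u^*$. This also affects your entropy computation: comparing $\omega_\varphi^n$ with a klt reference measure $\mu_\Theta$ on $X$ produces, beyond $-\ln\det(d^2_pu^*)-I_H(d_pu^*)$, terms that are linear in $a=d_pu^*$ with coefficients encoding $n_Y-c_Y$ (the density of $dV_H$ against any smooth log measure grows linearly towards the boundary divisors). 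These do depend on the metric, do not cancel, and are exactly the $d_pu^*(p)$ contributions that must recombine with the twisted energy into the facet integrals. Asserting that only ``affine in $a$'' remainders survive and then dropping them would lose the statement you are trying to prove.
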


As an application of this formula, we obtain the following sufficient condition for 
coercivity.
Consider the function $F_{\mathcal{L}}$ defined piecewise by 
\[
F_{\mathcal{L}}(q)=(n+1)\Lambda_Y-\bar{S}_{\Theta}
+\sum\frac{(\chi^{ac}-\Lambda_Y\chi)(\alpha^{\vee})}{q(\alpha^{\vee})}
\]
for $q$ in $\tilde\Delta_Y^+$. 
Define an element $\mathrm{bar}$ of the affine space generated by 
$\Delta^+$ by setting 
\[
\mathrm{bar}=
\int_{\Delta^+} qF_{\mathcal{L}}(q)\frac{P_{DH}(q)dq}{\int_{\Delta^+}P_{DH}dq}.
\]
Despite the notation, it is not in general the barycenter of $\Delta^+$ 
with respect to a positive measure. We will however see how to consider 
it as a barycenter in the article. 
The coercivity criterion is stated in terms of $F_{\mathcal{L}}$ and 
$\mathrm{bar}$. 
Let $2\rho_H$ denote the element of $\mathfrak{a}_s^*$ defined by the 
restriction of $\sum_{\alpha\in \Phi_{Q^u}\cup\Phi^+}\alpha$ to $\mathfrak{a}_s$.

\begin{thmd}
Assume that $F_{\mathcal{L}} >0$ and that the point 
\[
(\min_Y \Lambda_Y)\frac{\int_{\Delta^+}P_{DH}dq}{\int_{\Delta^+}F_{\mathcal{L}}P_{DH}dq}(\mathrm{bar}-\chi)-2\rho_H
\] 
is in the relative interior of the dual cone of $\mathfrak{a}_s^+$.
Then the Mabuchi functional is proper modulo the action of 
$Z(L)^0$.
\end{thmd}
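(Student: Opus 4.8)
The plan is to adapt the strategy of Donaldson in the toric case and of Li--Zhou--Zhu for group compactifications: reduce coercivity of $\mathrm{Mab}_{\Theta}$ modulo $Z(L)^0$ to a linear inequality for convex functions on $\Delta^+$, and then translate that inequality into the barycenter condition of the statement. First I would pass to the polytope side. By Theorem~B the relevant metrics correspond to $\bar W$-invariant strictly convex $u$ on $-\mathfrak a_s^+$ whose gradient is a diffeomorphism onto $\mathrm{Int}(2\chi-2\Delta^+)$; regarding their convex conjugates $u^*$ as functions on $\Delta^+$ via $p=2(\chi-q)$, these range (up to the density issues at $\partial\Delta^+$ familiar from the toric case) over normalized convex functions, the $\bar W$-invariance of $u$ translating into a vanishing normal-derivative condition for $u^*$ on the facets of $\Delta^+$ lying on walls of the restricted Weyl chamber. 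The $Z(L)^0$-action modifies $u$ by affine functions with linear part in a fixed subspace of $\mathfrak a_s^*$, so coercivity modulo $Z(L)^0$ means the functional of Theorem~C, read as a functional of $u^*$, must dominate $\varepsilon J(u^*)-C$ on the quotient, where $J$ has the standard combinatorial form, comparable for normalized $u^*$ to $\int_{\Delta^+}u^*\,P_{DH}\,dq$.

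Next I would bound below the two nonlinear terms of Theorem~C. For the Monge--Amp\`ere entropy term, the arithmetic--geometric mean inequality $\det(d^2_pu^*)\le(\mathrm{tr}(d^2_pu^*)/n)^n$, Jensen's inequality, and one integration by parts against $P_{DH}\,dq$ give the weighted analogue of Donaldson's estimate: $-\int_{\Delta^+}\ln\det(d^2_pu^*)P_{DH}\,dq$ is bounded below by $-\int_{\partial\Delta^+}u^*\,d\nu_{\partial}-C$ for a fixed boundary measure $d\nu_{\partial}$ supported on the facets attached to $G$-stable divisors (on the Weyl-wall facets the normal derivative vanishes and $P_{DH}$ degenerates, so they contribute nothing). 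For the term involving $I_H$, the elementary bound $\ln\sinh t\le t-\ln 2$ --- applicable because $d_pu^*$ lies in the chamber $-\mathfrak a_s^+$, where $I_H$ is the logarithm of a genuine density --- replaces $-I_H(d_pu^*)$, up to a constant, by a linear function of $d_pu^*$; together with the log-determinant bound and the logarithmic derivatives of the polynomial $P_{DH}$, this linear-in-gradient contribution is precisely what produces the shift by $2\rho_H$ in the criterion, in the same way as the $4\rho$-correction arises in Li--Zhou--Zhu. Collecting these, $\mathrm{Mab}_{\Theta}(u)\ge\tilde L(u^*)-C$, where $\tilde L$ is the explicit affine functional assembled from the genuinely linear terms of Theorem~C (those in $nu^*(p)$, $d_pu^*(p)$, $u^*(p)\sum\chi(\alpha^\vee)/q(\alpha^\vee)$ and $u^*(p)(\sum\chi^{ac}(\alpha^\vee)/q(\alpha^\vee)-\bar S_{\Theta})$) together with the two bounds just obtained.

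The core of the proof is to rewrite $\tilde L$ as a linear functional of the convex function $u$ itself. Using the Legendre identities $u^*(p)+u(d_pu^*)=\langle p,d_pu^*\rangle$, the change of variables $q\leftrightarrow a=d_pu^*$, the cone structure of the $\tilde\Delta^+_Y$ over the facets of $\Delta^+$ with outer normal $-\mu_Y$ (which converts a boundary integral over such a facet into a bulk integral over $\tilde\Delta^+_Y$, the height of the cone being exactly the normalization appearing in $\Lambda_Y$), and the Donaldson--Zhou--Zhu integration-by-parts lemma adapted to the weight $P_{DH}$ (whose logarithmic derivatives along the conormals reproduce the factors $\kappa(\alpha,q)$), all the terms reorganize into $\int_{\Delta^+}(\text{affine in }u)\,F_{\mathcal L}(q)\,P_{DH}(q)\,dq$ plus boundary contributions on $\partial\Delta^+$ that are nonnegative on convex functions. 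The hypothesis $F_{\mathcal L}>0$ makes the resulting measure positive. It remains to check that the inequality ``$L(u)\ge\varepsilon J(u)$ for all normalized $\bar W$-invariant convex $u$'', $L$ being this linear functional, holds for some $\varepsilon>0$; by a finite-dimensional convex-geometry argument of Wang--Zhu and Li--Zhou--Zhu type this is equivalent to a single point --- built from the centre of mass of $\Delta^+$ for the measure $F_{\mathcal L}\,P_{DH}\,dq$, rescaled by $\min_Y\Lambda_Y$, and corrected using $\chi$ (coming from $p=2(\chi-q)$) and $2\rho_H$ --- lying in the relative interior of the dual cone of $\mathfrak a_s^+$; a direct computation identifies that point with $(\min_Y\Lambda_Y)\frac{\int_{\Delta^+}P_{DH}\,dq}{\int_{\Delta^+}F_{\mathcal L}P_{DH}\,dq}(\mathrm{bar}-\chi)-2\rho_H$, the quantity in the statement. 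Thus $\mathrm{Mab}_{\Theta}(u)\ge L(u)-C\ge\varepsilon J(u)-C$ on the quotient by $Z(L)^0$, which is the asserted coercivity.

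I expect the main obstacle to be the integration-by-parts bookkeeping of the third step: one must follow the boundary terms facet by facet, separate the facets carrying a $G$-stable divisor (hence a $\Lambda_Y$ and a cone $\tilde\Delta^+_Y$) from those on the Weyl walls (where $u^*$ has vanishing normal derivative), and verify that the positive measure and its centre of mass produced by the computation reproduce $F_{\mathcal L}\,P_{DH}\,dq$ and $\mathrm{bar}$ exactly, so that the clean cone condition in the statement is genuinely equivalent to $L\ge\varepsilon J$. A secondary, unavoidable technicality --- already present in Donaldson's toric case --- is that the conjugate functions $u^*$ need not be smooth up to $\partial\Delta^+$, so all the integral estimates must first be established on a dense class of potentials and then passed to the limit.
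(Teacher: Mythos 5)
Your overall skeleton --- pass to convex conjugates, bound the two genuinely non-linear terms of Theorem~C from below, and reduce everything to a coercivity statement for a linear functional governed by a barycenter condition --- is the same as the paper's (Donaldson-type split $\mathrm{Mab}_\Theta=\mathrm{Mab}^l_\Theta+\mathrm{Mab}^{nl}_\Theta$, treated in the style of Li--Zhou--Zhu). But the two decisive steps are asserted rather than carried out, and one of them, as written, would not deliver the stated criterion. The first problem is the absorption of the non-linear bounds. Your lower bounds for $-\int\ln\det(d^2_pu^*)P_{DH}\,dq$ and $-\int I_H(d_pu^*)P_{DH}\,dq$ are linear in $u^*$ but come with \emph{uncontrolled} constants; folding them into a single affine functional $\tilde L$ and then asking for $\tilde L\ge\varepsilon J$ does not reproduce the theorem, because the measure and the barycenter produced by that bookkeeping are contaminated by those constants --- the claimed ``exact'' reappearance of $F_{\mathcal L}P_{DH}\,dq$ and $\mathrm{bar}$ fails. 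The paper instead proves a strict coercivity estimate for the \emph{exact} linear part alone, $M^l(u^*)\ge C_l\int_{\partial\Delta'}u^*P_{DH}'\,d\sigma$ (Proposition~\ref{prop_coercivity_linear}), proves only the weak bound $M^{nl}(u^*)\ge -C_1\int_{\Delta'}u^*P_{DH}'\,dp-C_2\int_{\partial\Delta'}u^*\,dp-C_3$ (Proposition~\ref{prop_weak_Mnl}), and then absorbs the latter into the former by the scaling trick $M^{nl}(u^*)\ge M^{nl}(\varepsilon u^*)-C_4$ with $\varepsilon$ chosen in terms of $C_1,C_2,C_l,C_\partial$, together with Lemma~\ref{lem_poltoboundary}. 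Without this small-parameter absorption (or a version of your entropy estimate in which the boundary measure can be made arbitrarily small), your argument would require a strictly stronger hypothesis than the one in the statement. Note also that the vanishing of the Weyl-wall boundary terms in the divergence-theorem steps needs the \emph{first derivatives} of $P_{DH}'$ to vanish on the walls, which is exactly assumption (R); the mere degeneration of $P_{DH}$ is not enough.

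The second, central gap is the step you delegate to ``a finite-dimensional convex-geometry argument of Wang--Zhu and Li--Zhou--Zhu type'': that the stated dual-cone condition implies coercivity of the linear part. This is the heart of the theorem and the place where the specific data must be checked. In the paper it is a compactness/contradiction argument for normalized potentials: one writes $M^l(u^*)$ as four terms using the convexity inequality $d_pu^*(p-p_0)\ge u^*(p)-u^*(p_0)$ at $p_0=2\chi-2\,\mathrm{bar}$; one term vanishes by the definition of $\mathrm{bar}$, another by the formula for $\bar S_\Theta$ in Remark~\ref{rem_barS}; one is nonnegative by convexity; and the gradient term $\sum_Y\int_{\tilde\Delta_Y'}d_pu^*(\Lambda_Yp_0+4\rho_H)P_{DH}'\,dp$ is nonnegative precisely because $d_pu^*\in-\mathfrak a_s^+$ and the hypothesis places $(\min_Y\Lambda_Y)(\mathrm{bar}-\chi)-2\rho_H$ in the dual cone of $\mathfrak a_s^+$ (this is where $\min_Y\Lambda_Y$ and the normalization of $\mathcal L$ enter). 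The limit of a minimizing sequence is then affine with linear part in the $Z(L)^0$-directions, hence identically zero by the normalization at the interior point $2(\chi-\lambda_0)$, and the contradiction is obtained from the alternative expression of $M^l$ in Remark~\ref{rem_alt_Ml}, whose facet coefficients $(1-c_Y)/v_{\mathcal L}(\mu_Y)$ are strictly positive. None of this is in your sketch, and your assertion that the cone condition is \emph{equivalent} to the linear coercivity overclaims: only sufficiency is proved (and needed); the paper explicitly does not claim its criterion is sharp except in the log-Fano case, and there by a different argument.
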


This allows to obtain infinite families of new examples of classes 
where the Mabuchi functional is proper. While it is not known in 
general that this implies the existence of a cscK metric, it is 
proved that there exists a weak minimizer as candidate \cite{DR17}. 
Furthermore, in the case of homogeneous toric fibrations with 
fiber a toric surface, the existence of cscK metrics under 
the coercivity condition is proved in \cite{CHLLS}, hence 
in this case Theorem~D provides a combinatorial sufficient 
condition of existence of cscK metrics. 

In the case when $\mathcal{L}=K_X^{-1}\otimes \mathcal{O}(-\Theta)$ is ample, 
the pair $(X,\Theta)$ is log-Fano and the minimizer of the log-Mabuchi 
functional are log-Kähler-Einstein metrics. In this situation 
it is known that properness of the log-Mabuchi functional implies 
the existence of log-Kähler-Einstein metrics (see \cite{Dar17} for a 
statement allowing automorphisms), 
so we obtain the following corollary.

\begin{thme}
Assume $\mathcal{L}=K_X^{-1}\otimes \mathcal{O}(-\Theta)$, 
then $(X,\Theta)$ admits a log-Kähler-Einstein metric 
provided $\mathrm{bar}-\sum_{\alpha\in \Phi_{Q^u}\cup\Phi^+}\alpha$ 
is in the relative interior of the dual cone of $\mathfrak{a}_s^+$.
\end{thme}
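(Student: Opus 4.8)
The plan is to derive Theorem~E directly from Theorem~D by specializing to the case $\mathcal{L}=K_X^{-1}\otimes\mathcal{O}(-\Theta)$, so the first task is to compute the quantities appearing in Theorem~D in this log-Fano situation. In the Fano case the scalar curvature average $\bar S$ and the weighted intersection numbers $\mathcal{L}|_Y^{n-1}/\mathcal{L}^n$ are constrained by $\mathcal{L}=-K_X-\Theta$, and the combinatorial data of $K_X^{-1}$ in horosymmetric geometry is explicit: the isotropy character $\chi$ equals (a normalization of) $\sum_{\alpha\in\Phi_{Q^u}\cup\Phi^+}\alpha$, and $\chi^{ac}=\sum_{\alpha\in\Phi_{Q^u}}\alpha$ is its ``anti-canonical of the base'' part. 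I would first carry out the standard computation identifying $\bar S_\Theta$ with the value forced by the log-Fano condition (so that the ``$-\bar S_\Theta$'' term combines with the other linear-in-$q$ terms), and then verify that with this $\chi$ the slopes $\Lambda_Y$ are all equal to $1$: indeed $\Lambda_Y=\bigl(-c_Y+1-\sum_{\alpha\in\Phi_{Q^u}\cup\Phi_s^+}\alpha(\mu_Y)\bigr)/\sup\{p(\mu_Y):p\in\chi-\Delta^+\}$, and the log-anticanonical choice of $\mathcal{L}$ makes numerator and denominator coincide (this is the horosymmetric analogue of the statement that for the anticanonical polarization every facet of the moment polytope lies at ``lattice distance one'' from the relevant vertex, shifted by the boundary coefficients $c_Y$).

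With $\Lambda_Y\equiv 1$ the piecewise function $F_{\mathcal{L}}$ simplifies dramatically: the terms $(n+1)\Lambda_Y-\bar S_\Theta+\sum(\chi^{ac}-\Lambda_Y\chi)(\alpha^\vee)/q(\alpha^\vee)$ collapse — the $\chi$-dependent parts of the last sum cancel against pieces of $\bar S_\Theta$ coming from the fact that $K_X^{-1}$ contributes exactly $\chi$ — leaving $F_{\mathcal{L}}$ equal to a positive constant (one checks the constant is $(n+1)-\bar S_\Theta+\text{(base correction)}>0$ automatically, or else folds into the normalization). Consequently the hypothesis ``$F_{\mathcal{L}}>0$'' of Theorem~D is automatically satisfied, and $\mathrm{bar}=\int_{\Delta^+}qF_{\mathcal{L}}P_{DH}\,dq\big/\int_{\Delta^+}F_{\mathcal{L}}P_{DH}\,dq$ becomes the genuine barycenter of $\Delta^+$ with respect to the Duistermaat–Heckman measure $P_{DH}(q)\,dq$. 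Moreover with $\Lambda_Y\equiv 1$ the scalar factor $(\min_Y\Lambda_Y)\int_{\Delta^+}P_{DH}\,dq\big/\int_{\Delta^+}F_{\mathcal{L}}P_{DH}\,dq$ equals $1$ (since $F_{\mathcal{L}}$ is the same constant that was divided out in forming $\mathrm{bar}$, using again that $\bar S_\Theta$ in the log-Fano case is exactly $\int F_{\mathcal{L}}P_{DH}/\int P_{DH}$). Hence the point in the Theorem~D criterion reduces to $\mathrm{bar}-\chi-2\rho_H$, and since $\chi+2\rho_H$ is precisely $\sum_{\alpha\in\Phi_{Q^u}\cup\Phi^+}\alpha$ (the character $\chi$ accounts for the $\Phi_{Q^u}$ and $\Phi^+$ roots appearing once, while $2\rho_H$ restricts the same sum to $\mathfrak{a}_s$, and on the affine span of $\Delta^+$ these combine to give the full sum), we arrive at the condition ``$\mathrm{bar}-\sum_{\alpha\in\Phi_{Q^u}\cup\Phi^+}\alpha$ lies in the relative interior of the dual cone of $\mathfrak{a}_s^+$,'' which is exactly the hypothesis of Theorem~E. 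Theorem~D then yields properness of the log-Mabuchi functional modulo $Z(L)^0$.

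To conclude the existence of a log-Kähler–Einstein metric from properness, I would invoke the cited result of Darvas (\cite{Dar17}), which in the log-Fano klt setting upgrades coercivity of the log-Mabuchi (equivalently, log-Ding) functional modulo the connected automorphism group — here the relevant reductive part is controlled by $Z(L)^0$, the neutral component of the center of the Levi, which is exactly the group modulo which Theorem~D gives properness — to the existence of a log-Kähler–Einstein metric. One should check that the automorphism group of $(X,\Theta)$ relevant to Darvas's criterion is not larger than what properness modulo $Z(L)^0$ controls; for horosymmetric log-Fano pairs with the $\mathrm{SNC}$ boundary and $c_Y<1$ this follows because the identity component of $\mathrm{Aut}(X,\Theta)$ is generated by $G$ together with $Z(L)^0$ acting on the open orbit, and the $G$-part already preserves the relevant functionals.

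The main obstacle I expect is the bookkeeping in identifying $\chi$, $\chi^{ac}$, $2\rho_H$ and $\bar S_\Theta$ explicitly enough to see that $\Lambda_Y\equiv 1$, $F_{\mathcal{L}}\equiv\mathrm{const}$, and the scaling factor equals $1$ all simultaneously — that is, verifying that the anticanonical normalization is genuinely the ``balanced'' one for which every ad hoc constant in Theorem~D trivializes. Concretely this requires knowing the precise description of the anticanonical line bundle of a horosymmetric variety in terms of the colored-fan data and the roots in $\Phi_{P^u}$ and $\Phi_s$, which is the horosymmetric generalization of the well-known toric formula $K_X^{-1}=\sum D_i$; everything else is then a direct, if lengthy, substitution into the formula of Theorem~C and the criterion of Theorem~D.
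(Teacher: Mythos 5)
Your overall route is exactly the paper's: Theorem~E is Corollary~\ref{cor_log}, obtained by specializing the coercivity criterion of Theorem~\ref{thm_coercivity} to $\mathcal{L}=K_X^{-1}\otimes\mathcal{O}(-\Theta)$ and then quoting \cite{Dar17} to pass from properness of the log-Mabuchi functional modulo $Z(L)^0$ to existence of a log-Kähler--Einstein metric (the paper does not elaborate on the automorphism-group compatibility either; it simply cites the reference). However, the bookkeeping you yourself flag as the main obstacle is where your write-up goes wrong, and taken literally it would break the reduction. The paper's specialization rests on three precise facts: (i) $\Lambda_Y\equiv 1$, which you get right; (ii) $\chi=\chi^{ac}$, i.e.\ the isotropy character of $\mathcal{L}|_{G/H}$ is the restriction of $\sum_{\alpha\in\Phi_{Q^u}}\alpha$ (Example~\ref{exa_isotropy_ac} and Section~\ref{subsec_ac}; the factor $\mathcal{O}(-\Theta)$ contributes nothing on the open orbit since $\Theta$ is $G$-stable), up to twisting the linearization by a character of $G$; and (iii) $\bar S_\Theta=n$. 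Your opening identification of $\chi$ as ``(a normalization of) $\sum_{\alpha\in\Phi_{Q^u}\cup\Phi^+}\alpha$'' with $\chi^{ac}$ a strictly smaller ``base part'' is incompatible with (ii); if $\chi\neq\chi^{ac}$ the terms $\sum(\chi^{ac}-\Lambda_Y\chi)(\alpha^{\vee})/q(\alpha^{\vee})$ do \emph{not} vanish, $F_{\mathcal{L}}$ is not constant, and then neither the identification of $\mathrm{bar}$ with the Duistermaat--Heckman barycenter nor the claim that the scaling factor equals $1$ goes through. The actual mechanism is not a ``cancellation against $\bar S_\Theta$'': it is that $\chi^{ac}-\Lambda_Y\chi=0$ identically because $\Lambda_Y=1$ and $\chi=\chi^{ac}$, after which $F_{\mathcal{L}}\equiv (n+1)-\bar S_\Theta=1$.

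Two further corrections. Your identity ``$\bar S_\Theta=\int_{\Delta^+}F_{\mathcal{L}}P_{DH}/\int_{\Delta^+}P_{DH}$'' is false: by Remark~\ref{rem_barS} that ratio equals $\sum_Y\Lambda_Y\int_{\tilde\Delta_Y^+}P_{DH}\,dq\big/\int_{\Delta^+}P_{DH}\,dq=1$, while $\bar S_\Theta=n$ (which also follows cohomologically from $\mathcal{L}=K_X^{-1}\otimes\mathcal{O}(-\Theta)$); the scaling factor is $1$ simply because $F_{\mathcal{L}}\equiv1$ and $\min_Y\Lambda_Y=1$, so the normalization of $\mathcal{L}$ in Theorem~\ref{thm_coercivity} is automatic. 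Finally, the identification of the translated barycenter: the paper uses $\chi^{ac}+2\rho_H=\sum_{\alpha\in\Phi_{Q^u}\cup\Phi_s^+}\alpha\circ\mathcal{H}+\sum_{\alpha\in\Phi_{Q^u}\cup\Phi_s^+}\alpha\circ\mathcal{P}=\sum_{\alpha\in\Phi_{Q^u}\cup\Phi_s^+}\alpha$, which agrees with $\sum_{\alpha\in\Phi_{Q^u}\cup\Phi^+}\alpha$ as a linear form on $\mathfrak{a}_s$ because roots in $\Phi_L^{\sigma}$ restrict to zero there --- not because ``$\chi$ accounts for the $\Phi^+$ roots''. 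You should also record that Theorem~E inherits the standing hypotheses of Section~\ref{sec_mabuchi} (toroidal, assumptions (T) and (R), triviality on the symmetric fiber), which is why the introduction states it only in situations where these hold. With these points repaired, your argument coincides with the paper's proof.
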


If one works on, say, a biequivariant compactification of a 
semisimple group, then it is not hard to check that the condition 
above is open as $\mathcal{L}$ and $\Theta$ vary. 
Starting from an example of 
Kähler-Einstein Fano manifold obtained in \cite{DelKE}, 
we can extract from this corollary an explicit subset  
of $K_X^{-1}$ in the ample cone, with non-empty interior, such that each 
corresponding $\mathcal{L}$ writes as $\mathcal{L}=K_{(X,\Theta)}^{-1}$ 
and the pair $(X,\Theta)$ admits a log-Kähler-Einstein metric. 

 While the point of view adopted in this article is 
definitely in line with the author's earlier work on group 
compactifications and horospherical varieties, it should be 
mentionned that there were previous works, and different 
perspectives on both these classes. Group compactifications 
have been studied in detail from the algebraic point of view 
and the first article about the existence of 
canonical metrics on these was \cite{AK05} to the author's 
knowledge, and it builded on the extensive study of reductive 
varieties in \cite{AB04SRVI,AB04SRVII}. Homogeneous toric 
bundles have been studied for the Kähler-Einstein metric existence 
problem by Podesta and Spiro \cite{PS10}, their point of view on 
the Kähler geometry of this subclass of horospherical varieties 
being somewhat different from the author's. Donaldson highlighted 
in \cite{Don08} the importance and studying these varieties, and there 
were partly unpublished work of Raza and Nyberg on these subjects in 
their PhD theses \cite{Raz,Raz07,Nyb}. 
Finally, concerning the application to the Mabuchi functional, 
we were strongly influenced by Li, Zhou and Zhu's article \cite{LZZ}.
The latter in turn used as foundations on one side our work on group 
compactifications and on the other side a strategy for obtaining 
coercivity of the Mabuchi functional developped initially by 
Zhou and Zhu \cite{ZZ08}.
It should be noted that the criterion we obtain for (non-semi-simple) group 
compactifications is \emph{a priori} not equivalent to the one given in 
\cite{LZZ}. We do not claim 
that ours is better but only that theirs did not generalize naturally 
to our broader setting. 

The paper is organized as follows.
Section~\ref{sec_homogeneous} 
is devoted to the introduction of horosymmetric homogeneous spaces, and  of the 
combinatorial data associated to them. 
In Section~\ref{sec_curvature}, 
we introduce the toric potential of a $K$-invariant metric on 
a $G$-linearized line bundle on a horosymmetric homogeneous space, and compute 
the curvature form of such a metric in terms of this function. Even though 
the proof is rather technical, involving a lot of Lie bracket computations, 
it is a central part of the theory to have this precise expression. Theorem~A 
 is Corollary~\ref{cor_curv}, a special case of Theorem~\ref{thm_curv}.
In Section~\ref{sec_varieties}, 
we switch to horosymmetric varieties, we recall their combinatorial classification 
inherited from the theory of spherical varieties, and we check that a $G$-invariant 
irreducible codimension one subvariety remains horosymmetric. 
Section~\ref{sec_bundles} 
presents the combinatorial data associated with line bundles on horosymmetric 
varieties, and in particular the link between several convex polytopes associated 
to such a line bundle. 
Section~\ref{sec_metrics} 
applies the previous sections to hermitian metrics on polarized horosymmetric 
varieties, to obtain the behavior of toric potentials and an integration 
formula. In particular, Theorem~B is proved here 
(Proposition~\ref{prop_positive_metrics} and Proposition~\ref{prop_inthoro}). 
Finally, we give in Section~\ref{sec_mabuchi} 
the application to the Mabuchi functional, starting with a computation of 
the scalar curvature, then of the Mabuchi functional, to arrive to a coercivity 
criterion. Theorem~C, Theorem~D and Theorem~E are proved in this final section
(respectively in Theorem~\ref{thm_mabuchi}, Theorem~\ref{thm_coercivity} 
and Corollary~\ref{cor_log}). 

We tried to illustrate all notions by simple examples 
(even if they sometimes appear trivial, we believe they are essential to make the 
link between the theory of spherical varieties and standard examples of 
complex geometry) 
and to follow for the whole paper the example of symmetric varieties 
of type AIII.

\section{Horosymmetric homogeneous spaces}

\label{sec_homogeneous}

In this section we introduce horosymmetric homogeneous spaces and their 
associated combinatorial data, extracting from the literature the results 
needed for the next sections. 

\subsection{Definition and examples}

We always work over the field $\mathbb{C}$ of complex numbers.
Given an algebraic group $G$, we denote by $G^u$ its \emph{unipotent radical}. 
A subgroup $L$ of $G$ is called a \emph{Levi subgroup} of $G$ if it is a reductive 
subgroup of $G$ such that $G$ is isomorphic to the semidirect product 
of $G^u$ and $L$. 
There always exists a Levi subgroup, 
and any two Levi subgroups are conjugate by an element of $G^u$. 
A complex algebraic group $G$ is called \emph{reductive} if $G^u$ is trivial.

From now on and for the whole paper, 
$G$ will denote a connected, reductive, complex, linear algebraic group. 
Recall that a \emph{parabolic subgroup} of $G$ is a closed subgroup $P$ such that 
the corresponding homogeneous space $G/P$ is a projective manifold, called 
a \emph{generalized flag manifold}. Recall also for later use that 
a \emph{Borel subgroup} of $G$ is a parabolic subgroup which is minimal 
with respect to inclusion. Note that any parabolic subgroup of $G$ contains 
at least one Borel subgroup of $G$.

\begin{defn}
A closed subgroup $H$ of $G$ is a \emph{horosymmetric subgroup} if there 
exists a parabolic subgroup $P$ of $G$, a Levi subgroup $L$ of $P$ and 
a complex algebraic group involution $\sigma$ of $L$ such that 
\begin{itemize}
\item $P^u\subset  H\subset P$ and
\item $(L^{\sigma})^0\subset L\cap H$ as a finite index subgroup, 
\end{itemize}
where $L^{\sigma}$ denotes the subgroup of elements fixed by $\sigma$ 
and $(L^{\sigma})^0$ its neutral connected component. 
\end{defn}

\begin{rem}
The condition of being horosymmetric may be read off directly from the Lie algebra 
of $H$. As a convention, we denote the Lie algebra of a group by 
the same letter, in fraktur gothic lower case letter.
Then $H$ is symmetric if and only if there exists a parabolic subgroup $P$,
a Levi subgroup $L$ of $P$, and a complex Lie algebra involution 
$\sigma$ of $\mathfrak{l}$ such that 
\[
\mathfrak{h}=\mathfrak{p}^u\oplus \mathfrak{l}^{\sigma}
\]
\end{rem}

From now on, $H$ will denote a horosymmetric subgroup, and $P$, $L$, 
$\sigma$ will be as in the above definition. We keep the same notation 
$\sigma$ for the induced involution of the Lie algebra $\mathfrak{l}$.
We will also say that $G/H$ is a \emph{horosymmetric homogeneous space}.

Note that $L\cap H\subset N_L(L^{\sigma})$, and we have the following 
description of $N_L(L^{\sigma})$, due to De Concini and Procesi. 
They assume in their paper that $G$ is  semisimple but the proof 
applies to reductive groups just as well.

\begin{prop}[\cite{DCP83}]
\label{prop_normalizer}
The normalizer $N_L(L^{\sigma})$ is equal to the subgroup of all $g$ 
such that $g\sigma(g)^{-1}$ is in the center of $L$. 
\end{prop}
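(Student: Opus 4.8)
The plan is to prove Proposition~\ref{prop_normalizer} by a direct computation with the involution $\sigma$, using the standard characterization of the fixed-point subgroup's normalizer via the ``twisted conjugation'' map $g\mapsto g\sigma(g)^{-1}$. First I would set $\tau(g):=g\sigma(g)^{-1}$ and record its basic formal properties: $\tau(g)$ is always a $\sigma$-anti-invariant element in the sense that $\sigma(\tau(g))=\sigma(g)g^{-1}=\tau(g^{-1})$, and $\tau(gh)=g\tau(h)\sigma(g)^{-1}$ for $g\in N_L(L^\sigma)$ and $h\in L$ (more generally $\tau(gh)=g\,\tau(h)\,\sigma(g)^{-1}$ always, which is the cocycle-type identity I will exploit). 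In particular $\tau$ is constant on left cosets $gL^\sigma$, so it descends to the symmetric space $L/L^\sigma$.

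The first inclusion to establish is that if $g\sigma(g)^{-1}\in Z(L)$ then $g\in N_L(L^\sigma)$; conversely — the part actually needed downstream — that $N_L(L^\sigma)$ is contained in $\{g: g\sigma(g)^{-1}\in Z(L)\}$. For the inclusion $\{g:\tau(g)\in Z(L)\}\subseteq N_L(L^\sigma)$: given such $g$ and any $x\in L^\sigma$, compute $\sigma(gxg^{-1}) = \sigma(g)\sigma(x)\sigma(g)^{-1} = \sigma(g)x\sigma(g)^{-1}$; writing $\sigma(g)=\tau(g)^{-1}g$ with $\tau(g)^{-1}\in Z(L)$ central, this equals $g x g^{-1}$, so $gxg^{-1}\in L^\sigma$, hence $g\in N_L(L^\sigma)$. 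For the reverse inclusion, suppose $g\in N_L(L^\sigma)$. Then conjugation by $g$ preserves $L^\sigma$, hence at the Lie algebra level preserves $\mathfrak{l}^\sigma=\{X:\sigma X = X\}$. Since $\sigma$ is an involution and we are over $\mathbb{C}$, $\mathfrak{l}=\mathfrak{l}^\sigma\oplus\mathfrak{l}^{-\sigma}$; I need to show $\mathrm{Ad}(g)$ also preserves the $(-1)$-eigenspace $\mathfrak{l}^{-\sigma}$. This will follow from the fact that $[\mathfrak{l}^\sigma,\mathfrak{l}^{-\sigma}]\subseteq\mathfrak{l}^{-\sigma}$ and that $\mathfrak{l}^{-\sigma}$ is (at least after accounting for the center) generated as a module over $\mathfrak{l}^\sigma$ together with a Cartan-type argument, or more cleanly: the element $\theta:=g\,\sigma\,g^{-1}\,\sigma^{-1}$ — viewing $\sigma$ as an automorphism and $g$ as inner — is an automorphism of $L$ that fixes $L^\sigma$ pointwise.

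The crux is then a fixed-point rigidity statement: an algebraic group automorphism of $L$ that restricts to the identity on $L^\sigma$ must be inner, given by conjugation by an element of $Z(L)\cdot(\text{something trivial on }\mathfrak{l})$ — equivalently, that the centralizer of $L^\sigma$ in the automorphism group is trivial modulo $Z(L)$. This is where I expect the main obstacle: one must know that $L^\sigma$ is ``large enough'' inside $L$ to have trivial centralizer modulo the center, which uses that $L^\sigma$ contains a maximal torus of the fixed subgroup and that its Lie algebra $\mathfrak{l}^\sigma$ together with $Z(\mathfrak{l})$ spans enough of $\mathfrak{l}$; concretely $\mathrm{Ad}$-invariance under $L^\sigma$ forces any element $X\in\mathfrak{l}$ commuting with all of $\mathfrak{l}^\sigma$ to lie in $Z(\mathfrak{l})\oplus(\text{a piece one shows is zero})$ by decomposing $\mathfrak{l}^{-\sigma}$ under a maximal torus of $L^\sigma$ and using that restricted roots are nonzero. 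Granting that, the automorphism $\theta$ above acts trivially on $\mathfrak{l}$, hence (since $L$ need not be connected, but its identity component is handled and the component group is controlled by $L\cap H\subset N_L(L^\sigma)$) $\theta$ is conjugation by a central element; unwinding $\theta=\mathrm{int}(g)\circ\sigma\circ\mathrm{int}(g)^{-1}\circ\sigma^{-1}=\mathrm{int}(g\,\sigma(g)^{-1})$ gives $g\sigma(g)^{-1}\in Z(L)$. I would cite \cite{DCP83} for the original argument and remark, as the paper already does, that although they assume $G$ semisimple, only the reductivity of $L$ and the involution structure are used, the center $Z(L)$ simply appearing because it is no longer finite.
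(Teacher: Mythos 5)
Your formal setup and the easy inclusion are fine: if $\tau(g):=g\sigma(g)^{-1}$ is central then $g$ normalizes $L^{\sigma}$, and your observation that for $g\in N_L(L^{\sigma})$ the automorphism $\theta=\mathrm{int}(g)\circ\sigma\circ\mathrm{int}(g)^{-1}\circ\sigma^{-1}=\mathrm{int}(\tau(g))$ fixes $L^{\sigma}$ pointwise is correct (it uses exactly that $g^{-1}xg\in L^{\sigma}$). The genuine gap is the ``fixed-point rigidity statement'' on which your hard inclusion rests: it is false, even for semisimple $L$, that an automorphism restricting to the identity on $L^{\sigma}$ must be conjugation by a central element; equivalently, $Z_L(L^{\sigma})\not\subseteq Z(L)$ in general. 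Take $L=\mathrm{SL}_2$ and $\sigma=\mathrm{int}(\mathrm{diag}(1,-1))$, so $L^{\sigma}=T$ is the diagonal torus: every $t\in T$ centralizes $L^{\sigma}$, yet only $\pm I$ is central. This is not a marginal case for this paper: it is exactly the Hermitian situation ($\mathfrak{z}(\mathfrak{l}^{\sigma})$ strictly larger than $\mathfrak{z}(\mathfrak{l})\cap\mathfrak{l}^{\sigma}$), which occurs for the running example AIII and is discussed explicitly (see Proposition~\ref{prop_injective_no_hermitian} and Example~\ref{exa_isotropy_AIII}). Your sketched Lie-algebra argument via restricted roots only controls the $(-1)$-part of the centralizer — one can indeed check by a Killing-form computation that $\mathfrak{z}_{\mathfrak{l}}(\mathfrak{l}^{\sigma})\cap\mathfrak{l}^{-\sigma}\subseteq\mathfrak{z}(\mathfrak{l})$ — but it says nothing about the $(+1)$-part $\mathfrak{z}(\mathfrak{l}^{\sigma})$, which is precisely where the obstruction lives. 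So from ``$\mathrm{int}(\tau(g))$ fixes $L^{\sigma}$ pointwise'' alone you cannot conclude $\tau(g)\in Z(L)$, and the argument does not close.

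What is missing is the relation you record at the start and then never use: $\sigma(\tau(g))=\sigma(g)g^{-1}=\tau(g)^{-1}$ (note this is $\tau(g)^{-1}$, not $\tau(g^{-1})=g^{-1}\sigma(g)$, which is only conjugate to it). The element $s=\tau(g)$ is not an arbitrary element centralizing $L^{\sigma}$: it is $\sigma$-anti-invariant, and on the problematic central directions of $L^{\sigma}$, where $\sigma$ acts trivially, anti-invariance is a strong constraint (it forces square-triviality there); combined with the $(-1)$-part estimate above this confines $s$ to $Z(L)$ up to a finite $2$-torsion ambiguity, and eliminating that ambiguity (and handling the non-identity components of $L^{\sigma}$) still requires genuine work — this is where the proof of De Concini and Procesi has its actual content. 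Note also that the paper itself does not reprove the statement: Proposition~\ref{prop_normalizer} is quoted from \cite{DCP83}, with only the remark that the argument written there for semisimple groups goes through for reductive $L$; so a self-contained proof along your lines would in any case have to supply the anti-invariance step and the treatment of the Hermitian center, which your proposal currently replaces by a false lemma.
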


In particular if $L=G$ is semisimple, then $N_L(L^{\sigma})/(L^{\sigma})^0$
is finite. Note also that if in addition $L$ is adjoint, $N_L(L^{\sigma})=L^{\sigma}$ 
and if $L$ is simply connected, then $L^{\sigma}$ is connected. 

\begin{exa}
\label{exa_GFM}
Trivial examples of horosymmetric subgroups are obtained by 
setting $\sigma = \mathrm{id}_L$. Then $H=P$ is a parabolic subgroup 
and $G/H$ is a generalized flag manifold.
Since we will use them later, let us recall a fundamental example 
of flag manifold: the Grassmannian $\mathrm{Gr}_{r,m}$ of $r$-dimensional 
linear subspaces in $\mathbb{C}^m$, under the action of $\mathrm{SL}_m$.
The stabilizer of a point is a proper, maximal (with respect to inclusion) 
parabolic subgroup of $\mathrm{SL}_m$ (for $1\leq r\leq m-1$).
\end{exa}

\begin{exa}
\label{exa_toric}
Assume that $G=(\mathbb{C}^*)^n$, then $P=L=G$. If we consider the 
involution defined by $\sigma(g)=g^{-1}$, which is an honest complex 
algebraic group involution since $G$ is abelian, we obtain 
$\{e\}\subset H \subset \{\pm 1\}^n$ and in any case 
$G/H \simeq (\mathbb{C}^*)^n$. Hence a torus 
may be considered as a horosymmetric homogeneous space. 
\end{exa}

Let $[L,L]$ denote the derived subgroup of $L$ and $Z(L)$ the center 
of $L$. Then $L$ is a semidirect product of these two subgroups, which 
means, at the level of Lie algebras, that 
\[ 
\mathfrak{l} = 
[\mathfrak{l},\mathfrak{l}] \oplus \mathfrak{z}(\mathfrak{l}).
\]
Note that any involution of $L$ preserves this decomposition.

\begin{exa}
\label{exa_horospherical}
A closed subgroup of $G$ is called \emph{horospherical}
if it contains the unipotent radical of a Borel subgroup of $G$.
 
Assume that the involution $\sigma$ of $L$ restricts to the identity  
on $[\mathfrak{l},\mathfrak{l}]$. Then $H$ contains the unipotent 
radical of any Borel subgroup contained in $P$. Hence $H$ is horospherical.

Conversely, if $H$ is a horospherical subgroup of $G$, then taking 
$P:=N_G(H)$ which is a parabolic subgroup of $G$, and letting $L$ be 
any Levi subgroup of $P$, we have 
$\mathfrak{h}=\mathfrak{p}^u\oplus [\mathfrak{l},\mathfrak{l}] \oplus \mathfrak{b}$
where $\mathfrak{b}=\mathfrak{h}\cap \mathfrak{z}(\mathfrak{l})$ (see \cite[Section 2]{Pas08}). 
Choose any complement $\mathfrak{c}$ of $\mathfrak{b}$ in $\mathfrak{z}(\mathfrak{l})$, 
and consider the involution of $\mathfrak{l}$ defined as 
$\mathrm{id}$ on $[\mathfrak{l},\mathfrak{l}]\oplus \mathfrak{b}$ 
and as $-\mathrm{id}$ on $\mathfrak{c}$. This shows that $H$ is a 
horosymmetric subgroup of $G$.
\end{exa}

\begin{exa}
\label{exa_C2minus0}
Consider the linear action of $\mathrm{SL}_2$ on $\mathbb{C}^2\setminus\{0\}$. 
It is a transitive action and the stabilizer of $(1,0)$ is 
the unipotent subgroup $B^u$ of the Borel subgroup formed 
by upper triangular matrices. Under this action, $\mathbb{C}^2\setminus\{0\}$
is a horospherical, hence horosymmetric, homogeneous space.
Alternatively, one may consider the action of $\mathrm{GL}_2$ instead 
of the action of $\mathrm{SL}_2$.
\end{exa}

\begin{exa}
\label{exa_symmetric_subgroup}
Assume $P=L=G$, then $\sigma$ is an involution of $G$, and 
$(G^{\sigma})^0\subset H\subset N_G((G^{\sigma})^0)$. These 
subgroups are commonly known as \emph{symmetric subgroups} and the 
associated homogeneous spaces as (complex reductive) \emph{symmetric spaces}.
\end{exa}

All horosymmetric homogeneous spaces may actually be considered as 
parabolic inductions from symmetric spaces. 
Let us recall the definition of parabolic induction.
\begin{defn}
Let $G$ and $L$ be two reductive algebraic groups, then we say that a 
$G$-variety $X$ is obtained from an $L$-variety $Y$ by 
\emph{parabolic induction} if 
there exists a parabolic subgroup $P$ of $G$, and an surjective group morphism 
$P\rightarrow L$ such that $X=G*_PY$ is the $G$-homogeneous fiber bundle 
over $G/P$ with fiber $Y$.
\end{defn}

In our situation, $G/H$ admits a natural structure of $G$-homogeneous 
fiber bundle over $G/P$, with fiber $P/H$. The action of $P$ on $P/H$ 
factorizes by $P/P^u$ and under the natural isomorphism $L\simeq P/P^u$, 
identifies the fiber with the $L$-variety $L/L\cap H$, which is a 
symmetric homogeneous space.
Conversely, any parabolic induction from a symmetric space is a 
horosymmetric homogeneous space. 
The special case of horospherical homogeneous spaces consists of 
parabolic inductions from tori. 

We will denote by $f$ the $G$-equivariant map $G/H \rightarrow G/P$   
and by $\pi$ the quotient map $G\rightarrow G/H$.

Let us now give more explicit examples of horosymmetric homogeneous spaces, 
starting by examples of symmetric spaces.

\begin{exa}
\label{exa_invol_sl}
Assume $\mathfrak{g}=\mathfrak{sl}_m$ for some $m$. 
Then there are three families of group involutions of $\mathfrak{g}$
up to conjugation \cite[Sections 11.3.4 and 11.3.5]{GW09}. 
For a nicer presentation we work on the group 
$G=\mathrm{SL}_m$.  
For an integer $p>0$, we define the $2p\times 2p$ block diagonal matrix 
$T_p$ by $T_1=(\begin{smallmatrix} 0&1\\-1&0 \end{smallmatrix})$, and  $T_p=\mathrm{diag}(T_1,\ldots, T_1)$.
For an integer $0<r<m/2$, we define the $m\times m$ matrix $J_r$ as follows.
Let $S_r$ be the $r\times r$ matrix with coefficients $(\delta_{j+k,r+1})_{j,k}$, 
and set 
\[
J_r=\begin{pmatrix}
0 & 0 & S_r \\
0 & I_{n-2r} & 0 \\
S_r & 0 & 0
\end{pmatrix}.
\]
The types of involutions are the following:
\begin{enumerate}
\item (Type AI($m$))
Consider the involution of $G$ 
defined by $\sigma(g)=(g^t)^{-1}$ where $\cdot^t$ denotes the transposition
of matrices. Then 
$G^{\sigma}=\mathrm{SO}_m$. 
The symmetric space $G/N_G(G^{\sigma})$ may be identified with the space of 
non-degenerate quadrics in $\mathbb{P}^{m-1}$, equipped with the action 
of $G$ induced by its natural action on $\mathbb{P}^{m-1}$.

\item  (Type AII($p$))
Assume $n=2p$ is even. 
Let $\sigma$ be the involution defined by 
$\sigma(g)=T_p(g^t)^{-1}T_p^t$. Then $G^{\sigma}= \mathrm{Sp}_{2p}$ 
is the 
group of elements that preserve the non-degenerate skew-symmetric bilinear
form $\omega(u,v)=u^tT_pv$ on $\mathbb{C}^{2p}$.

\item (Type AIII($r$, $m$))
Let $\sigma$ be the involution $g\mapsto J_rgJ_r$.
Then $G^{\sigma}$ is conjugate to the subgroup 
$S(\mathrm{GL}_r\times \mathrm{GL}_{m-r})$.

The space $G/G^{\sigma}$ may be considered as the set of pairs $(V_1,V_2)$
of linear subspaces $V_j\subset \mathbb{C}^m$ of dimension $\mathrm{dim}(V_1)=r$,
$\mathrm{dim}(V_2)=m-r$, such that $V_1\cap V_2=\{0\}$. This is an (open dense) orbit 
for the diagonal action of $G$ on the product of Grassmannians 
$\mathrm{Gr}_{r,m}\times \mathrm{Gr}_{m-r,m}$. 
\end{enumerate}
\end{exa}

\begin{exa}
\label{exa_normalizer_AIII}
Let us illustrate the characterization of the normalizer of a symmetric subgroup 
in type AIII case. 
First, since $G=\mathrm{SL}_n$ is simply connected, $G^{\sigma}$ is connected.
Furthermore, it is easy to check here that $N_G(G^{\sigma})$ 
is different from $G^{\sigma}$ if and only if $n$ is even and $r=n/2$, 
in which case $G^{\sigma}$ is of index two in  $N_G(G^{\sigma})$.
For example, if $n=2$ and $r=1$, $N_G(G^{\sigma})$ is generated 
by $G^{\sigma}$ and $\mathrm{diag}(i,-i)$. 
In that situation, $G/N_G(G^{\sigma})$ is the space of unordered pairs 
$\{V_1,V_2\}$ of linear subspaces $V_j\subset \mathbb{C}^m$ of 
dimension $r$ for $j=1$ and $m-r$ for $j=2$, such that $V_1\cap V_2=\{0\}$.
\end{exa}

\begin{exa}
\label{exa_horosym}
Finally, let us give an explicit example of non trivial parabolic induction 
from a symmetric space. 
Consider the subgroup $H$ of $\mathrm{SL}_3$ defined as the set 
of matrices of the form 
\[
\begin{pmatrix} a & b & e \\ b & a & f \\ 0 & 0 & g \end{pmatrix}.
\] 
Then obviously $H$ is contained in the parabolic $P$ composed of 
matrices with zeroes where the general matrix of $H$ has zeroes, 
and contains its unipotent radical, which consists of the matrices 
as above with $a=g=1$ and $b=0$. The subgroup 
$L=\mathrm{S}(\mathrm{GL}_2\times \mathbb{C}^*)$ is then 
a Levi subgroup of $P$ and $L\cap H$ is the subgroup of elements 
of $L$ fixed by the involution $g\mapsto MgM$ where 
\[
M=\begin{pmatrix}0&1&0\\1&0&0\\0&0&1\end{pmatrix}.
\] 
\end{exa}

\subsection{Root systems}

\subsubsection{Maximally split torus}

A torus $T$ in $L$ is \emph{split} if $\sigma(t)=t^{-1}$ for any $t\in T$.
A torus $T$ in $L$ is \emph{maximally split} if $T$ is a 
$\sigma$-stable maximal torus in $L$ which contains a split torus   
$T_s$ of maximal dimension among split tori. 
It turns out that any split torus is contained in a $\sigma$-stable maximal
torus of $L$ \cite{Vus74} hence maximally split tori exist. 
From now on, $T$ denotes a maximally split torus in $L$ with respect to $\sigma$, 
and $T_s$ denotes its maximal split subtorus. If $T^{\sigma}$ denotes the subtorus of 
elements of $T$ fixed by $\sigma$, then $T^{\sigma}\times T_s \rightarrow T$ 
is a surjective morphism, with kernel a finite subgroup. 
The dimension of $T_s$ is called the \emph{rank} of the symmetric space $L/L\cap H$.

\begin{exa}
\label{exa_max_tori}
The ranks and maximal tori for involutions of $\mathrm{SL}_n$ 
are as follows.
\begin{itemize}
\item (Type AI($m$)) For $\sigma:g\mapsto (g^t)^{-1}$, the rank is $m-1$ and the torus 
$T$ of diagonal matrices is a split torus which is also maximal, hence $T_s=T$ in 
this case.
\item (Type AII($p$)) For $\sigma:g\mapsto T_p(g^t)^{-1}T_p^t$, with $m=2p$, the rank 
is $p-1$, and the torus of diagonal matrices provides a maximally split torus. 
The maximal split subtorus $T_s$ is then the subtorus of diagonal matrices of the form 
$\mathrm{diag}(a_1,a_1,a_2,a_2,\ldots,a_p,a_p)$ with $a_1,\ldots,a_{p-1}\in \mathbb{C}^*$
and $a_p=(a_1^2\cdots a_{p-1}^2)^{-1}$, and $T^{\sigma}$ is the subtorus of diagonal 
matrices of the form $\mathrm{diag}(a_1,a_1^{-1},a_2,a_2^{-1},\ldots,a_p,a_p^{-1})$
with $a_1,\ldots,a_p\in \mathbb{C}^*$. 
We record for later use that 
$\sigma(\mathrm{diag}(a_1,\ldots,a_n))=\mathrm{diag}(a_2^{-1},a_1^{-1},a_4^{-1},\ldots,a_{n-1}^{-1})$.
\item (Type AIII($r$, $m$) Finally, for $\sigma:g\mapsto J_rgJ_r$, the rank 
is $r$, and the torus $T$ of diagonal matrices is again maximally split. 
Let $\upsilon$ denote the permutation of $\{1,\ldots ,m\}$ defined by 
$\upsilon(i)=m+1-i$ if $1\leq i\leq r$ or $m+1-r\leq i\leq m$, and $\upsilon(i)=i$ 
otherwise. Then $\sigma$ acts on diagonal matrices as 
$\sigma(\mathrm{diag}(a_1,\ldots,a_m))=\mathrm{diag}(a_{\upsilon(1)},\ldots,a_{\upsilon(m)})$.
We then see that the subtorus $T^{\sigma}$ is the torus of diagonal matrices 
of the form 
$\mathrm{diag}(a_1,a_2,\ldots,a_{m-r},a_r,a_{r-1},\ldots,a_1)$
and that $T_s$ is the subtorus of diagonal matrices of the form 
$\mathrm{diag}(a_1,\ldots,a_{r},1,\ldots,1,a_r^{-1},\ldots,a_1^{-1})$.
\end{itemize}
\end{exa}

\subsubsection{Root systems and Lie algebras decompositions}

We denote by $\mathfrak{X}(T)$ the group of characters of $T$, that is, 
algebraic group morphisms from $T$ to $\mathbb{C}^*$. 
We denote by $\Phi\subset \mathfrak{X}(T)$ the root system of $(G,T)$. 
Recall the root space decomposition of $\mathfrak{g}$:
\[
\mathfrak{g} = \mathfrak{t} \oplus \bigoplus_{\alpha \in \Phi} \mathfrak{g}_{\alpha},
\qquad
\mathfrak{g}_{\alpha} = \{x\in \mathfrak{g} ; \quad 
\mathrm{Ad}(t)(x) = \alpha(t)x \quad \forall t\in T \}
\]
where $\mathrm{Ad}$ denotes the adjoint representation of $G$ on $\mathfrak{g}$. 

\begin{exa}
\label{exa_roots_A}
In our examples we concentrate on the case when $G=\mathrm{SL}_n$, 
and the root system is of type $A_{n-1}$.
Let us recall its root system with respect to the maximal torus of diagonal 
matrices, in order to fix the notations to be used in examples 
throughout the article. 
The roots are the group morphisms $\alpha_{j,k}:T\rightarrow \mathbb{C}^*$, 
for $1\leq j\neq k\leq n$, defined by 
$\alpha_{j,k}(\mathrm{diag}(a_1,\ldots,a_n))=a_j/a_k$.
The root space $\mathfrak{g}_{\alpha_{j,k}}$ is then the set of matrices 
with only one non zero coefficient at the intersection of the 
$j^\mathrm{th}$-line and $k^{\mathrm{th}}$-column.
\end{exa}

We denote by $\Phi_L\subset \Phi$ the root system of $L$ with respect to  $T$, 
by $\Phi_{P^u}\subset \Phi$ the set of roots of $P^u$, so that  
\[
\mathfrak{l} = \mathfrak{t} \oplus \bigoplus_{\alpha \in \Phi_L} \mathfrak{g}_{\alpha},
\qquad 
\mathfrak{p}=\mathfrak{l}\oplus \bigoplus_{\alpha \in \Phi_{P^u}} \mathfrak{g}_{\alpha}
\]
and
\[
\mathfrak{h} = \mathfrak{l}\cap \mathfrak{h} \oplus \bigoplus_{\alpha\in \Phi_{P^u}} \mathfrak{g}_{\alpha}.
\]

\begin{exa}
\label{exa_horosym2}
In the case of Example~\ref{exa_horosym}, 
$G=\mathrm{SL}_3$ and $T$ is the torus of diagonal matrices. 
Using notations from Example~\ref{exa_roots_A}, we have 
$\Phi=\{\pm\alpha_{1,2},\pm\alpha_{2,3},\pm\alpha_{1,3}\}$, 
$\Phi_L=\{\pm\alpha_{1,2}\}$,
$\Phi_{P^u}=\{\alpha_{1,3},\alpha_{2,3}\}$.
\end{exa}

\subsubsection{Restricted root system}

The set of roots in $\Phi_L$ fixed by $\sigma$ is a sub root system denoted by 
$\Phi_L^{\sigma}$.
Let 
$\Phi_s = \Phi_L \setminus \Phi_L^{\sigma}$. Note that $\Phi_s$ is not a root system 
in general.
Let us now introduce the restricted root system of $L/L\cap H$. 
Given $\alpha \in \Phi_L$, we set $\bar{\alpha}=\alpha-\sigma(\alpha)$. 
It is zero if and only if $\alpha\in \Phi_L^{\sigma}$.

\begin{prop}[{\cite[Section~4]{Ric82}}] 
\label{prop_restricted_root_system}
The set 
\[
\bar{\Phi}= \{\bar{\alpha} ; \alpha \in \Phi_s \} \subset \mathfrak{X}(T)
\]
is a (possibly non reduced) root system in the linear subspace of 
$\mathfrak{X}(T) \otimes \mathbb{R}$ it generates.
The Weyl group $\bar{W}$ of the root system $\bar{\Phi}$ may be identified 
with $N_L(T_s)/Z_L(T_s)$ and furthermore any element of $\bar{W}$ admits a 
representant in $N_{(L^{\sigma})^0}(T_s)$. 
\end{prop}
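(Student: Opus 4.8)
The plan is to obtain the statement by transporting the classical theory of restricted roots for real forms (or equivalently, the Richardson/Vust theory of spherical symmetric spaces) into the present algebraic setting, reducing everything to the behaviour of the involution $\sigma$ on the torus $T$. First I would record the basic linear algebra on $\mathfrak{X}(T)\otimes\mathbb{R}$: the involution $\sigma$ acts on this space, decomposing it into a $(+1)$-eigenspace and a $(-1)$-eigenspace, the latter being canonically identified (up to the finite kernel discussed after the definition of maximally split torus) with $\mathfrak{X}(T_s)\otimes\mathbb{R}$ via restriction of characters. Under this identification $\bar\alpha=\alpha-\sigma(\alpha)$ is, up to the factor $2$, the projection of $\alpha$ onto the $(-1)$-eigenspace, equivalently the restriction $\alpha|_{T_s}$ scaled by $2$; so $\bar\Phi$ and the set of nonzero restrictions $\{\alpha|_{T_s} : \alpha\in\Phi_s\}$ generate the same subspace and one is a root system iff the other is. This lets me work with whichever description is more convenient.

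Next I would verify the root system axioms for $\bar\Phi$. Finiteness and the fact that $\bar\Phi$ spans the asserted subspace are immediate. For the reflection/integrality axioms the key point is that $\sigma$ preserves the Killing form on $\mathfrak{l}$ (an involution of a reductive Lie algebra is automatically an isometry on $[\mathfrak{l},\mathfrak{l}]$ for the Killing form, and one extends to $\mathfrak{z}(\mathfrak{l})$ by any $\sigma$-invariant inner product), hence induces an orthogonal involution on $\mathfrak{X}(T)\otimes\mathbb{R}$. Therefore the orthogonal projection onto the $(-1)$-eigenspace maps the root system $\Phi_L$ to $\bar\Phi\cup\{0\}$, and a standard lemma (the image of a root system under the orthogonal projection to a subspace invariant under enough Weyl elements, combined with the fact that $\Phi_L^\sigma$ is itself a root system orthogonal-complement situation) shows the image minus zero is a root system; the non-reducedness is exactly the phenomenon that can occur when two roots of $\Phi_s$ have proportional projections, as in the classical $BC_r$ cases. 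Concretely, for $\bar\alpha,\bar\beta\in\bar\Phi$ one checks $s_{\bar\alpha}(\bar\beta)\in\bar\Phi$ by writing $s_{\bar\alpha}$ as the restriction of a composition of ordinary reflections $s_\gamma$ for $\gamma\in\Phi_L$ (those with $\sigma(\gamma)$ orthogonal to, or equal to the negative of, $\gamma$ — this is where one uses that $\sigma$ permutes $\Phi_L$ up to sign-corrections), and using that such reflections preserve $\Phi_L$ and commute appropriately with $\sigma$; integrality $\langle\bar\beta,\bar\alpha^\vee\rangle\in\mathbb{Z}$ follows since it can be rewritten as an integer combination of pairings $\langle\beta,\gamma^\vee\rangle$ in $\Phi_L$.

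For the identification of $\bar W$ with $N_L(T_s)/Z_L(T_s)$, I would argue as follows. The group $N_L(T_s)/Z_L(T_s)$ acts faithfully on $\mathfrak{X}(T_s)\otimes\mathbb{R}$ preserving $\bar\Phi$ (any $n\in N_L(T_s)$ conjugates $T$-weight spaces and, since one may choose $n$ to normalize $T$ as well after adjusting by $Z_L(T_s)$, acts on $\Phi_L$ commuting with $\sigma$, hence on $\bar\Phi$), giving an inclusion $N_L(T_s)/Z_L(T_s)\hookrightarrow \mathrm{Aut}(\bar\Phi)$ whose image contains $\bar W$ because each simple restricted reflection is realized by an element of $N_{(L^\sigma)^0}(T_s)$ — this last, most delicate point is exactly the content of the classical result of Richardson (and of Helgason in the real case), proved by producing for a restricted root $\bar\alpha$ an $\mathfrak{sl}_2$-triple inside $\mathfrak{l}$, built from the $\sigma$-eigenvectors in $\mathfrak{g}_\alpha\oplus\mathfrak{g}_{\sigma(\alpha)}$, whose $(+1)$-part under $\sigma$ generates a homomorphism $\mathrm{SL}_2$ or $\mathrm{PGL}_2\to (L^\sigma)^0$ whose torus normalizer surjects onto the reflection $s_{\bar\alpha}$. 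The reverse inclusion $N_L(T_s)/Z_L(T_s)\subseteq\bar W$ is the harder half and is precisely \cite[Section~4]{Ric82}; I would simply invoke it, since the statement explicitly cites Richardson, and note that the proof goes through verbatim for reductive $L$ because the center $Z(L)^0$ lies in $Z_L(T_s)$ and plays no role. The main obstacle is this surjectivity of $N_{(L^\sigma)^0}(T_s)$ onto $\bar W$ — everything else is linear algebra plus axiom-checking — and there the cleanest route is to cite Richardson directly rather than reprove the $\mathfrak{sl}_2$-reduction, which is why the proposition is stated as a citation.
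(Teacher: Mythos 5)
The paper offers no proof of this proposition at all — it is imported wholesale from Richardson \cite[Section~4]{Ric82} — so your plan of reducing the statement to standard linear algebra plus that citation is exactly what the paper does, and the citation is where all the substance lives. One caveat on your sketch: the ``standard lemma'' that the nonzero orthogonal projections of a root system onto the $(-1)$-eigenspace of an involutive automorphism form a root system is false for arbitrary involutions (reflection in a short root $\gamma$ of $G_2$ yields nonzero projections $\pm\gamma/2,\pm\gamma,\pm3\gamma/2$, which is not a root system); the maximality of $T_s$ (equivalently the admissibility of $\sigma$, via the De Concini--Procesi normalization or the $\mathfrak{sl}_2$-triples you mention) is genuinely needed, so the appeal to Richardson is doing real work for the root-system axioms as well, not only for the identification of $\bar{W}$ with $N_L(T_s)/Z_L(T_s)$.
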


The root system $\bar{\Phi}$ is called the \emph{restricted root system} of the 
symmetric space $L/L\cap H$. We will also say that its elements are 
\emph{restricted roots}, that $\bar{W}$ is the \emph{restricted Weyl group}, 
etc.

Another interpretation of the restricted root system, which justifies the name, 
is obtained as follows. 
For any $t\in T_s$, and $\alpha\in \Phi_L$, we have 
\[
\sigma(\alpha)(t)=\alpha(\sigma(t))=\alpha(t^{-1})=(-\alpha)(t).
\]
As a consequence, $\bar{\alpha}|_{T_s}=2\alpha|_{T_s}$, 
that is, up to a factor two, $\bar{\alpha}$ encodes the restriction 
of $\alpha$ to $T_s$.
More significantly, given $\gamma\in \mathfrak{X}(T_s)$, let 
$\bar{\mathfrak{l}}_{\gamma}$ denote the subset of elements $x$ in 
$\mathfrak{l}$ such that $\mathrm{Ad}(t)(x)=\gamma(t)x$ for 
all $t\in T_s$. Then by simultaneous diagonalization, we check that 
$
\mathfrak{l} = \bigoplus_{\gamma\in \mathfrak{X}(T_s)} \bar{\mathfrak{l}}_{\gamma}.
$
We immediately remark that 
$\bar{\mathfrak{l}}_0$ contains $\mathfrak{t}$ and all 
$\mathfrak{g}_{\alpha}$ for $\alpha \in  \Phi_L^{\sigma}$, and 
that $\bar{\mathfrak{l}}_{\gamma}$ contains $\mathfrak{g}_{\alpha}$
as soon as $\alpha\in \Phi_L$ is such that $\bar{\alpha}|_{T_s}=2\gamma$.
By the usual root decomposition of $\mathfrak{l}$, we check that 
actually 
\[
\bar{\mathfrak{l}}_0=\mathfrak{t}\oplus\bigoplus_{\alpha\in \Phi_L^{\sigma}}\mathfrak{g}_{\alpha},
\qquad 
\bar{\mathfrak{l}}_{\gamma} = \bigoplus_{\bar{\alpha}|_{T_s}=2\gamma}\mathfrak{g}_{\alpha}
\] 
for $\gamma \neq 0$, and 
\[
\mathfrak{l} = \bar{\mathfrak{l}}_0 \oplus 
\bigoplus_{\bar{\alpha}\in \bar{\Phi}} \bar{\mathfrak{l}}_{\bar{\alpha}/2}.
\]
A restricted root $\bar{\alpha}$ is fully determined by its restriction 
to $T_s$ since $T=T_sT^{\sigma}$ and $\bar{\alpha}|_{T^{\sigma}}=0$ 
since $\sigma(\bar{\alpha})=-\bar{\alpha}$.

\begin{exa}
\label{exa_restricted_AI}
In the case of type AI($m$), $\Phi_L^{\sigma}$ is empty and $\Phi_s=\Phi_L=\Phi$.
For any $\alpha\in \Phi$, we have $\sigma(\alpha)=-\alpha$, hence the 
restricted root system is just the double $2\Phi$ of $\Phi$.
\end{exa}

\begin{exa}
\label{exa_restricted_AII}
In the case of type AII($p$), we check that 
\[
\sigma(\alpha_{j,k})=\alpha_{k+(-1)^{k+1},j+(-1)^{j+1}}.
\] 
In fact it is easier to identify the restricted root system by analysing
the restriction of roots to $T_s$. We denote an element of $T_s$ 
by $\mathrm{diag}(b_1,b_1,b_2,b_2,\ldots,b_p,b_p)$.
We check easily that, for $1\leq j\neq k \leq p$,
\[
\alpha_{2j,2k-1}|_{T_s}=\alpha_{2j-1,2k-1}|_{T_s}
=\alpha_{2j-1,2k}|_{T_s}=\alpha_{2j,2k}|_{T_s}
=b_j/b_k.
\]
We deduce that 
$\Phi_L^{\sigma}=\{\pm\alpha_{2j-1,2j};1\leq j\leq p\}$
and that the restricted root system is of type 
$A_{p-1}$, with elements 
$\bar{\alpha}_{2j,2k} : 
\mathrm{diag}(b_1,b_1,b_2,b_2,\ldots,b_p,b_p) \mapsto b_j^2/b_k^2$
for $1\leq j\neq k \leq p$.
\end{exa}

\begin{exa}
\label{exa_restricted_AIII}
In the case of type AIII($r$, $m$), finally, we will also identify the root system \emph{via}
restriction to $T_s$. We will denote an element of $T_s$, which is a diagonal matrix,  
by $\mathrm{diag}(b_1,\ldots,b_r, 1\ldots,1, b_r^{-1},\ldots,b_1^{-1})$.
In the case when $m=2r$, there are no $1$ in the middle and the restricted 
root system will be slightly different.

In general, the restriction $\alpha_{j,k}|_{T_s}$ is trivial if and only 
if $r+1\leq j\neq k\leq m-r$, which proves 
$\Phi_L^{\sigma}$ is the subsystem formed by these roots.

Since $\alpha_{j,k}=-\alpha_{k,j}$, it is obviously enough to consider 
only the case when $j<k$.
For $1\leq j<k \leq r$, we have 
$\alpha_{j,k}|_{T_s}=\alpha_{m-k+1,m-j+1}|_{T_s}=b_j/b_k$.
For $1\leq j \leq r$ and $r+1\leq k\leq m-r$, we have 
$\alpha_{j,k}|_{T_s}=\alpha_{m-k+1,m-j+1}|_{T_s}=b_j$.
Finally, for $1\leq j \leq r$ and $m+1-r\leq k\leq m$, we have 
$\alpha_{j,k}|_{T_s}=\alpha_{m-k+1,m-j+1}|_{T_s}=b_jb_{m+1-k}$.
Remark that in this last case, we may have 
$\alpha_{m-k+1,m-j+1}=\alpha_{j,k}$, namely when $j=m+1-k$. 
In this situation we obtain the 
function $b_j^2$. Hence, whenever $r+1\leq m-r$, or equivalently $r<m/2$
since both $r$ and $m$ are integers, the restricted root system is 
non reduced. It is possible to check 
that it is of type $BC_r$. In the remaining case, that is when 
$n=2r$, the restricted root system is of type $C_r$.
\end{exa}

\subsection{Cartan involution and fundamental domain}

There always exists a Cartan involution of $G$ such that its 
restriction to $L$ commutes with $\sigma$. We fix such a Cartan involution $\theta$. 
Denote by $K = G^{\theta}$ the corresponding 
maximal compact subgroup of $G$.  
Let $\mathfrak{a}_s$ denote the Lie subalgebra $\mathfrak{t}_s \cap i\mathfrak{k}$
of $\mathfrak{t}_s$.

Consider the group $\mathfrak{Y}(T_s)$ of one-parameter subgroups of $T_s$,
that is, algebraic group morphisms $\mathbb{C}^*\rightarrow T_s$.
This group naturally embeds in $\mathfrak{a}_s$: given $\lambda \in \mathfrak{Y}(T_s)$, 
it induces a Lie algebra morphism $d_e\lambda :\mathbb{C} \rightarrow \mathfrak{t}_s$.
Here we identified the Lie algebra of $\mathbb{C}^*$ with $\mathbb{C}$ and the 
exponential map is given by the usual exponential. 
Then $d_e\lambda(1)$ must be an element of $\mathfrak{a}_s$ and it determines 
$\lambda$ completely. This induces an injection of $\mathfrak{Y}(T_s)$ in 
$\mathfrak{a}_s$ which actually allows to identify $\mathfrak{a}_s$ 
with $\mathfrak{Y}(T_s)\otimes \mathbb{R}$.  

Recall that we may either consider the restricted root system $\bar{\Phi}$ 
as in $\mathfrak{X}(T)$, in which case it lies in the subgroup 
$\mathfrak{X}(T/T\cap H)$, or, \emph{via} the restriciton to $T_s$, 
we may consider $\bar{\Phi}$ to be in $\mathfrak{X}(T_s)$.
This allows to define a Weyl chambers in $\mathfrak{a}_s$ 
with respect to the restricted root system. Choose any such Weyl 
chamber, denote it by $\mathfrak{a}_s^+$ and call it the 
\emph{positive restricted Weyl chamber}.

\begin{prop}
\label{prop_fundamental_domain}
The natural map $\mathfrak{a_s} \rightarrow \exp(\mathfrak{a}_s)H/H$
is injective, and the intersection of a $K$-orbit 
in $G/H$ with $\exp(\mathfrak{a}_s)H$ is the image by this map of 
a $\bar{W}$-orbit in $\mathfrak{a}_s$. 
As a consequence, the subset $\exp(\mathfrak{a}_s^+)H/H$ is a fundamental 
domain for the action of $K$ on $G/H$. 
\end{prop}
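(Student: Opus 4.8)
The statement has three assertions, all of which should ultimately reduce to the corresponding classical facts about symmetric spaces (Cartan decomposition, polar coordinates, the description of $\bar W$ in Proposition~\ref{prop_restricted_root_system}) combined with the parabolic induction structure $f:G/H\to G/P$. The plan is to first dispose of the injectivity of $\mathfrak{a}_s\to \exp(\mathfrak{a}_s)H/H$, then analyze the $K$-orbit intersection, and finally deduce the fundamental domain claim.

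\textit{Step 1: injectivity.} Suppose $\exp(a_1)H=\exp(a_2)H$ with $a_i\in\mathfrak{a}_s$. Then $\exp(a_2-a_1)\in H$, and since $a_2-a_1\in\mathfrak{a}_s\subset\mathfrak{t}_s$ we have $\exp(a_2-a_1)\in T_s$. Because $T_s$ is a split torus, $\sigma(\exp(a_2-a_1))=\exp(a_2-a_1)^{-1}$; on the other hand $\exp(a_2-a_1)\in L\cap H$ lies in (a finite extension of) $(L^\sigma)^0$, on which $\sigma$ acts trivially up to the finite-index ambiguity. More carefully: $L\cap H\subset N_L(L^\sigma)$, so by Proposition~\ref{prop_normalizer}, $\exp(a_2-a_1)\sigma(\exp(a_2-a_1))^{-1}=\exp(2(a_2-a_1))$ lies in $Z(L)$; but $\exp(2(a_2-a_1))\in T_s$ and $T_s\cap Z(L)$ is finite (as $T_s$ acts with nontrivial restricted roots). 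Hence $2(a_2-a_1)$ lies in a lattice of $\mathfrak a_s$, so in fact — since $a_2-a_1$ is an arbitrary element of the real vector space $\mathfrak a_s$ and the exponential $\mathfrak a_s\to T_s$ is injective on $\mathfrak a_s$ (it is the noncompact part) — we must have $a_1=a_2$. I would phrase this last point by noting $\exp|_{\mathfrak a_s}$ is injective since $\mathfrak a_s=\mathfrak t_s\cap i\mathfrak k$ maps isomorphically onto the noncompact factor of the polar decomposition of $T_s$.

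\textit{Step 2: $K$-orbits meet $\exp(\mathfrak a_s)H$ in $\bar W$-orbits.} The heart of the matter. Using the fibration $f:G/H\to G/P$ and the Cartan involution $\theta$ chosen to commute with $\sigma$ on $L$, write $K$ acting on $G/H$: since $G=K\exp(i\mathfrak k)$ and the fiber $P/H\simeq L/L\cap H$ is a symmetric space under $L$, restricting to the fiber over the base point $eP$ one reduces to the classical statement for the reductive symmetric space $L/L\cap H$, namely that $(L\cap K)$-orbits (where $L\cap K=L^\theta$ is a maximal compact of $L$) meet $\exp(\mathfrak a_s)(L\cap H)$ along $\bar W$-orbits; this is the standard polar (Cartan) decomposition for symmetric spaces, with $\bar W=N_{L^\theta}(T_s)/Z_{L^\theta}(T_s)$ matching the group in Proposition~\ref{prop_restricted_root_system}. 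The point requiring care is upgrading from $(L\cap K)$ acting on the fiber to $K$ acting on all of $G/H$: if $k\exp(a_1)H=\exp(a_2)H$ for $k\in K$, then applying $f$ gives $kP=P$, so $k\in K\cap P=K\cap L$ (using that $K$ is $\theta$-stable and $P^u\cap K$ is trivial, or rather $K\cap P$ is a maximal compact of a Levi, which we may arrange to be $K\cap L=L^\theta$ after conjugating). Then $k\in L^\theta$ and we are back in the fiber, where the symmetric-space statement applies and produces an element of $\bar W$ sending $a_1$ to $a_2$; conversely any $w\in\bar W$ is realized by an element of $N_{(L^\sigma)^0}(T_s)$ which can be taken in $L^\theta\subset K$ (after adjusting by $\exp(i\mathfrak k)$, using again that $\theta$ and $\sigma$ commute on $L$, so $N_L(T_s)$ is $\theta$-stable). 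I expect \emph{this reduction step — cleanly identifying $K\cap P$ with a maximal compact subgroup of $L$ and checking the representative of $w$ can be chosen in $K$} — to be the main obstacle, essentially because it is where the parabolic induction interacts with the choice of compatible Cartan involution.

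\textit{Step 3: fundamental domain.} Given any $x\in G/H$, its $K$-orbit is compact, hence meets the (closed, by Step 2 applied with orbits) image of $\exp(\mathfrak a_s)$; actually one argues: $Kx$ meets $\exp(\mathfrak a_s)H/H$ because $G/H$ is covered by $K\exp(\mathfrak a_s)H/H$ — this follows from the Cartan decomposition $G=K\exp(i\mathfrak k)$ together with the fact that, modulo $H$, the $i\mathfrak k$-part can be moved into $\mathfrak a_s$ using the fiberwise symmetric-space polar decomposition and $\bar W\subset K$. So $Kx\cap\exp(\mathfrak a_s)H/H\neq\emptyset$, and by Step 2 this intersection is a single $\bar W$-orbit, which meets the closed Weyl chamber $\mathfrak a_s^+$ in exactly one point (standard property of Weyl chambers as fundamental domains for finite reflection groups, applicable since $\bar W$ is the Weyl group of the root system $\bar\Phi$ acting on $\mathfrak a_s$). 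Combined with the injectivity from Step 1, this says every $K$-orbit meets $\exp(\mathfrak a_s^+)H/H$ in exactly one point, which is precisely the assertion that it is a fundamental domain.
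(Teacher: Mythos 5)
Your overall architecture matches the paper's: use the fibration $f:G/H\to G/P$, transitivity of $K$ on $G/P$, and $K\cap P=K\cap L$ to reduce to the $(K\cap L)$-action on the symmetric fiber $L/L\cap H$. But the step you dismiss as ``standard'' is exactly where the real content lies, and in the form you quote it, it is false for general reductive symmetric spaces. The $(K\cap L)$-orbit decomposition of $L/L\cap H$ along $\exp(\mathfrak{a}_s)$ is \emph{not} the Riemannian Cartan/polar decomposition; it is the Flensted-Jensen type $K\backslash G/H$ decomposition, and the Weyl group and chamber controlling it are those of the root system of $\mathfrak{a}_s$ acting on $\mathfrak{l}^{\sigma\theta}$ (equivalently, the subgroup of the little Weyl group admitting representatives in $K\cap H$), which is in general strictly smaller than $N_{L\cap K}(T_s)/Z_{L\cap K}(T_s)$. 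For instance, for $\mathrm{SL}_2(\mathbb{C})/\mathrm{SL}_2(\mathbb{R})$ the fixed subalgebra of $\sigma\theta$ is abelian, the relevant Weyl group is trivial, and $\exp(a)$, $\exp(-a)$ lie in \emph{different} $K$-orbits, even though the normalizer quotient computed in $K$ is $\mathbb{Z}/2$. So asserting the classical statement with $\bar W=N_{L^\theta}(T_s)/Z_{L^\theta}(T_s)$ begs the question: one must prove that, because $L$ and $\sigma$ are complex, the root system of $\mathfrak{a}_s$ on $\mathfrak{l}^{\sigma\theta}$ coincides with $\bar\Phi$. That verification is precisely the paper's proof (each restricted root space $\bar{\mathfrak{l}}_{\bar\alpha/2}$ is $\sigma\theta$-stable and multiplication by $i$ exchanges the $\pm 1$ eigenspaces, so the fixed part is nonzero whenever the space is), and it is the missing idea in your proposal; the reduction to the fiber and the choice of $\bar W$-representatives in $K\cap H$, which you flag as the main obstacle, are comparatively routine.

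A secondary gap is in Step 1: the claim that ``$T_s\cap Z(L)$ is finite'' fails exactly in the horospherical-type cases the definition is designed to include (Examples~\ref{exa_toric} and~\ref{exa_horospherical}, where $\sigma$ is inversion on a central subtorus of $L$, so that $T_s\subset Z(L)$ and there may be no restricted roots at all). Injectivity is still true, but should be argued differently: if $\exp(a)\in L\cap H$ with $a\in\mathfrak{a}_s$, then $\exp(Na)\in(L^\sigma)^0$ for some $N\geq 1$ because $(L^\sigma)^0$ has finite index in $L\cap H$; since $\sigma$ acts as inversion on $T_s$, this gives $\exp(Na)=\exp(-Na)$, hence $\exp(2Na)=e$, and $a=0$ because $\exp$ is injective on $\mathfrak{a}_s=\mathfrak{t}_s\cap i\mathfrak{k}$.
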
 

\begin{proof} 
Remark that $K$ acts transitively on the base 
$G/P$ of the fibration $f:G/H\rightarrow G/P$, since $P$ is parabolic.
We are then reduced to finding a fundamental domain for the 
action of $K\cap P = K \cap L$ on the fiber $L/L\cap H$.

Flensted-Jensen proves in \cite[Section 2]{FJ80} 
that a fundamental domain is given by the positive Weyl chamber 
of a root system which is in general different from the restricted 
root system described above.  
However, in our situation, the group $L$ and the involution $\sigma$ 
are complex, and this allow to show that the two chambers are the same. 

More precisely, Flensted-Jensen considers the subspace $\mathfrak{l}'$ 
of elements fixed by the involution $\sigma \theta$. The positive Weyl chamber 
he considers is then a positive Weyl chamber for the root system formed 
by the non zero eigenvalues of the action of $\mathrm{ad}(\mathfrak{a}_s)$ 
on $\mathfrak{l}'$.
Now remark that the involution $\sigma \theta$ stabilizes any of the 
subspaces $\bar{\mathfrak{l}}_{\bar{\alpha}/2}$, which we may decompose as 
$\bar{\mathfrak{l}}_{\bar{\alpha}/2}=\bar{\mathfrak{l}}'_{\bar{\alpha}/2} 
\oplus \bar{\mathfrak{l}}''_{\bar{\alpha}/2}$ 
where $\bar{\mathfrak{l}}'_{\bar{\alpha}/2} = \bar{\mathfrak{l}}_{\bar{\alpha}/2} \cap \mathfrak{l}'$ 
and $\bar{\mathfrak{l}}''_{\bar{\alpha}/2}$ is the subspace of elements $x$
such that $\sigma \theta (x) = -x$.
Furthermore, since $\sigma(it)=i\sigma(t)$, multiplication by $i$ induces 
a bijection between $\bar{\mathfrak{l}}'_{\bar{\alpha}/2}$ and 
$\bar{\mathfrak{l}}''_{\bar{\alpha}/2}$, and in particular $\bar{\mathfrak{l}}'_{\bar{\alpha}/2}$ 
is not $\{0\}$ if and only if so is $\bar{\mathfrak{l}}_{\bar{\alpha}/2}$. 
As a consequence, the set of non zero eigenvalues of the action of $\mathrm{ad}(\mathfrak{a}_s)$ 
on $\mathfrak{l}'$ is precisely $\bar{\Phi}$. 

The reader may find a more detailed account of the results of 
Flensted-Jensen and of the structure of the action of $K$ on $G/H$
in \cite[Section 3]{vdB05}. 
\end{proof}

\subsection{Colored data for horosymmetric spaces}

As a parabolic induction from a symmetric space, 
$H$ is a \emph{spherical subgroup} of $G$, that is, 
any Borel subgroup of $G$ acts with an open dense orbit on $G/H$
(see \cite{Bria,Per14,Tim11,Kno91} for general presentations 
of spherical homogeneous spaces, and spherical varieties which 
will appear later). 

Given a choice of Borel subgroup $B$, a spherical homogeneous space $G/H$
is determined by three combinatorial objects (the highly non-trivial theorem 
that these objects fully determine $H$ up to conjugacy was obtained by 
Losev \cite{Los09}). 
\begin{itemize}
\item The first one is its associated lattice $\mathcal{M}$,
defined as the subgroup of characters $\chi\in \mathfrak{X}(B)$
such that there exists a function $f\in \mathbb{C}(G/H)$ with 
$b\cdot f=\chi(b)f$ for all $b\in B$ (where $b\cdot f(x)=f(b^{-1}x)$ by definition). 
Let us call $\mathcal{M}$ 
the \emph{spherical lattice} of $G/H$. Let 
$\mathcal{N}=\mathrm{Hom}_{\mathbb{Z}}(\mathcal{M},\mathbb{Z})$
denote the dual lattice.
\item The second one, the \emph{valuation cone} $\mathcal{V}$, is defined as 
the set of elements of $\mathcal{N}\otimes \mathbb{Q}$ which 
are induced by the restriction of $G$-invariant, $\mathbb{Q}$-valued 
valuations on $\mathbb{C}(G/H)$ to $B$-semi-invariant functions 
as in the definition of $\mathcal{M}$.
\item Finally, the third object needed to characterize the spherical 
homogeneous space $G/H$ is 
the \emph{color map} $\rho : \mathcal{D} \rightarrow \mathcal{N}$,  
as a map from an abstract finite set $\mathcal{D}$ to $\mathcal{N}$, 
that is, we only need to know the image of $\rho$ and the cardinality 
of its fibers. The set $\mathcal{D}$ is actually the set of 
codimension one $B$-orbits in $G/H$, called \emph{colors}, and the map $\rho$ 
is obtained by associating to a color $D$ the element of $\mathcal{N}$ 
induced by the divisorial valuation on $\mathbb{C}(G/H)$ defined by $D$.
\end{itemize}

In the case of horosymmetric spaces (which are parabolic inductions 
from symmetric spaces) these data may mostly be interpreted 
in terms of the restricted root system for a well chosen Borel $B$.

Let $Q$ be the parabolic subgroup of $G$ opposite to $P$ with respect 
to $L$, that is, the only parabolic subgroup of $G$ such that $Q\cap P=L$ and 
$L$ is also a Levi subgroup of $Q$. 
Let $B$ be a Borel subgroup such that $T\subset B \subset Q$. 
Then $B\cap L$ is a Borel subgroup of $L$. 
By \cite[Lemma 1.2]{DCP83},
we may choose a Borel subgroup of $L$, containing $T$, or equivalently 
a positive root system $\Phi_L^+$ in $\Phi_L$, so that for any positive 
root $\alpha\in \Phi_L^+$, either $\sigma(\alpha)=\alpha$ or $-\sigma(\alpha)$
is in $\Phi_L^+$.
Since Borel subgroups of $L$ containing $T$ are conjugate by an element
of $N_L(T)$, we may choose a conjugate of the initially chosen Borel 
subgroup of $G$ so that the resulting Borel $B$ satisfies 
$T\subset B\subset Q$ and that $B\cap L$ satisfies the above property 
with respect to $\sigma$.

We fix such a Borel subgroup and denote by $\Phi^+$ the corresponding 
positive root system of $\Phi$. We will use the notations 
$\Phi_L^+=\Phi^+\cap \Phi_L$ and $\Phi_s^+ := \Phi_L^+ \cap \Phi_s$. 
Note also that $\Phi_{P^u}=-\Phi^+ \setminus \Phi_L$ and 
$\Phi_{Q^u}=-\Phi_{P^u}$. 
Let $S$ denote the set of \emph{simple roots} of $\Phi$ generating $\Phi^+$,
and let $S_L=\Phi_L\cap S$, $S_s=\Phi_s\cap S$.
This induces a natural choice of simple roots in the restricted 
root system: $\bar{S}=\{\bar{\alpha};\alpha\in S_s\}$, and corresponding 
positive roots $\bar{\Phi}^+=\{\bar{\alpha};\alpha\in \Phi_s^+\}$.

Given $\alpha\in \Phi$, recall that the \emph{coroot} $\alpha^{\vee}$ is 
defined as the unique element in $[\mathfrak{g},\mathfrak{g}]\cap \mathfrak{t}$
such that for all $x\in \mathfrak{t}$, 
$\alpha(x)=2\kappa(x,\alpha^{\vee})/\kappa(\alpha^{\vee},\alpha^{\vee})$
where $\kappa$ denotes the Killing form on $\mathfrak{g}$.
Since $\alpha$ is real on $\mathfrak{t}\cap i\mathfrak{k}$, 
the coroot $\alpha^{\vee}$ is in $\mathfrak{a}=\mathfrak{t}\cap i\mathfrak{k}$ 
which we may also identify with $\mathfrak{Y}(T)\otimes \mathbb{R}$.

\begin{exa}
\label{exa_coroot_sl}
In our favorite group $\mathrm{SL}_n$, the coroot $\alpha_{j,k}^{\vee}$ 
is the diagonal matrix with $l^{\mathrm{th}}$-coefficient equal 
to $\delta_{l,j}-\delta_{l,k}$.
\end{exa}

We use also the notion of restricted coroots for the restricted root, 
as defined in \cite[Section 2.3]{Vus90}. 

\begin{defn}
\label{defn_rcoroot}
Given $\alpha\in \Phi_s$ the \emph{restricted coroot} $\bar{\alpha}^{\vee}$ 
is defined as:
\begin{itemize}
\item $\alpha^{\vee}/2$ if $-\sigma(\alpha)=\alpha$ ($\alpha$ is then called a real root), 
\item $(\alpha^{\vee}-\sigma(\alpha^{\vee}))/2=(\alpha^{\vee}+(-\sigma(\alpha))^{\vee})/2$ if $\sigma(\alpha)(\alpha^{\vee})=0$,
\item $(\alpha-\sigma(\alpha))^{\vee}$ if $\sigma(\alpha)(\alpha^{\vee})=1$, in which 
case $\alpha-\sigma(\alpha)\in \Phi_s$. 
\end{itemize}
\end{defn}

The restricted coroots form a root system dual to the restricted root system, 
and we thus call \emph{simple restricted coroots} the basis of this root system 
corresponding to the choice of positive roots $\bar{\alpha}^{\vee}$ 
for $\alpha\in \Phi^+$. 
One has to be careful here: in general the simple restricted corots are not 
the coroots of simple restricted roots. 

\begin{exa}
\label{exa_coroots_AIII}
Consider the example of type AIII($2$, $m>4$). 
Then we already described the restricted root system in 
Example~\ref{exa_restricted_AIII}. 
There are two real roots $\alpha_{1,m}$ and $\alpha_{2,m-1}$. 
The restricted coroots are diagonal matrices of the form 
$\mathrm{diag}(b_1,b_2,0,\ldots,0,-b_2,-b_1)$ and we write 
this more shortly as a point with coordinates $(b_1,b_2)$.
The restricted coroot $\bar{\alpha}_{1,m}^{\vee}$ is then 
$(1/2,0)$, while 
$\bar{\alpha}_{2,m-1}^{\vee}=(0,1/2)$.
The roots $\alpha_{1,2}$ and $\alpha_{1,m-1}$ satisfy $\sigma(\alpha)(\alpha^{\vee})=0$,
hence we have $\bar{\alpha}_{1,2}^{\vee}=(1/2,-1/2)$ 
and $\bar{\alpha}_{1,m-1}^{\vee}=(1/2,1/2)$.
Finally, the roots $\alpha_{1,3}$ and $\alpha_{2,3}$ satisfy 
$\sigma(\alpha)(\alpha^{\vee})=1$, and we have 
$\alpha_{1,3}-\sigma(\alpha_{1,3})=\alpha_{1,m}$ 
and $\alpha_{2,3}-\sigma(\alpha_{2,3})=\alpha_{2,m-1}$, hence 
$\bar{\alpha}_{1,3}^{\vee}=(1,0)$ and 
$\bar{\alpha}_{2,3}^{\vee}=(0,1)$.
We have described that way all (positive) restricted coroots.
Figure~\ref{fig_coroots_AIII} illustrates the positive restricted 
roots and coroots in this example. 
\end{exa}

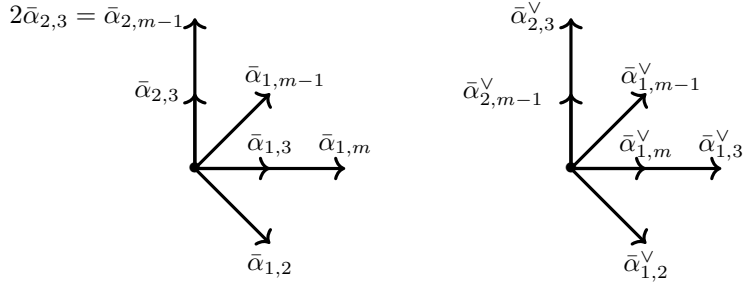
\begin{figure}
\centering
\caption{
Roots and coroots of type AIII($2$, $m>4$)}
\label{fig_coroots_AIII}
\begin{tikzpicture}  
\draw (0,0) node{$\bullet$};
\draw [very thick, ->] (0,0) -- (1,0);
\draw (1,.3) node{$\bar{\alpha}_{1,3}$};
\draw [very thick, ->] (0,0) -- (0,1);
\draw (-.5,1) node{$\bar{\alpha}_{2,3}$};
\draw [very thick, ->] (0,0) -- (2,0);
\draw (2,.3) node{$\bar{\alpha}_{1,m}$};
\draw [very thick, ->] (0,0) -- (0,2);
\draw (-1.3,2) node{$2\bar{\alpha}_{2,3}=\bar{\alpha}_{2,m-1}$};
\draw [very thick, ->] (0,0) -- (1,1);
\draw (1.2,1.2) node{$\bar{\alpha}_{1,m-1}$};
\draw [very thick, ->] (0,0) -- (1,-1);
\draw (1,-1.3) node{$\bar{\alpha}_{1,2}$};
\draw (5,0) node{$\bullet$};
\draw [very thick, ->] (5,0) -- (6,0);
\draw (6,.3) node{$\bar{\alpha}_{1,m}^{\vee}$};
\draw [very thick, ->] (5,0) -- (5,1);
\draw (4.1,1) node{$\bar{\alpha}_{2,m-1}^{\vee}$};
\draw [very thick, ->] (5,0) -- (7,0);
\draw (7,.3) node{$\bar{\alpha}_{1,3}^{\vee}$};
\draw [very thick, ->] (5,0) -- (5,2);
\draw (4.5,2) node{$\bar{\alpha}_{2,3}^{\vee}$};
\draw [very thick, ->] (5,0) -- (6,1);
\draw (6.2,1.2) node{$\bar{\alpha}_{1,m-1}^{\vee}$};
\draw [very thick, ->] (5,0) -- (6,-1);
\draw (6,-1.3) node{$\bar{\alpha}_{1,2}^{\vee}$};
\end{tikzpicture}
\end{figure}

Recall that $f:G/H\rightarrow G/P$ denotes the fibration map. 
The set of colors $\mathcal{D}(G/P)$ of the generalized flag manifold 
$G/P$ is in bijection with the set $\Phi_{Q^u}\cap S$ of  
simple roots that are also roots of $Q^u$, 
and any pre-image by $f$ of a color of $G/P$ is a 
color of $G/H$. Denote by $D_{\alpha}$ the color of $G/P$ 
associated with the root $\alpha\in \Phi_{Q^u}\cap S$. 

We identify $\mathfrak{a}_s$ and $\mathfrak{Y}(T/T\cap H)\otimes \mathbb{R}$.

\begin{prop}[{\cite[Proposition 20.4]{Tim11} and \cite{Vus90}}]
\label{prop_colored_data}
Let $G/H$ be a horosymmetric space.
\begin{itemize}
\item 
The spherical lattice $\mathcal{M}$ is the lattice $\mathfrak{X}(T/T\cap H)$. 
\item 
The valuation cone $\mathcal{V}$ is the 
negative restricted Weyl chamber $-\mathfrak{a}_s^+$.
\item 
The set of colors may be decomposed as a union of two sets 
$\mathcal{D}=\mathcal{D}(L/L\cap H) \cup f^{-1}\mathcal{D}(G/P)$.
The image of the color $f^{-1}(D_{\alpha})$ by $\rho$   
is the restriction $\alpha^{\vee}|_{\mathcal{M}}$ of the coroot $\alpha^{\vee}$ 
for $\alpha\in \Phi_{Q^u}\cap S$.   
The image $\rho(\mathcal{D}(L/L\cap H)$
on the other hand is the set of simple restricted coroots. 
\end{itemize}
\end{prop}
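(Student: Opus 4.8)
The strategy is to deduce all three assertions from the corresponding ones for the symmetric fiber $L/L\cap H$, where they are due to Vust \cite{Vus90}, by transporting them through the parabolic induction $G/H = G *_P (L/L\cap H)$; this transport is governed by the general theory recalled in \cite[Proposition 20.4]{Tim11}. The concrete tool is the trivialization of $f : G/H\to G/P$ over the open $B$-orbit of $G/P$. Since $B\subset Q$, that open orbit is the big cell $Q^u P/P$ (it equals $B\cdot(eP/P) = B/B_L$ with $B_L := B\cap L$), and the restriction of $G/H$ to it is $Q^u\times(L/L\cap H)$, with $B = B_L\ltimes Q^u$ acting through the translation action of $Q^u$ on the first factor, twisted by conjugation, and the natural action of $B_L$ on the second.

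For the first assertion, restriction of a $B$-semi-invariant rational function to a generic fiber gives, using this trivialization and the fact that $B_L$ has a dense orbit in $L/L\cap H$, a weight-preserving isomorphism $\mathbb{C}(G/H)^{(B)}\cong\mathbb{C}(L/L\cap H)^{(B_L)}$: a $B_L$-semi-invariant function on the fiber extends, by $B$-semi-invariance, uniquely to the dense $B$-orbit $Q^u\times O_{B_L}$ inside the big cell, and the extension is rational on $G/H$. Therefore $\mathcal{M}(G/H) = \mathcal{M}(L/L\cap H)\subset\mathfrak{X}(T)$, which equals $\mathfrak{X}(T/T\cap H)$ by Vust's description of reductive symmetric spaces — the arguments being insensitive to semisimplicity exactly as for Proposition~\ref{prop_normalizer}, and the finite-index hypothesis $(L^{\sigma})^0\subset L\cap H$ being what guarantees one recovers all of $\mathfrak{X}(T/T\cap H)$ and not merely a finite-index sublattice. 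For the second assertion, $G$-invariant $\mathbb{Q}$-valued valuations of $\mathbb{C}(G/H)$ correspond bijectively to $L$-invariant ones of $\mathbb{C}(L/L\cap H)$, compatibly with the identification of spherical lattices, hence of the ambient spaces $\mathcal{N}\otimes\mathbb{Q} = \mathfrak{a}_s$ (see \cite[Proposition 20.4]{Tim11}); so $\mathcal{V}(G/H) = \mathcal{V}(L/L\cap H)$, and Vust's theorem that the valuation cone of a complex symmetric space is the antidominant Weyl chamber of its restricted Weyl group identifies it with $-\mathfrak{a}_s^+$, provided one checks that the chamber structure in play is the one attached to $\bar{\Phi}$ with the sign fixed by the choice of $\Phi_L^+$ compatible with $\sigma$ (cf.\ Proposition~\ref{prop_fundamental_domain}).

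For the colors: a $B$-stable prime divisor $D$ of $G/H$ either maps into a color $D_\alpha$ of $G/P$, $\alpha\in\Phi_{Q^u}\cap S$, in which case $D = f^{-1}(D_\alpha)$ since $f$ has irreducible fibers; or it dominates $G/P$, in which case its trace on a generic fiber is, through the trivialization, a color $D_0$ of $L/L\cap H$, and $D\mapsto D_0$ is a bijection onto $\mathcal{D}(L/L\cap H)$. For the latter $D$, the trivialization also shows that $v_D$ restricted to $\mathbb{C}(G/H)^{(B)}\cong\mathbb{C}(L/L\cap H)^{(B_L)}$ is $v_{D_0}$, so $\rho(D) = \rho_{L/L\cap H}(D_0)$, and Vust's computation identifies these images with the simple restricted coroots of $\bar{\Phi}$ in the sense of Definition~\ref{defn_rcoroot}. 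For $f^{-1}(D_\alpha)$, the simple root $\alpha$ moves this color and, by \cite[Proposition 20.4]{Tim11}, no other, so the standard formula for a color that is the unique one moved by a simple root gives $\rho(f^{-1}(D_\alpha)) = \alpha^{\vee}|_{\mathcal{M}}$.

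The step that requires genuine work rather than citation is the last identification for the symmetric fiber: Vust's formula for the image of a color is stated case-by-case according to whether $\alpha$ is a real root, satisfies $\sigma(\alpha)(\alpha^{\vee}) = 0$, or satisfies $\sigma(\alpha)(\alpha^{\vee}) = 1$, and one must check that the three resulting elements are exactly the \emph{simple restricted coroots}, that is, the basis dual to $\bar{\Phi}^+$ attached to $\Phi_L^+$ — which, as emphasized after Definition~\ref{defn_rcoroot}, are in general \emph{not} the coroots of the simple restricted roots. One must also keep the identifications $\mathcal{N}\otimes\mathbb{Q} = \mathfrak{a}_s = \mathfrak{Y}(T/T\cap H)\otimes\mathbb{R}$ and the sign conventions on chambers and coroots consistent throughout; a secondary point is to verify that the trivialization is equivariant enough to transport $G$-invariant valuations, and not merely $B$-semi-invariant rational functions.
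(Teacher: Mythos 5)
Your proposal is correct and follows exactly the route the paper itself takes: the paper gives no independent argument but simply cites Timashev's result on parabolic induction (Proposition 20.4) together with Vust's description of the lattice, valuation cone and colors of a reductive symmetric space, which is precisely the reduction you carry out via the trivialization of $f$ over the big cell $Q^uP/P$. Your write-up just makes explicit the transport of semi-invariant functions, invariant valuations and colors that those citations encapsulate, including the caveat about simple restricted coroots versus coroots of simple restricted roots, so there is nothing to correct.
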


\begin{rem}
If $G=L$ is semisimple and simply connected, then $\mathcal{M}$ is a lattice between the lattice of restricted 
weights and the lattice of restricted roots determined by the restricted root system \cite{Vus90}.
More precisely, it is the lattice of restricted weights if and only 
if $H=G^{\sigma}$ and it is the lattice of restricted roots if and only if 
$H=N_G(G^{\sigma})$. 
\end{rem}

Remark that the proposition does not give here a complete description 
of $\rho$ in general as it does not give the cardinality of all orbits. 
There is however a rather general case where the discussion is simply 
settled. Say that the symmetric space $L/L\cap H$ has no Hermitian factor if 
$[L,L]\cap Z_L(L\cap H)$ is finite. Then Vust proved the following 
full caracterization of $\rho$:
\begin{prop}[\cite{Vus90}]
\label{prop_injective_no_hermitian}
If $L/L\cap H$ has no Hermitian factor, then $\rho_B$
is injective on $\mathcal{D}_B(L/L\cap H)$.
\end{prop}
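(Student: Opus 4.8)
The plan is to turn the injectivity of $\rho_B$ into a counting statement about colors via Proposition~\ref{prop_colored_data}, then to localise that statement at a single simple restricted root, and finally to settle the localised statement by inspecting the short list of rank one symmetric spaces.

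First I would reduce to a count. By Proposition~\ref{prop_colored_data} the image $\rho_B(\mathcal{D}_B(L/L\cap H))$ is precisely the set of simple restricted coroots, which has cardinality $r:=\mathrm{rk}(\bar\Phi)$; on the other hand every color is moved by a unique simple restricted root, so there is a natural surjection $c\colon\mathcal{D}_B(L/L\cap H)\twoheadrightarrow\bar S$, where $|\bar S|=r$. Hence it suffices to show that $c$ is injective, i.e. that $|\mathcal{D}_B(L/L\cap H)|=r$: once this is known, $\rho_B$ becomes a surjection between two finite sets of the same cardinality $r$, hence a bijection, and in particular injective. So the goal is: if $L/L\cap H$ has no Hermitian factor, then every simple restricted root moves exactly one color.

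Next I would localise at $\bar\alpha\in\bar S$. Let $M_{\bar\alpha}\subseteq L$ be the reductive subgroup generated by $T$ together with the root subgroups $U_\gamma$ of those $\gamma\in\Phi_L$ whose restriction to $T_s$ is a non-zero integral multiple of $\bar\alpha$; then $M_{\bar\alpha}$ is $\sigma$-stable, $B\cap M_{\bar\alpha}$ is a Borel subgroup of $M_{\bar\alpha}$, and $M_{\bar\alpha}/(M_{\bar\alpha}\cap H)$ is a symmetric space of restricted rank one, with restricted root system $\{\pm\bar\alpha\}$ or $\{\pm\bar\alpha,\pm 2\bar\alpha\}$. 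The key point — which is exactly the rank by rank analysis underlying De Concini and Procesi's and Vust's description of colors of symmetric varieties \cite{DCP83,Vus90} — is that passing to $M_{\bar\alpha}/(M_{\bar\alpha}\cap H)$ realises the localisation, in the sense of the theory of spherical homogeneous spaces, of $L/L\cap H$ at $\bar\alpha$; in particular the colors of $L/L\cap H$ moved by $\bar\alpha$ are in natural bijection with those of $M_{\bar\alpha}/(M_{\bar\alpha}\cap H)$ (note that $M_{\bar\alpha}\cap H$ may well be disconnected, which influences this count). Moreover, by the classification of symmetric pairs, if $L/L\cap H$ has no Hermitian factor then neither does any rank one restriction $M_{\bar\alpha}/(M_{\bar\alpha}\cap H)$ — the point being that the presence of a Hermitian factor, i.e. of a central one-parameter subgroup of $M_{\bar\alpha}\cap H$ lying inside $[M_{\bar\alpha},M_{\bar\alpha}]$, would already force one in $L/L\cap H$. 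So the statement is reduced to the rank one case.

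Finally, for a rank one symmetric space I would just run through the classification: up to isogeny and central tori the complex reductive symmetric spaces of restricted rank one form a short explicit list (restricted root system of type $A_1$ or $BC_1$), and in each case the colors are read off from an explicit, for instance wonderful, compactification. One checks that there are exactly two colors in the cases possessing a Hermitian factor — namely $\mathrm{SL}_2/T$ and, more generally, the type AIII spaces $\mathrm{SL}_k/\mathrm{S}(\mathrm{GL}_1\times\mathrm{GL}_{k-1})$, realised as the open $\mathrm{SL}_k$-orbit of non-incident pairs (a point and a hyperplane) in $\mathbb{P}^{k-1}\times(\mathbb{P}^{k-1})^{*}$ — and exactly one color in all the remaining (non-Hermitian) cases, such as $\mathrm{PGL}_2/N(T)$, $\mathrm{GL}_2/\mathrm{O}_2$, or $\mathrm{SL}_2$ viewed as a $(\mathrm{SL}_2\times\mathrm{SL}_2)$-symmetric space. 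Combining this with the localisation step gives the proposition. The main obstacle in making the argument fully rigorous is precisely that localisation step: showing carefully that the colors moved by $\bar\alpha$, and their $\rho$-images, are governed by the rank one sub-symmetric-space $M_{\bar\alpha}/(M_{\bar\alpha}\cap H)$, and that the Hermitian-factor condition descends to it — for which one genuinely leans on the detailed rank one structure theory of symmetric varieties in \cite{DCP83} and \cite{Vus90}.
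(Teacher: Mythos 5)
The paper does not prove this statement at all: it is imported verbatim from Vust's work, with the citation \cite{Vus90} serving as the proof. So the only meaningful comparison is with the strategy of the cited source, and your sketch does follow that general strategy (reduce the count of colors to the simple restricted roots, then settle each restricted root by a rank-one analysis). The preliminary reduction is fine: since Proposition~\ref{prop_colored_data} says the image of $\rho$ is the set of simple restricted coroots, which are linearly independent and hence $r$ distinct points, injectivity is equivalent to $|\mathcal{D}(L/L\cap H)|=r$, i.e.\ to each simple restricted root moving exactly one color (modulo the small unproved point that two simple roots of $L$ moving the same color must have the same restriction).

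The genuine gap is exactly where you flag it, and it is not a technicality that can be waved through: the descent of the ``no Hermitian factor'' hypothesis to the rank-one localisations. Your justification --- a central one-parameter subgroup of $M_{\bar\alpha}\cap H$ inside $[M_{\bar\alpha},M_{\bar\alpha}]$ ``would already force one in $L/L\cap H$'' --- does not follow formally, because centralizers behave the wrong way under passage to subgroups: $Z_{M_{\bar\alpha}}(M_{\bar\alpha}\cap H)$ can be much larger than $Z_L(L\cap H)\cap M_{\bar\alpha}$, so Hermitian-ness of the small space does not visibly propagate upward. The example of type AI, say $\mathrm{SL}_3/\mathrm{SO}_3$ (no Hermitian factor), shows where the real work lies: every simple restricted root there has multiplicity one and comes from a real root $\gamma$ ($\sigma(\gamma)=-\gamma$), so the localised $\mathfrak{sl}_2$ always carries the involution fixing a one-dimensional torus; whether the rank-one piece behaves like $\mathrm{SL}_2/T$ (two colors with the same image) or like $\mathrm{GL}_2/\mathrm{O}_2$ (one color) is decided by whether $M_{\bar\alpha}\cap H$ contains an extra component inverting that torus. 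Producing such an element from the global non-Hermitian hypothesis (and, dually, showing that its absence creates a Hermitian factor of $L/L\cap H$ itself) is precisely the structure theory of \cite{DCP83,Vus90} that your argument leans on without proving; the same applies to the identification of the colors moved by $\bar\alpha$ with the colors of $M_{\bar\alpha}/(M_{\bar\alpha}\cap H)$ for the correct, possibly disconnected, isotropy group. As it stands the proposal is therefore a correct outline of the expected proof, but its central step is asserted rather than established.
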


Note, and this is a general fact for parabolic inductions, that 
the images of colors in $f^{-1}\mathcal{D}(G/P)$ by $\rho$ all 
lie in the valuation cone $\mathcal{V}$. Indeed, for any two 
simple roots $\alpha$ and $\beta$, $\kappa(\alpha,\beta)\leq 0$. 
Given $\alpha\in \Phi_{Q^u}\cap S$, this implies that 
$\kappa(\alpha,\beta)\leq 0$ for any $\beta\in \Phi_L^+$ and 
thus $\kappa(\alpha,\bar{\beta})\leq 0$ for $\beta\in S_s$.

\begin{exa}
\label{exa_colored_data_AIII}
We draw here Figure~\ref{fig_colored_data_AIII} 
as an example of colored data for the symmetric space 
of type AIII($2$, $m>4$). 
The dotted grid represents the dual of the spherical lattice (which 
coincides here with the lattice generated by restricted coroots), 
the cone delimited by the dashed rays represents the valuation cone
(the negative restricted Weyl chamber), and the circles 
are centered on the points in the image of the color map 
(the simple restricted coroots), the number 
of circles reflecting the cardinality of the fiber.
\end{exa}

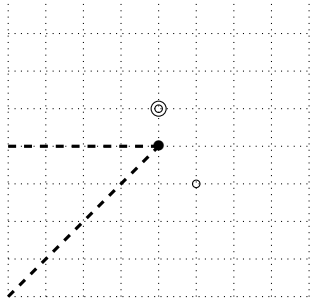
\begin{figure}
\centering
\caption{Colored data for type AIII($2$, $m>4$)}
\label{fig_colored_data_AIII}
\begin{tikzpicture}
\draw (0,0) node{$\bullet$};
\draw [dotted] (-4/2,-4/2) grid[xstep=1/2,ystep=1/2] (4/2,4/2); 
\draw [very thick, dashed] (-4/2,0) -- (0,0);  
\draw [very thick, dashed] (-4/2,-4/2) -- (0,0);
\draw (0,1/2) circle (.05);
\draw (0,1/2) circle (.1);
\draw (1/2,-1/2) circle (.05);
\end{tikzpicture}
\end{figure}

\section{Curvature forms}

\label{sec_curvature}

We now begin the study of Kähler geometry on horosymmetric spaces. We first 
recall how linearized line bundles on homogeneous spaces are encoded 
by their isotropy character, then we consider $K$-invariant hermitian 
metrics. We associate two functions to a hermitian metric: the quasipotential 
and the toric potential. We express the curvature form of the metric in terms 
of the isotropy character and toric potential, using the quasipotential as a tool 
in the proof. 

For this section, we use the letter $q$ to denote a metric, as the 
letter $h$ denotes elements of the group $H$.
Recall that given a hermitian metric $q$ on a line bundle $\mathcal{L}$, its 
curvature form $\omega$ may be defined locally as follows. Let $s$ be a local 
trivialization of $\mathcal{L}$ and let $\varphi$ denote the function defined 
by $\varphi=-\ln|s|^2_q$. Then the curvature form is the globally defined form 
which satisfies locally $\omega=i\partial\bar{\partial}\varphi$.

\subsection{Linearized line bundles on horosymmetric homogeneous spaces}

Let $\mathcal{L}$ be a $G$-linearized line bundle on $G/H$.  
The pulled back line bundle $\pi^*\mathcal{L}$ on $G$ is trivial, and we 
denote by $s$ a $G$-equivariant trivialization of $\pi^*\mathcal{L}$ on $G$.  
Denote by $\chi$ the character of $H$ defined by 
$h\cdot \xi = \chi(h) \xi$ for any $\xi$ in the fiber $\mathcal{L}_{eH}$. 
It fully determines the 
$G$-linearized line bundle $\mathcal{L}$.
The line bundle is trivializable on $G/H$ if and only if 
$\chi$ is the restriction 
of a character of $G$.

\begin{exa}
\label{exa_isotropy_ac}
The anticanonical line bundle admits a natural linearization, induced 
by the linearization of the tangent bundle. We may determine the 
isotropy character $\chi$ from the isotropy representation of $H$ 
on the tangent space at $eH$. If one identify this tangent space with 
$\mathfrak{g}/\mathfrak{h}$, then, working at the level of Lie algebras, 
the isotropy representation is given by 
$h\cdot (\xi+\mathfrak{h})= [h,\xi]+\mathfrak{h}$ 
for $h\in \mathfrak{h}$, $\xi\in \mathfrak{g}$.
Taking the determinant of this representation, we obtain for 
a horosymmetric homogeneous space $G/H$ that the isotropy character 
for the anticanonical line bundle is the restriction of the character 
$\sum_{\alpha\in \Phi_{Q^u}} \alpha$ of $P$ to $H$. 
\end{exa} 

\begin{exa}
\label{exa_isotropy_AIII}
On a Hermitian symmetric space, there may be non-trivial line bundles 
on $G/H$, as there may exist characters of $H$ which are not restrictions 
of characters of $G$. Let us illustrate this with our favorite type AIII 
example. 
Consider the matrix 
\[
M_r=\begin{pmatrix} 
\frac{1}{\sqrt{2}}I_r&0&\frac{1}{\sqrt{2}}S_r\\
0&I_{n-2r}&0\\
-\frac{1}{\sqrt{2}}S_r&0&\frac{1}{\sqrt{2}}I_r
\end{pmatrix}
\quad
\text{so that} 
\quad
M_rJ_rM_r^{-1}=\begin{pmatrix} -I_{r}&0\\0&I_{n-r}\end{pmatrix},
\]
then $M_rHM_r^{-1}=S(GL_{r}\times GL_{n-r})$.
This group obviously has non trivial 
characters not induced by a character of the (semisimple) group $G$, for example
$
(\begin{smallmatrix} A&B\\C&D \end{smallmatrix}) \mapsto \det(A).
$
Write an element of $H$ as 
\[
h=\begin{pmatrix}A_{11}&A_{12}&A_{13}\\A_{21}&A_{22}&A_{21}S_r\\S_rA_{13}S_r&S_rA_{12}&S_rA_{11}S_r\end{pmatrix}
\]
then composing with conjugation by $M$ we obtain the non-trivial 
character 
\[
\chi:h\mapsto \det(A_{11}+A_{13}S_r).
\]
\end{exa}

\begin{exa}
\label{exa_isotropy_P1xP1}
Consider the simplest example of type AIII, that is, 
$\mathbb{P}^1\times \mathbb{P}^1\setminus \mathrm{diag}(\mathbb{P}^1)$
equipped with the diagonal action of $SL_2$, and with base 
point $([1:1],[-1:1])$.
Then we have naturally linearized line bundles given by the 
restriction of $\mathcal{O}(k,m)$ for $k,m\in \mathbb{N}$.
The character associated to the line bundle $\mathcal{O}(k,m)$ is $\chi^{k-m}$ 
with $\chi$ as above, which translates here as 
$\chi:(\begin{smallmatrix}a&b\\b&a\end{smallmatrix}) \mapsto a+b$.
In particular we recover that it is trivial if and only if $k=m$.
\end{exa}

\subsection{Quasipotential and toric potential}
Let $q$ be a smooth $K$-invariant metric on $\mathcal{L}$. 

\begin{defn} \mbox{}
\begin{itemize}
\item The \emph{quasipotential} of $q$ is the function $\phi$ on $G$ defined by 
\[
\phi(g)=-2\ln|s(g)|_{\pi^*q}.
\]
\item The \emph{toric potential} of $q$ is the function 
$u: \mathfrak{a}_s \rightarrow  \mathbb{R}$ defined by 
\[
u(x)=\phi(\exp(x)).
\]
\end{itemize}
\end{defn}

\begin{prop}
\label{prop_equivariance_quasipotential}
The function $\phi$ satisfies the following equivariance relation:
\[
\phi(kgh)=\phi(g) - 2\ln|\chi(h)|.
\]
In particular $\phi$ is fully determined by $u$. 
\end{prop}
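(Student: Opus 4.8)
The plan is to unwind the definitions and use the $G$-equivariance of the trivialization $s$ together with the $K$-invariance of the metric $q$. First I would recall that by definition $\phi(g) = -2\ln|s(g)|_{\pi^*q}$, where $s$ is a $G$-equivariant trivialization of $\pi^*\mathcal{L}$ on $G$, meaning $s(g'g) = g'\cdot s(g)$ for all $g',g\in G$ under the $G$-action on the total space of $\pi^*\mathcal{L}$. The pulled-back metric $\pi^*q$ is by construction the metric on $\pi^*\mathcal{L}$ whose fiber at $g$ is the fiber of $\mathcal{L}$ at $\pi(g) = gH$, so $|s(g)|_{\pi^*q} = |s(g)|_q$ computed in $\mathcal{L}_{gH}$.

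Next I would treat the two sides of the equivariance relation separately. For the left $K$-action: since $q$ is $K$-invariant, for $k\in K$ the isometry of $\mathcal{L}$ induced by $k$ sends $\mathcal{L}_{gH}$ isometrically to $\mathcal{L}_{kgH}$, so $|k\cdot s(g)|_q = |s(g)|_q$; combined with $s(kg) = k\cdot s(g)$ this gives $|s(kg)|_q = |s(g)|_q$, hence $\phi(kg) = \phi(g)$. For the right $H$-action: here I use that $s(gh) = g\cdot s(e h)$ and that $s(eh) = s(e\cdot h)$ lives in $\mathcal{L}_{eH}$, on which $H$ acts by the isotropy character, so $h^{-1}\cdot s(e) $... more carefully, writing $s(gh)$ via equivariance as $gh\cdot s(e)$ and $s(g) = g\cdot s(e)$, the two sections of $\mathcal{L}_{gH}$ differ by the scalar coming from the action of $h$ on $s(e)\in\mathcal{L}_{eH}$, which is $\chi(h)$; thus $s(gh) = \chi(h)\, (g\cdot s(e))$ after identifying fibers appropriately, giving $|s(gh)|_q = |\chi(h)|\,|s(g)|_q$ and therefore $\phi(gh) = \phi(g) - 2\ln|\chi(h)|$. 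Combining the two computations yields $\phi(kgh) = \phi(g) - 2\ln|\chi(h)|$.

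Finally, for the last sentence I would argue that $\phi$ is determined by $u = \phi\circ\exp|_{\mathfrak{a}_s}$: by Proposition~\ref{prop_fundamental_domain}, every point of $G/H$ lies in a $K$-orbit meeting $\exp(\mathfrak{a}_s)H/H$, so any $g\in G$ can be written as $g = k\exp(x)h$ with $k\in K$, $x\in\mathfrak{a}_s$, $h\in H$; then $\phi(g) = \phi(\exp(x)) - 2\ln|\chi(h)| = u(x) - 2\ln|\chi(h)|$, and since $u$ and $\chi$ are given data, $\phi(g)$ is determined. One should check consistency, i.e. that this value is independent of the chosen decomposition, but that follows from the equivariance relation itself together with the fact that $u$ is automatically $\bar W$-invariant (a point in $\exp(\mathfrak{a}_s)H$ lying in a given $K$-orbit is well-defined up to the $\bar W$-action on $\mathfrak{a}_s$ and up to right multiplication by $H$, both of which are accounted for).

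The main obstacle I expect is the bookkeeping in the right-$H$-action step: one must be careful about how the $G$-equivariant trivialization $s$ interacts with the isotropy character, in particular which fiber each section value lives in and in what order the group elements act, so that the factor genuinely comes out as $|\chi(h)|$ and not $|\chi(h)|^{\pm 1}$ or $|\chi(h^{-1})|$. Everything else is a direct unwinding of definitions.
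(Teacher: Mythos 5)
Your argument is correct and is essentially the paper's own proof: both use the $G$-equivariance of $s$ to write $s(kgh)=kgh\cdot s(e)$, the action of $H$ on $\mathcal{L}_{eH}$ by $\chi$, and the $K$-invariance of $q$, and then deduce the "determined by $u$" claim from Proposition~\ref{prop_fundamental_domain}; splitting the computation into a left-$K$ step and a right-$H$ step is only a cosmetic difference. The consistency check you mention at the end is not needed, since $\phi$ is already a well-defined function and the decomposition $g=k\exp(x)h$ merely computes its value.
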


\begin{proof}
First, by $G$-invariance of $s$, we have 
\begin{align*}
\phi(kgh)& =-2\ln|kgh\cdot s(e)|_{\pi^*q} \\
& = -2\ln|k\cdot g\cdot h\cdot \pi_*s(e)|_q \\
\intertext{by equivariance of $\pi$}
& = -2\ln|g\cdot \chi(h) \pi_*(s(e))|_q \\
\intertext{by $K$-invariance of $q$ and by definition of $\chi$}
& = -2\ln|g\cdot s(e)|_{\pi^*q}- 2\ln|\chi(h)|
\end{align*}
Hence the equivariance relation.

Recall from Proposition~\ref{prop_fundamental_domain} that any $K$-orbit 
on $G/H$ intersects the image of $\mathfrak{a}_s$, 
so in view of the equivariance formula for $\phi$, we see that 
$\phi$, hence $q$, is fully determined by $u$.
\end{proof}

\subsection{Reference $(1,0)$-forms}

We choose root vectors $0\neq e_{\alpha} \in \mathfrak{g}_{\alpha}$ 
for $\alpha\in \Phi$ such that $[\theta(e_{\alpha}),e_{\alpha}]=\alpha^{\vee}$.
Using these root vectors, we can give a more explicit decomposition of $\mathfrak{h}$: 
\[
\mathfrak{h} = \bigoplus_{\alpha\in \Phi_{P^u}} \mathbb{C}e_{\alpha} \oplus \mathfrak{t}^{\sigma} 
\oplus \bigoplus_{\alpha\in \Phi_L^{\sigma}} \mathbb{C}e_{\alpha} \oplus 
\bigoplus_{\alpha\in \Phi_s^+} \mathbb{C}(e_{\alpha}+\sigma(e_{\alpha}))
\]

Choose a basis $(l_1, \ldots, l_r)$ of the real vector space $\mathfrak{a}_s$.
Let us further add the vectors $e_{\alpha}$ for $\alpha\in \Phi_{Q^u}$ and 
$\tau_{\beta}= e_{\beta} -\sigma(e_{\beta})$, 
for $\beta \in \Phi_s^+$. Then we obtain 
a family which is the complex basis of a complement of $\mathfrak{h}$ in 
$\mathfrak{g}$.  
This also defines local coordinates
\[
g\exp\left(\sum_j z_jl_j +\sum_{\alpha}z_{\alpha}e_{\alpha} +\sum_{\beta} z_{\beta}\tau_{\beta}\right)H
\]
 near a point $gH$ in $G/H$, depending on the 
choice of $g$. Let $\gamma_{\diamondsuit}^g$ denote the element of $\Omega^{(1,0)}_{gH} G/H$ 
defined by these coordinates, where $\diamondsuit$ is either some $j$, some $\alpha$ or some $\beta$.
Then $x\mapsto \gamma_{\diamondsuit}^{\exp(x)}$ provides a well defined 
(by Proposition~\ref{prop_fundamental_domain})
$\exp(\mathfrak{a}_s)$-invariant smooth $(1,0)$-form on $\exp(\mathfrak{a}_s)H/H$. 
From now on we denote by $\gamma_{\diamondsuit}$ the corresponding $(1,0)$-form 
and by $\omega_{\diamondsuit,\bar{\heartsuit}}$ the $(1,1)$-form 
$i\gamma_{\diamondsuit}\wedge\bar{\gamma}_{\heartsuit}$. 

\subsection{Reference volume form and integration}

We introduce a reference volume form on $G/H$.
Recall from example~\ref{exa_isotropy_ac} that 
the naturally linearized canonical line bundle $K_{L/L\cap H}$ 
on the symmetric space $L/L\cap H$ is $L$-trivial, 
that is, there exists a nowhere vanishing section 
$s_0 : L/L\cap H \rightarrow K_{L/L\cap H}$ which is $L$-equivariant.
We can further assume that $s_0$ coincides with 
$\bigwedge_{j} \gamma_j \wedge \bigwedge_{\beta} \gamma_\beta$
on $\exp(\mathfrak{a}_s)H/H$ where $j$ runs from $1$ to $r$ and 
$\beta$ runs over the set $\Phi_s^+$. 

Recall that $f$ denotes the map $G/H\rightarrow G/P$. Let 
$K_f = K_{G/H} - f^*K_{G/P}$ denote the relative canonical bundle. 
Then the section $s_0$ above may be considered as a trivialization 
of $K_f$ on the fiber above $eP\in G/P$. Since the map $f$ is 
$G$-equivariant, $K_f$ admits a natural $G$-linearization, 
and we may use the action of the maximal compact group $K$ to build 
a $K$-equivariant trivialization $s_f$ of $K_f$ on $G/H$. 
Setting $|s_f|_{q_f}=1$ provides a smooth $K$-invariant metric $q_f$ on $K_f$.
Let $q_P$ denote the smooth $K$-invariant metric on $K_{G/P}$ which 
satisfies $|f_*(\bigwedge_{\alpha} \gamma_{\alpha}^{e})|_{q_P}=1$, 
where $\alpha$ runs over the set $\Phi_{Q^u}$.
Pulling it pack provides a smooth $K$-invariant metric on $f^*K_{G/P}$.

The two metrics together provide a smooth reference metric 
$q_H=q_f\otimes f^*q_P$ on 
$K_{G/H}=K_f\otimes f^*K_{G/P}$, which is $K$-invariant.
We denote by $dV_H$ the associated smooth volume form on $G/H$. 
It is defined pointwise as follows: if $\xi$ is an element of the fiber  
of $K_{G/H}$ at $gH$, then 
$(dV_H)_{gH} = i^{n^2}|\xi|_{q_{H}}^{-2} \xi \wedge \bar{\xi}$.

\begin{prop}
\label{prop_relation_volume_forms}
Let $a\in \mathfrak{a}_s$, then 
\[
(dV_H)_{\exp(a)H} = e^{2\sum_{\alpha} \alpha(a)}
\left(\bigwedge_{\diamondsuit} 
\omega_{\diamondsuit,\bar{\diamondsuit}}\right)_{\exp(a)H}
\]
\end{prop}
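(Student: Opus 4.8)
The plan is to compute the volume form $dV_H$ at the point $\exp(a)H$ by unwinding the construction $q_H = q_f \otimes f^*q_P$ and comparing the reference $(1,0)$-forms $\gamma_\diamondsuit$ against the distinguished sections $s_f$ and $s_P$. First I would treat the base direction: pulling back, $f^*K_{G/P}$ is trivialized near $\exp(a)P$ by $\bigwedge_{\alpha\in\Phi_{Q^u}}\gamma_\alpha^{\exp(a)}$, but this is the \emph{translate} by $\exp(a)$ of the section used to normalize $q_P$ at $eP$. Since $q_P$ is only $K$-invariant, not $G$-invariant, the norm of this translated section at $\exp(a)P$ is not $1$: it picks up a factor governed by how $\mathrm{Ad}(\exp(a))$ scales $\bigoplus_{\alpha\in\Phi_{Q^u}}\mathbb{C}e_\alpha$. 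Since $\mathrm{Ad}(\exp(a))e_\alpha = e^{\alpha(a)}e_\alpha$ for $a\in\mathfrak{a}_s\subset\mathfrak{t}$, the wedge over $\Phi_{Q^u}$ rescales by $e^{\sum_{\alpha\in\Phi_{Q^u}}\alpha(a)}$, so $|f_*\bigwedge_\alpha\gamma_\alpha^{\exp(a)}|_{q_P}^{-2}$ contributes $e^{2\sum_{\alpha\in\Phi_{Q^u}}\alpha(a)}$ (up to checking the sign of the exponent, which I would pin down by the convention $\Phi_{Q^u}=-\Phi_{P^u}$ and the fact that the colors of $G/P$ lie in the valuation cone, i.e. these are the "contracting" directions).

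Next I would handle the fiber direction. By construction $s_f$ restricts on the fiber above $eP$ to $s_0$, which on $\exp(\mathfrak{a}_s)H/H$ equals $\bigwedge_j\gamma_j\wedge\bigwedge_{\beta\in\Phi_s^+}\gamma_\beta$, and $q_f$ is normalized so that $|s_f|_{q_f}=1$ everywhere. The subtlety is again that $s_f$ is built by transporting $s_0$ with $K$, not with $\exp(\mathfrak{a}_s)$; but here I would argue that since the $\gamma_j$ (for $j=1,\dots,r$, spanning the $\mathfrak{a}_s$-directions) and the $\gamma_\beta$ (for $\beta\in\Phi_s^+$, spanning the $\tau_\beta$-directions) are exactly the $\exp(\mathfrak{a}_s)$-invariant forms defining $s_0$, and since the symmetric-space canonical bundle is $L$-trivial with $s_0$ genuinely $L$-equivariant, there is no extra scaling factor along the fiber on $\exp(\mathfrak{a}_s)H/H$: the contribution is $|\bigwedge_j\gamma_j\wedge\bigwedge_\beta\gamma_\beta|^{-2}_{q_f}=1$. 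I should be careful to check that the $K$-equivariant extension $s_f$ agrees with this on the whole image of $\mathfrak{a}_s$, not just at $eH$; this follows because $K\cap L$ acts on the fiber and $\exp(\mathfrak{a}_s)H/H$ meets each $(K\cap L)$-orbit, combined with $L$-equivariance of $s_0$.

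Then I would assemble: $(dV_H)_{\exp(a)H} = i^{n^2}|\xi|_{q_H}^{-2}\,\xi\wedge\bar\xi$ with $\xi = \bigwedge_j\gamma_j\wedge\bigwedge_{\beta}\gamma_\beta\wedge\bigwedge_{\alpha}\gamma_\alpha$, and $|\xi|^{-2}_{q_H} = |\cdot|^{-2}_{q_f}\cdot|\cdot|^{-2}_{f^*q_P} = 1\cdot e^{2\sum_{\alpha\in\Phi_{Q^u}}\alpha(a)}$. Matching $i^{n^2}\xi\wedge\bar\xi$ with $\bigwedge_\diamondsuit \omega_{\diamondsuit,\bar\diamondsuit} = \bigwedge_\diamondsuit i\gamma_\diamondsuit\wedge\bar\gamma_\diamondsuit$ up to sign/reordering (the usual $i^{n^2}$ versus $i^n$ bookkeeping and a reshuffling sign that is $+1$ by the standard orientation convention) gives the claimed formula, where the sum $\sum_\alpha\alpha(a)$ in the statement is understood to run over $\Phi_{Q^u}$ — consistent with Example~\ref{exa_isotropy_ac}, where the anticanonical isotropy character is $\sum_{\alpha\in\Phi_{Q^u}}\alpha$.

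The main obstacle I anticipate is not the Lie-theoretic scaling computation, which is a one-line $\mathrm{Ad}$-eigenvalue statement, but rather the careful tracking of \emph{which} equivariance each reference section enjoys — $q_f$ and $q_P$ are only $K$-invariant, so one cannot naively translate normalizations from $eH$ to $\exp(a)H$ by $G$. The delicate point is establishing that the only place a nontrivial factor enters is the base $f^*K_{G/P}$ (because $\mathfrak{a}_s$ is not in $\mathfrak{k}$ and acts noncompactly there), while the fiber contribution stays trivial on $\exp(\mathfrak{a}_s)H/H$ thanks to the genuine $L$-equivariance of $s_0$ on the symmetric space. Once that dichotomy is justified via Proposition~\ref{prop_fundamental_domain} and the $L$-triviality of $K_{L/L\cap H}$, the rest is bookkeeping of wedge products and powers of $i$.
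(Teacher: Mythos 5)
Your proposal is correct and takes essentially the same route as the paper: decompose $q_H=q_f\otimes f^*q_P$, observe that the fiber factor contributes $1$ because $s_f$ agrees with $s_0=\bigwedge_j\gamma_j\wedge\bigwedge_\beta\gamma_\beta$ on $\exp(\mathfrak{a}_s)H/H$, and extract the factor $e^{2\sum_{\alpha\in\Phi_{Q^u}}\alpha(a)}$ from the base. One simplification worth noting (which also settles the sign you flagged): since $\exp(a)\in T_s\subset L\subset P$, the point $\exp(a)P$ \emph{is} $eP$, so no $K$-translation of $q_P$ is needed at all --- the translated section $\exp(-a)^*\cdot f_*(\bigwedge_\alpha\gamma_\alpha^e)$ lies in the fiber at $eP$ and is scaled exactly by the isotropy character $-\sum_{\alpha\in\Phi_{Q^u}}\alpha$ of $P$ on $K_{G/P}|_{eP}$, which is how the paper concludes.
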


\begin{proof}
At a point $\exp(a)H$ for $a\in \mathfrak{a}_s$, we can choose 
\[
\xi = \bigwedge_{\diamondsuit} \gamma_{\diamondsuit}^{\exp(a)} 
= \exp(-a)^*\cdot \bigwedge_{\diamondsuit} \gamma_{\diamondsuit}^{e},
\] 
and we get 
\begin{align*}
(dV_H)_{\exp(a)H} & = |\xi|_{q_{H}}^{-2} i^{n^2}\xi \wedge \bar{\xi} \\
& = |\exp(-a)^*\cdot \bigwedge_{\alpha}\gamma_{\alpha}^e|_{f^*q_P}^{-2} i^{n^2}\xi \wedge \bar{\xi} \\
\intertext{by definition of $q_H$ and $q_f$,}
& = |\exp(-a)^*\cdot f_*(\bigwedge_{\alpha}\gamma_{\alpha}^e)|_{q_P}^{-2} i^{n^2}\xi \wedge \bar{\xi} \\
& = e^{2\sum_{\alpha} \alpha(a)}i^{n^2}\xi \wedge \bar{\xi} \\
\intertext{because $P$ acts on the fiber at $eP$ of $K_{G/P}$ \emph{via} the 
character $-\sum_{\alpha\in \Phi_{Q^u}}\alpha$,}
& = e^{2\sum_{\alpha} \alpha(a)}i^{n}(-1)^{n(n-1)/2}\xi \wedge \bar{\xi}.
\end{align*}
\end{proof}

Remark that $dV_H$ depends on the precise choice of basis of the complement 
of $\mathfrak{h}$ in $\mathfrak{g}$ only by a multiplicative constant, as 
it only changes the element of the fiber of $K_{G/P}$ at $eP$ where $q_P$ 
takes value one, and the element of the fiber of $K_f$ at $eH$ where 
$q_f$ takes value one. 

Combining fiber integration with respect to the fibration $f$, and 
the formula for integration on symmetric spaces from 
\cite[Theorem 2.6]{FJ80}, we obtain a formula that reduces integration 
of a $K$-invariant function on $G/H$ with respect to $dV_{G/H}$ to 
integration of its restriction to $\exp(\mathfrak{a}_s^+)$ with respect 
to an explicit measure. 

Let $J_H$ denote the function on $\mathfrak{a}_s$ defined by 
\[
J_H(x) = \prod_{\alpha \in \Phi_s^+} |\sinh(2\alpha(x))|.  
\]
Another possible expression of the function $J_H$ is:
\[
J_H(x) = \prod_{\bar{\alpha} \in \bar{\Phi}^+} |\sinh(\bar{\alpha}(x))^{m_{\bar{\alpha}}}|
\]
where $m_{\bar{\alpha}} = \mathrm{dim}(\bar{l}_{\bar{\alpha}/2})$ is the 
number of $\beta \in \Phi_s$ such that $\bar{\beta}=\bar{\alpha}$. 

\begin{prop}
\label{prop_integration_horosymmetric_space}
There exists a constant $C_H>0$ such that
for any $K$-invariant function $\psi$ on $G/H$ which is integrable with respect to 
$dV_H$, we have 
\[
\int_{G/H} \psi dV_H = C_H \int_{\mathfrak{a}_s^+} \psi(\exp(x)H) J_H(x) dx
\] 
where $dx$ is a fixed Lebesgue measure on $\mathfrak{a}_s$.
\end{prop}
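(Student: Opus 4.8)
The plan is to decompose the integral along the fibration $f: G/H \to G/P$ and combine fiber integration with the known integration formula on the symmetric fiber. First I would use that $G$ acts transitively on $G/P$ and that $K \subset G$ acts transitively on $G/P$ as well (since $P$ is parabolic), so that $G/P \simeq K/(K\cap P)$. Equipping $G/P$ with the $K$-invariant volume form associated with the metric $q_P$ of Section~\ref{sec_metrics}, I would write $\int_{G/H}\psi\, dV_H = \int_{G/P} \left(\int_{f^{-1}(gP)} \psi\, dV_f\right) dV_{G/P}$, where $dV_f$ is the volume form on the fiber determined by $q_f$. For a $K$-invariant $\psi$, the inner fiber integral is a $K$-invariant function of $gP \in G/P$, hence constant; evaluating at $eP$, it equals $\int_{L/L\cap H} \psi\, dV_{L/L\cap H}$ up to the total volume $\mathrm{vol}(G/P)$, a finite positive constant that I absorb into $C_H$. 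Here I use that the restriction of $q_f$ to the fiber over $eP$ is, by construction in Section~\ref{sec_metrics}, the $L$-invariant metric on $K_{L/L\cap H}$ whose section $s_0$ has norm one, so $dV_f|_{f^{-1}(eP)}$ is precisely the canonical $L$-invariant volume form on the symmetric space $L/L\cap H$.

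Next I would invoke the integration formula of Flensted-Jensen \cite[Theorem 2.6]{FJ80} for the complex symmetric space $L/L\cap H$: every $(L\cap K)$-orbit meets $\exp(\mathfrak{a}_s)(L\cap H)/(L\cap H)$, and the pushforward of the $L$-invariant volume form to $\mathfrak{a}_s^+$ under $x \mapsto \exp(x)(L\cap H)$ is, up to a positive constant, the measure $\delta(x)\,dx$ where $\delta(x) = \prod_{\bar\alpha \in \bar\Phi^+} |\sinh(\bar\alpha(x))|^{m_{\bar\alpha}}$, the exponent $m_{\bar\alpha}$ being the multiplicity $\dim(\bar{\mathfrak{l}}_{\bar\alpha/2})$. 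By the analysis in the proof of Proposition~\ref{prop_fundamental_domain}, the relevant root system is exactly the restricted root system $\bar\Phi$ (the complexity of $L$ and $\sigma$ collapsing Flensted-Jensen's a priori different chamber onto $\mathfrak{a}_s^+$), with multiplicities $m_{\bar\alpha}$ as above. Rewriting $\prod_{\bar\alpha\in\bar\Phi^+} |\sinh(\bar\alpha(x))|^{m_{\bar\alpha}} = \prod_{\alpha \in \Phi_s^+} |\sinh(\bar\alpha(x))|$ and using $\bar\alpha|_{T_s} = 2\alpha|_{T_s}$ (so $\bar\alpha(x) = 2\alpha(x)$ for $x\in\mathfrak{a}_s$) from the discussion preceding Example~\ref{exa_restricted_AI}, this density becomes exactly $J_H(x) = \prod_{\alpha\in\Phi_s^+}|\sinh(2\alpha(x))|$, reconciling the two stated expressions for $J_H$.

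Finally I would assemble the pieces: combining the fiber integral with Flensted-Jensen's formula gives $\int_{G/H}\psi\, dV_H = C_H \int_{\mathfrak{a}_s^+} \psi(\exp(x)H)\, J_H(x)\, dx$ with $C_H = \mathrm{vol}(G/P) \cdot c_{\mathrm{FJ}} > 0$ independent of $\psi$, and I should check that the implicit normalization choices (the Lebesgue measure $dx$ on $\mathfrak{a}_s$, the choice of complement of $\mathfrak{h}$ entering $dV_H$, as already noted after Proposition~\ref{prop_relation_volume_forms}) only affect $C_H$ by a positive multiplicative constant. The main obstacle I anticipate is bookkeeping rather than conceptual: one must carefully verify that the $K$-invariant metric $q_f$ built by averaging in Section~\ref{sec_metrics} really does restrict over $eP$ to the $L$-invariant metric used by Flensted-Jensen — i.e. that the fiberwise volume form is the canonical symmetric-space one and not some $K$-twisted variant — and that the base volume form contributes only a constant, which requires knowing the fiber integral is genuinely constant in $gP$ (this is where $K$-invariance of $\psi$ together with $K$-transitivity on $G/P$ is essential, since $G$-transitivity alone would not suffice as $\psi$ is only $K$-invariant). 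Matching the multiplicities $m_{\bar\alpha}$ between the Lie-algebra decomposition $\mathfrak{l} = \bar{\mathfrak{l}}_0 \oplus \bigoplus_{\bar\alpha\in\bar\Phi}\bar{\mathfrak{l}}_{\bar\alpha/2}$ and Flensted-Jensen's formula is the other point demanding care, but it follows directly from the eigenspace description established in Section~\ref{sec_homogeneous}.
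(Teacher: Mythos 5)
Your argument is essentially the paper's own: the paper proves this proposition exactly by combining fiber integration along $f:G/H\to G/P$ with Flensted-Jensen's integration formula for the symmetric fiber, using the identification of chambers and multiplicities established in the proof of Proposition~\ref{prop_fundamental_domain}, and your write-up just fills in the same steps (constancy of the fiber integral by $K$-invariance and $K$-transitivity on $G/P$, matching of the density with $J_H$). The only slip is cosmetic: the reference metrics $q_f$, $q_P$ and the volume form $dV_H$ are constructed in Section~\ref{sec_curvature}, not Section~\ref{sec_metrics}.
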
 

Here again, a more detailed account on the integration formula for 
symmetric spaces may be found in \cite[Section 3]{vdB05}.

\subsection{Preparation for curvature form}

To shorten the formulas, we start using the following notations, 
for $y\in \mathfrak{g}$, 
\[
\Re(y)=\frac{y-\theta(y)}{2}\in i\mathfrak{k}
\qquad \mathrm{and} \qquad
\Im(y)=\frac{y+\theta(y)}{2}\in \mathfrak{k}.
\]
For $y\in \mathfrak{l}$, we will also use the notations 
\[
\mathcal{H}(y)=\frac{y+\sigma(y)}{2}\in \mathfrak{h}
\qquad \mathrm{and} \qquad
\mathcal{P}(y)=\frac{y-\sigma(y)}{2}.
\]
Remark that $\tau_{\beta}=2\mathcal{P}(e_{\beta})$ and define 
$\mu_{\beta}=2\mathcal{H}(e_{\beta})$.

\begin{lem}
\label{lem_prep1}
Let $a\in \mathfrak{a}_s$ be such that $\beta(a)\neq 0$ for all 
$\beta \in \Phi_s$.
Consider an element $D$ in $\mathfrak{g}$ and write 
\[
D = \sum_{1\leq j\leq r}z_jl_j + \sum_{\alpha\in \Phi_{Q^u}}z_{\alpha}e_{\alpha}
+\sum_{\beta \in \Phi_s^+}z_{\beta}\tau_{\beta}
+h
\] 
where $h\in \mathfrak{h}$, and $z_j$ for $1\leq j\leq r$, 
$z_{\alpha}$ for $\alpha\in \Phi_{Q^u}$, and 
$z_{\beta}$ for $\beta \in \Phi_s^+$ denote 
complex numbers.
Then we may write 
$D=A_D+B_D+C_D$ with $A_D\in \mathrm{Ad}(\exp(-a))(\mathfrak{k})$, 
$B_D \in \mathfrak{a}_1$ and $C_D\in \mathfrak{h}$ as follows.
\begin{align*}
A_D & = \sum_{1\leq j\leq r} \Im(z_jl_j) + \exp(\mathrm{ad}(-a)) \Big{\{} 
\sum_{\beta \in \Phi_s^+}
			\left( \frac{\Im(z_{\beta}\tau_{\beta})}{\cosh(\beta(a))} -
			\frac{\Im(z_{\beta}\mu_{\beta})}{\sinh(\beta(a))} \right)
\\ & \qquad 
+  \sum_{\alpha\in\Phi_{Q^u}} 2e^{\alpha(a)} \Im(z_{\alpha}e_{\alpha})  \Big{\}}
			\\
B_D & = \sum_{1\leq j\leq r}\Re(z_jl_j)  \\
C_D & = h  + 
		\sum_{\beta \in \Phi_s^+} \big{\{}
		\tanh(\beta(a))\Re(z_{\beta}\mu_{\beta})+ \coth(\beta(a))\Im(z_{\beta}\mu_{\beta}) \big{\}}
\\ & \qquad
		 +\sum_{\alpha\in \Phi_{Q^u}}-e^{2\alpha(a)}\theta(z_{\alpha}e_{\alpha})
\end{align*}
\end{lem}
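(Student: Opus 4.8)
The plan is to verify the decomposition directly by decomposing $\mathfrak{g}$ into the three summands $\mathrm{Ad}(\exp(-a))(\mathfrak{k})$, $\mathfrak{a}_1$ (presumably the ``flat'' part spanned by the $\Re(l_j)$, though I would first pin down its definition from the surrounding text) and $\mathfrak{h}$, and checking that the claimed $A_D$, $B_D$, $C_D$ land in the correct subspace and sum to $D$. Since the statement is a linear identity in $D$, it suffices to check it separately on each of the spanning vectors: the $l_j$ (split direction), the $e_\alpha$ for $\alpha\in\Phi_{Q^u}$ (opposite unipotent directions), the $\tau_\beta=2\mathcal{P}(e_\beta)$ for $\beta\in\Phi_s^+$, and arbitrary $h\in\mathfrak{h}$. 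On $h$ everything is trivial, with $A_h=B_h=0$, $C_h=h$; on $l_j$ the identity $l_j=\Re(l_j)+\Im(l_j)$ with $\Re(l_j)\in\mathfrak{a}_1$ and $\Im(l_j)\in\mathfrak{k}\subset\mathrm{Ad}(\exp(-a))(\mathfrak{k})$ (since $a\in\mathfrak{a}_s$ and $\mathrm{Ad}(\exp a)$ fixes $\mathfrak{a}_s$, hence fixes $\Im(l_j)$ up to the relation defining $\mathfrak{a}_1$ — this point needs a little care) does the job.

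The substance is in the two families $e_\alpha$ and $\tau_\beta$. For $e_\alpha$, $\alpha\in\Phi_{Q^u}$: the membership $C_{e_\alpha}=-e^{2\alpha(a)}\theta(e_\alpha)\in\mathfrak{h}$ uses that $\theta(e_\alpha)\in\mathfrak{g}_{-\alpha}$ and $-\alpha\in\Phi_{P^u}$, so $\mathbb{C}\theta(e_\alpha)\subset\mathfrak{h}$ by the root-space description of $\mathfrak{h}$ recalled earlier. Then I must check $e_\alpha - C_{e_\alpha} = 2\Im(e_\alpha) + (\text{something in }\mathrm{Ad}(\exp(-a))\mathfrak{k})$; concretely $2\Im(e_\alpha)=e_\alpha+\theta(e_\alpha)\in\mathfrak{k}$, and applying $\mathrm{Ad}(\exp(-a))$ to $2e^{\alpha(a)}\Im(e_\alpha)$ one computes $\mathrm{Ad}(\exp(-a))(e_\alpha+\theta(e_\alpha)) = e^{-\alpha(a)}e_\alpha + e^{\alpha(a)}\theta(e_\alpha)$, so $2e^{\alpha(a)}\mathrm{Ad}(\exp(-a))\Im(e_\alpha)=e_\alpha+e^{2\alpha(a)}\theta(e_\alpha)$, which is exactly $e_\alpha - C_{e_\alpha}$. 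For $\tau_\beta$, $\beta\in\Phi_s^+$: here I use $\tau_\beta=2\mathcal{P}(e_\beta)$, $\mu_\beta=2\mathcal{H}(e_\beta)\in\mathfrak{h}$, so $e_\beta=\tfrac12(\tau_\beta+\mu_\beta)$ and $\sigma(e_\beta)=\tfrac12(\mu_\beta-\tau_\beta)$; I express $\theta(e_\beta)$ and $\theta(\sigma(e_\beta))$ in terms of $\tau$'s and $\mu$'s using $\sigma\theta=\theta\sigma$, and then compute $\mathrm{Ad}(\exp(-a))$ of the combination $\tfrac{\Im(\tau_\beta)}{\cosh\beta(a)}-\tfrac{\Im(\mu_\beta)}{\sinh\beta(a)}$. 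The hyperbolic functions appear precisely because $\mathrm{Ad}(\exp(-a))$ scales $e_\beta$ by $e^{-\bar\beta(a)/?}$ — I would track the exact eigenvalue using $\bar\beta|_{T_s}=2\beta|_{T_s}$ conventions from the text — and the $\cosh/\sinh$ denominators are chosen exactly to cancel these scalings so that the $\mathfrak{g}_\beta$ and $\mathfrak{g}_{\sigma(\beta)}$ components recombine into $\tau_\beta$ plus an $\mathfrak{h}$-correction; the $\tanh$ and $\coth$ terms in $C_{\tau_\beta}$ are the residual $\mathcal{H}$-part forced by this cancellation.

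The main obstacle I anticipate is purely bookkeeping: getting all the hyperbolic-function coefficients and signs right in the $\tau_\beta$ computation, where one is juggling four vectors ($e_\beta$, $\sigma(e_\beta)$, $\theta(e_\beta)$, $\theta\sigma(e_\beta)$), two involutions that commute, and the eigenvalue of $\mathrm{Ad}(\exp(-a))$ on each root line. A clean way to organize this is to first rewrite everything in the basis $\{\Re(e_\beta),\Im(e_\beta),\Re(\sigma e_\beta),\Im(\sigma e_\beta)\}$ (or equivalently $\Re/\Im$ of $\tau_\beta,\mu_\beta$), note that $\mathrm{Ad}(\exp(-a))$ acts on the span of $e_\beta,\theta(e_\beta)$ as $\mathrm{diag}(e^{-\beta(a)},e^{\beta(a)})$ in that basis (after normalizing $\bar\beta$ vs $\beta$ on $T_s$), hence on $\Re(e_\beta)=\tfrac12(e_\beta-\theta e_\beta)$ and $\Im(e_\beta)$ by the $2\times 2$ matrix with entries $\cosh,\sinh$; then the identity becomes a $4\times 4$ linear-algebra check. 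A secondary, smaller point to nail down is the precise definition of $\mathfrak{a}_1$ and why $\sum_j\Re(z_jl_j)$ is the whole $\mathfrak{a}_1$-component (i.e.\ that $\mathfrak{a}_1$ is spanned by the $\Re(l_j)$), which I would extract from the paragraph preceding the lemma or from the $\mathrm{Ad}(\exp(-a))(\mathfrak{k})\oplus\mathfrak{a}_1\oplus\mathfrak{h}$ direct-sum decomposition that the lemma implicitly asserts; once that is fixed, the verification that the three pieces are linearly independent and exhaust $\mathfrak{g}$ is a dimension count, $\dim\mathfrak{k}+\dim\mathfrak{a}_1+\dim\mathfrak{h}=\dim\mathfrak{g}$, combined with the explicit formulas.
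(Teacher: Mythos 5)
Your core plan is exactly the paper's proof: the lemma is established by a ``straightforward rewriting'' using precisely the relations you describe, namely that $\mathrm{Ad}(\exp(-a))$ scales $\mathfrak{g}_{\pm\alpha}$ by $e^{\mp\alpha(a)}$ (giving $2e^{\alpha(a)}\Im(z_\alpha e_\alpha)$ in the $\mathrm{Ad}(\exp(-a))(\mathfrak{k})$-part and $-e^{2\alpha(a)}\theta(z_\alpha e_\alpha)\in\mathfrak{p}^u\subset\mathfrak{h}$), and that because $\sigma(\beta)(a)=-\beta(a)$ for $a\in\mathfrak{a}_s$ one has $\exp(\mathrm{ad}(-a))\Im(z_\beta\tau_\beta)=\cosh(\beta(a))\Im(z_\beta\tau_\beta)-\sinh(\beta(a))\Re(z_\beta\mu_\beta)$ and the analogous relation with $\tau_\beta$ and $\mu_\beta$ exchanged; combining these two gives exactly the $\tanh/\coth$ terms in $C_D$, as you anticipate. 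Two small corrections to your write-up. First, $\mathfrak{a}_1$ is simply a typo for $\mathfrak{a}_s$ (see how the lemma is invoked in the proof of the next lemma), and the $l_j$-case needs no care beyond noting that $\Im(z_jl_j)\in\mathfrak{k}$ commutes with $a$, hence is fixed by $\mathrm{Ad}(\exp(-a))$; also, since $\Re$ and $\Im$ are only $\mathbb{R}$-linear, carry the complex coefficients $z$ through (or check $e_\alpha$ and $ie_\alpha$ separately) — harmless, as $\theta(z_\alpha e_\alpha)=\bar z_\alpha\theta(e_\alpha)$ stays in $\mathfrak{g}_{-\alpha}$. Second, drop the concluding direct-sum/dimension-count remark: $\mathrm{Ad}(\exp(-a))(\mathfrak{k})+\mathfrak{a}_s+\mathfrak{h}$ is \emph{not} a direct sum of real subspaces (for instance the compact part of $\mathfrak{t}^\sigma$, or more generally elements of $\mathfrak{k}\cap\mathfrak{h}$ commuting with $a$, lie in two of the summands, and $\dim_{\mathbb{R}}\mathfrak{k}+r+\dim_{\mathbb{R}}\mathfrak{h}$ exceeds $\dim_{\mathbb{R}}\mathfrak{g}$ in general), so that count would fail; but nothing of the sort is needed, since the lemma only asserts the existence of one explicit decomposition, which your term-by-term verification already establishes.
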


\begin{proof}
This is a straightforward rewriting, using the following relations.
For $\alpha\in \Phi_{Q^u}$, 
\[
\exp(\mathrm{ad}(-a))(z_{\alpha}e_{\alpha} + \theta(z_{\alpha}e_{\alpha})) = 
e^{\alpha(-a)}z_{\alpha}e_{\alpha} + e^{-\alpha(-a)}\theta(z_{\alpha}e_{\alpha})
\] 
where we remark that 
$z_{\alpha}e_{\alpha} + \theta(z_{\alpha}e_{\alpha})\in \mathfrak{k}$
and 
$\theta(z_{\alpha}e_{\alpha})\in \mathfrak{h}$. 
For the terms in $\tau_{\beta}$, we use the relations 
\begin{align*}
z_{\beta}\tau_{\beta} & = \Re(z_{\beta}\tau_{\beta}) + \Im(z_{\beta}\tau_{\beta}), \\
\exp(\mathrm{ad}(-a))(\Im(z_{\beta}\tau_{\beta}))  
& = 
\cosh(\beta(a))\Im(z_{\beta}\tau_{\beta}) - 
\sinh(\beta(a))\Re(z_{\beta}\mu_{\beta}),  \\
\exp(\mathrm{ad}(-a))(\Im(z_{\beta}\mu_{\beta}))  
& = 
\cosh(\beta(a))\Im(z_{\beta}\mu_{\beta}) - 
\sinh(\beta(a))\Re(z_{\beta}\tau_{\beta}). 
\end{align*}
Note that the relations hold because $a\in \mathfrak{a}_s$, hence 
$\sigma(\beta)(a)=-\beta(a)$. 
\end{proof}

Let $a\in \mathfrak{a}_s$ be such that $\beta(a)\neq 0$ for all 
$\beta \in \Phi_s$, and consider now the function 
\[
D=D(\underline{z})=\sum_{1\leq j\leq r}z_jl_j + \sum_{\alpha\in \Phi_P^+}z_{\alpha}e_{\alpha}
+\sum_{\beta \in \Phi_L^+\setminus \Phi_L^{\sigma}}z_{\beta}\tau_{\beta},
\] 
where $\underline{z}$ denotes the tuple obtained by merging the tuples $(z_j)_j$, 
$(z_{\alpha})_{\alpha}$ and $(z_{\beta})_{\beta}$.
Let $A_D$, $B_D$, $C_D$ be the elements provided by Lemma~\ref{lem_prep1}
applied to $D$.
Let 
\[
E=E(\underline{z}):= ([B_D,D]+[C_D,B_D]+[C_D,D])/2
\] 
and introduce 
also $A_E$, $B_E$, $C_E$ the elements provided by Lemma~\ref{lem_prep1}
applied to $E$. 

\begin{lem}
\label{lem_prep_2}
For small enough values of $\underline{z}$, we have   
\[
\exp(D) = 
\exp(-a)k\exp(a+y+O) 
\exp(h),
\] 
where $O=O(\underline{z})\in \mathfrak{g}$ is of order strictly higher than two in $\underline{z}$,
$k=k(\underline{z})\in K$, $y=y(\underline{z})\in \mathfrak{a}_s$,             
and $h=h(\underline{z})\in \mathfrak{h}$. 
Furthermore,
\[
y=B_D+B_E
\]
and 
\[
\exp(h)=\exp(C_E)\exp(C_D).
\]
\end{lem}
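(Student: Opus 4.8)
The plan is to compute $\exp(D)$ up to second order in $\underline{z}$ using the Baker–Campbell–Hausdorff formula and the decomposition $\mathfrak{g} = \mathrm{Ad}(\exp(-a))(\mathfrak{k}) \oplus \mathfrak{a}_s \oplus \mathfrak{h}$ supplied by Lemma~\ref{lem_prep1} (noting $\mathfrak{a}_1 = \mathfrak{a}_s$ since $D$ has no $h$-component here, $B_D \in \mathfrak{a}_s$). Write $D = A_D + B_D + C_D$. The idea is that $A_D$ is the ``compact direction'', $B_D$ the ``$\mathfrak{a}_s$ direction'' and $C_D$ the ``$\mathfrak{h}$ direction'', and we want to rearrange $\exp(A_D + B_D + C_D)$ into the ordered product $\exp(-a)\,k\,\exp(a + y + O)\,\exp(h)$. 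First I would note that $A_D = \mathrm{Ad}(\exp(-a))(\mathfrak{k})$-valued, so $\exp(A_D) = \exp(-a)\exp(\text{something in }\mathfrak{k})\exp(a)$ exactly; this is where the factor $\exp(-a)$ on the left and a compensating $\exp(a)$ get produced, and the $\exp(a)$ is absorbed into the middle $\exp(a + \cdots)$ factor. The elements $A_D, B_D, C_D$ are each linear in $\underline{z}$, so to second order I only need BCH to one bracket: $\exp(X)\exp(Y) = \exp(X + Y + \tfrac12[X,Y] + O(3))$.

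The key computation is to split $\exp(D) = \exp(A_D + B_D + C_D)$ as (roughly) $\exp(A_D)\exp(B_D + C_D)$ times a second-order correction, then move all the $\mathfrak{h}$-content to the right and all the $\mathfrak{k}$-content to the left, collecting the leftover into the $\mathfrak{a}_s$-term $y$. The quantity $E = \tfrac12([B_D, D] + [C_D, B_D] + [C_D, D])$ is precisely the second-order term that arises from these rearrangements: the brackets $[B_D, \cdot]$ and $[C_D, \cdot]$ are exactly the ones BCH produces when commuting the $\exp(B_D)$ and $\exp(C_D)$ factors past the rest (the $[A_D, \cdot]$ brackets, together with the conjugation $\mathrm{Ad}(\exp(\mp a))$, get packaged into the definition of $k$ to leading order and do not contribute new $\mathfrak{a}_s$- or $\mathfrak{h}$-content at this order — or more precisely their contributions are pushed into $O$). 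Then I apply Lemma~\ref{lem_prep1} a second time, to $E$, getting $E = A_E + B_E + C_E$; the $A_E$ part is another compact correction absorbed into $k$, the $B_E$ part is the remaining $\mathfrak{a}_s$-content giving $y = B_D + B_E$, and the $C_E$ part is the remaining $\mathfrak{h}$-content giving $\exp(h) = \exp(C_E)\exp(C_D)$ (the order matters: $C_E$ is a correction applied on top of $C_D$, and since $C_E$ is already second order, $\exp(C_E)\exp(C_D) = \exp(C_D + C_E + O(3))$, consistent with writing $h = C_D + C_E + \cdots$).

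The main obstacle will be bookkeeping the various second-order terms and confirming that everything not of the form $B_D + B_E$ (in $\mathfrak{a}_s$) or $C_D + C_E$ (in $\mathfrak{h}$) can genuinely be swept into either $k$ or the higher-order remainder $O$. Concretely I would proceed as follows: (i) use $\exp(A_D) = \exp(-a)\exp(\mathrm{Ad}(\exp a)(A_D))\exp(a)$ with $\mathrm{Ad}(\exp a)(A_D) \in \mathfrak{k}$; (ii) write $\exp(D) = \exp(A_D)\exp(B_D + C_D)\exp(R)$ where $R = -\tfrac12[A_D, B_D + C_D] + \cdots$ via BCH, absorbing $[A_D,\cdot]$-terms appropriately; (iii) split $\exp(B_D+C_D) = \exp(B_D)\exp(C_D)\exp(-\tfrac12[B_D,C_D] + \cdots)$, and commute the accumulated second-order corrections (whose total is $E$) to the correct position, applying Lemma~\ref{lem_prep1} to land them in the three subspaces; (iv) collect the $\mathfrak{k}$-valued pieces into $k$, the $\mathfrak{a}_s$-valued pieces into $y$, the $\mathfrak{h}$-valued pieces into $\exp(h)$, and declare everything of order $\geq 3$ to be $O$. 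The verification that the surviving $\mathfrak{a}_s$- and $\mathfrak{h}$-parts are exactly $B_D + B_E$ and $C_D + C_E$ respectively — i.e., that the definition of $E$ captures precisely the right brackets — is the crux and requires care, but it is a direct (if tedious) BCH bookkeeping exercise given the explicit formulas of Lemma~\ref{lem_prep1}; I would not grind through it in detail here beyond indicating that the three bracket terms in $E$ correspond to the three commutations ($B_D$ past $D$, $C_D$ past $B_D$, $C_D$ past $D$) needed to reach the ordered form.
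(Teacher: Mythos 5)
Your proposal follows essentially the same route as the paper: decompose $D=A_D+B_D+C_D$ via Lemma~\ref{lem_prep1}, apply Baker--Campbell--Hausdorff to second order so that the accumulated correction is exactly $E$, decompose $E=A_E+B_E+C_E$, conjugate the $\mathrm{Ad}(\exp(-a))(\mathfrak{k})$-parts to produce $\exp(-a)k$, and sweep all order-$\geq 3$ terms into $O$. The only difference is presentational: the paper computes $\exp(-A_D)\exp(D)\exp(-C_D)$ in one stroke and recognizes the second-order term as $E$ via the substitution $A_D=D-B_D-C_D$, which is precisely the ``bookkeeping'' step you assert but do not write out.
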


\begin{proof} 
Throughout the proof, $O$ denotes an element of $\mathfrak{g}$ for 
$\underline{z}$ small enough, 
of order stricly higher than two in $\underline{z}$, which may change from line to line.

We first write $D= A_D + B_D + C_D$, with 
$A_D \in \mathrm{Ad}(\exp(-a))(\mathfrak{k})$, 
$B_D \in \mathfrak{a}_s$ and 
$C_D \in \mathfrak{h}$ given by Lemma~\ref{lem_prep1}.
Remark that they are all of order one in $\underline{z}$

Using the Baker-Campbell-Hausdorff formula \cite[Theorem X.3.1]{Hoc65} twice,  
we obtain that 
\[
\exp(-A_D)\exp(D)\exp(-C_D)=\exp(B_D+\frac{1}{2}([C_D,B_D]+[C_D,A_D]+[B_D,A_D])+O).
\]
Writing $A_D=D-B_D-C_D$ we easily check that $\frac{1}{2}([C_D,B_D]+[C_D,A_D]+[B_D,A_D])$
is equal to the $E$ introduced before.
We may then decompose again $E$ as $A_E+B_E+C_E$ 
where $A_E \in \mathrm{Ad}(\exp(-a))(\mathfrak{k})$, 
$B_E \in \mathfrak{a}_s$ and $C_E \in \mathfrak{h}$ 
given by Lemma~\ref{lem_prep1} and all terms are of order two in $\underline{z}$. 
Using again the Baker-Campbell-Hausdorff formula, we get 
\[
\exp(D)=\exp(A_D)\exp(A_E)\exp(B_D+B_E+O)\exp(C_E)\exp(C_D).
\]
The lemma is thus proved with a final application of the Baker-Campbell-Hausdorff
formula to $\exp(a)\exp(y+O)$.
\end{proof}

\subsection{Expression of the curvature form}

Given a function $u:\mathfrak{a}_s \rightarrow \mathbb{R}$ we may consider 
its differential $d_au \in \mathfrak{a}_s^*$ at a given point $a\in \mathfrak{a}_s$ 
as an element of $\mathfrak{a}^*$ by setting $d_au(x)=d_au\circ\mathcal{P}(x)$ and 
identifying $\mathfrak{a}^*_s$ with $\mathfrak{X}(T/T\cap H)\otimes \mathbb{R}$. 

Let $\mathcal{L}$ be a $G$-linearized line bundle corresponding to 
the character $\chi$ of $H$. We also denote by $\chi$ the corresponding 
Lie algebra character $\mathfrak{h}\rightarrow \mathbb{C}$. Hoping it 
will cause no confusion, we will also denote by $\chi$ the restriction 
of $\chi$ to $\mathfrak{a}\cap \mathfrak{h}$ and consider it as an element
of $\mathfrak{a}^*$ by setting $\chi(x)=\chi\circ\mathcal{H}(x)$ for 
$x\in \mathfrak{a}$.

Let $q$ be a smooth $K$-invariant metric on $\mathcal{L}$ with toric potential $u$, 
and let $\omega$ denote the curvature form of $q$. 

\begin{thm}
\label{thm_curv}
Let $a\in \mathfrak{a}_s$ be such that $\beta(a)\neq 0$ for all 
$\beta \in \Phi_s$. Then 
\[
\omega_{\exp(a)H} = \sum \Omega_{\diamondsuit,\bar{\heartsuit}} \omega_{\diamondsuit,\bar{\heartsuit}}
\]
where the sum runs over the indices $j$, $\alpha$, $\beta$, and 
the coefficients are as follows.
Let $1\leq j,j_1,j_2\leq r$, $\alpha$, $\alpha_1$, $\alpha_2\in \Phi_{Q^u}$, and 
$\beta$, $\beta_1$, $\beta_2\in \Phi_s^+$ with $\beta_1\neq \beta_2$ and $\alpha_2-\alpha_1\in \Phi_s$, then 
\[
\Omega_{j_1,\bar{j_2}} =\frac{1}{4}d^2u(l_{j_1},l_{j_2}), \quad \qquad 
\Omega_{j,\bar{\beta}} = \beta(l_j)(1-\tanh^2(\beta))\chi(\theta(\mu_{\beta})),
\]
\[
\Omega_{\alpha,\bar{\alpha}} = 
\frac{-e^{2\alpha}}{2}(du-2\chi)(\alpha^{\vee}), \quad \qquad 
\Omega_{\alpha_1,\bar{\alpha}_2} = 
\frac{2\chi([\theta(e_{\alpha_2}),e_{\alpha_1}])}{e^{-2\alpha_1}+e^{-2\alpha_2}},
\]
\begin{align*}
\Omega_{\beta_1,\bar{\beta}_2} = & 
\frac{\tanh(\beta_2-\beta_1)}{2}\Big{\{}\frac{1}{\sinh(2\beta_1)}
-\frac{1}{\sinh(2\beta_2)}\Big{\}} \chi([\theta(e_{\beta_2}),e_{\beta_1}])  
\\
 & +\frac{\tanh(\beta_1+\beta_2)}{2}\Big{\{}\frac{1}{\sinh(2\beta_2)}
+\frac{1}{\sinh(2\beta_1)}\Big{\}} \chi([\theta(e_{\beta_2}),\sigma(e_{\beta_1})]) 
\end{align*}
and 
\[
\Omega_{\beta,\bar{\beta}} =  
\frac{du(\beta^{\vee})}{\sinh(2\beta)} 
-\frac{2}{\cosh(2\beta)}\chi\circ\Re([\theta\sigma(e_{\beta}),e_{\beta}])
\]
where all quantities are evaluated at $a$.
Finally, the remaining coefficients except obviously the symmetric of those above are zero.
\end{thm}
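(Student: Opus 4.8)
The plan is to compute $\omega=i\partial\bar\partial\varphi$ at $\exp(a)H$ in the holomorphic coordinates $\underline z\mapsto\exp(a)\exp(D(\underline z))H$, for which the reference $(1,0)$-forms $\gamma_\diamondsuit$ restrict at the origin to $dz_\diamondsuit$; thus the coefficient $\Omega_{\diamondsuit,\bar\heartsuit}$ of $\omega_{\diamondsuit,\bar\heartsuit}=i\gamma_\diamondsuit\wedge\bar\gamma_\heartsuit$ is nothing but the mixed second derivative $\partial^2\varphi/\partial z_\diamondsuit\,\partial\bar z_\heartsuit$ at $0$. Composing the $G$-equivariant trivialization $s$ of $\pi^*\mathcal L$ with the holomorphic section $\underline z\mapsto\exp(a)\exp(D(\underline z))$ of $\pi$ provides a local holomorphic frame of $\mathcal L$ near $\exp(a)H$, and the corresponding potential is $\varphi(\underline z)=\phi(\exp(a)\exp(D(\underline z)))$, where $\phi$ is the quasipotential. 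So everything reduces to the $2$-jet of $\varphi$ at $0$.

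First I would carry out a soft reduction to quantities controlled by $u$ and $\chi$. Feeding $D=D(\underline z)$ into Lemma~\ref{lem_prep_2} gives $\exp(a)\exp(D)=k\exp(a+y+O)\exp(h)$ with $k\in K$, $y=B_D+B_E\in\mathfrak a_s$, $\exp(h)=\exp(C_E)\exp(C_D)$ and $O$ of order $\geq 3$ in $\underline z$. Plugging this into the equivariance relation of Proposition~\ref{prop_equivariance_quasipotential} erases the factor $k$ and turns $\exp(h)$ into $-2\ln|\chi(\exp(h))|=-2\Re\chi(C_D)-2\Re\chi(C_E)+O(|\underline z|^3)$, where $\chi$ also denotes the induced Lie algebra character and we used that $[C_D,C_E]$ is of order $\geq 3$. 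Since the $\mathfrak a_s$-component of $a+y+O$ has $2$-jet $a+B_D+B_E$, while $\phi\circ\exp$ agrees with $u$ along $\mathfrak a_s$ and $O$ is of order $\geq 3$, we get $\phi(\exp(a+y+O))=u(a+B_D(\underline z)+B_E(\underline z))+O(|\underline z|^3)$. Altogether, modulo terms of order $\geq 3$,
\[
\varphi(\underline z)=u\bigl(a+B_D(\underline z)+B_E(\underline z)\bigr)-2\Re\chi\bigl(C_D(\underline z)\bigr)-2\Re\chi\bigl(C_E(\underline z)\bigr).
\]
Now $B_D$ and $C_D$ are $\mathbb R$-linear in $\underline z$ by Lemma~\ref{lem_prep1}, so they feed into $i\partial\bar\partial$ at $0$ only through the quadratic term $\tfrac12 d^2_au(B_D,B_D)$; and $\mathfrak a_s\subset i\mathfrak k$ forces $B_D(\underline z)=\sum_j\Re(z_j)\,l_j$, which gives at once $\Omega_{j_1,\bar j_2}=\tfrac14 d^2u(l_{j_1},l_{j_2})$. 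Every other coefficient is the mixed ($z\bar z$) part of $d_au(B_E(\underline z))-2\Re\chi(C_E(\underline z))$.

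The heart of the proof is then the explicit expansion of $E=\tfrac12\bigl([B_D,D]+[C_D,B_D]+[C_D,D]\bigr)$ in the chosen root vectors, its decomposition by Lemma~\ref{lem_prep1} into the $\mathfrak a_s$-part to be paired with $d_au$ and the $\mathfrak h$-part $C_E$ to be paired with $\chi$, and the collection of the mixed monomials. Organizing these brackets — with the structural maps and hyperbolic weights of Lemma~\ref{lem_prep1} applied both to $D$ (through $B_D$, $C_D$) and to $E$ — produces all the ingredients of the statement: the diagonal torus-brackets $[\theta(e_\gamma),e_\gamma]=\gamma^\vee$ split as $\mathcal P(\gamma^\vee)\oplus\mathcal H(\gamma^\vee)$ and yield the combinations $(du-2\chi)(\alpha^\vee)$ and $du(\beta^\vee)$; the off-diagonal brackets between root vectors and their $\theta$- and $\sigma$-images produce the structure constants $\chi([\theta(e_{\alpha_2}),e_{\alpha_1}])$, $\chi([\theta(e_{\beta_2}),e_{\beta_1}])$, $\chi([\theta(e_{\beta_2}),\sigma(e_{\beta_1})])$ and $\chi(\theta(\mu_\beta))$; and the $\cosh,\sinh,\tanh,\coth$ factors combine into the $\sinh(2\beta)$, $\cosh(2\beta)$, $\tanh(\beta_1\pm\beta_2)$ and $e^{2\alpha}$ weights, using repeatedly $\sigma(\beta)(a)=-\beta(a)$, $\tau_\beta=2\mathcal P(e_\beta)$, $\mu_\beta=2\mathcal H(e_\beta)$. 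The vanishing of each unlisted coefficient comes out at the same time: the relevant bracket either vanishes for root-theoretic reasons (e.g. $[\mathfrak g_{\alpha_1},\theta\mathfrak g_{\alpha_2}]=0$ unless $\alpha_1-\alpha_2\in\Phi\cup\{0\}$, and similarly for $\beta_1\neq\beta_2$) or lands in a subspace on which the map subsequently applied to it — $\chi$, or the projection onto $\mathfrak a_s$ — vanishes.

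The main obstacle is precisely this last step: a long but essentially mechanical bracket computation, where the genuine care lies in tracking, bracket by bracket, which part lies in $\mathfrak h$, which in $\mathfrak a_s$ and which in $\mathrm{Ad}(\exp(-a))(\mathfrak k)$ (only the first two contribute to the Hessian), and in keeping the hyperbolic-function weights consistent across the two applications of Lemma~\ref{lem_prep1}. The reduction steps above are comparatively routine; the only real subtlety there is the justification that the order $\geq 3$ terms supplied by Lemma~\ref{lem_prep_2}, and the $\mathbb R$-linear contributions of $B_D$ and $C_D$ beyond $d^2_au(B_D,B_D)$, may all be discarded before computing $i\partial\bar\partial$.
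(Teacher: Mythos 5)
Your framework is exactly the paper's: you identify $\Omega_{\diamondsuit,\bar\heartsuit}$ with the mixed derivatives $\partial^2\varphi/\partial z_\diamondsuit\partial\bar z_\heartsuit$ of the quasipotential in the coordinates $\underline z\mapsto\exp(a)\exp(D(\underline z))H$, invoke Lemma~\ref{lem_prep_2} together with the equivariance of Proposition~\ref{prop_equivariance_quasipotential} to reduce everything, modulo order $\geq 3$, to
$u(a+B_D+B_E)-2\Re\chi(C_D+C_E)$, and correctly observe that only $\tfrac12 d^2_au(B_D,B_D)$, $d_au(B_E)$ and $-2\Re\chi(C_E)$ survive $i\partial\bar\partial$ at the origin. (Your appeal to $[C_D,C_E]$ being of order $\geq 3$ is unnecessary: $\chi$ is a character, so $\ln|\chi(\exp C_E\exp C_D)|=\Re\chi(C_E+C_D)$ exactly.) This reduction, and the coefficient $\Omega_{j_1,\bar j_2}=\tfrac14 d^2u(l_{j_1},l_{j_2})$, are carried out correctly and coincide with Steps 1 and 2 of the paper's proof.

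The gap is that the theorem's content is the list of explicit coefficients, and your proof stops exactly where those are established. The paper's Step 3 is a case-by-case computation over the pairs $(f_\diamondsuit,f_\heartsuit)$ — six cases, with the $(e_{\alpha_1},e_{\alpha_2})$ case split further according to whether $\alpha_2-\alpha_1$ lies in $\Phi_L^{\sigma}$, in $\Phi_L\setminus\Phi_L^{\sigma}$, in $\Phi\setminus\Phi_L$, or outside $\Phi$ — in which each bracket in $E=\tfrac12([B_D,D]+[C_D,B_D]+[C_D,D])$ is expanded, re-decomposed via Lemma~\ref{lem_prep1}, and paired with $d_au$ or $\chi$. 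This is where every stated formula is actually produced: the hyperbolic identities (e.g.\ $\coth x-\tanh x=2/\sinh 2x$) that turn the weights of Lemma~\ref{lem_prep1} into the $\sinh(2\beta)$, $\cosh(2\beta)$, $\tanh(\beta_1\pm\beta_2)$ and $e^{2\alpha}$ factors; the $\mathcal H/\mathcal P$ splittings giving $(du-2\chi)(\alpha^{\vee})$ versus $du(\beta^{\vee})$; and the vanishing arguments, which are not purely root-theoretic but use that $\chi$ annihilates $\theta(e_\alpha)$ and, more generally, the Lie algebra of the unipotent radical of $H$, and that $\mathfrak g_{\alpha_2-\alpha_1}\subset[\mathfrak h,\mathfrak h]$ when $\alpha_2-\alpha_1\in\Phi_L^{\sigma}$. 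Your proposal replaces all of this by the assertion that ``organizing these brackets produces all the ingredients of the statement,'' which merely restates the claim; since this is precisely the part where signs, hyperbolic factors and the identification of which structure constants survive $\chi$ can go wrong, the proof is incomplete without it.
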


This very involved description drastically simplifies if the 
restriction of $\chi$ to $L\cap H$ is trivial on $[L,L]\cap H$.
It is equivalent to the fact that it coincides with the restriction 
of a character of $L$ to $L\cap H$, or also to the fact that the 
corresponding line bundle is trivial on the symmetric 
fiber $L/L\cap H$.
This particular case in fact covers a wealth of examples, as it is the 
case for any choice of line bundle whenever the symmetric fiber has 
no Hermitian factor.
In the Hermitian case there are still plenty of line bundle which 
satisfy this extra assumption. A remarkable example is the 
anticanonical line bundle. 

\begin{cor} 
\label{cor_curv}
Assume that the restriction of $\mathcal{L}$ to the symmetric 
fiber $L/L\cap H$ is trivial.  
Let $a\in \mathfrak{a}_s$ be such that $\beta(a)\neq 0$ for all 
$\beta \in \Phi_s$. Then $\omega_{\exp(a)H}$ may compactly be written 
as 
\[
\frac{1}{4}d^2_au(l_{j_1},l_{j_2})\omega_{j_1,\bar{j_2}}
+  
\frac{-e^{2\alpha}}{2}(d_au-2\chi)(\alpha^{\vee}) \omega_{\alpha,\bar{\alpha}}
+  
\frac{d_au(\beta^{\vee})}{\sinh(2\beta(a))} \omega_{\beta,\bar{\beta}}
\]
where it is implicit that the three summands are actually sums over 
$\{1,\ldots,r\}$, $\Phi_{Q^u}$ and $\Phi_s^+$ respectively.
\end{cor}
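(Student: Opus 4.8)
The plan is to deduce the statement directly from Theorem~\ref{thm_curv} by showing that, under the extra hypothesis, every coefficient $\Omega_{\diamondsuit,\bar{\heartsuit}}$ appearing there but absent from the present, shorter formula vanishes, while the three retained families of coefficients are literally those written here. Concretely, the coefficients to be killed are $\Omega_{j,\bar{\beta}}$, the off-diagonal $\Omega_{\alpha_1,\bar{\alpha}_2}$ with $\alpha_1\neq\alpha_2$, the off-diagonal $\Omega_{\beta_1,\bar{\beta}_2}$ with $\beta_1\neq\beta_2$, and the second summand $-\frac{2}{\cosh(2\beta)}\chi\circ\Re([\theta\sigma(e_{\beta}),e_{\beta}])$ of $\Omega_{\beta,\bar{\beta}}$; the coefficients $\Omega_{j_1,\bar{j}_2}$, $\Omega_{\alpha,\bar{\alpha}}$ and the first summand of $\Omega_{\beta,\bar{\beta}}$ are unchanged, and all remaining coefficients are already zero. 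Substituting the surviving coefficients into $\omega_{\exp(a)H}=\sum\Omega_{\diamondsuit,\bar{\heartsuit}}\,\omega_{\diamondsuit,\bar{\heartsuit}}$ then yields the displayed formula.

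First I would record what the hypothesis means: by the remark preceding the statement, triviality of $\mathcal{L}$ on the symmetric fiber is equivalent to the Lie algebra character $\chi:\mathfrak{h}\to\mathbb{C}$ vanishing on $[\mathfrak{l},\mathfrak{l}]\cap\mathfrak{h}$. Next I would isolate the one structural point needed about the projection $p:\mathfrak{g}\to\mathfrak{h}$ along the fixed complement $\mathfrak{t}_s\oplus\bigoplus_{\alpha\in\Phi_{Q^u}}\mathbb{C}e_\alpha\oplus\bigoplus_{\beta\in\Phi_s^+}\mathbb{C}\tau_\beta$ (this $p$ being the operation implicitly meant when one writes $\chi(\xi)$ for $\xi\in\mathfrak{g}$ in Theorem~\ref{thm_curv}). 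The complement, intersected with $\mathfrak{l}$, equals $\mathfrak{t}_s\oplus\bigoplus_{\beta\in\Phi_s^+}\mathbb{C}\tau_\beta$, which is contained in the $(-1)$-eigenspace of $\sigma$ on $\mathfrak{l}$ (as $\sigma$ is the inverse on $T_s$ and $\sigma(\tau_\beta)=-\tau_\beta$), while $\mathfrak{h}\cap\mathfrak{l}=\mathfrak{l}^\sigma$ is the $(+1)$-eigenspace; comparing dimensions with the explicit decomposition of $\mathfrak{h}$ shows these two exhaust $\mathfrak{l}$, so $p|_{\mathfrak{l}}$ is exactly the $\sigma$-averaging operator $\mathcal{H}(x)=\tfrac12(x+\sigma(x))$. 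Since $\mathcal{H}$ preserves $[\mathfrak{l},\mathfrak{l}]$, we conclude $\chi(\xi)=\chi(\mathcal{H}(\xi))=0$ for every $\xi\in[\mathfrak{l},\mathfrak{l}]$.

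The bulk of the proof is then a short weight computation showing that each element $\xi$ on which $\chi$ is evaluated in the coefficients to be killed lies in $[\mathfrak{l},\mathfrak{l}]$: $\theta(\mu_\beta)=\mathcal{H}(2\theta(e_\beta))\in\mathfrak{g}_{-\beta}\oplus\mathfrak{g}_{-\sigma(\beta)}\subset[\mathfrak{l},\mathfrak{l}]$ (and in fact already in $\mathfrak{l}^\sigma$); $[\theta(e_{\alpha_2}),e_{\alpha_1}]\in\mathfrak{g}_{\alpha_1-\alpha_2}$ with $\alpha_1-\alpha_2\in\Phi_s\subset\Phi_L$; $[\theta(e_{\beta_2}),e_{\beta_1}]$ and $[\theta(e_{\beta_2}),\sigma(e_{\beta_1})]$ lie in $\mathfrak{g}_{\beta_1-\beta_2}$, respectively $\mathfrak{g}_{\sigma(\beta_1)-\beta_2}$, each of which is either $\{0\}$, a $\Phi_L$-root space, or (when the weight is $0$) a line $\mathbb{C}\beta_2^\vee\subset\mathfrak{t}\cap[\mathfrak{l},\mathfrak{l}]$; and $\Re([\theta\sigma(e_\beta),e_\beta])$ is a combination of $[\theta\sigma(e_\beta),e_\beta]\in\mathfrak{g}_{\beta-\sigma(\beta)}$ and its image under $\theta$, again landing in $\Phi_L$-root spaces (or $\{0\}$). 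In all cases $\xi\in[\mathfrak{l},\mathfrak{l}]$, so the corresponding coefficient vanishes by the previous paragraph and the hypothesis. Note that the surviving $\chi$-dependence, namely $\chi(\alpha^\vee)$ inside $\Omega_{\alpha,\bar{\alpha}}$, is expected to persist: for $\alpha\in\Phi_{Q^u}$ the coroot $\alpha^\vee$ is a coroot of $G$, not of $L$, so $\mathcal{H}(\alpha^\vee)$ generally has a nonzero $\mathfrak{z}(\mathfrak{l})$-component on which $\chi$ need not vanish.

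The only difficulty is organizational rather than conceptual: one must match the notation of Theorem~\ref{thm_curv} carefully, in particular fixing that $\chi(\xi)$ for $\xi\notin\mathfrak{h}$ stands for $\chi(p(\xi))$, and then check the (elementary) membership $\xi\in[\mathfrak{l},\mathfrak{l}]$ for each of the finitely many types of coefficient without overlooking the degenerate cases (for instance $\sigma(\beta_1)=\beta_2$, or $\beta-\sigma(\beta)$ failing to be a root) in which the relevant bracket is forced into $\mathfrak{t}\cap[\mathfrak{l},\mathfrak{l}]$ or is zero. An alternative, slightly longer route would be to rerun the proof of Theorem~\ref{thm_curv} under the hypothesis from the start, where the simplification appears term by term in the Lie bracket bookkeeping; I would prefer the route above since it invokes Theorem~\ref{thm_curv} verbatim and confines the new input to the single observation $p|_{\mathfrak{l}}=\mathcal{H}$.
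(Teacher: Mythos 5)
Your proposal is correct and follows exactly the route the paper intends: the corollary is the immediate specialization of Theorem~\ref{thm_curv}, with the extra coefficients killed because each $\chi$-argument there is (after the projection, which on $\mathfrak{l}$ is indeed $\mathcal{H}$) an element of $[\mathfrak{l},\mathfrak{l}]\cap\mathfrak{h}$, where $\chi$ vanishes by the triviality hypothesis. The paper leaves this verification implicit, so your contribution is simply to spell out the weight computations and the identification $p|_{\mathfrak{l}}=\mathcal{H}$, both of which you carry out correctly, including the degenerate cases.
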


\begin{exa}
\label{exa_curv_C2minus0}
Consider the example of $\mathbb{C}^2\setminus\{0\}$, viewed as a 
horospherical space under the natural action of $SL_2$. 
Then $\mathfrak{a}_s$ is one-dimensional and we may choose 
$\Phi_{Q^u}=\{\alpha_{2,1}\}$ ($\Phi_s^+$ is obviously empty).
We choose $l_1=\alpha_{2,1}^{\vee}$ 
as basis of $\mathfrak{a}_s$ and consider $u$ as a one real variable 
function, writing $d_au=u'(y)\alpha_{2,1}$  for $a=yl_1$, so that 
$d^2_a(l_1,l_1)=4u''(y)$. 
Then since $H=\mathrm{Stab}(1,0)$ has no characters, we have 
at $\exp(yl_1)H$,
\[
\omega= u''(y)\omega_{1,\bar{1}} + e^{-4y}u'(y)\omega_{\alpha_{2,1},\bar{\alpha_{2,1}}}. 
\]
\end{exa}

One major application of this general computation of curvature forms, 
but not the only one, will be through the \emph{Monge-Ampère operator}, 
which reads, with respect to the reference volume form, as follows.

\begin{cor}
\label{cor_MA}
Assume that the restriction of $\mathcal{L}$ to the symmetric 
fiber $L/L\cap H$ is trivial.  
Let $a\in \mathfrak{a}_s$ be such that $\beta(a)\neq 0$ for all 
$\beta \in \Phi_s$. Then at $\exp(a)H$, $\omega^n/dV_H$ is equal 
to  
\[
\frac{n!}{2^{2r+|\Phi_{Q^u}|}} \frac{\mathrm{det}(((d_a^2u)(l_j,l_k))_{j,k})}{J_H(a)} 
\prod_{\alpha\in \Phi_{Q^u}} 
(2\chi-d_au)(\alpha^{\vee}) 
\prod_{\beta \in \Phi_s^+} |d_au(\beta^{\vee})|
 \]
\end{cor}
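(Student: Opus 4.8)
The plan is to deduce Corollary~\ref{cor_MA} from Corollary~\ref{cor_curv} by a direct computation of the top exterior power of $\omega$, using the fact that the forms $\omega_{\diamondsuit,\bar{\diamondsuit}}$ for $\diamondsuit$ running over $\{1,\ldots,r\}\cup\Phi_{Q^u}\cup\Phi_s^+$ form a pointwise basis of the real $(1,1)$-forms at $\exp(a)H$. First I would observe that, since $\mathcal{L}$ restricts trivially to the symmetric fiber, Corollary~\ref{cor_curv} expresses $\omega_{\exp(a)H}$ in block-diagonal form: the $r\times r$ block in the $l_j$ directions has matrix $\tfrac14((d_a^2u)(l_j,l_k))_{j,k}$, and the remaining blocks are genuinely diagonal, with entry $-\tfrac12 e^{2\alpha(a)}(d_au-2\chi)(\alpha^\vee)$ on $\omega_{\alpha,\bar\alpha}$ for $\alpha\in\Phi_{Q^u}$, and $d_au(\beta^\vee)/\sinh(2\beta(a))$ on $\omega_{\beta,\bar\beta}$ for $\beta\in\Phi_s^+$.

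The key computational step is then the standard linear-algebra fact that if $\omega=\sum_{k,l}H_{kl}\,i\gamma_k\wedge\bar\gamma_l$ with $H$ a Hermitian matrix and $n$ the total (complex) dimension, then
\[
\omega^n = n!\,\det(H)\,\bigwedge_{k}\bigl(i\gamma_k\wedge\bar\gamma_k\bigr),
\]
so that here $\det(H)$ is simply the product of the determinant of the $l$-block, which is $\det((d_a^2u)(l_j,l_k))/4^r$, with the product of the diagonal entries over $\Phi_{Q^u}$ and over $\Phi_s^+$. This yields
\[
\det(H) = \frac{\det((d_a^2u)(l_j,l_k))}{4^r}\prod_{\alpha\in\Phi_{Q^u}}\frac{-e^{2\alpha(a)}}{2}(d_au-2\chi)(\alpha^\vee)\prod_{\beta\in\Phi_s^+}\frac{d_au(\beta^\vee)}{\sinh(2\beta(a))}.
\]

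Next I would divide by the reference volume form using Proposition~\ref{prop_relation_volume_forms}, which gives $(dV_H)_{\exp(a)H}=e^{2\sum_{\alpha\in\Phi_{Q^u}}\alpha(a)}\bigl(\bigwedge_\diamondsuit\omega_{\diamondsuit,\bar\diamondsuit}\bigr)_{\exp(a)H}$. The wedge products cancel, and the exponential factor $e^{2\sum\alpha(a)}$ in the denominator exactly cancels $\prod_{\alpha\in\Phi_{Q^u}}e^{2\alpha(a)}$ coming from $\det(H)$. Collecting powers of $2$, we pick up $1/4^r=1/2^{2r}$ from the $l$-block and $1/2^{|\Phi_{Q^u}|}$ from the $\alpha$-block, i.e. $1/2^{2r+|\Phi_{Q^u}|}$, and the factor $\prod_{\beta\in\Phi_s^+}1/\sinh(2\beta(a))$ combines with $J_H(a)=\prod_{\beta\in\Phi_s^+}|\sinh(2\beta(a))|$ from Proposition~\ref{prop_integration_horosymmetric_space} to leave $\prod_{\beta\in\Phi_s^+}|d_au(\beta^\vee)|/J_H(a)$ up to sign. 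The sign is absorbed by the absolute value on $d_au(\beta^\vee)$: since $\omega^n/dV_H$ is a positive real number (both $\omega^n$ and $dV_H$ being real volume forms of the same orientation near this point) and $2\chi-d_au$ evaluated on the $\alpha^\vee$ has the sign making $-e^{2\alpha}(d_au-2\chi)(\alpha^\vee)>0$, one checks the $\sinh$ sign matches $|d_au(\beta^\vee)|$, giving the stated formula. The only genuinely delicate point is this bookkeeping of signs and the verification that the orientation conventions implicit in writing $\omega_{\diamondsuit,\bar\heartsuit}=i\gamma_\diamondsuit\wedge\bar\gamma_\heartsuit$ are consistent with the one used in Proposition~\ref{prop_relation_volume_forms}; everything else is a routine consequence of the cited results.
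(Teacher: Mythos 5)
Your route is the intended one: the paper offers no separate argument for Corollary~\ref{cor_MA}, treating it as the immediate consequence of Corollary~\ref{cor_curv} together with Proposition~\ref{prop_relation_volume_forms}, and your expansion $\omega^n=n!\det(H)\bigwedge_\diamondsuit\omega_{\diamondsuit,\bar\diamondsuit}$ for the block-diagonal Hermitian matrix $H$, followed by division by $dV_H=e^{2\sum_{\alpha\in\Phi_{Q^u}}\alpha(a)}\bigwedge_\diamondsuit\omega_{\diamondsuit,\bar\diamondsuit}$, is exactly that computation; the bookkeeping of the factors $2^{2r+|\Phi_{Q^u}|}$, of the exponentials, and of $J_H$ is correct.

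The one step that does not hold up as you argue it is the sign absorption. The corollary assumes only that $q$ is a smooth $K$-invariant metric, so $\omega$ need not be positive, $\omega^n$ need not be a volume form, and neither ``$\omega^n/dV_H$ is a positive real number'' nor ``$-e^{2\alpha}(d_au-2\chi)(\alpha^{\vee})>0$'' is available; invoking them is circular with respect to what you are trying to prove. What your computation actually establishes is the identity with the signed factors $\prod_{\beta\in\Phi_s^+} d_au(\beta^{\vee})/\sinh(2\beta(a))$ in place of $\prod_{\beta}|d_au(\beta^{\vee})|/J_H(a)$. The two expressions agree precisely when each quotient $d_au(\beta^{\vee})/\sinh(2\beta(a))$ is nonnegative (or at least when the product of their signs is $+1$); this is automatic when $\omega$ is positive, since these quotients are then the diagonal curvature coefficients, and it also holds in the situation where the corollary is actually used (Proposition~\ref{prop_inthoro}): there $u$ is convex and $\bar{W}$-invariant, so $d_au(\beta^{\vee})$ and $\beta(a)$ have the same sign, and on $-\mathfrak{a}_s^+$ one may replace $|d_au(\beta^{\vee})|$ by $(-d_au)(\beta^{\vee})$. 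So either state the result with the signed quotient, or add the positivity/convexity hypothesis before passing to absolute values; as written, your justification of that last step would fail for a general $K$-invariant metric.
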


\begin{exa}
\label{exa_MA_AIII}
Consider the example of symmetric space of type AIII($2$, $m>4$). 
We choose as basis $l_1, l_2$ the basis dual to $(\bar{\alpha}_{1,2},\bar{\alpha}_{2,3})$.
We write $a=a_1l_1+a_2l_2$ and $d_au=u_1(a)\bar{\alpha}_1+u_2(a)\bar{\alpha}_2$.
We check easily that 
$\mathcal{P}(\alpha_{1,2}^{\vee})=\bar{\alpha}_{1,2}^{\vee}$, 
$\mathcal{P}(\alpha_{1,m-1}^{\vee})=\bar{\alpha}_{1,m-1}^{\vee}$,
$\mathcal{P}(\alpha_{1,m}^{\vee})=\bar{\alpha}_{1,3}^{\vee}$,
$\mathcal{P}(\alpha_{2,m-1}^{\vee})=\bar{\alpha}_{2,3}^{\vee}$, 
$\mathcal{P}(\alpha_{1,k}^{\vee})=\bar{\alpha}_{1,m}^{\vee}$ and 
$\mathcal{P}(\alpha_{2,k}^{\vee})=\bar{\alpha}_{2,m-1}^{\vee}$ 
for $3\leq k \leq n-2$. Hence we may compute, under the assumption 
that $\chi$ is zero, that at $\exp(a)H$, $\omega^n/dV_H$ is equal 
to $n!/2$ times 
\[
\frac{(u_{1,1}u_{2,2}-u_{1,2}^2)(2u_1-u_2)^2u_1^{2m-7}u_2^2(u_2-u_1)^{2m-7}}
{\sinh(a_1)^2\sinh(a_1+a_2)^{2m-8}\sinh(2a_1+2a_2)\sinh(a_1+2a_2)^2\sinh(a_2)^{2m-8}\sinh(2a_2)}
\]
\end{exa}

Let us now illustrate on examples how the other terms in the curvature form 
may appear. 

\begin{exa}
\label{exa_curv_P1xP1}
Consider again $\mathbb{P}^1\times \mathbb{P}^1\setminus \mathrm{diag}(\mathbb{P}^1)$
equipped with the diagonal action of $SL_2$, and with the linearized line bundle 
$\mathcal{O}(k,m)$.
Then we can take $\beta=\alpha_{1,2}$, 
$e_{\beta}=(\begin{smallmatrix}0&1\\0&0\end{smallmatrix})$ 
and $l_1=\beta^{\vee}=(\begin{smallmatrix} 1&0\\0&-1\end{smallmatrix})$. 
We may further consider $u$ as a function of a single variable $t$ by 
writing $a=(\begin{smallmatrix}t&0\\0&-t\end{smallmatrix})$ and we get 
\[
\omega_{\exp(a)H}=\frac{u''(t)}{4}\omega_{1,\bar{1}}
+2(m-k)(1-\tanh^2(2t))(\omega_{1,\bar{\beta}}+\omega_{\beta,\bar{1}})
+\frac{u'(t)}{\sinh(4t)}\omega_{\beta,\bar{\beta}}
\]
\end{exa}

\begin{exa}
Consider the symmetric space of type AIII($1$, $3$). 
It admits the non-trivial character $\chi:(a_{i,j})\mapsto a_{1,1}+a_{1,3}$. 
For a metric on the line bundle corresponding to this character, 
we get for example 
\[
\mathcal{R}([\theta\sigma(e_{\alpha_{1,3}}),e_{\alpha_{1,3}}]=
\begin{pmatrix}0&0&1/2\\0&0&0\\1/2&0&0\end{pmatrix}
\]
hence a non-trivial contribution in $\Omega_{\alpha_{1,3},\bar{\alpha}_{1,3}}$ 
which is equal to 
\[
\frac{d_au(\alpha_{1,3}^{\vee})}{\sinh(2\alpha_{1,3}(a))}-\frac{1}{\cosh(2\alpha_{1,3}(a))}.
\]
\end{exa}

\begin{exa}
Consider the symmetric space $G/G^{\sigma}$ of type AIII($2$, $4$). 
It admits the non-trivial character $\chi:(a_{i,j})\mapsto a_{1,1}+a_{2,2}+a_{2,3}+a_{1,4}$, 
at the Lie algebra level and for $(a_{i,j})\in \mathfrak{h}$. 
For a metric on the line bundle corresponding to this character, 
we have for example 
$[\theta(e_{\alpha_{3,4}}),e_{\alpha_{2,4}}]=e_{\alpha_{2,3}}$,  
$[\theta(e_{\alpha_{3,4}}),\sigma(e_{\alpha_{2,4}})]=-e_{\alpha_{4,1}}$
and $\chi(e_{\alpha_{2,3}})=\chi\circ\mathcal{H}(e_{\alpha_{2,3}})=1/2$,
$\chi(-e_{\alpha_{4,1}})=\chi\circ\mathcal{H}(-e_{\alpha_{4,1}})=-1/2$,
hence, writing $a=\mathrm{diag}(t_1,t_2,-t_2,-t_1)$ we have 
\[
\Omega_{\alpha_{2,4},\bar{\alpha}_{3,4}}=
\frac{-1}{2\cosh(2t_1)\cosh(2t_2)}.
\]
\end{exa}

\begin{exa}
Consider again example~\ref{exa_horosym}. Using the same notations, 
we have a non-trivial character $\chi$ which associates $a+b$ to 
any element of $H$.
We have 
$[\theta(e_{\alpha_{2,3}}),e_{\alpha_{1,3}}]=e_{\alpha_{1,2}}$, 
and $\chi(e_{\alpha_{1,2}})=\chi\circ\mathcal{H}(e_{\alpha_{1,2}})=1/2$, 
hence we have 
\[\Omega_{\alpha_{1,3},\bar{\alpha}_{2,3}}=\frac{1}{2\cosh(2t)}\]
at the point $\exp(\mathrm{diag}(t,-t,0))H$.
We  check also that 
\[
\Omega_{\alpha_{2,3},\bar{\alpha}_{2,3}}=e^{2t}(2-u'(t))/4.
\]
\end{exa}

\subsection{Proof of Theorem~\ref{thm_curv}}

\mbox{}

\textbf{Step 1}

Recall that $\pi$ denotes the quotient map $G\rightarrow G/H$. 
By definition of the quasipotential $\phi : G \rightarrow \mathbb{R}$ of $q$, 
$i\partial \overline{\partial} \phi$ is the curvature form of $\pi^*q$. 
Furthermore, this curvature form coincides with $\pi^*\omega$.

Let $f_{\diamondsuit}\in \mathfrak{g}$ be any of the elements $l_j$, $e_{\alpha}$ or $\tau_{\beta}$ 
for $1\leq j\leq r$, $\alpha\in \Phi_{Q^u}$ or $\beta\in \Phi_s^+$. 
Identifying $\mathfrak{g}$ with $T^{(1,0)}_eG$, we build a global $G$-invariant 
$(1,0)$ holomorphic vector fields $\eta_{\diamondsuit}$ by setting 
$(\eta_{\diamondsuit})_g=g_*f_{\diamondsuit}\in T^{(1,0)}_gG$. 
Then 
\[
\pi^*\omega_g(\eta_{\diamondsuit},\bar{\eta}_{\heartsuit}) = 
\left.i \frac{\partial^2}{\partial z_{\diamondsuit} \partial \bar{z}_{\heartsuit}}\right|_0
\phi(g\exp(z_{\diamondsuit}f_{\diamondsuit} + z_{\heartsuit}f_{\heartsuit})).
\]

By definition, the set of all direct images $\pi_*\eta_{\diamondsuit}$ at $\exp(a)$ 
provides a basis of $T^{(1,0)}_{\exp(a)H}G/H$ which coincides with the dual basis to
the basis formed by the $(\gamma_{\diamondsuit})_{\exp(a)H}$ in 
$\Omega^{(1,0)}_{\exp(a)H}G/H$.
We thus have 
\begin{align*}
\Omega_{\diamondsuit,\bar{\heartsuit}} & = -i \omega_{\exp(a)H}(\pi_*\eta_{\diamondsuit},\pi_*\bar{\eta}_{\heartsuit}) \\
& = - i (\pi^*\omega)_{\exp(a)}(\eta_{\diamondsuit},\bar{\eta}_{\heartsuit}) \\
& = \left. \frac{\partial^2}{\partial z_{\diamondsuit} \partial \bar{z}_{\heartsuit}}\right|_0
\phi(\exp(a)\exp(z_{\diamondsuit}f_{\diamondsuit} + z_{\heartsuit}f_{\heartsuit})).
\end{align*}

\textbf{Step 2}

Set $D=z_{\diamondsuit}f_{\diamondsuit} + z_{\heartsuit}f_{\heartsuit}$.
Using Lemma~\ref{lem_prep_2}, we write 
\[
\exp(D)= \exp(-a)k\exp(a+y+O)\exp(h)
.\]
Then 
\begin{align*}
\phi(\exp(a)\exp(D)) & = \phi(k\exp(a+y+O)\exp(h)) \\
\intertext{by the equivariance property of the quasipotential (Proposition~\ref{prop_equivariance_quasipotential}), 
this is}
& = \phi(\exp(a+y+O))-2\ln|\chi(\exp(h))|
\end{align*}
Recall from Lemma~\ref{lem_prep_2} and the notations introduced before this lemma 
that $y=B_D+B_E$ and $\exp(h)=\exp(C_E)\exp(C_D)$ where 
$E=\frac{1}{2}([B_D,D]+[C_D,B_D]+[C_D,D])$
and $B_D$, $C_D$, $B_E$, $C_E$ are provided by Lemma~\ref{lem_prep1}.  
Note that 
\begin{align*}
\ln|\chi(\exp(C_E)\exp(C_D))| &= \ln|\chi(\exp(C_E))|+\ln|\chi(\exp(C_D))| \\
& = \ln|e^{\chi(C_E)}|+\ln|e^{\chi(C_D)}| \\
\intertext{where we still denote by $\chi$ the Lie algebra character 
$\mathfrak{h}\rightarrow \mathbb{C}$ induced by $\chi$,}
& = \mathrm{Re}(\chi(C_E)+\chi(C_D)) \\
& = \mathrm{Re}(\chi(C_E+C_D)). 
\end{align*}
We may now write 
\begin{align*}
\Omega_{\diamondsuit,\bar{\heartsuit}} & = 
\left.\frac{\partial^2}{\partial z_{\diamondsuit} \partial \bar{z}_{\heartsuit}}\right|_0 
\phi(\exp(a+B_D+B_E+O))-2 \ln|\chi(\exp(C_E)\exp(C_D))| \\
& = 
\left.\frac{\partial^2}{\partial z_{\diamondsuit} \partial \bar{z}_{\heartsuit}}\right|_0 
\phi(\exp(a+B_D+B_E))-2 \ln|\chi(\exp(C_E)\exp(C_D))| \\
& = 
\left.\frac{\partial^2}{\partial z_{\diamondsuit} \partial \bar{z}_{\heartsuit}}\right|_0 
(u(a+B_D+B_E)-2\ \mathrm{Re}(\chi(C_E+C_D)).
\end{align*}
Note that here the term $O$ denoted terms of order strictly higher than two 
in $(z_{\diamondsuit},z_{\heartsuit})$, which become negligeable in our computation.
Actually, other terms will be negligible and we will now denote by $O$ a sum of terms 
(which may change from line to line) each with a factor among 
$z_{\diamondsuit}^2$, $\bar{z}_{\diamondsuit}^2$, $z_{\diamondsuit}\bar{z}_{\diamondsuit}$,
$z_{\heartsuit}^2$, $\bar{z}_{\heartsuit}^2$, $z_{\heartsuit}\bar{z}_{\heartsuit}$, 
$z_{\diamondsuit}z_{\heartsuit}$ or $\bar{z}_{\diamondsuit}\bar{z}_{\heartsuit}$.

\textbf{Step 3}

The case by case computation follows.

1) 
Consider the case $D=z_1l_{j_1}+z_2l_{j_2}$, then 
we have $B_D=(z_1+\bar{z}_1)l_{j_1}/2+(z_2+\bar{z}_2)l_{j_2}/2$ and 
$B_E=C_E=C_D=0$ hence 
\[
\Omega_{j_1,\bar{j}_2} =
\frac{1}{4} d^2_a u(l_{j_1},l_{j_2}). 
\]

2) 
Consider the case $D=z_1e_{\alpha}+z_2l_j$.
By Lemma~\ref{lem_prep1}, we have $B_D= \Re(z_2l_j)$ and 
$C_D=-e^{2\alpha(a)}\theta(z_1 e_{\alpha})$.
We now compute $E=([B_D,D]-[B_D,C_D]+[C_D,D])/2$: 
\[ 
2[B_D,D]=O-z_1\bar{z}_2\alpha(\theta(l_j))e_{\alpha},
\]
\[
2[B_D,C_D]=z_2\bar{z}_1\alpha(l_j)e^{2\alpha(a)}\theta(e_{\alpha})+O,
\]
\[
[C_D,D]=-z_2\bar{z}_1\alpha(l_j)e^{2\alpha(a)}\theta(e_{\alpha})+O,
\]
hence
\[
E = \frac{1}{4}z_1\bar{z}_2\alpha(l_j)e_{\alpha} - \frac{3}{4}
z_2\bar{z}_1\alpha(l_j)e^{2\alpha(a)}\theta(e_{\alpha})+O.
\]
Using Lemma~\ref{lem_prep1} again we check that $B_E=O$ is negligible and 
\[
C_E=-2z_2\bar{z}_1\alpha(l_j)e^{2\alpha(a)}\theta(e_{\alpha})+O.
\]
Since $\theta(e_{\alpha})$ is in the Lie algebra of the unipotent radical of 
$H$, we have $\chi(\theta(e_{\alpha}))=0$, we may thus end the 
computation and obtain
\[
\Omega_{j,\bar{\alpha}}=\Omega_{\alpha,\bar{j}}=0.
\]

3) Consider the case $D=z_1e_{\alpha_1}+z_2e_{\alpha_2}$.
We have $B_D=0$ and 
\[
C_D=-e^{2\alpha_1(a)}\theta(z_1e_{\alpha_1})-e^{2\alpha_2(a)}\theta(z_2e_{\alpha_2}).
\]
Then $E=[C_D,D]/2$ is equal to 
\[
E = O -e^{2\alpha_1(a)}\bar{z}_1z_2[\theta(e_{\alpha_1}),e_{\alpha_2}]/2 
 -e^{2\alpha_2(a)}z_1\bar{z}_2[\theta(e_{\alpha_2}),e_{\alpha_1}]/2 
\]
We then need to treat several cases separately, depending on $\alpha_1-\alpha_2$. 

3.i) 
If $\alpha_1=\alpha_2=\alpha$ then we have 
\[
E = -\frac{1}{2}e^{2\alpha(a)}(\bar{z}_1z_2+z_1\bar{z}_2)\alpha^{\vee} +O
\]
hence 
\[
B_E =-\frac{1}{2}e^{2\alpha(a)}(\bar{z}_1z_2+z_1\bar{z}_2)\mathcal{P}(\alpha^{\vee}) +O
\] 
and 
\[
C_E=-\frac{1}{2}e^{2\alpha(a)}(\bar{z}_1z_2+z_1\bar{z}_2)\mathcal{H}(\alpha^{\vee}) +O. 
\]
Since $\chi$ is trivial on the unipotent radical of $H$, we have 
\[
\mathrm{Re}(\chi(C_D+C_E))=\mathrm{Re}(\chi(C_E))=
\chi\circ \Re(C_E)=\chi(C_E).
\]
We then end the computation to obtain 
\begin{align*}
\Omega_{\alpha,\bar{\alpha}} & = \frac{-1}{2}e^{2\alpha(a)}(d_au(\alpha^{\vee})
-2\chi(\alpha^{\vee})) 
\end{align*}

3.ii) 
If $\alpha_2-\alpha_1 \in \Phi_L^{\sigma}$ then we get $B_E=O$ and $C_E=O+E$.
Furthermore, $[\theta(e_{\alpha_1}),e_{\alpha_2}]\in 
\mathfrak{g}_{\alpha_2-\alpha_1}\subset [\mathfrak{h},\mathfrak{h}]$ 
(consider $[[\theta(e_{\alpha_2-\alpha_1}),e_{\alpha_2-\alpha_1}],e_{\alpha_2-\alpha_1}]$)
hence $\chi([\theta(e_{\alpha_1}),e_{\alpha_2}])=0$, and the same holds for 
$[\theta(e_{\alpha_2}),e_{\alpha_1}]$. 
We can then end the computation and obtain  
\[
\Omega_{\alpha_1,\bar{\alpha_2}}=0.
\]

3.iii) 
If $\alpha_2-\alpha_1 \in \Phi_L\setminus \Phi_L^{\sigma}$ then 
we have $B_E=O$ and 
\begin{align*}
C_E  = & O -e^{2\alpha_1(a)}\bar{z}_1z_2\mathcal{H}([\theta(e_{\alpha_1}),e_{\alpha_2}])/2 \\
& -e^{2\alpha_2(a)}z_1\bar{z}_2\mathcal{H}([\theta(e_{\alpha_2}),e_{\alpha_1}])/2 \\
&  +\tanh((\alpha_2-\alpha_1)(a))\Re(-e^{2\alpha_1(a)}\bar{z}_1z_2\mathcal{H}([\theta(e_{\alpha_1}),e_{\alpha_2}])/2)) \\
&  +\coth((\alpha_2-\alpha_1)(a))\Im(-e^{2\alpha_1(a)}\bar{z}_1z_2\mathcal{H}([\theta(e_{\alpha_1}),e_{\alpha_2}])/2)) \\
&  +\tanh((\alpha_1-\alpha_2)(a))\Re(-e^{2\alpha_2(a)}\bar{z}_2z_1\mathcal{H}([\theta(e_{\alpha_2}),e_{\alpha_1}])/2)) \\
&  +\coth((\alpha_1-\alpha_2)(a))\Im(-e^{2\alpha_2(a)}\bar{z}_2z_1\mathcal{H}([\theta(e_{\alpha_2}),e_{\alpha_1}])/2)). 
\end{align*}
As a consequence, 
\begin{align*}
\Re(C_E) = & O -e^{2\alpha_1(a)}\Re(\bar{z}_1z_2\mathcal{H}([\theta(e_{\alpha_1}),e_{\alpha_2}]))/2 \\
& -e^{2\alpha_2(a)}\Re(z_1\bar{z}_2\mathcal{H}([\theta(e_{\alpha_2}),e_{\alpha_1}]))/2 \\
&  +\tanh((\alpha_2-\alpha_1)(a))\Re(-e^{2\alpha_1(a)}\bar{z}_1z_2\mathcal{H}([\theta(e_{\alpha_1}),e_{\alpha_2}])/2)) \\
&  +\tanh((\alpha_1-\alpha_2)(a))\Re(-e^{2\alpha_2(a)}\bar{z}_2z_1\mathcal{H}([\theta(e_{\alpha_2}),e_{\alpha_1}])/2)). 
\end{align*}
We then check by computation that 
\begin{align*} 
\Omega_{\alpha_1,\bar{\alpha}_2} & = 
\frac{1}{2}\Big{(}
e^{2\alpha_2(a)}(1+\tanh((\alpha_1-\alpha_2)(a))) \\ 
& \qquad 
+e^{2\alpha_1(a)}(1+\tanh((\alpha_2-\alpha_1)(a))) 
\Big{)}
\chi\circ\mathcal{H}([\theta(e_{\alpha_2}),e_{\alpha_1}])  \\
& = \frac{ 2 \chi([\theta(e_{\alpha_2},e_{\alpha_1}])}{(e^{-2\alpha_1(a)}+e^{-2\alpha_2(a)})}.
\end{align*}

3.iv) 
If $\alpha_2-\alpha_1 \in \Phi\setminus \Phi_L$, say 
$\alpha_1-\alpha_2\in \Phi_{P^u}$ for example, then $B_E=O$ 
and 
\begin{align*}
C_E & = O + \frac{-e^{2\alpha_2(a)}}{2}\bar{z}_2z_1[\theta(e_{\alpha_2}),e_{\alpha_1}] \\
& \quad - e^{2(\alpha_1-\alpha_2)(a)}\theta(\frac{-e^{2\alpha_1(a)}}{2}\bar{z}_1z_2[\theta(e_{\alpha_1}),e_{\alpha_2}]).
\end{align*}
Since $[\theta(e_{\alpha_2}),e_{\alpha_1}]$ is in the Lie algebra of the 
unipotent radical of $H$ we have $\chi([\theta(e_{\alpha_2}),e_{\alpha_1}])=0$
hence 
\[
\Omega_{\alpha_1,\bar{\alpha}_2}=0.
\]

3.v) 
Finally, if $\alpha_2-\alpha_1 \notin \Phi$, then we have 
$[\theta(e_{\alpha_1}),e_{\alpha_2}] = [\theta(e_{\alpha_2}),e_{\alpha_1}] =0$
hence $B_E=O$ and $C_E=O$, and we deduce 
\[
\Omega_{\alpha_1,\bar{\alpha}_2}=0.
\]

4) 
Consider now the case $D=z_1l_j+z_2\tau_{\beta}$.
Then $B_D=\Re(z_1l_j)$ and 
\[
C_D=\tanh(\beta(a))\Re(z_2\mu_{\beta}) +
\coth(\beta(a))\Im(z_2\mu_{\beta}).
\]
We compute 
\[
[B_D,D]=O+\bar{z}_1z_2\beta(l_j)\mu_{\beta}/2
\]
\begin{align*}
[C_D,B_D]=& O+\frac{z_1\bar{z}_2}{4}\beta(l_j)(\coth(\beta(a))-\tanh(\beta(a)))\theta(\tau_{\beta}) \\
	& -\frac{\bar{z}_1z_2}{4}\beta(l_j)(\tanh(\beta(a))+\coth(\beta(a))\tau_{\beta}
\end{align*}
and 
\[
[C_D,D]=O+\frac{z_1\bar{z}_2}{2}\beta(l_j)(\coth(\beta(a)-\tanh(\beta(a))\theta(\tau_{\beta}).
\]
From these computations we deduce 
\begin{align*}
E = & O + \bar{z}_1z_2 \frac{\beta(l_j)}{2}(\mu_{\beta}-\frac{\tanh(\beta(a)+\coth(\beta(a))}{2}\tau_{\beta}) \\
& +z_1\bar{z}_2\frac{3\beta(l_j)}{4}(\coth(\beta(a))-\tanh(\beta(a)))\theta(\tau_{\beta})).
\end{align*}
We then have $B_E=O$ and 
\begin{align*}
\Re(C_E) & = O+\frac{\beta(l_j)}{2} \Re(\bar{z}_1z_2\mu_{\beta}) \\
& \quad + \tanh(\beta(a))\Re(\bar{z}_1z_2\frac{-\beta(l_j)}{4}(\tanh(\beta(a))+\coth(\beta(a)))\mu_{\beta}) \\
& \quad + \tanh(-\beta(a))\Re(z_1\bar{z}_2\frac{3\beta(l_j)}{4}(\coth(\beta(a))-\tanh(\beta(a)))\theta(\mu_{\beta})) \\
& = O+\beta(l_j)(1-\tanh^2(\beta(a)))\Re(\bar{z}_1z_2\mu_{\beta}) 
\end{align*}
since $\Re(z_1\bar{z}_2\theta(\mu_{\beta}))=-\Re(\theta(z_1\bar{z}_2\theta(\mu_{\beta}))=-\Re(\bar{z}_1z_2\mu_{\beta})$.
We may thus finish the computation to obtain
\[ 
\Omega_{j,\bar{\beta}} = \beta(l_j)(1-\tanh^2(\beta(a))) \chi(\theta(\mu_{\beta})).
\]

5) 
Consider the case $D=z_1\tau_{\beta}+z_2e_{\alpha}$.
Then we have $B_D=0$ and  
\[
C_D=-e^{2\alpha(a)}\theta(z_2e_{\alpha})+
\tanh(\beta(a))\Re(z_1\mu_{\beta}) +
\coth(\beta(a))\Im(z_1\mu_{\beta}).
\]
Then $E=[C_D,D]/2$, 
which is equal to
\[
\frac{z_1\bar{z}_2}{2}e^{2\alpha(a)}[\tau_{\beta},\theta(e_{\alpha})]
+\frac{-\bar{z}_1z_2}{4}(\coth(\beta(a))-\tanh(\beta(a)))[e_{\alpha},\theta(\mu_{\beta})] +O.
\]
We then remark that 
$[\tau_{\beta},\theta(e_{\alpha})]\in 
\mathfrak{g}_{-\alpha+\beta}\oplus\mathfrak{g}_{-\alpha+\sigma(\beta)}$
and 
$[e_{\alpha},\theta(\mu_{\beta})]\in 
\mathfrak{g}_{\alpha-\beta}\oplus\mathfrak{g}_{\alpha-\sigma(\beta)}$.
It is impossible for $-\alpha+\beta$ as well as for 
$-\alpha+\sigma(\beta)$ to lie in $\Phi_L$ (write these roots in a basis of 
simple roots adapted to $P$ to check this assertion).
Hence we obtain that $C_E$ is a sum of a negligible term and 
a term in 
$\mathfrak{g}_{-\alpha+\beta}\oplus\mathfrak{g}_{-\alpha+\sigma(\beta)}
\oplus \mathfrak{g}_{\alpha-\beta)}\oplus\mathfrak{g}_{\alpha-\sigma(\beta)} \cap \mathfrak{h}$.
Since this last space is contained in the Lie algebra of the unipotent 
radical of $H$, we obtain that $\chi\circ\Re(C_E)$ is negligible,
hence
\[
\Omega_{\beta,\bar{\alpha}}=0.
\]

6) 
Consider finally the case $D=z_1\tau_{\beta_1}+z_2\tau_{\beta_2}$.
Then we have $B_D=0$ and  
\begin{align*}
C_D = &
\tanh(\beta_1(a))\Re(z_1\mu_{\beta_1}) +
\coth(\beta_1(a))\Im(z_1\mu_{\beta_1}) \\
& +\tanh(\beta_2(a))\Re(z_2\mu_{\beta_2}) +
\coth(\beta_2(a))\Im(z_2\mu_{\beta_2}).
\end{align*}
Then in view of the relation $\coth(x)-\tanh(x)=2/\sinh(2x)$, we have 
\[E= 
\frac{\bar{z}_2z_1[\theta(\mu_{\beta_2}),\tau_{\beta_1}]}{2\sinh(2\beta_2(a))} 
 +\frac{\bar{z}_1z_2[\theta(\mu_{\beta_1}),\tau_{\beta_2}]}{2\sinh(2\beta_1(a))} +O
\]
We separate in two cases the end of the computation.

6.i) 
If $\beta_1\neq \beta_2$ then, note that for $1\leq j\neq k\leq 2$, 
we have 
\[
[\theta(\mu_{\beta_k}),\tau_{\beta_j}]=2\mathcal{P}([\theta(e_{\beta_k}),e_{\beta_j}])+
2\mathcal{P}([\theta\sigma(e_{\beta_k}),e_{\beta_j}])
\]
where $[e_{\beta_j},\theta(e_{\beta_k})]\in \mathfrak{g}_{\beta_j-\beta_k}$
and $[e_{\beta_j},\theta\sigma(e_{\beta_k})]\in\mathfrak{g}_{\beta_j-\sigma(\beta_k)}$.
It shows that $B_E$ is negligible, and that $\Re(C_E)$ is equal to 
the sum of a negligible term and 
\begin{align*}
&\frac{\tanh(\beta_2(a)-\beta_1(a))}{\sinh(2\beta_1(a))}\Re(\bar{z}_1z_2\mathcal{H}([\theta(e_{\beta_1}),e_{\beta_2}])) \\
&+ \frac{\tanh(\beta_2(a)-\sigma(\beta_1)(a))}{\sinh(2\beta_1(a))}\Re(\bar{z}_1z_2\mathcal{H}([\theta\sigma(e_{\beta_1}),e_{\beta_2}])) \\
&+\frac{\tanh(\beta_1(a)-\beta_2(a))}{\sinh(2\beta_2(a))}\Re(\bar{z}_2z_1\mathcal{H}([\theta(e_{\beta_2}),e_{\beta_1}])) \\
&+ \frac{\tanh(\beta_1(a)-\sigma(\beta_2)(a))}{\sinh(2\beta_2(a))}\Re(\bar{z}_2z_1\mathcal{H}([\theta\sigma(e_{\beta_2}),e_{\beta_1}])).
\end{align*}
We may rewrite this as 
\begin{align*}
&\tanh(\beta_2-\beta_1)(\frac{1}{\sinh(2\beta_1(a)}-\frac{1}{\sinh(2\beta_2(a)})\Re(\bar{z}_1z_2\mathcal{H}([\theta(e_{\beta_1}),e_{\beta_2}])) \\
&+ \tanh(\beta_1+\beta_2)(\frac{1}{\sinh(2\beta_1(a)}+\frac{1}{\sinh(2\beta_2(a)})\Re(\bar{z}_1z_2\mathcal{H}([\theta\sigma(e_{\beta_1}),e_{\beta_2}])).
\end{align*}

We compute now 
\begin{align*}
\Omega_{\beta_1,\bar{\beta}_2} & =
\frac{1}{2}\tanh(\beta_2(a)-\beta_1(a))(\frac{1}{\sinh(2\beta_1(a)}-\frac{1}{\sinh(2\beta_2(a)})\chi([\theta(e_{\beta_2}),e_{\beta_1}]) \\
& \quad + \frac{1}{2}\tanh(\beta_1+\beta_2)(\frac{1}{\sinh(2\beta_1(a)}+\frac{1}{\sinh(2\beta_2(a)})\chi([\theta(e_{\beta_2}),\sigma(e_{\beta_1})]) 
\end{align*}

6.ii) 
If $\beta_1 = \beta_2 = \beta$ then we have
\[
E=\frac{z_1\bar{z}_2+\bar{z}_1z_2}{\sinh(2\beta)}(\mathcal{P}(\beta^{\vee})+\mathcal{P}([\theta\sigma(e_{\beta}),e_{\beta}]))
\]
and thus 
\[
B_E = \frac{z_1\bar{z}_2+\bar{z}_1z_2}{\sinh(2\beta)}\mathcal{P}(\beta^{\vee})
\] 
and 
\begin{align*}
\Re(C_E)&=\frac{z_1\bar{z}_2+\bar{z}_1z_2}{\sinh(2\beta)}\tanh(\beta-\sigma(\beta))\Re\circ\mathcal{H}([\theta\sigma(e_{\beta}),e_{\beta}]) \\
&=\frac{z_1\bar{z}_2+\bar{z}_1z_2}{\cosh(2\beta)}\mathcal{H}\circ\Re([\theta\sigma(e_{\beta}),e_{\beta}]).
\end{align*} 
Hence 
\begin{align*}
\Omega_{\beta,\bar{\beta}}&= \frac{d_au(\beta^{\vee})}{\sinh(2\beta)} 
-\frac{2}{\cosh(2\beta)}\chi\circ\Re([\theta\sigma(e_{\beta}),e_{\beta}]).
\end{align*}

\section{Horosymmetric varieties}

\label{sec_varieties}

We move on to introduce horosymmetric varieties. We provide several examples and 
present the classification theory inherited from that of spherical varieties.
We then check the property that a $G$-invariant irreducible codimension one 
subvariety in a horosymmetric variety is still horosymmetric. 

\subsection{Definition and examples}

\begin{defn}
A \emph{horosymmetric variety} $X$ is a normal $G$-variety such that 
$G$ acts with an open dense orbit which is a \emph{horosymmetric homogeneous 
space}. 
\end{defn}

\begin{exa} 
By Example~\ref{exa_horospherical}, any 
\emph{horospherical variety} (see \cite{Pas08}) 
may be considered as a horosymmetric variety. 
It includes in particular generalized flag manifolds, toric varieties 
and homogeneous toric bundles.  
\end{exa}

\begin{exa}
\label{exa_P2_horospherical}
Consider the projective plane $\mathbb{P}^2$ equipped with the action 
of $\mathrm{SL}_2$ or $\mathrm{GL}_2$ induced by a choice of affine chart 
$\mathbb{C}^2$ in $\mathbb{P}^2$. There are three orbits under this action: 
the fixed point $\{0\}$, the open dense orbit $\mathbb{C}^2\setminus \{0\}$ 
and the projective line at infinity $\mathbb{P}^1$. The $\mathrm{GL}_2$-variety 
$\mathbb{P}^2$ is hence a horospherical variety by Example~\ref{exa_C2minus0}. 
We may further consider the blow up of $\mathbb{P}^2$ at the fixed point 
$\{0\}$ and lift the action of $\mathrm{GL}_2$ to check that this blow up is also
a horospherical variety.  
More generally, Hirzebruch surfaces have structures of $\mathrm{GL}_2$-horospherical 
varieties refining their toric structure.
\end{exa}

Assume $G=L$ is semisimple and $H=N_G(G^{\sigma})$.
Then the \emph{wonderful compactifications} of $G/H$ constructed by De Concini 
and Procesi \cite{DCP83} is a horosymmetric variety.
It is a particularly nice compactification of $G/H$ caracterized by 
the following properties. 
Let $r$ denote the rank of $G/H$ and set $I=\{1,\ldots,r\}$.

\begin{thm}[{\cite{DCP83}}]
\label{thm_dcp83}
The wonderful compactification $X$ of $G/H$ is the unique smooth $G$-equivariant 
compactification of $X$ such that:
\begin{itemize} 
\item $G$-orbit closures $X_J$ in $X$ are in bijection with subsets $J\subset I$ and 
\item all $X_J$ are smooth and intersect transversely, with $X_{J}=\bigcap_{j\in J} X_{\{j\}}$.
\end{itemize}  
Furthermore, for each $J$, there exists a parabolic subgroup $P_J$ of $G$, with $\sigma$-stable 
semisimple Levi factor $L'_J$, and an equivariant fibration $X_J\rightarrow G/P_J$ 
with fiber the wonderful compactification of $L'_J/N_{L'_J}((L'_J)^{\sigma})$.
\end{thm}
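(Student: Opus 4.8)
This statement is the theorem of De Concini and Procesi \cite{DCP83}, so the plan is to recall the shape of their argument, which splits into an explicit construction yielding existence together with all the stated geometric properties, and an application of the Luna--Vust classification yielding uniqueness. For the construction, the plan is to reduce to $G$ adjoint (legitimate, since by Proposition~\ref{prop_normalizer} the subgroup $N_G(G^{\sigma})$ contains $Z(G)$, so $G/N_G(G^{\sigma})$ is a quotient of $G_{ad}$) and then to fix a faithful irreducible representation $V$ of $G$ whose highest weight $\lambda$ is regular and ``special'', meaning that $V$ contains a line fixed by $H$; the existence of such a $\lambda$ is where semisimplicity is used. Spanning this line by a vector $v_0$, which decomposes as a sum of weight vectors for $\lambda$ and for the weights obtained from $\lambda$ by subtracting sums of simple restricted roots, one sets $X := \overline{G\cdot[v_0]}\subset \mathbb{P}(V)$. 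The crux is then the analysis of one explicit affine chart: one exhibits a Borel-stable affine open neighbourhood $X_0$ of $[v_0]$, isomorphic as a variety with $T_s$-action to a product $\mathbb{A}^r\times \mathbb{A}^{m}$ with $r+m=\dim(G/H)$, on which $T_s$ acts linearly with weights forming a basis of $\mathcal{M}\otimes \mathbb{R}$ on the first factor, in such a way that the $r$ coordinate hyperplanes of the $\mathbb{A}^r$ factor cut out the boundary $X\setminus G/H$ inside $X_0$, and such that the $G$-translates of $X_0$ cover $X$.

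All of the first batch of assertions then read off this chart. The variety $X$ is smooth because it is covered by copies of $\mathbb{A}^{\dim(G/H)}$; the $G$-orbits meeting $X_0$ are indexed by the subsets of $\{1,\dots,r\}$ recording which of the first $r$ coordinates vanish, and since the $G$-translates of $X_0$ cover $X$ this yields the bijection $J\leftrightarrow X_J$, the identity $X_J=\bigcap_{j\in J}X_{\{j\}}$, and the transversality of the $X_{\{j\}}$. For the fibration statement, one observes that imposing $z_j=0$ for all $j\in J$ in the chart leaves precisely the analogous chart for a smaller symmetric space: the root subgroups whose coordinates have been killed generate, together with the fixed Borel, a parabolic $P_J$ of $G$, whose Levi $L'_J$ is $\sigma$-stable and semisimple, and the residual structure on $X_J$ is that of the wonderful compactification of $L'_J/N_{L'_J}((L'_J)^{\sigma})$, parabolically induced over $G/P_J$. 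I expect this local model analysis --- in particular verifying that $X_0$ is a smooth product and that its $G$-translates exhaust $X$ --- to be the main technical obstacle, and the place where it is cleanest simply to defer to \cite{DCP83}.

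For uniqueness, the plan is to invoke the Luna--Vust classification of $G$-equivariant embeddings of the spherical homogeneous space $G/H$ by colored fans \cite{LV83,Kno91}. By Proposition~\ref{prop_colored_data} applied with $L=G$ and $H=N_G(G^{\sigma})$, the spherical lattice $\mathcal{M}$ is the restricted root lattice and the valuation cone $\mathcal{V}$ is the antidominant restricted Weyl chamber $-\mathfrak{a}_s^+$, which is a simplicial cone of dimension $r=|I|$ whose edges are generated by the basis of $\mathcal{N}$ dual to the simple restricted roots. Now let $X$ be any smooth complete $G$-equivariant embedding of $G/H$ whose orbit closures $X_J$ are in bijection with subsets $J\subset I$ and are smooth, transverse, with $X_J=\bigcap_{j\in J}X_{\{j\}}$. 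Then $X$ has exactly $r$ $G$-stable prime divisors $X_{\{1\}},\dots,X_{\{r\}}$, giving primitive elements $\mu_1,\dots,\mu_r\in \mathcal{N}$ lying in $\mathcal{V}$, and the transversal cubical incidence pattern together with completeness forces the colored fan of $X$ to be the fan of all faces of $\mathrm{cone}(\mu_1,\dots,\mu_r)$, without colors, with this cone equal to all of $-\mathfrak{a}_s^+$; smoothness then forces $\mu_1,\dots,\mu_r$ to be the primitive generators of its edges. Hence the colored fan of $X$ is uniquely determined, so $X$ is unique by Luna--Vust, and the explicit $X$ constructed above realizes this very fan --- again visibly from the toric shape of the chart $X_0$ --- which completes the argument.
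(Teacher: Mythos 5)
The paper itself offers no proof of this statement: Theorem~\ref{thm_dcp83} is recalled verbatim from \cite{DCP83}, so there is no internal argument to compare yours with. Judged on its own terms, your sketch of the construction is a faithful summary of De Concini--Procesi's approach (reduction to the adjoint case via Proposition~\ref{prop_normalizer}, choice of a regular special highest weight $\lambda$, the compactification as $\overline{G\cdot[v_0]}\subset\mathbb{P}(V)$, and the analysis of the distinguished affine cell whose $G$-translates cover $X$ and from which smoothness, the $2^r$ orbits indexed by coordinate vanishing, the transversality, and the parabolic-induction structure of the orbit closures are all read off). But you explicitly defer the entire local-model verification --- which is the mathematical content of the existence half, including the ``Furthermore'' clause about the fibrations $X_J\rightarrow G/P_J$ --- to the reference, so as a proof this half is a citation rather than an argument, which is in effect what the paper itself does.

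The uniqueness half is where you actually argue, and there is a genuine gap: the sentence claiming that ``the transversal cubical incidence pattern together with completeness forces the colored fan of $X$ to be the fan of all faces of $\mathrm{cone}(\mu_1,\dots,\mu_r)$, without colors, with this cone equal to all of $-\mathfrak{a}_s^+$'' is precisely the point that needs proof, and smooth complete \emph{colored} embeddings do exist in general (e.g.\ the product of Grassmannians for type AIII), so colors cannot be dismissed silently. The missing steps are: (i) since every orbit closure contains the unique closed orbit $X_I$, every colored cone of $\mathcal{F}_X$ is a face of the colored cone $(\mathcal{C}_I,\mathcal{R}_I)$ attached to $X_I$, so the fan is the face fan of one maximal colored cone; (ii) the $r$ codimension-one orbits give exactly $r$ one-dimensional cones, necessarily colorless and lying in $\mathcal{V}=-\mathfrak{a}_s^+$ (their relative interiors must meet $\mathcal{V}$, and a color image generating such a ray is excluded because simple restricted coroots do not lie in $-\mathfrak{a}_s^+$); (iii) one must then show $\mathcal{R}_I=\emptyset$ and that $\mathcal{C}_I$ is generated by these $r$ rays --- this is where the hypotheses ``$X_J=\bigcap_{j\in J}X_{\{j\}}$, smooth, transverse'' enter, e.g.\ via the local structure theorem at a point of the closed orbit, or via matching the count of $2^r$ colored faces against the face poset of $\mathcal{C}_I$; only then do completeness ($\mathcal{C}_I\supseteq\mathcal{V}$) and step (ii) ($\mathcal{C}_I\subseteq\mathcal{V}$) force $\mathcal{C}_I=-\mathfrak{a}_s^+$, with smoothness pinning the $\mu_j$ as a partial basis of $\mathcal{N}$ generating its extreme rays. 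Note also that one must separately verify that the valuation cone $-\mathfrak{a}_s^+$ is itself simplicial and spans $\mathcal{N}\otimes\mathbb{R}$ (true here, by Proposition~\ref{prop_colored_data} and the theory of restricted root systems), and that the variety constructed in the first half realizes exactly this colorless fan, so that Luna--Vust closes the loop. As written, your uniqueness paragraph asserts the conclusion of (iii) rather than proving it.
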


\begin{exa}
\label{exa_wonderful_AIII}
Consider the symmetric space $G/H$ of type AIII($2$, $m>4$).
Recall from Example~\ref{exa_normalizer_AIII} that $H=N_G(H)$. Using the description of $G/H$ as a dense 
orbit in the product of Grassmannians 
$X_0=\mathrm{Gr}_{2,m}\times \mathrm{Gr}_{m-2,m}$ as in Example~\ref{exa_invol_sl},
we obtain a first example of (horo)symmetric variety with open orbit 
$G/H$: this product of Grassmannians $X_0$ itself. 
It contains three orbits under the action of $G=\mathrm{SL}_m$: the dense 
orbit of pairs of linear subspaces in direct sum, the closed 
orbit of flags $(V_1,V_2)$ with $V_1\subset V_2$, and the codimension one orbit 
of pairs $(V_1,V_2)$ with $\mathrm{dim}(V_1\cap V_2)=1$.
One can blow up $X_0$ along the closed orbit to obtain another 
$G$-equivariant compactification $X$ of $G/H$. This new 
compactification $X$ is none other than the wonderful compactification 
of $G/H$. 
\end{exa}

In higher ranks, one may still obtain the wonderful compactifications 
from the product of Grassmannians, but this requires a more 
involved sequence of blow-ups.

From Theorem~\ref{thm_dcp83}, one sees the first 
examples of horosymmetric varieties that are neither horospherical 
nor symmetric. Indeed, the description of orbits in a wondeful 
compactification show that orbit closures in wonderful compactifications 
are all horosymmetric, with the only symmetric being the open orbit and 
the only horospherical being the closed one (actually a generalized flag 
manifold).

Since it applies only to $H=N_G(G^{\sigma})$, the construction of 
De Concini and Procesi does not exhaust the compactifications of 
symmetric spaces satisfying the properties of Theorem~\ref{thm_dcp83}, 
still called wonderful compactifications. 
The simplest example of wonderful compactification which is not 
in the examples studied by De Concini and Procesi is the following.
\begin{exa}
\label{exa_wond_P1xP1}
Consider the variety $\mathbb{P}^1\times \mathbb{P}^1$ equipped 
with the diagonal action of $\mathrm{SL}_2$. There are two orbits under 
this action: the diagonal embedding of $\mathbb{P}^1$ and its 
complement. The complement is open dense and isomorphic to the 
symmetric space $\mathrm{SL}_2/T$ where $T$ is a maximal torus of 
$\mathrm{SL}_2$. 

The wonderful compactification constructed by De Concini and Procesi 
and corresponding to this involution on the other hand is $\mathbb{P}^2$ 
seen as a compactification of $\mathrm{SL}_2/N_{\mathrm{SL}_2}(T)$ by 
a adding a quadric.
\end{exa}

\begin{exa}
Several papers expanded results valid on the wonderful compactification 
of a symmetric space to so-called \emph{complete symmetric varieties}
(see \emph{e.g.} \cite{DCP85,Bif90}), 
that is, smooth $G$-equivariant compactifications of $G/H$ that 
dominate the wonderful compactification. 
Any complete symmetric variety is a horosymmetric variety.
\end{exa}

\subsection{Combinatorial description of horosymmetric varieties}

\subsubsection{Colored fans}

As spherical varieties, horosymmetric varieties with open orbit $G/H$ 
are classified by colored fans for the spherical homogeneous space $G/H$, 
which are defined in terms of the combinatorial data $\mathcal{M}$, 
$\mathcal{V}$ and $\rho:\mathcal{D}\rightarrow \mathcal{N}$ 
(Recall these were described in Proposition~\ref{prop_colored_data}). 

\begin{defn}\mbox{}
\begin{itemize}
\item A \emph{colored cone} is a pair $(\mathcal{C},\mathcal{R})$, where 
$\mathcal{R}\subset \mathcal{D}$, $0\notin \rho(\mathcal{R})$, and 
$\mathcal{C}\subset \mathcal{N} \otimes \mathbb{Q}$ is a strictly convex cone 
generated by $\rho(\mathcal{R})$ and finitely many elements of $\mathcal{V}$
such that the intersection of the relative interior of $\mathcal{C}$ with 
$\mathcal{V}$ is not empty.
\item Given two colored cones $(\mathcal{C},\mathcal{R})$ and 
$(\mathcal{C}_0,\mathcal{R}_0)$, we say that $(\mathcal{C}_0,\mathcal{R}_0)$ is a 
\emph{face} of $(\mathcal{C},\mathcal{R})$ if $\mathcal{C}_0$ is a face of 
$\mathcal{C}$ and $\mathcal{R}_0=\mathcal{R}\cap \rho^{-1}(\mathcal{C}_0)$.
\item A \emph{colored fan} is a non-empty finite set $\mathcal{F}$ of colored cones 
such that the face of any colored cone in $\mathcal{F}$ is still in $\mathcal{F}$, 
and any $v\in \mathcal{V}$ is in the relative interior of at most one cone.
\end{itemize}
\end{defn}

An equivariant \emph{embedding} $(X,x)$ of $G/H$ is the data of a horosymmetric variety $X$ 
and a base point $x\in X$ such that $\overline{G\cdot x}=X$ and $\mathrm{Stab}_G(x)=H$. 

\begin{thm}[{\cite[Theorem 3.3 and Theorem 4.2]{Kno91}}]
\label{thm_class}
There is a bijection $(X,x) \longmapsto \mathcal{F}_X$ between embeddings of $G/H$ up to 
$G$-equivariant isomorphism and colored fans. 
There is a bijection $Y\mapsto (\mathcal{C}_Y, \mathcal{R}_Y)$ 
between the orbits of $G$ in $X$, and the colored cones in $\mathcal{F}_X$. 
An orbit $Y$ is in the closure of another orbit $Z$ in $X$ if and only if 
the colored cone $(\mathcal{C}_Z, \mathcal{R}_Z)$ is a face of 
$(\mathcal{C}_Y, \mathcal{R}_Y)$.
The variety $X$ is complete if and only if the support 
$|\mathcal{F}_X|=\bigcup_{(\mathcal{C},\mathcal{R})\in\mathcal{F}_X}\mathcal{C}$ 
contains the valuation cone $\mathcal{V}$.
\end{thm}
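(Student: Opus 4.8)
This statement is an instance of the Luna--Vust classification of embeddings of a spherical homogeneous space, applied to $G/H$: recall that, being a parabolic induction from a symmetric space, $H$ is a spherical subgroup of $G$, so the general theory of \cite{LV83,Kno91} applies verbatim once the combinatorial invariants $\mathcal{M}$, $\mathcal{V}$ and $\rho\colon\mathcal{D}\to\mathcal{N}$ are known, which is the content of Proposition~\ref{prop_colored_data}. The plan is to recall the main steps of that theory, taking for granted the structural results about spherical varieties it rests on.

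First I would reduce to \emph{simple} embeddings, those with a single closed $G$-orbit, the general case being obtained by covering an embedding with its simple open subembeddings. To a simple embedding $(X,x)$ with closed orbit $Y$ one attaches the colored cone $(\mathcal{C}_Y,\mathcal{R}_Y)$, where $\mathcal{R}_Y$ is the set of colors whose closure in $X$ contains $Y$, and $\mathcal{C}_Y$ is the cone generated by $\rho(\mathcal{R}_Y)$ together with the images in $\mathcal{N}$ of the $G$-invariant prime divisors of $X$ through $Y$; the fact that $Y$ is a single orbit forces strict convexity of $\mathcal{C}_Y$ and forces its relative interior to meet $\mathcal{V}$. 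Conversely, from a colored cone $(\mathcal{C},\mathcal{R})$ one builds a simple embedding by first removing from $G/H$ the closures of the colors not in $\mathcal{R}$ to get a $B$-stable affine open set, realizing it (via the local structure theorem) as the spectrum of the algebra of functions in $\mathbb{C}(G/H)$ whose $B$-weights lie in $\mathcal{C}^{\vee}\cap\mathcal{M}$, and then taking the $G$-saturation. One checks that two simple embeddings are $G$-isomorphic over $G/H$ exactly when their colored cones agree, which gives the bijection in the simple case; the orbit--cone correspondence and the face $\Longleftrightarrow$ closure statement drop out of the same local analysis, faces of $\mathcal{C}_Y$ being matched with the orbits in the closure of the dense orbit of the $B$-chart.

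Gluing then concludes: an arbitrary embedding of $G/H$ is covered by one simple open subembedding per $G$-orbit, and the compatibility of two of them over an open subset translates precisely into their colored cones sharing a common face, so that the collection of all these colored cones is a colored fan; conversely a colored fan prescribes a gluing of the associated simple embeddings, and the two procedures are inverse to one another. Completeness, finally, is read off via the valuative criterion of properness: every $G$-invariant $\mathbb{Q}$-valued valuation of $\mathbb{C}(G/H)$ both determines and is realized by a point of $\mathcal{V}$ (Knop's injectivity theorem), and such a valuation admits a center on $X$ if and only if the corresponding point lies in some cone of $\mathcal{F}_X$; hence $X$ is complete if and only if $\mathcal{V}\subseteq|\mathcal{F}_X|$.

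The genuine difficulty is not in these formal steps but in the structural input underlying them --- finiteness of the set of $B$-orbits on $G/H$, the injection of $G$-invariant valuations into $\mathcal{N}\otimes\mathbb{Q}$ with image the negative Weyl chamber identified in Proposition~\ref{prop_colored_data}, and the local structure theorem permitting the passage between $G$-varieties and affine $B$-charts. These are exactly the results of \cite{LV83,Kno91} that we invoke; the only part specific to the present setting, carried out in Section~\ref{sec_homogeneous}, is the explicit computation of $\mathcal{M}$, $\mathcal{V}$ and $\rho$ for horosymmetric homogeneous spaces by parabolic induction from the symmetric fiber.
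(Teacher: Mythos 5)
The paper does not prove this statement at all: it is quoted verbatim from Knop's account of the Luna--Vust theory (\cite[Theorems 3.3 and 4.2]{Kno91}), and the only content specific to this article is the verification, recorded in Proposition~\ref{prop_colored_data}, of what $\mathcal{M}$, $\mathcal{V}$ and $\rho$ are for a horosymmetric space. Your proposal correctly identifies this division of labour and then sketches the architecture of Knop's proof itself — reduction to simple embeddings, the colored cone attached to a closed orbit, reconstruction of a simple embedding from an affine $B$-chart via the local structure theorem, gluing along common faces, and the valuative criterion of properness combined with the injectivity of $G$-invariant valuations into $\mathcal{N}\otimes\mathbb{Q}$ for the completeness criterion. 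That outline is faithful to the standard argument, so as a proof sketch it is sound; the one place where the wording is looser than the actual construction is the description of the $B$-chart: it is not simply the spectrum of the functions on $G/H$ with $B$-weights in $\mathcal{C}^{\vee}\cap\mathcal{M}$, but the affine variety cut out by nonnegativity conditions indexed both by $\mathcal{C}\cap\mathcal{V}$ (invariant valuations) and by the colors in $\mathcal{R}$, after removing the closures of the colors outside $\mathcal{R}$; the simple embedding is then $G\cdot X_B$ for that chart. If you intend the argument as anything more than a pointer to \cite{Kno91}, that step, together with the finiteness of $B$-orbits and the local structure theorem it relies on, is where all the real work lies — which is precisely why the paper, like most of the literature on spherical varieties, treats the theorem as a black box.
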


\begin{exa}
\label{exa_wond_colored_fan}
Assume $G$ is a semisimple group, and $H$ is a symmetric subgroup of $G$. 
Then there is a natural choice of colored fan given by the negative 
Weyl chamber and its faces. 
If $H=N_G(H)$ then the corresponding variety is the wonderful compactification 
of $G/H$.   
\end{exa}

More generally if the valuation cone is strictly convex, then the embedding 
corresponding to the colored fan given by the valuation cone and its faces 
is called \emph{wonderful} it it is in addition smooth.
There are criterions of smoothness for spherical varieties \cite{Bri91}, and 
some simpler criterions for the case of horospherical \cite{Pas08}
and symmetric \cite[Section 3]{Ruz11} varieties. It would certainly be possible and 
useful to derive such a simpler criterion for the class of horosymmetric 
varieties. 
In the case of toroidal horosymmetric varieties, which are introduced in the 
next section, the criterion is very simple, as it is the case for toroidal 
spherical varieties in general. 

\subsubsection{Toroidal horosymmetric varieties}

Given an embedding $(X,x)$ of $G/H$ we denote by $\mathcal{F}_X$ its 
colored fan and we call the elements of $\mathcal{D}(X)= 
\bigcup_{(\mathcal{C},\mathcal{R})\in \mathcal{F}}\mathcal{R}
\subset \mathcal{D}$
the \emph{colors of} $X$. 
It should be noted that the set of colors does not depend on the base 
point $x$, but $\mathcal{F}_X$ does. We however omit this dependence in the
notation.

\begin{defn}
An embedding is \emph{toroidal} if $\mathcal{D}(X)$ is empty, else 
it is \emph{colored}.
\end{defn}

A toroidal horosymmetric variety is globally a parabolic induction from 
a symmetric variety. More generally, we record the following elementary 
statement, easily seen by the classification of 
horosymmetric varieties by colored fan, and the fact that the colored 
fan of a parabolic induction is the same as the colored of the 
embedding one starts with. 

\begin{prop}
\label{prop_criterion_parabolic_induction}
A horosymmetric variety with set of colors $\mathcal{D}_X$ is globally 
a parabolic induction from a symmetric variety if and 
only if $\mathcal{D}_X\cap f^{-1}\mathcal{D}(G/P)=\emptyset$.
\end{prop}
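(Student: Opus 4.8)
The plan is to argue entirely on the level of colored fans, combining the classification of Theorem~\ref{thm_class} with the compatibility of combinatorial data under parabolic induction recorded in Proposition~\ref{prop_colored_data} and the remark preceding the statement. Throughout, fix the base point $x\in X$ of the embedding, so that $G\cdot x\simeq G/H$ with its structure of parabolic induction $f\colon G/H\to G/P$.

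First I would isolate the two inputs. By Proposition~\ref{prop_colored_data} (together with standard parabolic induction theory, cf.\ \cite{Tim11}), the spherical homogeneous spaces $G/H$ and $L/L\cap H$ have the same spherical lattice $\mathcal{M}=\mathfrak{X}(T/T\cap H)$, the same dual lattice $\mathcal{N}$, and the same valuation cone $\mathcal{V}=-\mathfrak{a}_s^+$; moreover the set of colors of $G/H$ decomposes as the disjoint union $\mathcal{D}=\mathcal{D}(L/L\cap H)\sqcup f^{-1}\mathcal{D}(G/P)$, and the color map $\rho$ of $G/H$ restricts on $\mathcal{D}(L/L\cap H)$ to the color map of the symmetric space $L/L\cap H$. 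Since the conditions defining a colored cone or a colored fan involve only $\mathcal{V}$, $\rho$ and the set of chosen colors, it follows that a colored fan for the symmetric space $L/L\cap H$ is \emph{the same datum} as a colored fan for $G/H$ all of whose colored cones $(\mathcal{C},\mathcal{R})$ satisfy $\mathcal{R}\subset\mathcal{D}(L/L\cap H)$, equivalently $\mathcal{R}\cap f^{-1}\mathcal{D}(G/P)=\emptyset$. The second input, recalled just before the statement, is that for any embedding $\bar Y$ of $L/L\cap H$ the parabolic induction $G*_P\bar Y$ is an embedding of $G/H$ whose colored fan, with respect to the data of $G/H$, coincides with the colored fan $\mathcal{F}_{\bar Y}$.

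With these in hand both implications are immediate. For ``$\Leftarrow$'', assume $\mathcal{D}(X)\cap f^{-1}\mathcal{D}(G/P)=\emptyset$. By the first input $\mathcal{F}_X$ is a colored fan for the symmetric space $L/L\cap H$; let $(\bar Y,\bar y)$ be the corresponding embedding of $L/L\cap H$ furnished by Theorem~\ref{thm_class} applied to the group $L$, and set $X':=G*_P\bar Y$. By the second input the colored fan of $X'$ is $\mathcal{F}_{\bar Y}=\mathcal{F}_X$, so the injectivity of the bijection in Theorem~\ref{thm_class} (for $G$) yields a $G$-equivariant isomorphism $X\simeq X'$ compatible with base points; thus $X$ is globally a parabolic induction from the symmetric variety $\bar Y$. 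For ``$\Rightarrow$'', assume $X\simeq G*_P\bar Y$ for some embedding $\bar Y$ of $L/L\cap H$. By the second input $\mathcal{F}_X=\mathcal{F}_{\bar Y}$, which is a colored fan for $L/L\cap H$, so every color occurring in it lies in $\mathcal{D}(L/L\cap H)$; hence $\mathcal{D}(X)\subset\mathcal{D}(L/L\cap H)$ and $\mathcal{D}(X)\cap f^{-1}\mathcal{D}(G/P)=\emptyset$. Finally, since $\mathcal{D}(X)$ does not depend on the base point while the equivalence holds for each fixed base point, the property of being a parabolic induction is base-point independent, as needed.

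The only genuine content, and the step I expect to require the most care in the write-up, is the first input: that under parabolic induction the invariants $\mathcal{M}$ and $\mathcal{V}$ are unchanged, that $\rho$ restricts correctly, and that the only new colors are the pullbacks $f^{-1}\mathcal{D}(G/P)$. This is precisely Proposition~\ref{prop_colored_data} combined with the functoriality of colored fans under parabolic induction, so in the proof it reduces to a careful citation rather than a new computation; everything else is a formal consequence of Theorem~\ref{thm_class}.
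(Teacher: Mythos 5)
Your proof is correct and follows essentially the same route as the paper, which records this statement as an elementary consequence of the colored-fan classification (Theorem~\ref{thm_class}), the description of the colored data in Proposition~\ref{prop_colored_data}, and the fact that the colored fan of a parabolic induction coincides with that of the embedding one induces from. You have simply written out in detail the argument the paper leaves as a sketch, deferring the same key input (invariance of the colored fan under parabolic induction) to a citation, exactly as the paper does.
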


\begin{exa}
The horospherical variety $\mathbb{P}^2$ under the action of $\mathrm{SL}_2$ is 
not a global parabolic induction (in particular it is not toroidal), 
but the blow up of $\mathbb{P}^2$ is. 
\end{exa}

The following result shows that, in a toroidal horosymmetric 
variety, there is a well identified toric subvariety which 
will play an important role in later applications.

\begin{prop}[{\cite[Corollary 8.3 and paragraph after Corollary 6.3]{Kno94}}]
\label{prop_toric_subvar}
Let $(X,x)$ be a toroidal embedding of the horosymmetric space $G/H$, 
with colored fan $\mathcal{F}_X$.  
Then the closure $Z$ of $T\cdot x$ in $X$ is the $T/T\cap H$-toric 
variety whose fan (as a spherical variety) 
consists of the images, by elements of the restricted Weyl group $\bar{W}$, 
of the cones $\mathcal{C}$ in the colored cone $\mathcal{F}_X$.
\end{prop}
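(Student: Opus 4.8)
The plan is to exhibit $Z$ as a normal toric variety under the quotient torus $T_s/(T_s\cap H)$ and to read off its fan in $\mathfrak{a}_s=\mathfrak{Y}(T/T\cap H)\otimes\mathbb{R}$ via the usual description of toric orbits by limits of one-parameter subgroups, comparing this with the colored fan of $X$. First I would simplify: since $T=T_sT^{\sigma}$ and $T^{\sigma}\subset(L^{\sigma})^0\subset H$ we have $T\cdot x=T_s\cdot x$, so $Z=\overline{T_s\cdot x}$ and $T_s\cdot x\cong T_s/(T_s\cap H)$ is its open dense orbit; moreover, by Proposition~\ref{prop_criterion_parabolic_induction} a toroidal horosymmetric variety is globally a parabolic induction, so (placing $x$ in the fibre over $eP$) one may even reduce to the case where $X$ is a toroidal symmetric variety and $\mathcal{F}_X$ is its colourless fan in $\mathcal{V}=-\mathfrak{a}_s^+$. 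The one structural input I would take from the theory is that $Z$ is \emph{normal} — this is part of the cited statement, and follows in any case from the local structure theorem for toroidal spherical varieties, which gives a $Q^u$-stable affine chart of $X$ meeting the closed orbit of the shape $Q^u\times S$ with $S$ an affine normal toric variety having big torus $T_s/(T_s\cap H)$. With normality in hand, $Z$ is a genuine toric variety and it makes sense to speak of its fan.

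Next I would record the $\bar W$-symmetry. By Proposition~\ref{prop_restricted_root_system} every $w\in\bar W$ has a representative $\dot w\in N_{(L^{\sigma})^0}(T_s)$, and since $(L^{\sigma})^0\subset H$ we get $\dot w\cdot x=x$ together with $\dot w T_s\dot w^{-1}=T_s$ and $\dot w(T_s\cap H)\dot w^{-1}=T_s\cap H$. Hence $\dot w\cdot(T_s\cdot x)=T_s\cdot x$ and $\dot w\cdot Z=Z$, so the fan of $Z$ is invariant under $\bar W$. I would then set up the two orbit--cone dictionaries. For the toric variety $Z$: a cone $\sigma$ of $\mathrm{fan}(Z)$ is characterized by the property that $\lim_{t\to0}\lambda(t)\cdot x$ exists in $Z$ iff $\lambda\in|\mathrm{fan}(Z)|$, and then lies in the $T_s$-orbit attached to the unique cone containing $\lambda$ in its relative interior, for $\lambda\in\mathfrak{Y}(T_s/(T_s\cap H))$. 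For the spherical variety $X$: by Theorem~\ref{thm_class} and the valuative description of spherical embeddings, for $\lambda\in\mathcal{V}=-\mathfrak{a}_s^+$ the limit $\lim_{t\to0}\lambda(t)\cdot x$ exists in $X$ iff $\lambda\in|\mathcal{F}_X|$, and then lies in the $G$-orbit $Y$ whose (colourless) cone $\mathcal{C}_Y$ contains $\lambda$ in its relative interior.

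The two dictionaries are then matched by folding along $\bar W$. Given $\lambda\in\mathfrak{Y}(T_s/(T_s\cap H))$, choose $w\in\bar W$ with $w\lambda\in\mathcal{V}$; this is possible, and for relative interiors unique, because $-\mathfrak{a}_s^+$ is a strict fundamental domain for $\bar W$ on $\mathfrak{a}_s$ (Proposition~\ref{prop_fundamental_domain}). With $\dot w\in H$ as above one has $(w\lambda)(t)=\dot w\,\lambda(t)\,\dot w^{-1}$ and $\dot w^{-1}\cdot x=x$, so $(w\lambda)(t)\cdot x=\dot w\cdot(\lambda(t)\cdot x)$ and, applying the automorphism $\dot w$ of $X$, the limit along $w\lambda$ exists iff the limit along $\lambda$ does, with $\lim_{t\to0}(w\lambda)(t)\cdot x=\dot w\cdot\lim_{t\to0}\lambda(t)\cdot x$. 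Hence the limit along $\lambda$ exists iff $w\lambda\in|\mathcal{F}_X|$, iff $\lambda\in\bigcup_{w'\in\bar W}w'|\mathcal{F}_X|$; and if $\lambda$ lies in the relative interior of $w^{-1}\mathcal{C}_Y$ the limit equals $\dot w^{-1}$ applied to a point of the $T_s$-orbit $Y\cap Z$ attached to $\mathcal{C}_Y$, which is a $T_s$-orbit of $Z$ depending only on the cone $w^{-1}\mathcal{C}_Y$; conversely distinct cones in $\{w'\mathcal{C}_Y:w'\in\bar W,\ Y\ \text{a}\ G\text{-orbit}\}$ give distinct $T_s$-orbits of $Z$, and every $T_s$-orbit of $Z$ arises this way since every $\lambda$ has a $\bar W$-representative in $\mathcal{V}$. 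This identifies the cones of $\mathrm{fan}(Z)$ with exactly the $\bar W$-translates of the cones of $\mathcal{F}_X$; since $\mathcal{F}_X$ is closed under taking faces and the chambers $w'\mathcal{V}$ meet along faces of $\mathcal{V}$, these translates glue into an honest fan, proving the proposition.

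The main obstacle is making the valuative dictionary of the second paragraph precise: one must invoke carefully the relation between the limits $\lim_{t\to0}\lambda(t)\cdot x$, the $G$-invariant $\mathbb{Q}$-valuations parametrized by $\mathcal{V}$, and the colored fan, and in particular use that for a \emph{toroidal} $X$ this relation is unobstructed on all of $\mathcal{V}$ (no interference from colours), while for one-parameter subgroups outside $\mathcal{V}$ one must genuinely pass through $\dot w$ rather than attempt a direct valuative reading. A secondary point requiring care is that the several identifications $\mathfrak{a}_s\cong\mathfrak{Y}(T/T\cap H)\otimes\mathbb{R}\cong\mathcal{N}\otimes\mathbb{R}$ of Proposition~\ref{prop_colored_data} are compatible with the $\bar W$-action, so that ``$\bar W$-translates of the cones of $\mathcal{F}_X$'' is unambiguous, and that one checks $|\mathrm{fan}(Z)|=\bigcup_{w}w\mathcal{V}=\mathfrak{a}_s$ with precisely the claimed cone structure and not a coarsening.
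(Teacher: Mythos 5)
The paper does not actually prove this proposition: it is imported verbatim from Knop, so your write-up has to be judged against the standard argument rather than an internal one. Your skeleton is the right one (replace $T\cdot x$ by $T_s\cdot x$, fold by representatives $\dot w\in N_{(L^{\sigma})^0}(T_s)\subset H$, and compare orbit--cone dictionaries through limits of one-parameter subgroups), but two steps are genuinely incomplete. The first is normality of $Z$, which you need even to speak of ``the fan of $Z$'' and to invoke the toric orbit--cone correspondence. The local structure theorem gives that $Z\cap X_0$ is a normal toric variety, where $X_0=X\setminus\bigcup_{D\in\mathcal{D}}\overline{D}$, and hence so are the translates $\dot w(Z\cap X_0)$; but to conclude that $Z$ itself is normal you must know $Z=\bigcup_{w\in\bar W}\dot w(Z\cap X_0)$, and that covering is essentially the orbit statement you are in the middle of proving. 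As written there is a circularity: your claim that every $T_s$-orbit of $Z$ contains a limit $\lim_{t\to 0}\lambda(t)\cdot x$ is itself a consequence of $Z$ being a toric variety. (This is repairable --- pass to the normalization of $Z$, where every orbit does contain such a limit, and push forward along the finite equivariant map --- but the repair is not in your text, and ``it is part of the cited statement'' cannot serve in a proof of that statement.)

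The second gap is in pinning down the cones. You assert that for $\lambda$ in the relative interior of $\mathcal{C}_Y$ the limit lies in ``the $T_s$-orbit $Y\cap Z$''; but $Y\cap Z$ is not a single $T_s$-orbit in general --- already for $\mathbb{P}^1\times\mathbb{P}^1$ under diagonal $\mathrm{SL}_2$ the closed orbit meets $Z$ in two fixed points. What your limit argument actually yields is that the support of the fan of $Z$ equals $\bigcup_{w}w|\mathcal{F}_X|$ and that limits from $\mathrm{relint}\,\mathcal{C}_Y$ land in the $G$-orbit $Y$; it does not exclude that the fan of $Z$ subdivides $\mathcal{C}_Y$ (different $\lambda$ in its relative interior landing in different $T_s$-orbits inside $Y$), nor that it merges two $\bar W$-translates of one cone attached to the same $Y$. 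The missing ingredient is the local structure theorem used in the form the paper itself invokes in the proof of Proposition~\ref{prop_restr_tor}: $Z\cap X_0$ is an \emph{open} toric subvariety of $Z$ whose fan is exactly the set of (colorless) cones of $\mathcal{F}_X$, so each $\mathcal{C}_Y$, hence by $\bar W$-invariance each translate $w\mathcal{C}_Y$, is an honest cone of the fan of $Z$; combined with the support computation and the fan axioms this forces the fan to be precisely the set of translates, and the covering needed in the first point drops out as well. You mention this theorem, but only to claim normality of a chart; it is here, where the cone structure is decided, that it must actually be deployed.
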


\begin{rem}
We insist here that we obtain the fan of $Z$ as a spherical variety 
under the action of $T/T\cap H$. It does not exactly coincide in 
general with the fan of $Z$ as a toric variety with the classical conventions, 
but to the opposite of this fan. We refer to Pezzini \cite[Section 2]{Pez10}
for details, but the short explanation is that a character $\lambda$
of a torus $T$ may be interpreted as a regular $B=T$-semi-invariant 
function on $T$ with weight $-\lambda$:
$\lambda(b^{-1}t)=(-\lambda)(b)\lambda(t)$.
This difference is important to get the correct expression for the 
asymptotic behavior of metrics in Section~\ref{sec_metrics}. 
This fact was overlooked in previous work of the author, 
fortunately with no serious consequences. 
\end{rem}

\begin{exa}
Assume $X$ is the wonderful compactification of a symmetric space 
then the fan of its toric subvariety $Z$ is the fan obtained by considering 
the collection of all restricted Weyl chambers for $\bar{\Phi}$ and their 
faces.  
\end{exa}

Finally, let us mention the 
criterion of smoothness for toroidal horosymmetric varieties:
\begin{prop}
\label{prop_smoothness}
A toroidal horosymmetric embedding $(X,x)$ is smooth if and only if the 
toric subvariety $Z$ is smooth, that is, if and only if every cone in the 
colored fan is generated by a subset of a basis of $\mathcal{N}$.  
\end{prop}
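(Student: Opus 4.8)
The plan is to reduce, via the local structure theorem for toroidal spherical varieties, to the classical smoothness criterion for affine toric varieties. Because $X$ is toroidal we have $\mathcal{D}(X)=\emptyset$, and for each maximal colored cone $(\mathcal{C},\emptyset)\in\mathcal{F}_X$ the associated simple open $G$-subvariety $X_{\mathcal{C}}^{\circ}\subseteq X$ (whose orbits correspond to the faces of $(\mathcal{C},\emptyset)$) contains a $B$-stable affine open subset $X_{\mathcal{C}}$ meeting the closed orbit of $X_{\mathcal{C}}^{\circ}$, with $X_{\mathcal{C}}^{\circ}=G\cdot X_{\mathcal{C}}$, and $X=\bigcup_{\mathcal{C}}X_{\mathcal{C}}^{\circ}$. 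The local structure theorem (see \cite{Kno91,Bri91,Tim11}) then provides an isomorphism of varieties
\[
X_{\mathcal{C}}\;\cong\;\mathbb{A}^{N}\times S_{\mathcal{C}},
\]
where $N=\dim X-\dim Z$ and $S_{\mathcal{C}}$ is the locally closed affine $T/T\cap H$-stable subvariety $Z\cap X_{\mathcal{C}}$, which is exactly the affine toric chart of the $T/T\cap H$-toric variety $Z$ attached to the cone $\mathcal{C}$ (with the orientation convention recorded in the Remark following Proposition~\ref{prop_toric_subvar}). Hence $X_{\mathcal{C}}$ is smooth if and only if $S_{\mathcal{C}}$ is, and by the usual smoothness criterion for affine toric varieties this holds if and only if $\mathcal{C}$ is generated by a subset of a basis of $\mathcal{N}$.

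Next I would assemble the global statement. Since $X=\bigcup_{\mathcal{C}}G\cdot X_{\mathcal{C}}$ and $G$ acts by automorphisms, $X$ is smooth if and only if every $X_{\mathcal{C}}$ is smooth, hence, by the previous paragraph, if and only if every maximal cone of $\mathcal{F}_X$ is generated by a subset of a basis of $\mathcal{N}$; since the remaining cones are faces of maximal ones, this is equivalent to asking that \emph{every} cone of $\mathcal{F}_X$ be so generated. On the other hand, by Proposition~\ref{prop_toric_subvar} the fan of $Z$ consists of the images of the cones of $\mathcal{F}_X$ under the restricted Weyl group $\bar{W}$ together with their faces; as $\bar{W}$ acts on $\mathcal{N}$ through lattice automorphisms, and as a cone is generated by part of a basis of $\mathcal{N}$ precisely when its opposite is, $Z$ is smooth if and only if every cone of $\mathcal{F}_X$ is generated by a subset of a basis of $\mathcal{N}$. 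Comparing the two conditions yields the proposition.

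The one genuinely delicate point is the precise form of the local structure theorem and the identification of the slice $S_{\mathcal{C}}$ with the affine chart of $Z$ — in particular getting the lattice $\mathcal{N}=\mathfrak{Y}(T/T\cap H)$ and the orientation right, as flagged in the Remark after Proposition~\ref{prop_toric_subvar}; everything else is bookkeeping. Alternatively, one may first reduce to the case of a toroidal symmetric variety using Proposition~\ref{prop_criterion_parabolic_induction}: since then $X=G*_PY$ is Zariski-locally isomorphic to $\mathbb{A}^{\dim G/P}\times Y$ over $G/P$, $X$ is smooth if and only if $Y$ is, and $Z$ lies inside the fiber $Y$ with unchanged fan, which makes the appeal to the local structure theorem slightly more transparent.
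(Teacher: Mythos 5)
Your argument is correct. The paper actually states Proposition~\ref{prop_smoothness} without proof, treating it as the standard criterion for toroidal spherical varieties, and your write-up is exactly the argument that fact rests on: the local structure theorem giving the chart $X_{\mathcal{C}}\cong \mathbb{A}^N\times S_{\mathcal{C}}$ with toric slice (valid precisely because the toroidal hypothesis removes the colors, so the Levi acts on the slice through the quotient torus), combined with Proposition~\ref{prop_toric_subvar}, the fact that $\bar{W}$ acts on $\mathcal{N}$ by lattice automorphisms, and the observation that the ``part of a basis'' condition is unaffected by passing to opposite cones. The one delicate point is the one you already flag — identifying the slice with the affine chart of $Z$ for the lattice $\mathcal{N}=\mathrm{Hom}_{\mathbb{Z}}(\mathcal{M},\mathbb{Z})$ — and your handling of it (or the alternative reduction to the symmetric fiber via Proposition~\ref{prop_criterion_parabolic_induction}) is sound.
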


\subsection{Facets of a horosymmetric variety}

\label{subsec_facets}

\begin{defn}
Let $X$ be a horosymmetric variety under the action of $G$. A 
\emph{facet} of $X$ is a $G$-stable irreducible codimension one subvariety in $X$.
\end{defn}
The goal of this section is to prove the following result.
\begin{prop}
\label{prop_facets}
Let $X$ be a horosymmetric variety under the action of $G$, then 
any facet of $X$ is also a horosymmetric 
variety under the action of $G$.
\end{prop}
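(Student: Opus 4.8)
The plan is to combine the orbit--cone correspondence of Theorem~\ref{thm_class} with the combinatorial description of horosymmetric data in Proposition~\ref{prop_colored_data} and with Losev's uniqueness theorem \cite{Los09}. A facet $Y^{-}$ of $X$ is the closure of its unique dense $G$-orbit $Y$ (unique because a spherical variety has finitely many $G$-orbits). It is a standard fact of Luna--Vust theory that a $G$-orbit closure in a spherical variety is a normal spherical variety (see \cite{Kno91,Tim11}); granting this, $Y^{-}$ is a normal $G$-variety with dense orbit $Y$, so it remains only to prove that $Y$ is a \emph{horosymmetric} homogeneous space.

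By Theorem~\ref{thm_class}, $Y$ corresponds to a ray $(\mathcal{C}_Y,\mathcal{R}_Y)$ of $\mathcal{F}_X$; write $\mathcal{C}_Y=\mathbb{Q}_{\geq 0}\mu_Y$ with $\mu_Y$ a primitive lattice vector of $\mathcal{V}=-\mathfrak{a}_s^{+}$. First I would record the spherical datum $(\mathcal{M}_Y,\mathcal{V}_Y,\rho_Y\colon\mathcal{D}_Y\to\mathcal{N}_Y)$ of the open orbit $Y$ in terms of $(\mathcal{M},\mathcal{V},\rho)$ and $\mu_Y$, using the standard description of the germ of a spherical variety along a $G$-stable divisor: $\mathcal{M}_Y=\mathcal{M}\cap\mu_Y^{\perp}$, $\mathcal{N}_Y$ its dual, $\mathcal{V}_Y$ the image of $\mathcal{V}\cap\mu_Y^{\perp}$ in $\mathcal{N}_Y\otimes\mathbb{Q}$, and $\mathcal{D}_Y$, $\rho_Y$ read off from the colors of $X$ whose $\rho$-image does not lie on $\mathbb{Q}_{\geq 0}\mu_Y$. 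The task is then to recognise this triple as the datum of some horosymmetric homogeneous space $G/H_Y$; Losev's theorem then yields $Y\cong G/H_Y$, finishing the proof (alternatively one may bypass Losev and write $H_Y$ down explicitly from the geometry).

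For the recognition I would argue by cases according to how $\mu_Y$ meets the combinatorial data. Since the simple restricted coroots lie outside the negative restricted chamber, no color in $\mathcal{D}(L/L\cap H)$ alone spans a ray meeting $\mathcal{V}$ in its relative interior; hence either $\mathcal{R}_Y=\emptyset$ (so $Y$ is again a parabolic induction, by Proposition~\ref{prop_criterion_parabolic_induction}) or $\mathcal{R}_Y$ contains some $f^{-1}(D_\alpha)$, in which case $\mu_Y$ is proportional to $\alpha^{\vee}|_{\mathcal{M}}$ for a simple root $\alpha\in\Phi_{Q^u}\cap S$. In the first case I would reduce along $f$ to the symmetric fibre and invoke the De Concini--Procesi description of orbit closures (Theorem~\ref{thm_dcp83}), extended from wonderful to arbitrary toroidal and then colored symmetric varieties: the corresponding orbit in $L/L\cap H$ is a parabolic induction from the symmetric space with restricted root system $\bar{\Phi}\cap\mu_Y^{\perp}$, whence horosymmetric, and parabolic induction along $f$ preserves this property. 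In the second case the color $f^{-1}(D_\alpha)$ forces the defining parabolic to grow: one expects $P_{Y}$ to be the parabolic generated by $P$ and the root subgroup of $\alpha$, with Levi $L_Y$ and an induced involution $\sigma_Y$ extending $\sigma$, the lattice and valuation-cone truncations being governed as before by $\mu_Y$ now seen inside $L_Y$.

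The step I expect to be the main obstacle is exactly this last, mixed situation: one must check that the truncation of $\mathcal{M}$ and of $\mathcal{V}$ by $\mu_Y$ is compatible with the enlargement of the parabolic by those colors $f^{-1}(D_\alpha)$ that are proportional to $\mu_Y$, so that the Lie-algebra criterion $\mathfrak{h}_Y=\mathfrak{p}_Y^{u}\oplus\mathfrak{l}_Y^{\sigma_Y}$ of the Remark holds. The normality of orbit closures and the rigidity theorem of Losev are used only as black boxes; all the real work lies in this bookkeeping together with the extension of the De Concini--Procesi orbit description to the colored symmetric setting.
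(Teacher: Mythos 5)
Your overall route (read off the spherical datum of the dense orbit of the facet and recognize it via Losev) is the alternative the paper explicitly mentions and sets aside in the remark following its proof, but as written it has a genuine gap, and the gap sits exactly where you locate the ``main obstacle''. First, your case analysis is miscalibrated: a facet always corresponds to a \emph{colorless} ray. Indeed the union of the open orbit with the facet is an elementary embedding, whose colored fan is a single ray in $\mathcal{V}$ with $\mathcal{R}=\emptyset$; a ray carrying a color $f^{-1}(D_\alpha)$ corresponds to a $G$-orbit of codimension at least two (compare $\mathbb{P}^2$ under $\mathrm{GL}_2$, Example~\ref{exa_P2_horospherical}, where the colored ray gives the fixed point, not a divisor, and discoloration replaces it by a colorless ray). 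So the ``mixed situation'' you expect to be hardest never occurs, while the case you dispose of in one sentence is where all the content lies. There you invoke an extension of the De Concini--Procesi orbit description (Theorem~\ref{thm_dcp83}) ``from wonderful to arbitrary toroidal and then colored symmetric varieties''; but that extension \emph{is} the statement to be proved -- Theorem~\ref{thm_dcp83} only treats $H=N_G(G^\sigma)$ and its wonderful embedding, and in particular does not tell you what happens along a ray lying on a wall of $\mathcal{V}$, where the facet is genuinely symmetric-induced and not horospherical. The paper supplies precisely this missing content: it passes to the elementary embedding, uses Proposition~\ref{prop_criterion_parabolic_induction} to reduce to the symmetric fiber, and then computes the isotropy Lie algebra of the limit point $x_{\mu_Y}=\lim_{z\to 0}\mu_Y(z)\cdot x$ via \cite[Proposition 2.4]{Bri90}, exhibiting the explicit parabolic $P_{I(\mu_Y)}$ and the modified involution $\sigma_{\mu_Y}$ of $Z_L(\mu_Y)$ with $\mathfrak{h}_{\mu_Y}=\mathfrak{p}_{I(\mu_Y)}^u\oplus\mathfrak{l}_{I(\mu_Y)}^{\sigma_{\mu_Y}}$ (Proposition~\ref{prop_isotropy_facets}). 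Your sketch never produces a candidate $H_Y$ nor verifies its datum; the ``bookkeeping'' you defer is the proof.

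Two further points on the Losev route itself. The data you write down are not quite right: the valuation cone of the dense orbit of the facet is the image of $\mathcal{V}$ under the projection $\mathcal{N}_{\mathbb{Q}}\to\mathcal{N}_{\mathbb{Q}}/\mathbb{Q}\mu_Y$, not ``the image of $\mathcal{V}\cap\mu_Y^{\perp}$'' (which is ill-posed, since $\mu_Y^{\perp}$ lives in $\mathcal{M}_{\mathbb{Q}}$); for instance when $\mu_Y$ is interior to $\mathcal{V}$ the orbit is horospherical and its valuation cone is the whole space, as recorded after Proposition~\ref{prop_isotropy_facets}. Moreover, determining the color map of the orbit (needed to apply \cite{Los09}) is a real difficulty, which is exactly why the paper prefers the direct computation of the isotropy algebra over the combinatorial identification of \cite{GH15_hsd}: the latter would in any case only pin down $H_Y$ up to conjugacy, whereas later sections need the isotropy of the specific point $x_{\mu_Y}$. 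So the approach could in principle be made to work, but the recognition step -- the heart of the argument -- is missing from your proposal.
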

We will actually obtain more precise statements describing the corresponding 
horosymmetric homogeneous spaces, using \cite{Bri90}. 
Let us first introduce some terminology. 

\begin{defn}
An \emph{elementary embedding} $(E,x)$ of $G/H$ is an embedding such 
that the complement of $G/H$ in $E$ is a single codimension one orbit 
of $G$. Equivalently, it is an embedding whose colored fan consists 
of a single ray $\mathcal{C}_E\subset -\mathfrak{a}_s^+$ with no colors. 
\end{defn}

Elementary embeddings are in bijection with indivisible one parameter subgroups 
in $-\mathfrak{a}_s^+\cap \mathfrak{Y}(T_s)$ by selecting the only such one parameter subgroup 
in the ray associated to the elementary embedding. Given an indivisible 
$\mu\in -\mathfrak{a}_s^+\cap \mathfrak{Y}(T_s)$ we denote by $(E_{\mu},x)$ the corresponding elementary 
embedding.
Furthermore 
$x_{\mu}:=\lim_{z\rightarrow 0} \mu(z)\cdot x$ exists in $E_{\mu}$ and is in 
the open $B$-orbit of the codimension one $G$-orbit \cite[Section~2.10]{BP87}. 
We will use \cite{Bri90} to describe the Lie algebra of the isotropy subgroup 
of $x_{\mu}$. 

We fixed since Section~\ref{sec_homogeneous}
a maximal torus $T$ of $G$ and a Borel subgroup $B$ containing $T$. 
Recall that parabolic subgroups containing $B$ are classified by subsets 
of the set of simple roots $S$. More precisely, given a subset $I\subset S$, 
there is a unique parabolic subgroup $Q_I$ of $G$ containing $B$ such that 
$\Phi_{Q_I^u}\cap S=S\setminus I$. It further contains a unique Levi subgroup $L_I$
containing $T$, and $\Phi_{L_I}\cap S=I$. We denote by $P_I$ the 
parabolic subgroup opposite to $Q_I$ with respect to $L_I$.

The subgroup $B\cap L$ is a Borel subgroup of $L$ containing $T$, and we have 
the same correspondence between subsets $I$ of $S_L$ and parabolic 
subgroups $Q_I^L$ of $L$ containing $B\cap L$. We have the obvious 
relation $Q_I^L=Q_I\cap L$, and all of these parabolics are contained 
in $Q_{S_L}=Q$. The Levi subgroup of $Q^L_I$ containing $T$ is none 
other than $L_I$. 

Given a one parameter subgroup $\mu\in \mathfrak{Y}(T)$, we obtain a subset 
of $S_L$ by setting $I(\mu)=\{\alpha\in S_L; \alpha(\mu)=0\}$. 
Then the 
Levi subgroup $L_I$ of $Q^L_{I(\mu)}$ containing $T$ coincides with 
the centralizer $Z_L(\mu)$ of $\mu$. 
In particular, $\mu$ is contained in the radical of the Levi subgroup $L_{I(\mu)}$.
Pay attention to the fact that $Z_L(\mu)$ may be different from $Z_G(\mu)$ here.

Take $\mu\in \mathfrak{Y}(T_s)$. Then the restriction of $\sigma$ to 
$L_{I(\mu)}=Z_L(\mu)$ is still a well defined involution. 
Since $\mu$ is contained in the radical of $L_{I(\mu)}$, we may 
choose a $\sigma$-stable complement $\mathfrak{m}$ of $\mathbb{C}\mu$ 
in $\mathfrak{l}_{I(\mu)}$ which contains the derived Lie algebra 
$[\mathfrak{l}_{I(\mu)},\mathfrak{l}_{I(\mu)}]$.
Define a new involution $\sigma_{\mu}$ on $L_{I(\mu)}$ by setting, 
at the level of Lie algebras, 
\[
\sigma_{\mu}(z\mu+m)=z\mu+\sigma(m).
\]
This is a well defined involution of Lie algebras thanks to our choice of 
complement $\mathfrak{m}$.

The following proposition provides a more precise version of  
Proposition~\ref{prop_facets}. Indeed, given a facet $Y$ of $X$, the union of 
the open $G$-orbits in $X$ and $Y$ form an elementary embedding of $G/H$.

\begin{prop}
\label{prop_isotropy_facets}
Let $\mu\in \mathfrak{Y}(T_s)$ indivisible. Then the isotropy subgroup 
$H_{\mu}$ of $x_{\mu}$ is horosymmetric as follows:
\[
\mathfrak{h}_{\mu} = \mathfrak{p}_{I(\mu)}^u \oplus 
\mathfrak{l}_{I(\mu)}^{\sigma_{\mu}}.
\]
\end{prop}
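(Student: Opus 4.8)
The plan is to combine the general result of Brion \cite{Bri90} on the isotropy Lie algebra of the limit point $x_\mu$ with the explicit root-space descriptions of $\mathfrak{h}$, $\mathfrak{p}$, $\mathfrak{l}$ established in Section~\ref{sec_homogeneous}. Concretely, Brion's description computes $\mathfrak{h}_\mu$ as the limit, in the appropriate Grassmannian of subspaces of $\mathfrak{g}$, of $\mathrm{Ad}(\mu(z))\mathfrak{h}$ as $z\to 0$: one decomposes $\mathfrak{h}$ according to the eigenvalues of $\mathrm{ad}(\mu)$ (equivalently the weights of the $\mu$-action) and keeps, in each $\mathrm{ad}(\mathfrak{t})$-weight space that meets $\mathfrak{h}$, the part surviving in the limit. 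Since $\mu\in\mathfrak{Y}(T_s)\subset\mathfrak{a}_s$, and $\mathfrak h=(\mathfrak l\cap\mathfrak h)\oplus\bigoplus_{\alpha\in\Phi_{P^u}}\mathfrak g_\alpha$ with $\mathfrak l\cap\mathfrak h=\mathfrak t^\sigma\oplus\bigoplus_{\alpha\in\Phi_L^\sigma}\mathfrak g_\alpha\oplus\bigoplus_{\beta\in\Phi_s^+}\mathbb C(e_\beta+\sigma(e_\beta))$ (using the root vectors from Section~\ref{sec_curvature}), the calculation splits into three kinds of contributions, which I would treat in turn.

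First, for the unipotent part $\bigoplus_{\alpha\in\Phi_{P^u}}\mathfrak g_\alpha$: each $\mathfrak g_\alpha$ is an $\mathrm{ad}(\mu)$-eigenspace with eigenvalue $\alpha(\mu)$, so it is carried to itself by $\mathrm{Ad}(\mu(z))$ up to a scalar and survives in the limit. One checks that $\Phi_{P^u}=\Phi_{P_{I(\mu)}^u}$ because $P$ and $P_{I(\mu)}$ are built from the same Borel/parabolic data relative to $Q$ and $L$, and $I(\mu)\subset S_L$ only refines the Levi $\mathfrak l\supset\mathfrak l_{I(\mu)}$ without touching $\Phi_{P^u}$; hence this part contributes exactly $\mathfrak p_{I(\mu)}^u$. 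Second, for the torus and the $\sigma$-fixed roots inside $\mathfrak l$: $\mathfrak t^\sigma$ and the $\mathfrak g_\alpha$, $\alpha\in\Phi_L^\sigma$, are $\mathrm{ad}(\mu)$-stable, and they lie in $\mathfrak{l}_{I(\mu)}^{\sigma_\mu}$ precisely because $\sigma_\mu$ agrees with $\sigma$ on the chosen complement $\mathfrak m\supset[\mathfrak l_{I(\mu)},\mathfrak l_{I(\mu)}]$ and fixes $\mu$ itself — so $\mathfrak t^\sigma$ maps in (note $\mu\in\mathfrak t$ is $\sigma_\mu$-fixed whereas it was $\sigma$-anti-fixed, which is exactly the point of the modification). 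Third, and this is the delicate case, the vectors $e_\beta+\sigma(e_\beta)$ for $\beta\in\Phi_s^+$: since $\beta,\sigma(\beta)$ are distinct $\mathrm{ad}(\mu)$-weights unless $\beta(\mu)=\sigma(\beta)(\mu)$, i.e. unless $\beta(\mu)=0$ (as $\sigma(\beta)|_{T_s}=-\beta|_{T_s}$), the limit of $\mathrm{Ad}(\mu(z))(e_\beta+\sigma(e_\beta))$ depends on the signs of $\beta(\mu)$ and $-\beta(\mu)=\sigma(\beta)(\mu)$; the surviving line is $\mathbb Ce_\beta$ if $\beta(\mu)>0$, $\mathbb C\sigma(e_\beta)$ if $\beta(\mu)<0$, and the whole plane $\mathbb C(e_\beta+\sigma(e_\beta))$ (unchanged) if $\beta(\mu)=0$. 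One then verifies: when $\beta(\mu)=0$, $\beta\in S_L$-span with $\alpha(\mu)=0$ for the relevant simple components, $e_\beta+\sigma(e_\beta)\in\mathfrak l_{I(\mu)}$ and is $\sigma_\mu$-fixed; when $\beta(\mu)\ne 0$, the surviving root vector ($e_\beta$ or $\sigma(e_\beta)$) lies in $\mathfrak p_{I(\mu)}^u$, because its restriction to $T_s$ is then a nonzero (positive, after keeping track of signs via $\mu\in-\mathfrak a_s^+$) multiple of a restricted root that is a root of $Q_{I(\mu)}^u$ — here I would use that $\Phi_s^+=\Phi_L^+\cap\Phi_s$, $\mu\in-\mathfrak a_s^+$, and the fact that $\alpha(\mu)=0$ exactly for $\alpha\in I(\mu)$ to place each root correctly relative to $P_{I(\mu)}$.

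Assembling the three contributions gives $\mathfrak h_\mu=\mathfrak p_{I(\mu)}^u\oplus\mathfrak l_{I(\mu)}^{\sigma_\mu}$, and comparing with the Remark after the definition of horosymmetric subgroups (the Lie-algebra criterion $\mathfrak h=\mathfrak p^u\oplus\mathfrak l^\sigma$) shows $H_\mu$ is horosymmetric with the stated data; that $x_\mu$ indeed has this isotropy, and that $(E_\mu,x)$ together with $Y$'s open orbit recovers the facet, is exactly the setup recalled before the statement. The main obstacle I expect is the bookkeeping in the third case: one must carefully track which of $e_\beta$, $\sigma(e_\beta)$ survives, confirm it is a root vector for an actual root of $G$ lying in $\Phi_{P_{I(\mu)}^u}$ (not merely something with the right $T_s$-restriction), and make sure the $\sigma_\mu$-fixed part in the $\beta(\mu)=0$ case really lands in $\mathfrak l_{I(\mu)}$ — this requires knowing that $\beta(\mu)=0$ for $\beta\in\Phi_s^+$ forces $\beta\in\Phi_{L_{I(\mu)}}$, which follows since $\beta$ is a nonnegative combination of simple roots in $S_L$ and $\mu$ pairs nonpositively (in fact, by indivisibility and $\mu\in-\mathfrak a_s^+$, one controls the signs) with each of them. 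Once the sign analysis is organized, everything else is a direct comparison of root-space decompositions.
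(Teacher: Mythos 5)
Your route differs from the paper's: the paper first uses that an elementary embedding is toroidal, hence a global parabolic induction (Proposition~\ref{prop_criterion_parabolic_induction}), to get $P^u\subset H_\mu\subset P$ and reduce to the symmetric fiber, and then quotes Brion's Proposition~2.4 of \cite{Bri90}, which hands over the isotropy algebra $\mathfrak{l}\cap\mathfrak{h}_\mu$ directly. You instead propose to compute $\mathfrak{h}_\mu$ as $\lim_{z\to 0}\mathrm{Ad}(\mu(z))\mathfrak{h}$ in the Grassmannian, and this identification is the main gap: that limit is only \emph{contained} in $\mathfrak{h}_\mu$ and has dimension $\dim\mathfrak{h}$, whereas $x_\mu$ lies in a codimension-one $G$-orbit of $E_\mu$, so $\dim\mathfrak{h}_\mu=\dim\mathfrak{h}+1$. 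The missing line is $\mathbb{C}\mu$ itself (note $\mu(\mathbb{C}^*)$ fixes $x_\mu$), and it is exactly the summand for which the modified involution $\sigma_\mu$ is introduced: the toral part of your assembled answer is only $\mathfrak{t}^\sigma$, a proper subspace of the toral part $\mathbb{C}\mu\oplus\mathfrak{t}^\sigma$ of $\mathfrak{l}_{I(\mu)}^{\sigma_\mu}$, so your three contributions cannot sum to $\mathfrak{p}_{I(\mu)}^u\oplus\mathfrak{l}_{I(\mu)}^{\sigma_\mu}$ for dimension reasons. (This is repairable: observe $\mu\in\mathfrak{h}_\mu$, add it to the limit, and conclude by the codimension-one dimension count; but that step is essential and absent, and it is not what \cite{Bri90} is cited for in the paper, where the quoted expression for $\mathfrak{l}\cap\mathfrak{h}_\mu$ contains $\mathbb{C}\mu$ explicitly.) Relatedly, your claim $\Phi_{P^u}=\Phi_{P_{I(\mu)}^u}$ is false: since $I(\mu)\subset S_L$ and $\mu$ is antidominant, $\Phi_{P_{I(\mu)}^u}=\Phi_{P^u}\sqcup\{-\gamma:\gamma\in\Phi_s^+,\ \gamma(\mu)\neq 0\}$, so the unipotent part of $\mathfrak{h}$ contributes only $\mathfrak{p}^u$ and the extra negative root spaces must come out of your third case.

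That third case is also where your sign bookkeeping, which you defer, actually breaks as written. For $\mu\in-\mathfrak{a}_s^+$ one has $\beta(\mu)\le 0$ for $\beta\in\Phi_s^+$, and $\mathrm{Ad}(\mu(z))(e_\beta+\sigma(e_\beta))=z^{\beta(\mu)}e_\beta+z^{-\beta(\mu)}\sigma(e_\beta)$, so as $z\to 0$ the line converges to $\mathbb{C}e_\beta$ when $\beta(\mu)<0$, the \emph{positive} root vector, not to $\mathbb{C}\sigma(e_\beta)$ as your rule states (your rule is the $z\to\infty$ limit). Only $\mathbb{C}\sigma(e_\beta)=\mathbb{C}e_{\sigma(\beta)}$ lies in $\mathfrak{p}_{I(\mu)}^u$, whose intersection with $\mathfrak{l}$ consists of negative root spaces, and the formula being proved indeed involves negative root vectors; with the literal $z\to 0$ degeneration your limit algebra contains $e_\beta$ and is not contained in the asserted right-hand side. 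Concretely, for $G=\mathrm{SL}_2$ with $\sigma$ of type AIII($1$,$2$), base point $([1:1],[1:-1])\in\mathbb{P}^1\times\mathbb{P}^1$ and $\mu=-\alpha^\vee$, the naive limit $\lim_{z\to 0}\mu(z)\cdot x$ is $([1:0],[1:0])$ with isotropy $\mathfrak{t}\oplus\mathbb{C}e_{\alpha}$, while $\mathfrak{p}_{I(\mu)}^u\oplus\mathfrak{l}_{I(\mu)}^{\sigma_\mu}=\mathfrak{t}\oplus\mathbb{C}e_{-\alpha}$; the two are only Weyl-conjugate. Reconciling this requires pinning down the convention relating the ray generator $\mu_Y\in\mathcal{N}$ to the one-parameter subgroup along which one degenerates, i.e.\ the fan-versus-opposite-fan issue the paper itself flags after Proposition~\ref{prop_toric_subvar}; this is precisely what your proposal leaves open, and quoting \cite{Bri90} for the isotropy algebra, as the paper does, is how that convention chasing is avoided. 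So the approach could work, but as written it both misses a dimension and rests on an eigenvalue rule inconsistent with the limit you set up.
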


\begin{proof}
An elementary embedding is toroidal, hence, 
by Proposition~\ref{prop_criterion_parabolic_induction}, it is a global parabolic induction 
$E_L \hookrightarrow E_{\mu}\rightarrow G/P$ where $(E_L,x)$ is the elementary 
embedding of $L/L\cap H$ associated with the same one parameter subgroup $\mu$.
Since $x_{\mu} \in E_L$, we obtain that $P^u\subset H_{\mu}\subset P$ and 
$H_{\mu}$ is determined by $L\cap H_{\mu}$. 

We now use the results of \cite{Bri90} applied to the case of symmetric spaces 
to obtain a description of $\mathfrak{l}\cap \mathfrak{h}_{\mu}$.
By Proposition~2.4 in \cite{Bri90} and the remarks about the case of 
symmetric spaces in Section~2.2 of the same paper, we have 
\[
\mathfrak{l}\cap \mathfrak{h}_{\mu} = 
\mathbb{C}\mu\oplus \mathfrak{t}^{\sigma} \oplus 
\bigoplus_{\alpha\in \Phi_L^{\sigma}} \mathbb{C}e_{\alpha}
\oplus \bigoplus_{\alpha\in \Phi_s^+; \bar{\alpha}(\mu)=0}\mathbb{C}(e_{-\alpha}+\sigma(e_{-\alpha}))
\oplus \bigoplus_{\alpha\in \Phi_s^+; \bar{\alpha}(\mu)\neq 0}\mathbb{C}e_{-\alpha}.
\]
Since $\mu\in \mathfrak{Y}(T_s)$, we have $\bar{\alpha}(\mu)=2\alpha(\mu)$, so we may write 
the above expression as 
\[
\mathfrak{l}\cap \mathfrak{h}_{\mu} = 
\mathfrak{l}_{I(\mu)}^{\sigma_{\mu}}\oplus \bigoplus_{\alpha\in \Phi_{P_{I(\mu)}^u}\cap \Phi_L}\mathfrak{g}_{\alpha}
\]
Putting both results together, we get 
\[
\mathfrak{l} = 
\mathfrak{l}_{I(\mu)}^{\sigma_{\mu}}\oplus \bigoplus_{\alpha\in \Phi_{P_{I(\mu)}^u}}\mathfrak{g}_{\alpha}
\]
hence the statement.
\end{proof}

\begin{exa}
\label{exa_facets_AIII}
Consider the symmetric space of type AIII($2$, $m>4$). 
Take $\mu \in -\mathfrak{a}_s^+$ an indivisible one parameter subgroup. Then 
there are three possibilities for $I(\mu)$: we have 
$I(\bar{\alpha}_{1,m}^{\vee})=S\setminus\{\alpha_{1,2},\alpha_{m-1,m}\}$, 
$I(\bar{\alpha}_{1,m-1}^{\vee})=S\setminus\{\alpha_{2,3},\alpha_{m-2,m-1}\}$, 
and $I(\mu)=S^{\sigma}$ in the other cases. 
In the first situation, $[\mathfrak{l}_{I(\mu)},\mathfrak{l}_{I(\mu)}]$ is 
isomorphic to the simple Lie algebra $\mathfrak{sl}_{m-2}$ and the induced 
involution is still of type AIII, but now of rank one. 
In the second situation, $[\mathfrak{l}_{I(\mu)},\mathfrak{l}_{I(\mu)}]$ splits 
as a direct sum of three summands, one fixed by $\sigma$ and the other two, 
each isomorphic to $\mathfrak{sl}_2$, exchanged by $\sigma$. 
Finally, in the third scenario, the isotropy group is in fact horospherical.
\end{exa}

\begin{rem}
It is in fact very general that if the ray is in the 
interior of the valuation cone, then the closed orbit in the corresponding 
elementary embedding is horospherical, with an explicit description of its 
Lie algebra \cite[Proposition 3.10]{BP87}.
\end{rem}

\begin{rem}
Gagliardi and Hofscheier \cite{GH15_hsd} obtained a description of the combinatorial 
data associated to any orbit in a spherical variety. We could in principle 
(neglecting the difficulty of describing the color map in general) have 
used this to show that facets of horosymmetric varieties are horosymmetric. 
However their result identifies only the conjugacy class of the isotropy subgroup, 
while we will need the precise knowledge of the isotropy group (actually Lie algebra)
of a specific point in the orbit. On the other hand, it is possible to identify 
precisely the isotropy group of the point we consider, and not only its Lie algebra, 
by combining our result with that of \cite{GH15_hsd}. 
This could be used to fully recover the description of orbit closures in 
wonderful compactifications given by De Concini and Procesi. 
We mention here, as it could be useful for other applications in Kähler geometry, 
that the work of Gagliardi and Hofscheier \cite{GH15_hsd} further allows to identify 
the colored fan of a facet of a symmetric variety. 
\end{rem}

\section{Linearized line bundles on horosymmetric varieties}

\label{sec_bundles}

In this section, we consider $G$-linearized line bundles on a 
$G$-horosymmetric variety $X$. We explain how to associate to 
such a line bundle $\mathcal{L}$ a priviledged $B$-invariant 
$\mathbb{Q}$-divisor, and several convex polytopes. 
For example, one can associate to $\mathcal{L}$ its (algebraic) 
moment polytope, and in the case $X$ is toroidal, the moment 
polytope of the restriction of $\mathcal{L}$ to the toric subvariety. 
We determine the relations between these polytopes, and illustrate 
these notions on some examples, including the anticanonical line bundle.

Note that if $G$ is simply connected, all line bundles on $X$ are 
$G$-linearized \cite{KKV89,KKLV89}. 
Else, if $\mathcal{L}$ is not linearized, there exists 
a tensor power $\mathcal{L}^k$ which is linearized. Finally, recall 
that any two linearizations of the same line bundle differ by a character 
of $G$.

\subsection{Special function associated to a linearized line bundle}

\label{subsec_special_function}

Let $X$ be a horosymmetric embedding of $G/H$.
Let $\mathcal{L}$ be a $G$-linearized line bundle on $X$. 
The action of $G$ on $\mathcal{L}$ induces an action of $G$ on 
meromorphic sections of $\mathcal{L}$, given explicitly by 
$(g\cdot s)(x)=g\cdot (s(g^{-1}\cdot x))$. Such a section is called 
$B$-semi-invariant if it is an eigenvector for the action of $B$, 
that is, there exists a character $\lambda$ of $B$ such that 
$b\cdot s=\lambda(b)s$.

A meromorphic section $s$ of $\mathcal{L}$ defines a Cartier divisor 
$D_s=\mathrm{div}(s)$ representing $\mathcal{L}$. If $s$ is 
$B$-semi-invariant, then $d_s$ is $B$-invariant.
There are two types of irreducible $B$-stable Weil divisor 
on $X$: the closure of colors $D\in \mathcal{D}$ of $G/H$, and the 
facets of $X$. 
Denote by $\mathcal{I}^G(X)$ the set of facets of $X$.
Since $D_s$ is by definition Cartier, it writes, by \cite[Proposition 3.1]{Bri89},
\[
D_s=\sum_{Y\in \mathcal{I}^G(X)} v_s(\mu_Y) Y + \sum_{D\in \mathcal{D}_X} v_s(\rho(D)) \overline{D} 
+\sum_{D\in \mathcal{D}\setminus \mathcal{D}_X}n_D \overline{D}
\]
for some integers $n_D$ and a piecewise linear integral function $v_s$ 
on the fan $\mathcal{F}_X$. 

In general there may not be any priviledged $B$-semi-invariant 
meromorphic section of $\mathcal{L}$. Given such a section $s$, 
with associated weight $\lambda_s \in \mathfrak{X}(B)$, the others 
are obtained by multiplying by eigenvectors $f$ for the action 
of $B$ on $\mathbb{C}(G/H)$, with eigenvalue an element 
$\lambda_f$ of $\mathcal{M}$.
Assume however that $\lambda_s|_{T_s}\in \mathfrak{X}(T_s)$ is 
induced by an element of $\mathcal{M}$ under the epimorphism 
$\mathfrak{X}(T_s)\rightarrow \mathfrak{X}(T/T\cap H)$. Then 
we can choose $f$ so that $(\lambda_s\lambda_f)|_{T_s}$ is trivial. 

\begin{defn}
Assume there exists a section $s_{\mathcal{L}}$ such that 
$\lambda_s|_{T_s}$ is trivial.
Then we call this section, 
well defined up to multiplicative scalar, 
the \emph{special section} of $\mathcal{L}$, and 
$D_{\mathcal{L}}=\mathrm{div}(s_{\mathcal{L}})$ the 
\emph{special divisor} representing $\mathcal{L}$.
\end{defn}

In the general case, we may still define $D_{\mathcal{L}}$ 
as a $\mathbb{Q}$-divisor. Indeed, let $s$ be a $B$-semi-invariant 
section of $\mathcal{L}$ with weight $\lambda$ and consider the 
$B$-semi-invariant section $s\otimes s$ of the $G$-linearized
line bundle $\mathcal{L} \otimes \mathcal{L}$. It has weight $2\lambda$, 
and since $T_s\cap H$ consists of elements of order two, the 
weight $2\lambda|_{T_s}$ is induced by an element of $\mathcal{M}$.
Thus the previous paragraph defines the special divisor 
$D_{\mathcal{L} \otimes \mathcal{L}}$.

\begin{defn}
The \emph{special divisor} of $\mathcal{L}$ is the $\mathbb{Q}$-divisor 
$D_{\mathcal{L}}=D_{\mathcal{L} \otimes \mathcal{L}}/2$.
\end{defn}

Using the fact that $D_{\mathcal{L} \otimes \mathcal{L}}$ is Cartier, 
we may write  
\[
D_{\mathcal{L}}=\sum_{Y\in \mathcal{I}^G(X)} v_{\mathcal{L}}(\mu_Y) Y 
+ \sum_{D\in \mathcal{D}_X} v_{\mathcal{L}}(\rho(D)) \overline{D} 
+\sum_{D\in \mathcal{D}\setminus \mathcal{D}_X}n_{\mathcal{L},D} \overline{D}.
\]
for some piecewise linear function on $\mathcal{F}_X$, such that 
$2v_{\mathcal{L}}$ is integral.

\begin{defn}
The function $v_{\mathcal{L}}$ will be referred to as the 
\emph{special function} associated to $\mathcal{L}$. 
\end{defn}

Remark that it takes non integral values if $\mathcal{L}$
admits no special section.
Note that all of these notions are relative to the choice of a 
Borel subgroup $B$.  

\begin{defn}
The unique $\bar{W}$-invariant function $v_{\mathcal{L}}^t$ defined 
by $v_{\mathcal{L}}^t = v_{\mathcal{L}}$ on $-\mathfrak{a}_s^+$, 
is the \emph{toric special function} of $\mathfrak{L}$.
\end{defn}

\subsection{Toroidal case}

When $X$ is toroidal, we may identify the restriction of a $G$-linearized 
line bundle to the toric subvariety $Z$ in terms of the objects defined 
in Section~\ref{subsec_special_function}. 

\begin{prop}
\label{prop_restr_tor}
Let $\mathcal{L}$ be a $G$-linearized line bundle on a toroidal horosymmetric 
variety $X$. For a divisible enough integer $m$, the restriction  
$\mathcal{L}^m|_Z$ defines a $T/T\cap H\rtimes \bar{W}$-linearized toric 
line bundle on $Z$, such that the divisor associated with the 
$T/T\cap H\rtimes \bar{W}$-invariant section coincides with 
\[
\sum_{F\in \mathcal{I}^{T/T\cap H}(Z)} mv_{\mathcal{L}}^t(u_F)F.
\]  
\end{prop}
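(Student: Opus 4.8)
The plan is to realise the claimed toric line bundle as the restriction of a special section of $\mathcal{L}^m$ to $Z$, and to compute its divisor via the combinatorial description of $Z$. First I would set up the $\bar{W}$-action on $Z$ and choose $m$. Recall that $Z=\overline{T\cdot x}=\overline{T_s\cdot x}$ (the two closures agree since $T=T_sT^{\sigma}$ and $T^{\sigma}\subset L^{\sigma}\subset H$ fixes $x$), that $T_s\subset T\subset B$, and that by Proposition~\ref{prop_toric_subvar} the fan of the $T/T\cap H$-toric variety $Z$ is the union of the $\bar{W}$-translates of the cones of $\mathcal{F}_X$. Every $w\in\bar{W}$ has a representative $n\in N_{(L^{\sigma})^{0}}(T_s)\subset H$ (Proposition~\ref{prop_restricted_root_system}); such an $n$ fixes $x$, normalises $T_s$, hence preserves $Z$ and acts on it through $w$ acting on $\mathfrak{a}_s\supset(\text{fan of }Z)$, which yields the $T/T\cap H\rtimes\bar{W}$-action. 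I would then fix an integer $m$ divisible enough that simultaneously: $mv_{\mathcal{L}}^{t}$ is integral ($2v_{\mathcal{L}}^{t}$ already is, so $m$ even suffices); $\mathcal{L}^{m}$ admits a \emph{special section} $s:=s_{\mathcal{L}^m}$, using as in Section~\ref{subsec_special_function} that $T_s\cap H$ consists of involutions to arrange that the $B$-weight $\lambda$ of $s$ is trivial on $T_s$; and the class $[\mathcal{L}^m|_Z]$, which is $\bar{W}$-fixed because $\mathcal{L}^m$ is $G$-linearised, carries a genuine $T/T\cap H\rtimes\bar{W}$-linearisation (the finite descent obstruction for the $\bar{W}$-part dying after taking a power).

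Next I would restrict $s$ to $Z$. Its divisor is $D_{\mathcal{L}^m}$, which by Section~\ref{subsec_special_function} and the toroidality of $X$ (so $\mathcal{D}_X=\emptyset$) reads
\[
\mathrm{div}(s)=D_{\mathcal{L}^m}=\sum_{Y\in\mathcal{I}^{G}(X)}mv_{\mathcal{L}}^{t}(\mu_Y)\,Y+\sum_{D\in\mathcal{D}}n_{\mathcal{L}^m,D}\,\overline{D}.
\]
No component of this divisor contains $Z$: a facet $Y$ does not, since $Z$ meets the open $G$-orbit, and no $\overline{D}$ does, since $x$ lies in the open $B$-orbit $B\cdot eH$ (one checks $\mathfrak{b}+\mathfrak{h}=\mathfrak{g}$ for our choice of $B$) and $T_s\subset B$, so the generic point of $Z$ avoids every color. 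Hence $s|_Z$ is a nonzero global section of $\mathcal{L}^m|_Z$; on the open $T_s$-orbit it is a $T_s$-eigenvector of weight $\lambda|_{T_s}=0$, hence $T_s$-invariant, so, after choosing the linearisation above compatibly, it is (up to scalar) the distinguished $T/T\cap H\rtimes\bar{W}$-invariant section of the toric line bundle $\mathcal{L}^m|_Z$.

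It remains to compute $\mathrm{div}_Z(s|_Z)=\sum_{F\in\mathcal{I}^{T/T\cap H}(Z)}c_F\,F$, this shape being forced by $T/T\cap H$-invariance. Since $X$ is toroidal, every ray $u_F$ of the fan of $Z$ equals $w\cdot\mu_Y$ for some $w\in\bar{W}$ and some facet $Y$, and the toric divisor $F$ lies over $Y$. Picking $n\in N_{(L^{\sigma})^0}(T_s)\subset H$ representing $w^{-1}$, we get $c_F=\mathrm{ord}_F(s|_Z)=\mathrm{ord}_{F_0}\big((n\cdot s)|_Z\big)$, where $F_0=n\cdot F$ is the toric divisor of $Z$ attached to $\mu_Y$ and $n\cdot s$ is, via the $G$-linearisation, a $(nBn^{-1})$-semi-invariant section of $\mathcal{L}^m$ with $\mathrm{div}(n\cdot s)=n\cdot D_{\mathcal{L}^m}$. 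Near the generic point $\eta_Y$ of $Y$ — to which the generic point of $F_0$ maps — the $(nBn^{-1})$-invariant Cartier divisor $n\cdot D_{\mathcal{L}^m}$ has a local equation $g$ which is an eigenfunction of $\mathbb{C}(G/H)$ of weight $\nu\in\mathcal{M}=\mathfrak{X}(T/T\cap H)$. On $X$, $\mathrm{ord}_Y(n\cdot D_{\mathcal{L}^m})=\mathrm{ord}_Y(g)=\langle\nu,\mu_Y\rangle$ by the defining property of the valuation $\mu_Y$, and since $n\in G$ preserves every $G$-orbit, this equals $\mathrm{ord}_Y(D_{\mathcal{L}^m})=mv_{\mathcal{L}}^{t}(\mu_Y)$. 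On $Z$ (a $T_s$-toric variety, $T_s\subset nBn^{-1}$), $g|_Z$ is $T_s$-semi-invariant of weight $\nu$, so $\mathrm{ord}_{F_0}(n\cdot D_{\mathcal{L}^m}|_Z)=\mathrm{ord}_{F_0}(g|_Z)=\langle\nu,\mu_Y\rangle$ as well, with the convention of the remark after Proposition~\ref{prop_toric_subvar}. Therefore $c_F=mv_{\mathcal{L}}^{t}(\mu_Y)=mv_{\mathcal{L}}^{t}(u_F)$ by $\bar{W}$-invariance of $v_{\mathcal{L}}^{t}$, and summing over $F$ gives the stated divisor, hence the identification of $\mathcal{L}^m|_Z$.

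I expect the main obstacle to be the ``divisible enough $m$'' bookkeeping — producing a single $m$ that makes $v_{\mathcal{L}}^{t}$ integral, yields a special section, and admits a $\bar{W}$-equivariant linearisation on $\mathcal{L}^m|_Z$ matching $s|_Z$ — together with pinning down conventions (signs, lattice normalisations) so that the two evaluations $\mathrm{ord}_Y(g)$ on $X$ and $\mathrm{ord}_{F_0}(g|_Z)$ on $Z$ agree literally and not up to a multiple; this is precisely the subtlety flagged in the remark following Proposition~\ref{prop_toric_subvar}. The remaining ingredients are Proposition~\ref{prop_toric_subvar}, the formula for the special divisor, and the standard order-of-vanishing computations along $B$-stable divisors on spherical varieties already cited.
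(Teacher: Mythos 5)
Your proposal is correct and follows essentially the same route as the paper: restrict a special section of a suitable power $\mathcal{L}^m$ to $Z$, identify its order along each toric divisor with $mv_{\mathcal{L}}^t$ evaluated on the ray by moving to a facet over the valuation cone via Weyl representatives in $N_{(L^\sigma)^0}(T_s)\subset H$, and pass to a further power to make the $T/T\cap H$-invariant section genuinely $\bar{W}$-invariant. The only differences are cosmetic: the paper computes the restricted divisor on the chart $Z\cap(X\setminus\overline{\Delta})$ using the local structure theorem and Brion's restriction formula and transports it by making $s$ itself $n_w$-invariant, whereas you transport divisor by divisor and compute orders via $B$-eigenfunction local equations and the valuation pairing, and you obtain the $\bar{W}$-linearization by killing a finite obstruction after a power rather than via the $\bar{W}$-invariance of the divisor; both justifications are sound, granting the sign convention flagged after Proposition~\ref{prop_toric_subvar} that you already note.
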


\begin{proof}
The restriction $\mathcal{L}|_Z$ inherits a $T_s$-linearization as well as a 
$N_K(T_s)\cap H$-linearization. 
The line bundle $\mathcal{L}^{2k}$ for some $k\geq 1$ admits a 
special section $s$.
By definition of a special section, $s$ is in particular $T_s\cap H$-invariant, 
hence $T_s\cap H$ acts trivially on $\mathcal{L}|_Z$.
Thus the $T_s$-linearization of $\mathcal{L}^{2k}$ actually comes from a 
$T/T\cap H$-linearization. 
The section $s|_Z$ is obviously $T/T\cap H$-invariant.

Let $n_w$ be a representant in $N_K(T_s)\cap H$ of $w\in \bar{W}$.
Then for any $t\in T_s$ 
\begin{align*}
n_w\cdot s(n_w^{-1}\cdot t\cdot x) &= n_w\cdot s(n_w^{-1}tn_w\cdot x) \\
\intertext{since $n_w\in H$,}
&= n_wn_w^{-1}tn_w\cdot s(x) \\
\intertext{since $n\in N_K(T_s)$ and $s$ is $T_s$-invariant,}
&= \chi(n_w)t\cdot s(x) \\
\intertext{where $\chi$ is the character of $H$ associated to $\mathcal{L}^{2k}$}
&= \chi(n_w) s(t\cdot x).
\end{align*}
Since there is a finite number of $n_w$ and they are in $K$, we may choose 
$k$ so that $\chi(n_w)=1$ for all $w\in \bar{W}$.

We now use the local structure of spherical varieties \cite[Proposition 3.4]{BP87}. 
Consider $\Delta=\bigcup_{D\in\mathcal{D}}D\subset G/H$ and set 
$U=X\setminus \bar{\Delta}$ and $V=Z\cap U$. 
Then $V$ is the toric subvariety associated to the subfan contained 
in $-\mathfrak{a}_s^+$, and the toric divisors in $V$ are precisely the 
$Y\cap V$ for $Y\in \mathcal{I}^G_X$.
By \cite[Section 3.2]{Bri89}, the restriction of $d_{\mathcal{L}^{2k}}$ 
to $V$ is 
\[
\mathrm{div}(s)\cap V= \sum_{Y\in \mathcal{I}^G(X)} 2k v_{\mathcal{L}}(\mu_Y)(Y\cap V).
\]
By $n_w$-invariance of $s$, we obtain 
\[
\mathrm{div}(s)\cap w\cdot V = \sum_{Y\in \mathcal{I}^G(X)} 2k v_{\mathcal{L}}(\mu_Y)w\cdot(Y\cap V).
\]
We deduce that 
\[
\mathrm{div}(s|_Z) = \sum_{F\in \mathcal{I}^{T/T\cap H}(Z)} 2k v_{\mathcal{L}}^t(u_F)F.
\] 
Remark that our reasoning with the representants $n_w$ did not endow $\mathcal{L}^{2k}|_Z$
with a $\bar{W}$-linearization \emph{a priori}. 
However, $s|_Z$ is the (up to multiplicative scalar) $T/T\cap H$-invariant section of $\mathcal{L}^{2k}|_Z$
and $\mathrm{div}(s|_Z)$ is $\bar{W}$-invariant, hence $\mathcal{L}^{2k}|_Z$ admits a 
natural $\bar{W}$-linearization such that $w\cdot s|_Z= \mu(w) s|_Z$ for all $w\in \bar{W}$ and 
some character $\mu:\bar{W}\rightarrow \mathbb{S}^1$ of $\bar{W}$. The group $\bar{W}$ being 
finite, we may take a multiple $m$ of $2k$ such that the character $\frac{m}{2k}\mu$ is trivial, 
and obtain that the $T/T\cap H$-invariant section is also $\bar{W}$-invariant. 
\end{proof}

\subsection{Polytopes}

To a $G$-linearized line bundle $\mathcal{L}$ on a complete horosymmetric variety $X$, 
we may associate several different convex 
polytopes. The first one is obtained directly from the special divisor of $\mathcal{L}$. 

\begin{defn}
The \emph{special polytope} $\Delta_{\mathcal{L}}$ of $\mathcal{L}$ is the convex polytope 
in $\mathcal{M}\otimes \mathbb{R}$ defined by the inequalities 
$m+v_{\mathcal{L}}\geq 0$, and  
$m(\rho(D))+n_{D,\mathcal{L}}\geq 0$ for all $D\in \mathcal{D}\setminus \mathcal{D}_X$.
\end{defn}

\begin{defn}
The \emph{toric polytope} $\Delta_{\mathcal{L}}$ of $\mathcal{L}$ is the convex polytope 
defined by 
\[
\Delta^t_{\mathcal{L}}=\{m\in \mathcal{M}\otimes \mathbb{R} ; 
m+v^t_{\mathcal{L}}\geq 0 \}.
\]
\end{defn}

Remark that the toric polytope is $\bar{W}$-invariant (and independent of the choice of a 
Borel subgroup $B$ containing $T$).

\begin{defn}
The \emph{moment polytope} $\Delta^+_{\mathcal{L}}$ is the set defined as the closure 
in $\mathfrak{X}(T)\otimes \mathbb{R}$ of the set of all 
$\sfrac{\lambda}{k}$ such that there exists a non-zero $B$-semi-invariant 
global holomorphic section $s$ of $\mathcal{L}^k$ with weight $\lambda$ 
(that is, $b\cdot s=\lambda(b)s$ for all $b\in B$).
\end{defn}

Note that all of these sets are multiplicative with respect to tensor powers, 
that is $\Delta^{\heartsuit}_{\mathcal{L}^m}=m\Delta^{\heartsuit}_{\mathcal{L}}$
for any positive integer $m$.

It was proved by Brion that the moment polytope is indeed a convex polytope. 
More precisely, we have the following relation between the special 
polytope and the moment polytope. 

\begin{prop}
\label{prop_moment_vs_special}
Let $\chi\in \mathfrak{X}(H)$ be the character associated to the restriction of 
$\mathcal{L}$ to $G/H$. Consider $\chi$ as before as an element of 
$\mathfrak{X}(T/T_s)\otimes \mathbb{R} \subset \mathfrak{X}(T)\otimes \mathbb{R}$
via its restriction to $T\cap H$.
Then 
\[
\Delta^+_{\mathcal{L}}=\chi + \Delta_{\mathcal{L}}. 
\]
\end{prop}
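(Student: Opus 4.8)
The plan is to realize both $\Delta^+_{\mathcal L}$ and $\Delta_{\mathcal L}$ as parametrizing, through a single reference section, the same family of $B$-semi-invariant rational functions subject to an effectivity condition on a divisor, and then to identify the shift between the two weight normalizations with $\chi$. Since $T_s\cap H$ consists of involutions, $\mathcal L^2$ admits a special section $s$; let $\lambda_s\in\mathfrak X(B)$ be its weight, so that $\lambda_s|_{T_s}=0$. For each even $k=2j$ the power $s^j$ is a special section of $\mathcal L^k$, and $\operatorname{div}(s^j)=j\operatorname{div}(s)=j\,D_{\mathcal L\otimes\mathcal L}=2j\,D_{\mathcal L}=k\,D_{\mathcal L}$ by the definition of the special $\mathbb Q$-divisor.

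Next I would set up the correspondence. A $B$-semi-invariant global holomorphic section $s'$ of $\mathcal L^k$ is in particular a rational section, so $s'=g\,s^j$ for a unique $g\in\mathbb C(X)=\mathbb C(G/H)$; if $s'$ has weight $\lambda$ then $g$ is $B$-semi-invariant of weight $\lambda-\tfrac k2\lambda_s$, which therefore lies in the spherical lattice $\mathcal M$ by its very definition. Conversely, for $\mu\in\mathcal M$ let $f_\mu$ be the ($\mathbb C^*$-unique) $B$-semi-invariant rational function of weight $\mu$; then $f_\mu s^j$ is a $B$-semi-invariant rational section of $\mathcal L^k$ of weight $\mu+\tfrac k2\lambda_s$, and it is holomorphic precisely when $\operatorname{div}(f_\mu)+k\,D_{\mathcal L}\geq 0$. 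By the description of divisors of $B$-semi-invariant rational functions on a spherical variety (the formula recalled in Section~\ref{subsec_special_function}, applied to $f_\mu$), $\operatorname{div}(f_\mu)=\sum_{Y\in\mathcal I^G(X)}\mu(\mu_Y)\,Y+\sum_{D\in\mathcal D}\mu(\rho(D))\,\overline D$, while $k\,D_{\mathcal L}$ has coefficient $k\,v_{\mathcal L}(\mu_Y)$ along a facet $Y$, coefficient $k\,v_{\mathcal L}(\rho(D))$ along $\overline D$ for $D\in\mathcal D_X$, and coefficient $k\,n_{D,\mathcal L}$ along $\overline D$ for $D\in\mathcal D\setminus\mathcal D_X$. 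Hence effectivity of $\operatorname{div}(f_\mu)+k\,D_{\mathcal L}$ amounts to $\tfrac\mu k+v_{\mathcal L}\geq0$ on the facet rays and on the colors of $\mathcal D_X$, together with $(\tfrac\mu k)(\rho(D))+n_{D,\mathcal L}\geq0$ for $D\notin\mathcal D_X$. Since $v_{\mathcal L}$ is linear on each cone of $\mathcal F_X$ and every maximal cone of $\mathcal F_X$ is generated by some of the $\mu_Y$ and some of the $\rho(D)$ with $D\in\mathcal D_X$, the first condition is exactly $\tfrac\mu k+v_{\mathcal L}\geq0$ on $|\mathcal F_X|$, so $f_\mu s^j\in H^0(X,\mathcal L^k)$ if and only if $\tfrac\mu k\in\Delta_{\mathcal L}$.

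Then I would identify the weight shift. The section $s$ does not vanish on the open $B$-orbit $BH/H$ — its divisor, being $B$-invariant, is supported on $B$-stable prime divisors of $X$, none of which meets $BH/H$ — so $s(eH)\neq 0$, and comparing, for $h\in T\cap H\subset B\cap H$, the action of $h$ on the fibre $\mathcal L^2_{eH}$ described through $B$-semi-invariance with its description through the isotropy character of $\mathcal L^2$ yields $\lambda_s|_{T\cap H}=2\chi|_{T\cap H}$. Because $T=T_sT^\sigma$ with $T^\sigma\subset T\cap H$, a character of $T$ trivial on $T_s$ is determined by its restriction to $T\cap H$, so $\lambda_s$ is the unique such extension of $2\chi|_{T\cap H}$; equivalently, $\tfrac12\lambda_s$ equals $\chi$ regarded, via its restriction to $T\cap H$, as an element of $\mathfrak X(T/T_s)\otimes\mathbb R$. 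Consequently the normalized weight of $f_\mu s^j$ is $\tfrac1k\bigl(\mu+\tfrac k2\lambda_s\bigr)=\tfrac\mu k+\chi$, and the two preceding paragraphs show that for every even $k$ the set of normalized weights of $B$-semi-invariant global sections of $\mathcal L^k$ equals $\chi+\tfrac1k\bigl(k\Delta_{\mathcal L}\cap\mathcal M\bigr)$. As $\Delta_{\mathcal L}$ is a rational polytope, its rational points are dense in it and each is of the form $\tfrac\mu k$ with $\mu\in k\Delta_{\mathcal L}\cap\mathcal M$ for a suitable even $k$, so passing to the closure gives $\chi+\Delta_{\mathcal L}$; odd powers contribute nothing new, since a $B$-semi-invariant section of $\mathcal L^k$ of weight $\lambda$ squares to one of $\mathcal L^{2k}$ of weight $2\lambda$, forcing $\tfrac\lambda k\in\chi+\Delta_{\mathcal L}$. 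This gives $\Delta^+_{\mathcal L}=\chi+\Delta_{\mathcal L}$.

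The step I expect to be the main obstacle is the combinatorial bookkeeping of the second paragraph: one must match the coefficient-wise effectivity of $\operatorname{div}(f_\mu)+k\,D_{\mathcal L}$ along every $B$-stable prime divisor of $X$ with the defining inequalities of $\Delta_{\mathcal L}$, which rests on the precise description of the rays of a colored fan — that the facets and the colors of $\mathcal D_X$ exhaust the generators of the maximal colored cones — and on a careful normalization of the $\mu_Y$ and of the color map $\rho$ ensuring $\operatorname{ord}_Y(f_\mu)=\mu(\mu_Y)$ and $\operatorname{ord}_{\overline D}(f_\mu)=\mu(\rho(D))$. The weight computation of the third paragraph is short but similarly needs care with the lattices $\mathcal M=\mathfrak X(T/T\cap H)$ and $\mathfrak X(T/T_s)$ and with the isogeny $T_s\times T^\sigma\to T$.
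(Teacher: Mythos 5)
Your proof is correct and takes essentially the same route as the paper: reduce to even powers of $\mathcal{L}$ so that a special section $s$ exists, and identify its $B$-weight with the isotropy character of $\mathcal{L}^2$ by comparing the $B$-semi-invariance action with the isotropy action on the fibre at $eH$ (your verification that $s(eH)\neq 0$ is exactly what the paper uses implicitly). The only divergence is that the paper obtains the relation $\Delta^+_{\mathcal{L}^2}=\lambda_s+\Delta_{\mathcal{L}^2}$ by citing \cite[Proposition 3.3]{Bri89}, whereas you reprove it inline through the correspondence between $B$-semi-invariant sections of $\mathcal{L}^{2j}$ and weights $\mu\in\mathcal{M}$ with $\mathrm{div}(f_\mu)+2j\,D_{\mathcal{L}}\geq 0$ — a sound but unnecessary unpacking of the cited result.
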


\begin{proof}
By multiplicativity, we may as well prove the result for $\mathcal{L}^2$.
This has two consequences: $\mathcal{L}$ has a special section, and 
$2\chi|{T_s\cap H}=0$.  

Let $s$ denote the special section of $\mathcal{L}^2$, and denote by 
$\lambda\in \mathfrak{X}(B)=\mathfrak{X}(T)$ its character.  
By \cite[Proposition 3.3]{Bri89} (see also \cite[Section 5.3]{Bria}), 
we have 
\[
\Delta^+_{\mathcal{L}^2}=\lambda + \Delta_{\mathcal{L}^2}.
\]
We know that $\lambda|_{T_s}=0$ hence we may consider $\lambda$ as an 
element of $\mathfrak{X}(T/T_s)\subset \mathfrak{X}(T)$.

From the other consequence of considering $\mathcal{L}^2$, we see that 
we may also consider $2\chi$ as an element of $\mathfrak{X}(T\cap H/T_s\cap H)$.
The natural epimorphism $T\cap H\rightarrow T/T_s$ identifies 
$T\cap H/T_s\cap H$ with $T/T_s$.
Let $t\in T\cap H$. We have, by definition of $s$, 
\begin{align*}
\lambda(t)s(eH) &= t\cdot s(t^{-1}H) \\
&= t\cdot s(eH) \\
\intertext{since $t\in H$}
&= (2\chi)(t) s(eH)
\end{align*}
by definition of $\chi$, hence the theorem.
\end{proof}

Recall the caracterization of ample and globally generated line bundles 
proved by Brion. Given a maximal cone $\mathcal{C}$ contained in $\mathcal{F}_X$,   
let $m_{\mathcal{C}}$ denote the element of 
$\sfrac{1}{2}\mathcal{M}$ such that $v_{\mathcal{L}}(y)=m_{\mathcal{C}}(y)$ 
for $y\in \mathcal{C}$.

\begin{prop}[{\cite[Th\'eor\`eme 3.3]{Bri89}}]
\label{prop_ample}
The $G$-linearized line bundle $\mathcal{L}$ is \emph{globally generated} 
if and only if 
\begin{itemize}
\item The function $v_{\mathcal{L}}$ is convex and 
\item $n_{D,\mathcal{L}} \geq m_{\mathcal{C}}(\rho(D))$ for all $D\in \mathcal{D} \setminus \mathcal{D}_{X}$ 
and maximal cone $\mathcal{C} \in \mathcal{F}_X$. 
\end{itemize}
It is \emph{ample} if and only if it is globally generated and furthermore 
\begin{itemize}
\item $m_{\mathcal{C}_1} \neq m_{\mathcal{C}_2}$ if 
$\mathcal{C}_1\neq \mathcal{C}_2\in \mathcal{F}_X$ are two maximal cones, 
\item $n_{D,\mathcal{L}} \neq m_{\mathcal{C}}(\rho(D))$ for all $D\in \mathcal{D} \setminus \mathcal{D}_{X}$ 
and maximal cone $\mathcal{C} \in \mathcal{F}_X$.
\end{itemize}
\end{prop}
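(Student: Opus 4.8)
The plan is to deduce this from Brion's general characterization of globally generated and ample line bundles on spherical varieties \cite[Th\'eor\`eme 3.3]{Bri89}, of which the statement above is a transcription adapted to the combinatorial bookkeeping introduced in Section~\ref{subsec_special_function}. First I would invoke the fact that $H$ is spherical, so that $X$ is a spherical $G$-variety and Brion's theorem applies verbatim: a $B$-stable Cartier divisor on $X$, decomposed along the facets and along the closures of colors, gives a globally generated line bundle if and only if the piecewise linear function on $\mathcal{F}_X$ recording its coefficients on facets is convex and its coefficients on colors not among the colors of $X$ dominate the corresponding linear forms on maximal cones, and it is ample if and only if in addition the maximal cones carry pairwise distinct linear forms and the color inequalities are strict.

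Next I would note that both properties depend only on the linear equivalence class of $\mathcal{L}$ and are unchanged under passing to a positive tensor power, so they may be tested on the special divisor $D_{\mathcal{L}}$, after replacing $\mathcal{L}$ by $\mathcal{L}^2$ if needed to make all coefficients integral (this multiplies every coefficient by $2$ and affects neither convexity nor the inequalities). In the decomposition of $D_{\mathcal{L}}$ the coefficient of a facet $Y$ is $v_{\mathcal{L}}(\mu_Y)$, the coefficient of the closure of a color $D\in\mathcal{D}_X$ is $v_{\mathcal{L}}(\rho(D))$ and is therefore determined by the cone of $\mathcal{F}_X$ containing $\rho(D)$, and the coefficient of the closure of a color $D\in\mathcal{D}\setminus\mathcal{D}_X$ is the free parameter $n_{D,\mathcal{L}}$; this is precisely the input of Brion's criterion. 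On a maximal cone $\mathcal{C}$ the function $v_{\mathcal{L}}$ coincides with the linear form $m_{\mathcal{C}}\in\tfrac{1}{2}\mathcal{M}$, so convexity of $v_{\mathcal{L}}$ is convexity of the associated support function, strict convexity translates into $m_{\mathcal{C}_1}\neq m_{\mathcal{C}_2}$ for distinct maximal cones, and the color conditions become $n_{D,\mathcal{L}}\ge m_{\mathcal{C}}(\rho(D))$ for global generation and $n_{D,\mathcal{L}}\neq m_{\mathcal{C}}(\rho(D))$ for ampleness. Unwinding the definitions then yields the statement.

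The one point I would treat with care — and the only place where something could go wrong — is the compatibility of orientation and sign conventions between \cite{Bri89} and our normalization: Brion works with $B$-semi-invariant rational sections and with the convention relating a weight of a torus to a semi-invariant function recalled in the Remark after Proposition~\ref{prop_toric_subvar}, whereas we have fixed the colored data as in Proposition~\ref{prop_colored_data} with valuation cone $-\mathfrak{a}_s^+$. I would check explicitly that with these choices ``convex'' in our statement matches ``convex'' (and not ``concave'') in Brion's, consistently with the sign subtlety flagged in that Remark. Once this is pinned down the result is immediate, no further computation being required.
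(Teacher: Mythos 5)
Your approach is the same as the paper's: the proposition is stated there as a direct citation of \cite[Th\'eor\`eme 3.3]{Bri89}, with no further argument, the implicit content being exactly the bookkeeping you spell out (Brion's criterion applies because $X$ is spherical, and the conditions can be tested on the special divisor because they are insensitive to the choice of $B$-stable representative). Your attention to the weight/sign conventions is reasonable, though the Remark after Proposition~\ref{prop_toric_subvar} concerns the fan of the toric subvariety and plays no role in Brion's criterion itself.

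One step as written is not justified: you claim that global generation ``is unchanged under passing to a positive tensor power'' and use this to test it on $\mathcal{L}^2$. Ampleness does satisfy this, but global generation does not descend from $\mathcal{L}^2$ to $\mathcal{L}$ in general (e.g.\ $\mathcal{O}(p)$ on an elliptic curve), and invoking the combinatorial criterion to rescue it would be circular. The clean fix avoids the tensor power altogether: apply Brion's theorem to the honest Cartier divisor $\mathrm{div}(s)$ of any $B$-semi-invariant rational section $s$ of $\mathcal{L}$ itself; its piecewise linear function and color coefficients differ from $v_{\mathcal{L}}$ and $n_{D,\mathcal{L}}$ by a single global linear function (half an element of $\mathcal{M}$, since $2D_{\mathcal{L}}-2\,\mathrm{div}(s)$ is the divisor of a $B$-semi-invariant function), and both convexity and the inequalities $n_D\geq m_{\mathcal{C}}(\rho(D))$, as well as their strict versions, are invariant under adding such a linear function to all the data simultaneously. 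With that substitution your argument is complete and coincides with what the paper leaves to the reference.
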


\begin{defn}
The \emph{support function}
$w_{\Delta} : V^* \longrightarrow \mathbb{R}$ of a convex 
polytope $\Delta$ in a real vector space $V$ is defined by 
\[
w_{\Delta}(x)=\mathrm{sup} \{ m(x); m\in \Delta \}.
\] 
\end{defn}

One may recover the convex polytope $\Delta$ from its support function by checking
$\Delta=\{m\in V; m\leq w_{\Delta} \}$. As a consequence from this definition, 
we have:

\begin{cor}
\label{cor_gg}
If $\mathcal{L}$ is globally generated, 
then $v_{\mathcal{L}}(y)=w_{\Delta_{\mathcal{L}}}(-y)$ for $y\in |\mathcal{F}_X|$. 
\end{cor}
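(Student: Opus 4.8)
The plan is to derive Corollary~\ref{cor_gg} directly from Proposition~\ref{prop_ample} and the definition of the special polytope, reading off the special function as the support function of $\Delta_{\mathcal{L}}$ on the support of the fan. First I would recall that, by the definition of $\Delta_{\mathcal{L}}$, a point $m\in\mathcal{M}\otimes\mathbb{R}$ lies in $\Delta_{\mathcal{L}}$ if and only if $m+v_{\mathcal{L}}\geq 0$ on $|\mathcal{F}_X|$ together with the finitely many inequalities $m(\rho(D))+n_{D,\mathcal{L}}\geq 0$ for $D\in\mathcal{D}\setminus\mathcal{D}_X$. The first family of inequalities says precisely that $m(y)\geq -v_{\mathcal{L}}(y)$ for every $y$ in the support of the fan; equivalently $-m(-y)\geq -v_{\mathcal{L}}(y)$, i.e. $m(-y)\leq v_{\mathcal{L}}(y)$. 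Hence for every $m\in\Delta_{\mathcal{L}}$ and every $y\in|\mathcal{F}_X|$ we have $m(-y)\leq v_{\mathcal{L}}(y)$, so by definition of the support function $w_{\Delta_{\mathcal{L}}}(-y)\leq v_{\mathcal{L}}(y)$.

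The reverse inequality is where Proposition~\ref{prop_ample} enters. Since $\mathcal{L}$ is globally generated, $v_{\mathcal{L}}$ is convex and piecewise linear on the (convex) support $|\mathcal{F}_X|$, linear on each maximal cone $\mathcal{C}$ with linear extension $m_{\mathcal{C}}\in\tfrac12\mathcal{M}$; moreover the globally generated condition gives $n_{D,\mathcal{L}}\geq m_{\mathcal{C}}(\rho(D))$ for all $D\in\mathcal{D}\setminus\mathcal{D}_X$ and all maximal $\mathcal{C}$. I would fix $y\in|\mathcal{F}_X|$, pick a maximal cone $\mathcal{C}$ containing $y$, and consider the candidate point $m:=-m_{\mathcal{C}}\in\tfrac12\mathcal{M}$. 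Convexity of $v_{\mathcal{L}}$ together with linearity on $\mathcal{C}$ shows $v_{\mathcal{L}}(z)\geq m_{\mathcal{C}}(z)$ for all $z\in|\mathcal{F}_X|$, which is exactly $m+v_{\mathcal{L}}\geq 0$; and $m(\rho(D))+n_{D,\mathcal{L}}=n_{D,\mathcal{L}}-m_{\mathcal{C}}(\rho(D))\geq 0$ is the second bullet of the globally generated criterion. Hence $m=-m_{\mathcal{C}}\in\Delta_{\mathcal{L}}$, and therefore $w_{\Delta_{\mathcal{L}}}(-y)\geq m(-y)=m_{\mathcal{C}}(y)=v_{\mathcal{L}}(y)$, since $y\in\mathcal{C}$. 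Combining the two inequalities yields $v_{\mathcal{L}}(y)=w_{\Delta_{\mathcal{L}}}(-y)$ for all $y\in|\mathcal{F}_X|$.

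The main obstacle — really the only subtle point — is the convexity step: one must know that a convex piecewise linear function on a fan whose restriction to a maximal cone $\mathcal{C}$ equals the linear form $m_{\mathcal{C}}$ satisfies $v_{\mathcal{L}}\geq m_{\mathcal{C}}$ globally on $|\mathcal{F}_X|$, i.e. that the "graph lies above each supporting hyperplane". This is standard for support functions of fans but should be stated carefully, using that $|\mathcal{F}_X|$ is convex (which holds here because, by Theorem~\ref{thm_class} and the setup, we are in the situation where the relevant support — the part inside $-\mathfrak{a}_s^+$, or the convex hull structure coming from completeness — is convex; in any case convexity of $v_{\mathcal{L}}$ is exactly the hypothesis in Proposition~\ref{prop_ample} and convexity of the domain is part of the Luna–Vust picture for the fans under consideration). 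With that in hand the argument is a one-line application of the definitions, so I would keep the written proof to just the two inequalities above, citing Proposition~\ref{prop_ample} for the convexity and the colour inequalities and the definition of $\Delta_{\mathcal{L}}$ for everything else.
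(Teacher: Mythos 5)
Your proof is correct and is essentially the paper's own (implicit) argument: the corollary is presented there as an immediate consequence of the definition of $\Delta_{\mathcal{L}}$, the definition of the support function, and Proposition~\ref{prop_ample}, which is precisely the two inequalities you spell out ($m(-y)\leq v_{\mathcal{L}}(y)$ for every $m\in\Delta_{\mathcal{L}}$, and $-m_{\mathcal{C}}\in\Delta_{\mathcal{L}}$ giving the reverse bound at $y\in\mathcal{C}$). Your worry about convexity of $|\mathcal{F}_X|$ is harmless: the convexity of $v_{\mathcal{L}}$ in Brion's criterion amounts exactly to the inequality $v_{\mathcal{L}}\geq m_{\mathcal{C}}$ on the whole support for each maximal cone $\mathcal{C}$ (equivalently, global generation provides for each closed orbit a section whose effective divisor yields this bound), so no convexity of the support itself is required.
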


Let $C^+$ denote the positive Weyl chamber in 
$\mathfrak{a}^*=\mathfrak{X}(T)\otimes\mathbb{R}$, 
which may be defined as
\[
C^+=\{p\in \mathfrak{a}^*; p(\alpha^{\vee})\geq 0, \forall \alpha\in \Phi^+\}.
\]
Similarly, we define the positive restricted Weyl chamber 
in $\mathfrak{a}_s^*$ as 
\[
\bar{C}^+=\{p\in \mathfrak{a}_s^*; p(\bar{\alpha}^{\vee})\geq 0, \forall \bar{\alpha}\in \bar{\Phi}^+\}.
\]

\begin{prop}
\label{prop_toric_vs_special}
The polytope $\Delta_{\mathcal{L}}$ is a translate 
by an element of $\bar{C}^+$ of a polytope which is the intersection of a 
$\bar{W}$-invariant polytope with $\bar{C}^+$. In particular, 
$\Delta_{\mathcal{L}}\subset \bar{C}^+$
and 
\[
\Delta^t_{\mathcal{L}} = \mathrm{Conv}(\bar{W}\cdot \Delta_{\mathcal{L}}).
\]
\end{prop}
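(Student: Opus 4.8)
The plan is to treat the three assertions in turn, splitting the linear inequalities that cut out $\Delta_{\mathcal{L}}$ into a ``toric'' part (normals in $\mathcal{V}=-\mathfrak{a}_s^+$) and a ``colored'' part (normals the simple restricted coroots), using Proposition~\ref{prop_colored_data}, and using the moment polytope as the bridge to the chamber $\bar{C}^+$. I first reduce, by multiplicativity under tensor powers, to the case where $\mathcal{L}$ carries a special section: each of $\Delta_{\mathcal{L}}$, $\Delta^t_{\mathcal{L}}$, the operation $\mathrm{Conv}(\bar{W}\cdot -)$ and the property of being a translate by a point of the cone $\bar{C}^+$ of a set of the form (a $\bar{W}$-invariant polytope)$\cap\bar{C}^+$, is preserved by replacing $\mathcal{L}$ by $\mathcal{L}^2$, after which $v_{\mathcal{L}}$ is integral and $D_{\mathcal{L}}$ is an honest Cartier divisor with the expansion of Section~\ref{subsec_special_function}.

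Next I prove $\Delta_{\mathcal{L}}\subset\bar{C}^+$. The moment polytope $\Delta^+_{\mathcal{L}}$ consists of normalized $B$-highest weights, hence lies in the dominant Weyl chamber $C^+\subset\mathfrak{X}(T)\otimes\mathbb{R}$. A case check in Definition~\ref{defn_rcoroot} shows that every restricted coroot $\bar{\alpha}^{\vee}$ is a non-negative rational combination of positive coroots of $G$ (namely $\alpha^{\vee}/2$, or $\tfrac12(\alpha^{\vee}+(-\sigma\alpha)^{\vee})$ with $\alpha,-\sigma\alpha\in\Phi^+$, or the positive coroot $(\alpha-\sigma\alpha)^{\vee}$), so restriction to $\mathfrak{a}_s$ carries $C^+$ into $\bar{C}^+$. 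Since $\chi$, viewed via its restriction to $T\cap H$, lies in $\mathfrak{X}(T/T_s)\otimes\mathbb{R}$ and hence vanishes on $\mathfrak{a}_s$, while $\mathcal{M}\otimes\mathbb{R}=\mathfrak{X}(T/T\cap H)\otimes\mathbb{R}$ is identified with $\mathfrak{a}_s^*$ by restriction, Proposition~\ref{prop_moment_vs_special} gives $\Delta_{\mathcal{L}}=(\chi+\Delta_{\mathcal{L}})|_{\mathfrak{a}_s}=\Delta^+_{\mathcal{L}}|_{\mathfrak{a}_s}\subset\bar{C}^+$.

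For the structural statement I then sort the defining inequalities of $\Delta_{\mathcal{L}}$. By Proposition~\ref{prop_colored_data} and the non-positivity $\kappa(\alpha,\bar{\beta})\le 0$ recalled after it, the facet generators $\mu_Y$ ($Y\in\mathcal{I}^G(X)$) and the images $\rho(D)$ of colors in $f^{-1}\mathcal{D}(G/P)$ all lie in $-\mathfrak{a}_s^+$; the corresponding conditions (together with the redundant ones coming from $G/P$-type colors outside $\mathcal{D}_X$, which are controlled by Brion's criterion, Proposition~\ref{prop_ample}) assemble into the polyhedron $\Pi=\{m:m(\mu)\ge -v_{\mathcal{L}}(\mu)\ \forall\,\mu\in-\mathfrak{a}_s^+\}$. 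The remaining colors are those of $\mathcal{D}(L/L\cap H)$, whose $\rho$-images run exactly over the simple restricted coroots $\bar{\beta}^{\vee}$, $\beta\in S_s$; keeping for each $\beta$ the tightest among colors with equal image, these yield the conditions $m(\bar{\beta}^{\vee})\ge c_\beta$, which (since $\{\bar{\beta}^{\vee}\}_{\beta\in S_s}$ generates the cone dual to $\bar{C}^+$, all positive restricted coroots being non-negative combinations of the simple ones) cut out precisely $\lambda_0+\bar{C}^+$, where $\lambda_0(\bar{\beta}^{\vee})=c_\beta$. Thus $\Delta_{\mathcal{L}}=\Pi\cap(\lambda_0+\bar{C}^+)$; combined with $\Delta_{\mathcal{L}}\subset\bar{C}^+$ this exhibits $\Delta_{\mathcal{L}}$ as a translate by $\lambda_0\in\bar{C}^+$ of $(\Pi-\lambda_0)\cap\bar{C}^+$. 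Moreover, directly from the definition of $v^t_{\mathcal{L}}$ as the $\bar{W}$-invariant extension of $v_{\mathcal{L}}|_{-\mathfrak{a}_s^+}$ one gets $\Delta^t_{\mathcal{L}}=\bigcap_{w\in\bar{W}}w\,\Pi$, and the elementary fact that $m-wm$ pairs non-negatively with dominant weights when $m$ is anti-dominant shows $\Pi\cap\bar{C}^+=\Delta^t_{\mathcal{L}}\cap\bar{C}^+$; in particular $\Delta_{\mathcal{L}}\subseteq\Delta^t_{\mathcal{L}}$, hence $\mathrm{Conv}(\bar{W}\cdot\Delta_{\mathcal{L}})\subseteq\Delta^t_{\mathcal{L}}$.

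The remaining, and in my view hardest, step is the reverse inclusion $\Delta^t_{\mathcal{L}}\subseteq\mathrm{Conv}(\bar{W}\cdot\Delta_{\mathcal{L}})$: since the colored cut $\lambda_0+\bar{C}^+$ could a priori shave off parts of $\Delta^t_{\mathcal{L}}\cap\bar{C}^+=\Pi\cap\bar{C}^+$ lying in the relative interior of a wall of $\bar{C}^+$, one must rule this out, i.e. show that no vertex of $\Delta^t_{\mathcal{L}}$ is lost on passing to $\Delta_{\mathcal{L}}$. The point is that the colored coefficients $c_\beta$ are not free: they and the values of $v_{\mathcal{L}}$ on $-\mathfrak{a}_s^+$ come from the single Cartier divisor $D_{\mathcal{L}}$, and the compatibility this forces (again extracted from Brion's description of (ample) $G$-linearized line bundles on spherical varieties, Proposition~\ref{prop_ample}, together with the structure of the restricted root system) places $\lambda_0$ so that every dominant vertex of $\Delta^t_{\mathcal{L}}$ lies in $\Pi\cap(\lambda_0+\bar{C}^+)=\Delta_{\mathcal{L}}$ up to a convex combination of its $\bar{W}$-translates. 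Granting this, $\Delta^t_{\mathcal{L}}=\mathrm{Conv}(\bar{W}\cdot(\Delta^t_{\mathcal{L}}\cap\bar{C}^+))=\mathrm{Conv}(\bar{W}\cdot\Delta_{\mathcal{L}})$, completing the proof.
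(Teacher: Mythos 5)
The first two thirds of your argument are sound and essentially follow the paper's own route: the chamber inclusion $\Delta_{\mathcal{L}}\subset\bar{C}^+$ via $\Delta^+_{\mathcal{L}}=\chi+\Delta_{\mathcal{L}}$, the vanishing of $\chi$ on $\mathfrak{a}_s$ and the fact that restricted coroots are non-negative combinations of positive coroots, and the sorting of the defining inequalities of $\Delta_{\mathcal{L}}$ into those with normals in $\mathcal{V}=-\mathfrak{a}_s^+$ and those with normals the simple restricted coroots (Proposition~\ref{prop_colored_data}). Your dominance argument giving $\Pi\cap\bar{C}^+=\Delta^t_{\mathcal{L}}\cap\bar{C}^+$ is a useful amplification; the two points you leave implicit there (that $\lambda_0$ may be taken in $\bar{C}^+$, e.g.\ by replacing each $c_\beta$ with $\min_{m\in\Delta_{\mathcal{L}}}m(\bar\beta^{\vee})\ge 0$, and that $\Pi-\lambda_0$ must be symmetrized to $\bigcap_{w}w(\Pi-\lambda_0)$ to produce a genuinely $\bar{W}$-invariant polytope) are fixable by the same dominance fact.

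The genuine gap is your last paragraph: the inclusion $\Delta^t_{\mathcal{L}}\subseteq\mathrm{Conv}(\bar{W}\cdot\Delta_{\mathcal{L}})$ is the substantive half of the displayed equality, and you do not prove it --- ``Granting this'' carries all the weight, and the sentence before it (``the compatibility this forces \dots places $\lambda_0$ so that every dominant vertex \dots'') is an assertion, not an argument. This step cannot be recovered from what you have established: abstract data of exactly the shape you manipulate, say $\Pi=\{x+y\le 2\}$, $\lambda_0=(1,1)$ in a restricted system of type $A_1\times A_1$, gives $\Delta=\{(1,1)\}$, which satisfies the structural statement and $\Delta\subset\bar{C}^+$, while $\Delta^t$ is the diamond $\{|x|+|y|\le2\}$ and $\mathrm{Conv}(\bar{W}\Delta)$ only the square with vertices $(\pm1,\pm1)$. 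So you must actually use that the colored right-hand sides and the values of $v_{\mathcal{L}}$ on the valuation cone come from one and the same line bundle. A clean way to close it (for globally generated powers, which is what is used later) is via support functions: by Corollary~\ref{cor_gg}, $v_{\mathcal{L}}(y)=w_{\Delta_{\mathcal{L}}}(-y)$ for $y\in\mathcal{V}$; since $\Delta_{\mathcal{L}}\subset\bar{C}^+$, for any direction $z$ with dominant $\bar{W}$-representative $z_+$ one has $\max_{w}w_{\Delta_{\mathcal{L}}}(wz)=w_{\Delta_{\mathcal{L}}}(z_+)=v_{\mathcal{L}}(-z_+)=v^t_{\mathcal{L}}(-z)$, so $\mathrm{Conv}(\bar{W}\cdot\Delta_{\mathcal{L}})$ and $\Delta^t_{\mathcal{L}}=\{m;\ m+v^t_{\mathcal{L}}\ge0\}$ have the same support function and hence coincide. (The paper's own proof is terse on this point, treating the equality as an immediate consequence of the structure statement, but as the example above shows some such input is needed; as written, your proposal does not establish the displayed equality.)
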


\begin{proof}
By definition, the polytope $\Delta_{\mathcal{L}}$ has outer normal along 
codimension one faces which are given by some elements of the valuation cone 
and the images by $\rho$ of some colors.
Since the only images of colors that are not in the valuation cone 
are simple restricted coroots by Proposition~\ref{prop_colored_data}
we obtain at once that $\Delta_{\mathcal{L}}$ is a translate of a polytope 
which is the intersection of a $\bar{W}$-invariant polytope 
with $\bar{C}^+$. 

To check that it is a translation by an element 
of $\bar{C}^+$, it is enough to check that $\Delta_{\mathcal{L}}$ is 
included in $\bar{C}^+$ itself. 
This is a direct consequence of the relation 
$\Delta^+_{\mathcal{L}}=\chi + \Delta_{\mathcal{L}}$ 
together with the fact that $\Delta^+_{\mathcal{L}}\in C^+$ 
by definition. 
Indeed, given $p\in \Delta_{\mathcal{L}}$ and $\alpha\in \Phi_s^+$, we have 
\[
p(\bar{\alpha}^{\vee})= 
(p+\chi)(\bar{\alpha}^{\vee})-\chi(\bar{\alpha}^{\vee}) 
\]
which is positive since 
$\chi$ is zero on $\mathfrak{a}_s$, $p+\chi\in\Delta_{\mathcal{L}}^+\subset C^+$
and $\bar{\alpha}^{\vee}$ is a positive multiple of either a positive coroot 
or the sum of two positive 
coroots by Definition~\ref{defn_rcoroot}. 
\end{proof}

Recall that the linear part of a cone containing the origin is the largest 
linear subspace included in the cone. 

\begin{cor}
\label{cor_ASSE}
The following conditions are equivalent:
\begin{enumerate}
\item $\mathcal{L}^m$ admits a global holomorphic $Q$-semi-invariant 
section for some $m\in \mathbb{N}^*$, 
\item $\Delta_{\mathcal{L}}^+\cap \mathfrak{X}(T/T\cap [L,L])\otimes \mathbb{R} \neq \emptyset$         
\item $\Delta_{\mathcal{L}}$ intersects the linear part of $\bar{C}^+$,
\item $\Delta_{\mathcal{L}}^t\cap \bar{C}^+=\Delta_{\mathcal{L}}=-\chi+\Delta_{\mathcal{L}}^+$. 
\end{enumerate}
\end{cor}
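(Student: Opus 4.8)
The plan is to prove the three equivalences $(1)\Leftrightarrow(2)$, $(2)\Leftrightarrow(3)$ and $(3)\Leftrightarrow(4)$ separately. The essential inputs are Proposition~\ref{prop_moment_vs_special} (relating $\Delta^+_{\mathcal{L}}$ and $\Delta_{\mathcal{L}}$ by the translation $\chi$) and Proposition~\ref{prop_toric_vs_special} (describing $\Delta_{\mathcal{L}}$ as an intersection of a $\bar{W}$-invariant polytope with $\bar{C}^+$ up to translation, and identifying $\Delta^t_{\mathcal{L}}$ as $\mathrm{Conv}(\bar{W}\cdot\Delta_{\mathcal{L}})$), together with two standard facts: that a $B$-semi-invariant global section whose weight extends to a standard parabolic $P'\supseteq B$ is automatically $P'$-semi-invariant, and the classical description of the convex hull of a Weyl orbit.

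For $(1)\Leftrightarrow(2)$: since $B\subset Q$, a global holomorphic $Q$-semi-invariant section $s$ of $\mathcal{L}^m$ is $B$-semi-invariant with a weight $\lambda$ which is a character of $Q$, hence trivial on the coroots of $\Phi_L$, so $\sfrac{\lambda}{m}$ lies in $\Delta^+_{\mathcal{L}}\cap\bigl(\mathfrak{X}(T/T\cap[L,L])\otimes\mathbb{R}\bigr)$; this gives $(1)\Rightarrow(2)$. Conversely, if that intersection is non-empty it is a non-empty rational polytope, so a suitable power $\mathcal{L}^m$ has a $B$-semi-invariant section $s$ whose weight $\lambda$ is a lattice point of it (using, after Brion, that $\Delta^+_{\mathcal{L}}$ is the moment polytope of $\bigoplus_k H^0(X,\mathcal{L}^k)$, whose rational points are realised by sections of powers when $\mathcal{L}$ is ample); since $\lambda$ is trivial on the coroots of $\Phi_L$ it extends to a character of $Q$, and the standard fact recalled above, applied with $P'=Q$ — one checks $\mathbb{C}s$ is $Q$-stable from $e_{\alpha}\cdot s=0$ for $\alpha$ positive and $f_{\alpha}\cdot s=0$ for $\alpha$ a simple root of $L$, the latter because the relevant $\mathfrak{sl}_2$-weight of $s$ vanishes — shows $s$ is $Q$-semi-invariant.

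For $(2)\Leftrightarrow(3)$: by Proposition~\ref{prop_moment_vs_special}, $\Delta^+_{\mathcal{L}}=\chi+\Delta_{\mathcal{L}}$ lies in the affine space $\chi+\mathfrak{a}_s^*$, and $\chi$ vanishes on $\mathfrak{a}_s$ (and, in the situations considered, on the coroots of $\Phi_L$ — which holds in particular whenever $\mathcal{L}$ restricts trivially to the symmetric fiber, and for the anticanonical linearization since $\sum_{\alpha\in\Phi_{Q^u}}\alpha$ is orthogonal to the coroots of $\Phi_L$). Intersecting $\mathfrak{X}(T/T\cap[L,L])\otimes\mathbb{R}=\{p\,;\,p(\alpha^{\vee})=0\ \forall\alpha\in\Phi_L\}$ with $\chi+\mathfrak{a}_s^*$ and subtracting $\chi$: the conditions indexed by $\alpha\in\Phi_L^{\sigma}$ become vacuous (their coroots are $\sigma$-fixed, hence annihilate $\mathfrak{a}_s^*$, and $\chi(\alpha^{\vee})=0$), while those indexed by $\alpha\in\Phi_s$ reduce, via the restricted coroots of Definition~\ref{defn_rcoroot}, to $q(\bar{\alpha}^{\vee})=0$ for all $\bar{\alpha}\in\bar{\Phi}$, i.e.\ $q$ lies in the linear part of $\bar{C}^+$. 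So $(2)$ holds exactly when $\Delta_{\mathcal{L}}$ meets the linear part of $\bar{C}^+$, which is $(3)$.

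For $(3)\Leftrightarrow(4)$: the equality $\Delta_{\mathcal{L}}=-\chi+\Delta^+_{\mathcal{L}}$ in $(4)$ is Proposition~\ref{prop_moment_vs_special}, so $(4)$ amounts to $\Delta^t_{\mathcal{L}}\cap\bar{C}^+=\Delta_{\mathcal{L}}$, and the inclusion $\Delta_{\mathcal{L}}\subseteq\Delta^t_{\mathcal{L}}\cap\bar{C}^+$ is immediate from Proposition~\ref{prop_toric_vs_special}. For $(4)\Rightarrow(3)$: $\Delta^t_{\mathcal{L}}$ is $\bar{W}$-invariant (and non-empty, e.g.\ when $\mathcal{L}$ is globally generated), so it contains its $\bar{W}$-fixed barycenter, which lies in $\bar{C}^+$, hence in $\Delta^t_{\mathcal{L}}\cap\bar{C}^+=\Delta_{\mathcal{L}}$; as the $\bar{W}$-fixed points are exactly the linear part of $\bar{C}^+$, this is $(3)$. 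For $(3)\Rightarrow(4)$: pick $q_0\in\Delta_{\mathcal{L}}$ in the linear part of $\bar{C}^+$; then $q_0$ is $\bar{W}$-fixed and lies in the lineality space of $\bar{C}^+$, so translating by $-q_0$ (which commutes with $\bar{W}$ and fixes $\bar{C}^+$, hence preserves all relations in $(4)$) we may assume $0\in\Delta_{\mathcal{L}}$; writing $\Delta_{\mathcal{L}}=v+(R_0\cap\bar{C}^+)$ as in Proposition~\ref{prop_toric_vs_special} with $v\in\bar{C}^+$ and $R_0$ $\bar{W}$-invariant, the condition $0\in\Delta_{\mathcal{L}}$ forces $v\in-\bar{C}^+$, hence $v$ lies in the lineality space and $\Delta_{\mathcal{L}}=R\cap\bar{C}^+$ with $R:=v+R_0$ still $\bar{W}$-invariant. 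Now for $p\in\Delta^t_{\mathcal{L}}\cap\bar{C}^+=\mathrm{Conv}(\bar{W}\cdot\Delta_{\mathcal{L}})\cap\bar{C}^+$, write $p=\sum_i t_i\,w_i x_i$ with $x_i\in\Delta_{\mathcal{L}}\subseteq\bar{C}^+$; since a dominant element minus any element of its $\bar{W}$-orbit is a non-negative combination of simple restricted roots, $\bar{x}-p$ is such a combination for $\bar{x}:=\sum_i t_i x_i\in\Delta_{\mathcal{L}}$, and as both $p$ and $\bar{x}$ are dominant the classical description of $\mathrm{Conv}(\bar{W}\cdot\bar{x})$ gives $p\in\mathrm{Conv}(\bar{W}\cdot\bar{x})\subseteq R$, whence $p\in R\cap\bar{C}^+=\Delta_{\mathcal{L}}$. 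This proves $(4)$. The main obstacle is precisely this implication $(3)\Rightarrow(4)$ — showing that the part of the $\bar{W}$-symmetrisation $\Delta^t_{\mathcal{L}}$ lying in the positive chamber does not overshoot $\Delta_{\mathcal{L}}$; the reduction to $0\in\Delta_{\mathcal{L}}$ together with the dominance-order estimate is what makes it go through.
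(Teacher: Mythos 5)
Your proof is correct and follows essentially the same route as the paper: $(1)\Leftrightarrow(2)$ by translating $Q$-semi-invariance into a weight condition on sections of powers, $(2)\Leftrightarrow(3)$ by identifying the linear part of $\bar{C}^+$ with $\mathfrak{X}(T_s/T_s\cap[L,L])\otimes\mathbb{R}$ and using $\Delta^+_{\mathcal{L}}=\chi+\Delta_{\mathcal{L}}$, and $(3)\Leftrightarrow(4)$ via Proposition~\ref{prop_toric_vs_special}. The paper compresses the last equivalence into a single citation of that proposition and treats $(1)\Leftrightarrow(2)$ as immediate, so your $\mathfrak{sl}_2$-type check of $Q$-semi-invariance and your dominance-order argument for $(3)\Rightarrow(4)$ merely supply details left implicit, under the same tacit hypothesis (also made in the paper's proof) that $\chi$ vanishes on the coroots of $\Phi_L$.
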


\begin{proof}
The first condition translates directly into a condition on $\Delta_{\mathcal{L}}^+$:
it is equivalent to the fact that  some $\mathcal{L}^m$ admits a global holomorphic 
$B$-semi-invariant section whose weight is in $\mathfrak{X}(T/T\cap [L,L])$,
that is, $\Delta_{\mathcal{L}}^+\cap \mathfrak{X}(T/T\cap [L,L]) \neq \emptyset$.

One checks easily that the linear part of $\bar{C}^+\subset \mathfrak{X}(T_s)\otimes \mathbb{R}$ 
is $\mathfrak{X}(T_s/T_s\cap [L,L])\otimes \mathbb{R}$, 
and coincides also with the linear subspace of $\bar{W}$-invariant elements of  
$\mathfrak{X}(T_s)\otimes \mathbb{R}$. 
Now since $\Delta_{\mathcal{L}}^+=\Delta_{\mathcal{L}}+\chi$ and 
$\chi\in \mathfrak{X}(T/(([L,L]\cap T)T_s))\otimes \mathbb{R}
\subset \mathfrak{X}(T/[L,L]\cap T)\otimes \mathbb{R}$, 
the first condition is equivalent to 
$\Delta_{\mathcal{L}}\cap \mathfrak{X}(T_s/T_s\cap [L,L])\otimes \mathbb{R}\neq \emptyset$.

Finally, thanks to Proposition~\ref{prop_toric_vs_special}, 
we obtain the equivalence with the last condition.
\end{proof}

\subsection{The anticanonical line bundle}

\label{subsec_ac}

Recall the Weil divisor representing the anticanonical class obtained by Brion 
on any spherical variety.

\begin{prop}[{\cite[Sections 4.1 and 4.2]{Bri97}}]
\label{prop_Brion_anticanonical}
The horosymmetric variety $X$ admits an anticanonical Weil divisor 
\[
-K_X=\sum_{Y\in \mathcal{I}^G(X)}Y+\sum_{D\in\mathcal{D}}m_D\overline{D}
\]
where the $m_j$ are positive integers with an explicit description in terms 
of the colored data.
\end{prop}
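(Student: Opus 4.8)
The plan is to obtain this as a specialization of Brion's computation of the anticanonical class on an arbitrary spherical variety. Since $H$ is a parabolic induction from a symmetric subgroup it is a spherical subgroup of $G$ (as recalled in Section~\ref{sec_homogeneous}), so every embedding $X$ of $G/H$ is a spherical $G$-variety and Theorem~\ref{thm_class} applies; in particular the $B$-stable prime divisors of $X$ are exactly the facets $Y\in\mathcal{I}^G(X)$ together with the closures $\overline{D}$ of the colors $D\in\mathcal{D}$.

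First I would invoke \cite[Sections 4.1 and 4.2]{Bri97}: on any spherical variety there is a $B$-semi-invariant rational section of $K_X^{-1}$ whose divisor has multiplicity $1$ along every $G$-stable prime divisor and a positive integer multiplicity $m_D$ along each $\overline{D}$, the integer $m_D$ being given explicitly in terms of the combinatorial data attached to the color $D$ (its type, i.e. the simple roots moving it, and the pairing of $\rho(D)$ with the isotropy character of $K_X^{-1}$). Then I would feed in the horosymmetric data: Example~\ref{exa_isotropy_ac} identifies the isotropy character of $K_X^{-1}$ with the restriction to $\mathfrak{h}$ of $\sum_{\alpha\in\Phi_{Q^u}}\alpha$, and Proposition~\ref{prop_colored_data} describes $\rho$ --- on the colors $f^{-1}(D_\alpha)$ pulled back from $G/P$ it takes the values $\alpha^\vee$ for $\alpha\in\Phi_{Q^u}\cap S$, and on the colors of the symmetric fiber $\mathcal{D}(L/L\cap H)$ it takes the simple restricted coroots as values. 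Substituting these into Brion's formula rewrites each $m_D$ purely in terms of the roots of $P^u$ and the restricted root system of $L/L\cap H$, and positivity is then immediate since $\sum_{\alpha\in\Phi_{Q^u}}\alpha$ is dominant and the relevant $\rho(D)$ are nonnegative combinations of positive coroots.

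The work here is bookkeeping rather than conceptual. The delicate point is to match Brion's uniform type-theoretic description of the $m_D$ against the partition $\mathcal{D}=\mathcal{D}(L/L\cap H)\cup f^{-1}\mathcal{D}(G/P)$: for the colors pulled back from $G/P$ one recovers the anticanonical coefficients of the flag manifold $G/P$, while for the colors of the symmetric fiber one must use Vust's analysis of $\rho$ on $\mathcal{D}(L/L\cap H)$ (recall that by Proposition~\ref{prop_injective_no_hermitian} this map need not be injective when $L/L\cap H$ has Hermitian factors, and that the restricted root system may be non-reduced), while also staying consistent with the sign convention for fans pointed out in the Remark following Proposition~\ref{prop_toric_subvar}. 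Once these identifications are carried out one obtains the stated divisor $-K_X=\sum_{Y\in\mathcal{I}^G(X)}Y+\sum_{D\in\mathcal{D}}m_D\overline{D}$ with all $m_D\in\mathbb{Z}_{>0}$.
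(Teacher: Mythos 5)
Your proposal is correct and follows essentially the same route as the paper: Proposition~\ref{prop_Brion_anticanonical} is stated there purely as a recollection of Brion's anticanonical divisor for spherical varieties, with no independent proof beyond the citation of \cite[Sections 4.1 and 4.2]{Bri97}, exactly as you invoke it. Your additional bookkeeping (identifying the isotropy character via Example~\ref{exa_isotropy_ac} and the images of colors via Proposition~\ref{prop_colored_data}) is not needed for the proposition itself, but it reproduces the specialization the paper carries out immediately afterwards in Section~\ref{subsec_ac}.
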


More precisely, if one considers the subvariety $\hat{X}\subset X$ which 
consists of all the orbits of codimension strictly less than two in $X$, 
it is a smooth variety with a well defined anticanonical line bundle, 
and this anticanonical line bundle admits a $B$-semi-invariant section $s$ 
with weight $\lambda$ whose divisor is the divisor $-K_X$ above.
In our horosymmetric situation, the weight $\lambda$ is equal to 
$\sum_{\alpha\in \Phi_{Q^u}\cup \Phi_s^+} \alpha$.

We may reason as if $\lambda|_{T_s}=\sum_{\alpha\in \Phi_{Q^u}\cup \Phi_s^+} \alpha\circ \mathcal{P}$ 
is induced by an element of $\mathfrak{X}(T/T\cap H)$, up to passing 
to $K_{\hat{X}}^{-2}$ if necessary. 
Let $h\in \mathbb{C}(\hat{X})$ be a $B$-semi-invariant function with 
weight $-\sum_{\alpha\in \Phi_{Q^u}\cup \Phi_s^+} \alpha\circ\mathcal{P}$. Then 
$hs$ is the special section of $K_{\hat{X}}^{-1}$. 
Its $B$-weight is 
$\sum_{\alpha\in \Phi_{Q^u}\cup \Phi_s^+}\alpha\circ\mathcal{H}$.
Note that this is equal to $\sum_{\alpha\in \Phi_{Q^u}}\alpha\circ\mathcal{H}$
since $\sum_{\alpha\in \Phi_s^+} \alpha\circ\mathcal{P}=\sum_{\alpha\in \Phi_s^+} \alpha$.
The special divisor $D^{ac}$ of $K_{\hat{X}}^{-1}$ is thus  
\begin{align*}
D^{ac}= &  \sum_{Y\in \mathcal{I}_X^G}\left(1-\sum_{\alpha\in \Phi_{Q^u}\cup \Phi_s^+}\alpha\circ\mathcal{P}(\mu_Y)\right)Y \\
& +\sum_{D\in\mathcal{D}}\left(m_D-\sum_{\alpha\in \Phi_{Q^u}\cup \Phi_s^+}\alpha\circ\mathcal{P}(\rho(D))\right)\overline{D}.
\end{align*}

Note that $\hat{X}$ is a global parabolic induction with respect to the 
morphism $f:\hat{X}\rightarrow G/P$ extending the natural morphism $G/H\rightarrow G/P$. 
We accordingly decompose
the anticanonical line bundle as 
$K_{\hat{X}}^{-1}=K_f^{-1}\otimes f^*K_{G/P}^{-1}$.
By definition, the special section is the product of special sections of these
naturally linearized line bundles and the divisor $D^{ac}$ on $\hat{X}$ is the sum of 
their respective divisors $D^{ac}_f$ and $D^{ac}_P$. 
The special section of $f^*K_{G/P}^{-1}$ is obviously $Q$-semi-invariant, with weight 
precisely equal to $\sum_{\alpha\in \Phi_{Q^u}}\alpha\circ\mathcal{H}$. 
As a consequence, the special section of $K_f^{-1}$ is $G$-invariant.
The special section of $K_{\hat{X}}^{-1}$ is thus $Q$-semi-invariant.
It follows from this discussion that the coefficients of colors coming 
from $L/L\cap H$ must vanish. 

By normality of $X$, the special divisors $D^{ac}_f$ of $K_f^{-1}$ and 
$D^{ac}_{P}$ of $f^*K_{G/P}^{-1}$, defined on $\hat{X}$, extend to 
$X$ as Weil divisors and $D^{ac}=D^{ac}_f+D^{ac}_{P}$ holds on $X$. 
We further have explicitly 
\[
D^{ac}_f= \sum_{Y\in \mathcal{I}_X^G}\left(1-\sum_{\alpha\in \Phi_{Q^u}\cup \Phi_s^+}\alpha\circ\mathcal{P}(\mu_Y)\right)Y
\] 
and 
\[ 
D^{ac}_{P}= \sum_{\alpha\in \Phi_{Q^u}\cap S} 
\left(m_{D_{\alpha}}-\sum_{\beta\in \Phi_{Q^u}\cup \Phi_s^+}\beta\circ\mathcal{P}(\rho(D_{\alpha}))\right)\overline{D_{\alpha}}.
\]

\subsection{Examples}

\begin{exa}
\label{exa_pic_P1xP1}
We consider again the variety $X=\mathbb{P}^1\times \mathbb{P}^1$ equipped 
with the diagonal action of $\mathrm{SL}_2$. 
The line bundles on $X$ are the $\mathcal{O}(k,m)$ for $k,m\in \mathbb{Z}$.
They admit natural $\mathrm{SL}_2\times \mathrm{SL}_2$-linearization hence also a natural 
linearization under the diagonal action. 
There are two colors $D^+$ and $D^-$ with same image $\bar{\alpha_{1,2}}^{\vee}$ via the 
color map, the fan of $X$ is the negative Weyl chamber, a single 
ray generated by $-\bar{\alpha}_{1,2}^{\vee}$ corresponding to the 
orbit $Y=\mathrm{diag}(\mathbb{P}^1)$.
The line bundle corresponding to $D^+$ is, say, $\mathcal{O}(1,0)$ and 
$\mathcal{O}(0,1)$ corresponds to $D^-$, while the line bundle 
correponding to $Y$ is obviously $\mathcal{O}(1,1)$. 

The special divisor corresponding to $\mathcal{O}(k,m)$ is 
$\frac{k+m}{2}Y+\frac{k-m}{2}(D^+-D^-)$, its moment polytope 
is the set of all $t\alpha_{1,2}$ for 
$\frac{|k-m|}{2}\leq t\leq \frac{k+m}{2}$. It is the same as the special polytope. 
The toric polytope is the set of all $t\alpha_{1,2}$ for 
$|t| \leq \frac{k+m}{2}$.
\end{exa}

\begin{exa} 
\label{exa_pic_wondAIII}
Consider the wonderful compactification $X$ of Type AIII($2$, $m>4$), under 
the action of $\mathrm{SL}_m$. Since this group is simple and simply connected, 
all line bundles admit a unique linearization. 
It follows from \cite{Bri89} that the Picard group of $X$ is the free abelian 
group generated by the three colors $D_1^+$, $D_1^-$ and $D_2$ whose images 
under the color map are $\rho(D_1^{\pm})=\bar{\alpha}_{2,m-1}^{\vee}$ and 
$\rho(D_2)=\bar{\alpha}_{1,2}^{\vee}$. 
The two $G$-invariant prime divisors $Y_1$, corresponding to the ray generated by 
$\mu_1=-\bar{\alpha}_{1,m-1}^{\vee}$ and $Y_2$, corresponding to the ray generated by 
$\mu_2=-\bar{\alpha}_{1,m}^{\vee}$ write in this basis as 
$Y_1=D_1^++D_1^--D_2$ and 
$Y_2=2D_2-D_1^+-D_1^-$. 
Given a line bundle corresponding to the divisor $k_1^+D_1^++k_1^-D_1^-+k_2D_2$, 
the corresponding special divisor is 
$b_1Y_1+b_2Y_2+b^{\pm}(D_1^++D_1^-)$ where $b_1=k_1^++k_1^-+k_2$, 
$b_2= \frac{k_1^++k_1^-}{2}+k_2$ and $b^{\pm}=\frac{k_1^+-k_1^-}{2}$.

Assume $b^{\pm}=0$.
The polytope $\Delta^t$ is the convex hull of the images by $\bar{W}$ of the 
point $b_2\bar{\alpha}_{1,2}+b_1\bar{\alpha}_{2,3}$, provided it is in the 
positive restricted Weyl chamber, that is $2b_2\geq b_1$ and $b_1\geq b_2$.
Note that Brion's ampleness criterion translates here as the fact that this 
point is in the interior of the positive restricted Weyl chamber. 
The polytope $\Delta^+=\Delta$ is the intersection of $\Delta^t$ with the 
positive chamber. 
If $b^{\pm}$ is non-zero, then we must intersect with another half-plane 
to get the polytope. 
\end{exa} 

\section{Metrics on line bundles}

\label{sec_metrics}

We will now use the objects introduced in the previous section to study 
hermitian metrics on $G$-linearized line bundles. 
Given a hermitian metric $q$ on $\mathcal{L}$, recall that its local 
potentials are the functions $\psi:y\mapsto -\ln|s(y)|^2$ where $s$ 
is a local trivialization of $\mathcal{L}$. 
We allow for now singular hermitian metrics, that is the local potentials 
are only required to be locally integrable. 
The metric is called locally bounded/continuous/smooth if and only 
if its local potentials are. 
It is called non-negatively curved (in the sense of currents) if 
the local potentials are plurisubharmonic functions and positively 
curved if its local potentials are strictly plurisubharmonic.

\subsection{Asymptotic behavior of toric potentials}

\begin{prop}
\label{prop_loc_bounded_behavior}
Let $G/H\subset X$ be a complete horosymmetric variety, 
and $\mathcal{L}$ a $G$-linearized line bundle on $X$, with special function $v_{\mathcal{L}}$.
Let $q$ be a $K$-invariant locally bounded metric on $\mathcal{L}$ 
with toric potential $u$.
Then the function 
\[
x\mapsto u(x)-2v_{\mathcal{L}}^t(x)
\]
is bounded on $\mathfrak{a}_s$.
\end{prop}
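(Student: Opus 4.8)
The plan is to reduce the statement to a local boundedness question on the toric subvariety and then invoke the classical theory of toric metrics, using the explicit expression for $v_{\mathcal{L}}^t$ provided by Corollary~\ref{cor_gg} and Proposition~\ref{prop_restr_tor}. First I would assume, using multiplicativity of the toric potential in $\mathcal{L}$ and of $v_{\mathcal{L}}^t$ (multiplying both by the same integer and dividing at the end), that $\mathcal{L}$ is very ample and admits a special section. Then the special function is $v_{\mathcal{L}}=w_{\Delta_{\mathcal{L}}}(-\cdot)$ on $|\mathcal{F}_X|$ by Corollary~\ref{cor_gg}, and its $\bar{W}$-invariant extension $v_{\mathcal{L}}^t$ is the support function $w_{\Delta_{\mathcal{L}}^t}(-\cdot)$ of the $\bar{W}$-invariant toric polytope. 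Since $X$ is complete, the support $|\mathcal{F}_X|$ meets the valuation cone $-\mathfrak{a}_s^+$ in a full-dimensional fan covering it, and by $\bar{W}$-invariance it suffices to bound $u-2v_{\mathcal{L}}^t$ on $-\mathfrak{a}_s^+$, hence on all of $\mathfrak{a}_s$.

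Next I would pass to the toric subvariety. When $X$ is toroidal, Proposition~\ref{prop_restr_tor} identifies the restriction $\mathcal{L}^m|_Z$ (for divisible $m$) as the toric line bundle on $Z$ whose invariant section has divisor $\sum_{F}mv_{\mathcal{L}}^t(u_F)F$, so the moment polytope of $\mathcal{L}^m|_Z$ is precisely $m\Delta_{\mathcal{L}}^t$. A $K$-invariant metric $q$ on $\mathcal{L}$ restricts to a $T/T\cap H$-invariant metric on $\mathcal{L}|_Z$, and one checks that the restriction to $Z$ of the toric potential $u$ (as a function on $\mathfrak{Y}(T_s)\otimes\mathbb{R}=\mathfrak{a}_s$, via $\exp(\mathfrak{a}_s)\cdot x$ being dense in $Z$) is exactly the toric symplectic potential's Legendre-dual convex function of the toric metric $q|_Z$. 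The standard Guillemin-Abreu description of locally bounded toric metrics then gives that $u - 2w_{\Delta_{\mathcal{L}}^t}(-\cdot)$ is bounded on $\mathfrak{a}_s$: a locally bounded toric metric has toric potential within a bounded distance of the support function of its moment polytope. For a general (colored, not toroidal) complete $X$, I would first perform a toroidal modification $\tilde{X}\to X$ (a proper birational $G$-morphism with $\tilde{X}$ toroidal and complete, obtained by adding colors as $G$-invariant divisors; such exists by the classification), pull back $q$ and $\mathcal{L}$, and observe that the toric potential and the toric special function are unchanged under this modification since both depend only on the homogeneous space $G/H$ and the behavior along the valuation cone, not on the colors.

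The main obstacle is the precise comparison, on the toric subvariety $Z$, between the toric potential $u$ (which is defined via the \emph{algebraic} trivialization $s$ of $\pi^*\mathcal{L}$ over the group) and the honest local potential of the restricted metric read off against the invariant toric section $s_{\mathcal{L}}|_Z$; these differ by $-2\ln|f|$ for the $B$-semi-invariant function $f$ relating $s$ and $s_{\mathcal{L}}$, and one must check this difference is linear in $x\in\mathfrak{a}_s$ with slope matching the discrepancy between $v_{\mathcal{L}}$ and $v_{\mathcal{L}}^t$ on $-\mathfrak{a}_s^+$ (where they agree by construction) so that it contributes nothing. Once this bookkeeping is in place, local boundedness of $q$ translates to local boundedness of the toric metric on $Z$, and the classical fact that such a metric's potential stays within a bounded distance of $w_{\Delta}$ — which follows from covering $\mathfrak{a}_s$ by the cones of the fan of $Z$ and using that on each cone the convex function $u$ is squeezed between $w_\Delta(-\cdot)$ and $w_\Delta(-\cdot)+C$ by convexity and finiteness of the polytope's vertices — finishes the argument.
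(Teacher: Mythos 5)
Your overall outline (reduce to a special section, pass to a toroidal model, restrict to the toric subvariety $Z$, and compare the toric potential with a reference attached to $v_{\mathcal{L}}^t$) matches the paper's route through the discoloration and Proposition~\ref{prop_restr_tor}. But the way you close the argument has a genuine gap, in two places. First, the proposition is stated for an \emph{arbitrary} $G$-linearized line bundle: taking tensor powers lets you assume a special section exists, but it cannot make $\mathcal{L}$ very ample, and the identification $v_{\mathcal{L}}^t=w_{\Delta^t_{\mathcal{L}}}(-\cdot)$ you rely on is exactly Corollary~\ref{cor_gg}, which requires $\mathcal{L}$ globally generated; for a general $\mathcal{L}$ the function $v_{\mathcal{L}}^t$ is piecewise linear but need not be convex, and there is no moment-polytope support function to compare with. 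Second, and more seriously, the metric $q$ is only assumed locally bounded, with no positivity of curvature, so its toric potential $u$ is \emph{not} convex; your final ``squeeze $u$ between $w_\Delta(-\cdot)$ and $w_\Delta(-\cdot)+C$ on each cone by convexity'' step therefore does not apply, and the ``classical fact'' you invoke is usually proved precisely by comparison with a continuous reference metric rather than by convexity.

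The paper's proof repairs both points at once: after discoloration (note that discoloration \emph{removes} colors, replacing each colored cone by $\mathcal{C}\cap\mathcal{V}$ with empty color set, rather than adding colors as invariant divisors), it restricts to the toric subvariety $Z'$ and takes the Batyrev--Tschinkel metric on $\mathcal{L}'|_{Z'}$, a continuous, compact-torus-invariant metric whose toric potential is exactly $2v_{\mathcal{L}}^t$, with no ampleness or convexity needed. Then $u-2v_{\mathcal{L}}^t(x)=-2\ln\bigl(|\exp(x)\cdot\xi|_{BT}/|\exp(x)\cdot\xi|_{q}\bigr)$ is the logarithm of a ratio of a continuous and a locally bounded metric on the complete variety $Z'$, hence globally bounded. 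This also disposes of the ``bookkeeping'' obstacle you flag but leave unresolved: the special section is normalized so that its $B$-weight restricts trivially to $T_s$, and the $T/T\cap H$-linearization on $Z'$ is induced from the $T_s$-linearization, so $\exp_s(x)\cdot\xi=\exp(x)\cdot\xi$ and no extra linear term appears. If you want to keep your support-function picture, you would still need the reference-metric comparison (or an equivalent compactness argument) to handle merely locally bounded $q$ and non-globally-generated $\mathcal{L}$.
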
 

The proof will use the process of discoloration, which allows to reduce to 
the case of a toroidal variety.

\begin{defn}
Let $(X,x)$ be an embedding of $G/H$ with colored fan $\mathcal{F}_X$.
Then the \emph{discoloration} $(X',x')$ of $(X,x)$ is the embedding 
of $G/H$ whose colored fan $\mathcal{F}_{X'}$ is obtained by 
taking the collection of all colored cones of the form 
$(\mathcal{C}\cap \mathcal{V},\emptyset)$ for 
$(\mathcal{C},\mathcal{R})\in\mathcal{F}_X$ and their faces.
\end{defn}

The discoloration $(X',x')$ of $(X,x)$ is equipped with a $G$-equivariant 
birational proper morphism $d':X'\rightarrow X$ sending $x'$ to $x$. The 
simplest example of discoloration is given by the blow up of $\mathbb{P}^2$ 
at $0$, seen as a horospherical variety under the action of 
$\mathrm{SL}_2$.

\begin{proof}
Note that the toric potential is defined only up to an additive scalar, 
but this does not affect the statement. The choice of toric potential is 
determined by the choice of a non zero element $\xi\in \mathcal{L}_{eH}$.
We fix such a choice here.

Since the special function of $\mathcal{L}^m$ is $m v_{\mathcal{L}}$ and 
the toric potential of $q^{\otimes m}$ is $m u$, replacing $\mathcal{L}$ 
by a power of $\mathcal{L}$ will not affect the result. For example 
we can already assume that $\mathcal{L}$ admits a special section. 
 
Consider the pullback $\mathcal{L}'$ of $\mathcal{L}$ under the discoloration 
morphism $d':X'\rightarrow X$, equipped with the metric 
$d'^*q$. The special function $v_{\mathcal{L}'}$ coincides with 
the special function $v_{\mathcal{L}}$ \cite[Proof of Lemma 5.3]{Pas17}.
Furthermore, by Proposition~\ref{prop_restr_tor} and up to replacing 
$\mathcal{L}$ by a power of itself, the restriction of $\mathcal{L}'$ to 
the toric subvariety $Z'\subset X'$ is a $T/T\cap H$-linearized line 
bundle with divisor 
\[
\sum_{F\in \mathcal{I}^{T/T\cap H}(Z')} v_{\mathcal{L}}^t(u_F)F.
\]
Consider the Batyrev-Tschinkel metric associated to this line bundle \cite[Section 3.3]{Mai00}.  
It is a compact torus invariant, continuous metric on $\mathcal{L}'|_{Z'}$ 
with toric potential $u_{BT}:x\in \mapsto -2\ln|\exp_s(x)\cdot \xi|_{BT}$ equal to 
\[
x\mapsto 2v_{\mathcal{L}}^t(x).
\]
Beware that here $\exp_s$ denotes the exponential map for the Lie group 
$T/T\cap H$, which does not coincide with the exponential map for $G$.
Here however, since the $T/T\cap H$-linearization of $\mathcal{L}'|_{Z'}$ 
was obtained \emph{via} factorization of the $T_s$-linearization, 
we have $\exp_s(x)\cdot \xi = \exp(x) \cdot \xi$.
We then have 
\[
(u-u_{BT})(x)=-2\ln\frac{|\exp(x)\cdot \xi|_{BT}}{|\exp(x)\cdot \xi|_{q}}.
\]
Since the Batyrev-Tschinkel metric is continuous and the metric $q$ 
is locally bounded, we obtain that the above function is globally bounded, 
hence the statement.
\end{proof}

\subsection{Positive metrics on globally generated line bundles}

In this section, $X$ is a horosymmetric variety and $\mathcal{L}$ is a globally generated 
and big line bundle on $X$. 

\begin{prop}
\label{prop_positive_metrics}
Let $q$ be a non-negatively curved, $K$-invariant, locally bounded 
hermitian metric on $\mathcal{L}$ with toric potential $u$.
Assume in addition that $q$ is locally bounded, and that its restriction to 
$\mathcal{L}|_{G/H}$ is smooth and positively curved. Then 
\begin{enumerate}
\item $u$ is a smooth, strictly convex, $\bar{W}$-invariant function,
\item there exists a constant $C$ such that 
$w_{-2\Delta^t}-C\leq u\leq w_{-2\Delta^t}+u(0)$,
\item $a\mapsto d_au$ defines a diffeomorphism from $\mathfrak{a}_s$ onto 
$\mathrm{Int}(-2\Delta^t)$,
\item $a\mapsto d_au$ defines a diffeomorphism from $\mathrm{Int}(\mathfrak{a}_s^+)$ onto 
$\mathrm{Int}(-2\Delta^t\cap \bar{C}^+)$.
\end{enumerate}
\end{prop}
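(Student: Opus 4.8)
The plan is to reduce the analytic heart of the statement to the toric subvariety $Z$ and to the Guillemin--Abreu--Donaldson description of positively curved toric metrics, via Proposition~\ref{prop_restr_tor}, and then to obtain the $\bar{W}$-equivariant refinement~(4) by hand.

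I would first dispose of smoothness and $\bar{W}$-invariance of $u$ directly. Smoothness is immediate: the quasipotential $\phi=-2\ln|s|_{\pi^{*}q}$ is smooth on $G$ (the metric $q$ being smooth on $\mathcal{L}|_{G/H}$) and $u=\phi\circ\exp$. For $\bar{W}$-invariance, given $w\in\bar{W}$ I would choose a representative $n_{w}$ in $N_{(L^{\sigma})^{0}}(T_{s})\subset(L^{\sigma})^{0}\subset L\cap H$, and in fact inside a maximal compact subgroup of $L\cap H$, so that $n_{w}\in K\cap H$. Then $\exp(w\cdot x)=n_{w}\exp(x)n_{w}^{-1}$, and Proposition~\ref{prop_equivariance_quasipotential} together with $K$-invariance of $q$ gives $u(w\cdot x)=u(x)-2\ln|\chi(n_{w}^{-1})|=u(x)$, the last equality because the character $\chi$ has modulus one on the compact group $K\cap H$.

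For (1) and (3) I would pass to the discoloration $X'$ of $X$, which does not change $G/H$ nor the toric potential and keeps $\mathcal{L}$ globally generated and big, so that $X$ may be assumed toroidal, and then restrict to the toric subvariety $Z=\overline{T\cdot x}$ of Proposition~\ref{prop_toric_subvar}. The open $T$-orbit of $Z$ lies in $G/H$, so $q$ restricts there to a smooth, positively curved, $T$-invariant hermitian metric on $\mathcal{L}|_{Z}$, extending to the locally bounded metric $q|_{Z}$ on the projective toric variety $Z$; its toric potential in logarithmic coordinates, with the $-2\ln$ normalization, is exactly $u$ (using $\exp_{s}(x)\cdot\xi=\exp(x)\cdot\xi$). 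The classical toric picture then yields that $u$ is smooth, strictly convex with positive definite Hessian everywhere, and that $a\mapsto d_{a}u$ is a diffeomorphism from $\mathfrak{a}_{s}$ onto the interior of the moment polytope of $\mathcal{L}|_{Z}$. By Proposition~\ref{prop_restr_tor} the invariant section of $\mathcal{L}^{m}|_{Z}$ has divisor $\sum_{F}m\,v^{t}_{\mathcal{L}}(u_{F})F$, so this moment polytope is $-2\Delta^{t}_{\mathcal{L}}$, the sign coming from the toric/spherical fan convention recalled after Proposition~\ref{prop_toric_subvar} and the factor $2$ from the $-2\ln$; this gives (1) and (3). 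Alternatively, strict convexity away from the walls is visible directly in Theorem~\ref{thm_curv}: positivity of $\omega$ on $G/H$ forces the Hermitian matrix $(\Omega_{\diamondsuit,\bar{\heartsuit}})$ at $\exp(a)H$, for $\beta(a)\neq 0$, to be positive definite, and its $\{1,\dots,r\}$-principal block equals $\tfrac14(d^{2}_{a}u(l_{j},l_{k}))_{j,k}$.

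For (2), the lower bound $u\geq w_{-2\Delta^{t}}-C$ is Proposition~\ref{prop_loc_bounded_behavior} once one checks $2v^{t}_{\mathcal{L}}=w_{-2\Delta^{t}}$, which follows from Corollary~\ref{cor_gg}, the $\bar{W}$-invariance of $v^{t}_{\mathcal{L}}$, and $\Delta^{t}_{\mathcal{L}}=\mathrm{Conv}(\bar{W}\cdot\Delta_{\mathcal{L}})$; the upper bound follows from convexity, which gives $u(x)\leq u(0)+d_{x}u(x)$, together with $d_{x}u\in-2\Delta^{t}$ from (3). Finally, for (4): since $u$ is $\bar{W}$-invariant, $d_{a}u$ is $\bar{W}$-equivariant for the contragredient action; differentiating $u\circ s_{\bar{\alpha}}=u$ at a point of the wall $\ker\bar{\alpha}$ shows $d_{a}u(\bar{\alpha}^{\vee})=0$ there, so $d_{a}u$ maps $\ker\bar{\alpha}$ into, hence (by a dimension/connectedness count, $d_{a}u$ being a diffeomorphism) onto, the wall $\{p(\bar{\alpha}^{\vee})=0\}$ of $\bar{C}^{+}$; moving to the $\bar{\alpha}>0$ side and using $d^{2}_{a}u(\bar{\alpha}^{\vee},\bar{\alpha}^{\vee})>0$ shows $d_{a}u$ sends it to the $p(\bar{\alpha}^{\vee})>0$ side. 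Running over simple restricted roots, $d_{a}u$ carries $\mathrm{Int}(\mathfrak{a}_{s}^{+})$ diffeomorphically into $\mathrm{Int}(-2\Delta^{t})\cap\mathrm{Int}(\bar{C}^{+})=\mathrm{Int}(-2\Delta^{t}\cap\bar{C}^{+})$ (equal interiors since $-2\Delta^{t}$ is full-dimensional and $\Delta_{\mathcal{L}}\subset\bar{C}^{+}$ by Proposition~\ref{prop_toric_vs_special}), and $\bar{W}$-equivariance together with (3) shows this map is onto. I expect the main obstacle to be the bookkeeping identifying the moment polytope of $\mathcal{L}|_{Z}$ with $-2\Delta^{t}_{\mathcal{L}}$, where the sign and the factor $2$ must be tracked consistently through the discoloration, the toric/spherical convention, and the $-2\ln$ normalization; a secondary delicate point is positive definiteness of $d^{2}_{a}u$ on the walls, which the reduction to $Z$ settles cleanly but which would require extra care if argued directly from Theorem~\ref{thm_curv}, valid only off the walls.
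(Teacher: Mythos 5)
Your proof is correct and follows essentially the same route as the paper: reduce to the toroidal case by discoloration, restrict to the toric subvariety of Proposition~\ref{prop_toric_subvar}, and combine Proposition~\ref{prop_loc_bounded_behavior} with convexity and $\bar{W}$-equivariance. The only (harmless) divergence is that you get (3) from the toric moment-map picture and then use it for the upper bound in (2), whereas the paper obtains (2) first (Proposition~\ref{prop_loc_bounded_behavior} plus the convexity/piecewise-linearity trick) and deduces (3) from it; your explicit arguments for $\bar{W}$-invariance and for (4) are precisely the details the paper leaves implicit.
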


\begin{proof}
Without loss of generality, we may assume that $X$ is toroidal \emph{via}
the discoloration procedure. Then the first property directly 
follows from restriction to the toric subvariety. 
The second property is a translation of Proposition~\ref{prop_loc_bounded_behavior}, 
with the additional input that by convexity and since $w_{-2\Delta^t}$ is 
piecewise linear, 
we can take $u(0)$ as constant on one side.
Then the third property is a consequence of the second, and the fourth 
follows by $\bar{W}$-invariance.
\end{proof}

\begin{rem}
Note that the open dense orbit is contained in the ample locus of 
any big line bundle on $X$, hence there are hermitian metrics as 
in the statement of Proposition~\ref{prop_positive_metrics}.
\end{rem}

The \emph{convex conjugate} $u^*:\mathfrak{a}_s^*\rightarrow \mathbb{R}\cup\{+\infty\}$ of $u$ 
is the convex function defined by 
\[
u^*(p)=\sup_{y\in\mathfrak{a}_s}(p(y)-u(y)).
\]
If $u$ is the toric potential of a metric $q$ as in Proposition~\ref{prop_positive_metrics}, 
then $u^*$ is $W$-invariant and $u^*=+\infty$ on $\mathfrak{a}_s^*\setminus -2\Delta^t$. 
Furthermore, we have 
$u^*(p)=p(a)-u(a)$ whenever $p=d_au\in \mathrm{Int}(-2\Delta^t)$. 

\subsection{Metric induced on a facet}

\label{subsec_metric_facet}

Let $\mathcal{L}$ be a $G$-linearized line bundle on a horosymmetric 
embedding $(X,x)$ of $G/H$. 
Assume that $\mathcal{L}$ admits a special section $s$ and write the 
special divisor as  
$D_{\mathcal{L}}=\sum_Yn_yY+\sum_Dn_D\overline{D}$.
For every facet $Y$ of $X$, let $\mu_Y\in \mathfrak{Y}(T_s)$ denote 
the indivisible generator of the ray corresponding to $Y$ in the 
colored fan of $X$, denote by $E_Y\subset X$ the corresponding 
elementary embedding and let $x_Y=\lim \mu_Y(z)\cdot x$.

For each facet $Y$, we choose a complement $\mathfrak{a}_Y$ of 
$\mathbb{R}\mu_Y$ in $\mathfrak{a}_s$ as in Section~\ref{subsec_facets}, 
corresponding to a torus $T_{s,Y}$. 
Note that the torus $T_{s,Y}$ is a maximal split torus for the 
involution associated to $Y$ as in Section~\ref{subsec_facets}. 

Let $h$ be a hermitian metric on $\mathcal{L}$ and assume that it is 
smooth on the elementary embedding $E_Y$. 
Denote by $\psi: b\mapsto -2\ln|s(b)|_h$ the potential of $h$ 
with respect to the special section $s$. 

There exists a unique $\lambda\in \mathfrak{X}(T/(T\cap H)T_Y)\otimes \mathbb{Q}$ 
such that $\lambda(\mu_Y)=-n_Y$. 
Up to taking a tensor power of $\mathcal{L}$, we may thus find 
a rational function $f\in \mathbb{C}(X)$ such that 
$\mathrm{ord}_Y(f)=-n_Y$ and $f(x)=1$.
Let $s_{\lambda}=fs$ denote a $B$-semi-invariant section obtained by multiplying 
the section $s$ by $f$.
Then the section $s_{\lambda}$ does not vanish identically on $Y$, 
and its restriction to $Y$ is further a special section for $\mathcal{L}|_Y$.
The potential $\psi_{\lambda}$ of $h$ with respect to $s_{\lambda}$ 
is defined on the whole $E_Y$ and satisfies, for $b\in B$, 
$\psi_{\lambda}(bH)=\psi(bH)-2\ln \lambda(b)$.

The toric potential of $h$ is 
$u(a)=\psi(\exp(a)\cdot x)$ 
and the toric potential of the restriction of $h$ to $\mathcal{L}|_Y$ is 
the function $u_Y$ defined by 
$u_Y(b)=\psi_{\lambda}(\exp(b)\cdot x_Y)$
for $b\in \mathfrak{a}_Y$.
Let us also define the function $u_{\lambda}$ on $\mathfrak{a}_s$ 
by 
$u_{\lambda}(a)=\psi_{\lambda}(\exp(a)\cdot x)=u(a)-2\lambda(a)$.

\begin{prop}
\label{prop_limits_to_facets}
Let $(t_j)$, $(\tilde{t}_j)$ be sequences of real numbers 
and $(b_j)$, $(\tilde{b}_j)$ be sequences of elements of $\mathfrak{a}_Y$ 
such that $\lim t_j=-\infty$, $(\tilde{t}_je^{t_j})$ is bounded, 
$\lim b_j=b\in \mathfrak{a}_Y$ and 
$\lim \tilde{b}_j=\tilde{b}\in \mathfrak{a}_Y$.
Then 
\[
\lim u_{\lambda}(t_j \mu_Y+b_j)=u_Y(b)
\] 
and 
\[ 
\lim d_{t_j \mu_Y+b_j}u_{\lambda}(\tilde{t}_j \mu_Y+\tilde{b}_j)=
d_bu_Y(\tilde{b}).
\]
\end{prop}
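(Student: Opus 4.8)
The statement is about the limiting behavior of the toric potential $u_\lambda$ and its differential as one moves along the ray $\mathbb{R}\mu_Y$ toward the facet $Y$ and approaches the base point $x_Y$ of the open $B$-orbit in the facet. The natural approach is to work upstairs on the group, using the quasipotential, and to track what happens to the relevant one-parameter subgroup limits. First I would recall that the potential $\psi_\lambda$ of $h$ with respect to the section $s_\lambda$ is a smooth function on the elementary embedding $E_Y$, that $x_Y = \lim_{z\to 0}\mu_Y(z)\cdot x$ lies in $E_Y$, and that $\exp(t\mu_Y)\cdot x = \mu_Y(e^t)\cdot x$ converges to $x_Y$ as $t\to -\infty$ (using the identification of $\mathfrak{Y}(T_s)$ inside $\mathfrak{a}_s$). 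Then I would decompose $\exp(t_j\mu_Y + b_j)\cdot x$: since $b_j \in \mathfrak{a}_Y$ commutes with $\mu_Y$, we have $\exp(t_j\mu_Y+b_j)\cdot x = \exp(b_j)\mu_Y(e^{t_j})\cdot x$, and as $j\to\infty$ this converges to $\exp(b)\cdot x_Y$ by continuity of the $G$-action on $X$ together with $b_j\to b$. Applying continuity of $\psi_\lambda$ on $E_Y$ gives $\lim u_\lambda(t_j\mu_Y+b_j) = \psi_\lambda(\exp(b)\cdot x_Y) = u_Y(b)$, which is the first claim. The key point making this work is precisely that the restriction of $s_\lambda$ to $Y$ is a special section for $\mathcal{L}|_Y$ (stated in the paragraph before the proposition), so that $u_Y(b) = \psi_\lambda(\exp(b)\cdot x_Y)$ by definition, matching the limit.

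**The differential statement.** For the second assertion I would differentiate. Since $u_\lambda$ is smooth on all of $\mathfrak{a}_s$ (as $h$ is smooth on $E_Y \supset \exp(\mathfrak{a}_s)\cdot x$), the directional derivative $d_{t_j\mu_Y+b_j}u_\lambda(\tilde{t}_j\mu_Y+\tilde{b}_j)$ is $\frac{d}{ds}\big|_{s=0}\psi_\lambda(\exp(t_j\mu_Y + s\tilde{t}_j\mu_Y + b_j + s\tilde{b}_j)\cdot x)$. Splitting the argument into its $\mu_Y$-component and its $\mathfrak{a}_Y$-component, the $\mathfrak{a}_Y$-part contributes, in the limit, $d_b u_Y(\tilde{b})$ by the first part of the argument applied to the smooth function $b'\mapsto \psi_\lambda(\exp(b')\cdot x_Y)$ on $\mathfrak{a}_Y$. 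The genuinely delicate term is the $\mu_Y$-derivative: $\tilde{t}_j \cdot \partial_t\big|_{t=t_j}\psi_\lambda(\exp(t\mu_Y + b_j)\cdot x)$. Here one must show this goes to zero. The idea is that near the facet $Y$, the section $s_\lambda$ does not vanish, so $\psi_\lambda$ extends smoothly across $Y$ in suitable local coordinates; in such coordinates the $\mu_Y$-flow corresponds (after the exponential change of variable $z = e^t$) to approaching the boundary divisor $Y = \{z=0\}$ transversally, and $\partial_t\psi_\lambda = z\,\partial_z\psi_\lambda \cdot \frac{\partial z}{\partial \log z}$ picks up a factor of $z = e^{t_j}\to 0$. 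Combined with the hypothesis that $\tilde{t}_j e^{t_j}$ is bounded, the product $\tilde{t}_j\,\partial_t\psi_\lambda$ is bounded times $e^{t_j}\to 0$, hence vanishes in the limit. This is where the precise asymptotic hypotheses in the statement ($t_j\to -\infty$ and $\tilde{t}_j e^{t_j}$ bounded) are used in an essential way.

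**Main obstacle.** The hard part is making the $\mu_Y$-derivative estimate rigorous: one needs good local coordinates near a generic point of the open $B$-orbit of $Y$ adapted to the elementary embedding structure, in which the $\mu_Y$-action linearizes and $Y$ appears as a coordinate hyperplane, and one needs that $\psi_\lambda$ — being $-2\log$ of the norm of the non-vanishing section $s_\lambda$ — is smooth up to and across $Y$ in these coordinates. For this I would invoke the local structure theorem for spherical varieties near the codimension-one orbit (as used in Section~\ref{subsec_facets} via \cite{BP87}, \cite{Bri90}) to get an $\mathrm{Ad}(P^u)$-equivariant local chart, reduce to the toric subvariety $Z$ where $\mu_Y$ corresponds to a ray generator of the fan and the claim becomes the standard toric statement that $\partial_t$ of a smooth toric potential along a ray is $O(e^t)$ near the corresponding boundary divisor. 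Once that local smoothness and the $z\partial_z$ bound are in hand, the rest is the routine limit-splitting described above, using continuity of $\psi_\lambda$ and of the $G$-action, and the two convergence hypotheses on $(b_j)$, $(\tilde b_j)$.
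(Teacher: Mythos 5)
Your route is the paper's route: the first limit from smoothness of $h$, hence of $\psi_\lambda$, across the facet, and the second limit by splitting off the $\mu_Y$-derivative and converting it, via the coordinate $z=e^t$ on the $\mathbb{C}$-factor of the toric slice $Z\simeq\mathbb{C}\times T_Y$ inside $E_Y$, into $e^t$ times a coordinate derivative of the smooth function $\psi_\lambda$ (this is exactly the paper's identity $\exp(-t)\,d_{t\mu_Y+b}u_\lambda(\mu_Y)=d_{t\mu_Y+b}\psi_\lambda(\partial/\partial t)$), with the tangential $\mathfrak{a}_Y$-directions handled by continuity of the first derivatives of $\psi_\lambda$ up to $Y$.

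There is, however, a slip in your final estimate for the cross-term, and as written that step does not close. You conclude that ``the product $\tilde{t}_j\,\partial_t\psi_\lambda$ is bounded times $e^{t_j}\to 0$'', but the hypotheses allow $\tilde{t}_j\to\infty$ (only $\tilde{t}_je^{t_j}$ is bounded). After extracting the factor $e^{t_j}$ from $\partial_t$, what you actually have is $(\tilde{t}_je^{t_j})\cdot\bigl(\text{transverse derivative of }\psi_\lambda\bigr)$ evaluated at points converging to $\exp(b)\cdot x_Y\in Y$; for a general smooth function that second factor converges to a nonzero number, so boundedness of $\tilde{t}_je^{t_j}$ only gives boundedness of the cross-term, not that it vanishes. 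What is missing is the observation that this transverse derivative tends to $0$ along $Y$: since $h$ is $K$-invariant and the $B$-weight of $s_\lambda$ has modulus one on $T\cap K$, the potential $\psi_\lambda$ is invariant under the compact part of the $\mathbb{C}^*$-factor, hence its restriction to the slice is even in $\mathrm{Re}(z)$ near $\{z=0\}$ and its first derivative in the real $z$-direction vanishes on $\{z=0\}$; by continuity of $d\psi_\lambda$ the second factor then goes to $0$, and the bounded factor $\tilde{t}_je^{t_j}$ finishes the argument. The paper is admittedly terse here (it calls this ``easily verified''), but your phrasing commits to a factorization that the hypotheses do not support, so you should add this invariance (or an equivalent vanishing-on-$Y$) argument to make $\lim \tilde{t}_j\,d_{t_j\mu_Y+b_j}u_\lambda(\mu_Y)=0$ genuinely hold.
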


\begin{proof}
The first statement follows directly from the smoothness of $h$, 
hence of $\psi_{\lambda}$.

For the second limit, the result holds because 
\[
\lim \tilde{t}_j d_{t_j \mu_Y+b_j}u_{\lambda}(\mu_Y)=0.
\]
This is easily verified by noticing 
$\exp(-t)d_{t \mu_Y+b}u_{\lambda}(\mu_Y)=
d_{t \mu_Y+b}\psi_{\lambda}(\partial/\partial t)$
where $\partial/\partial t$ is the constant real direction vector field 
of $\mathbb{C}$ identified with the $\mathbb{C}$-factor in the toric 
subvariety $Z\simeq \mathbb{C} \times T_Y$ in $E_Y$.
\end{proof}

Assume now that $h$ is positively curved on the elementary embedding $E_Y\subset X$. 
Then we may consider the convex conjugates $u^*$, $u_{\lambda}^*$ and $u_Y^*$. 

\begin{prop}
\label{prop_legendre_facet}
Let $b\in \mathfrak{a}_Y$. Then at $p=d_bu_Y$ we have  
\[
u^*_Y(p)=u_{\lambda}^*(p)=u^*(p+2\lambda).
\]
\end{prop}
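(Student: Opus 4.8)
The plan is to reduce everything to the one–variable behaviour of $u_\lambda$ in the direction $\mu_Y$, using the decomposition $\mathfrak{a}_s=\mathbb{R}\mu_Y\oplus\mathfrak{a}_Y$ fixed in Section~\ref{subsec_metric_facet}, and identifying $\mathfrak{a}_Y^*$ with the subspace of forms in $\mathfrak{a}_s^*$ that vanish on $\mu_Y$. With this identification the three quantities in the statement live in $\mathfrak{a}_s^*$ and can be compared directly; note that $\lambda$, by contrast, vanishes on $\mathfrak{a}_Y$, so $p$ and $\lambda$ are carried by complementary directions.

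First I would dispose of the equality $u_\lambda^*=u^*(\,\cdot\,+2\lambda)$, which is purely formal. Since $u_\lambda(a)=u(a)-2\lambda(a)$ on $\mathfrak{a}_s$,
\[
u_\lambda^*(p)=\sup_{a\in\mathfrak{a}_s}\bigl(p(a)-u(a)+2\lambda(a)\bigr)
=\sup_{a\in\mathfrak{a}_s}\bigl((p+2\lambda)(a)-u(a)\bigr)=u^*(p+2\lambda),
\]
for every $p\in\mathfrak{a}_s^*$.

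The substance is the identity $u_Y^*(p)=u_\lambda^*(p)$ for $p=d_bu_Y$. Writing $a=t\mu_Y+b'$ with $t\in\mathbb{R}$, $b'\in\mathfrak{a}_Y$, and using $p(\mu_Y)=0$,
\[
u_\lambda^*(p)=\sup_{t\in\mathbb{R},\,b'\in\mathfrak{a}_Y}\bigl(p(b')-u_\lambda(t\mu_Y+b')\bigr)
=\sup_{b'\in\mathfrak{a}_Y}\Bigl(p(b')-\inf_{t\in\mathbb{R}}u_\lambda(t\mu_Y+b')\Bigr).
\]
For fixed $b'$, the function $t\mapsto u_\lambda(t\mu_Y+b')$ is convex — it is the restriction to the torus orbit of a toric potential of a positively curved metric on $E_Y$, cf. Proposition~\ref{prop_positive_metrics} — and it tends to $u_Y(b')$ as $t\to-\infty$ by the first assertion of Proposition~\ref{prop_limits_to_facets} (equivalently, $\psi_\lambda$ extends continuously across $Y$ and $\exp(t\mu_Y+b')\cdot x\to\exp(b')\cdot x_Y$). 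A convex function on $\mathbb{R}$ with a finite limit at $-\infty$ is non-decreasing, so its infimum over $t$ equals that limit; hence $\inf_t u_\lambda(t\mu_Y+b')=u_Y(b')$ and
\[
u_\lambda^*(p)=\sup_{b'\in\mathfrak{a}_Y}\bigl(p(b')-u_Y(b')\bigr)=u_Y^*(p).
\]
Combined with the first step this gives $u_Y^*(p)=u_\lambda^*(p)=u^*(p+2\lambda)$.

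The only non-formal inputs are the convexity of $u_\lambda$ on $\mathfrak{a}_s$ and the boundary value $\lim_{t\to-\infty}u_\lambda(t\mu_Y+b')=u_Y(b')$, and both are already in place — the first from the positivity of $h$ on $E_Y$ together with the standard toric correspondence underlying Proposition~\ref{prop_positive_metrics}, the second from (the proof of) Proposition~\ref{prop_limits_to_facets}. I expect the mildest friction to be bookkeeping: keeping straight the embedding $\mathfrak{a}_Y^*\subset\mathfrak{a}_s^*$ relative to the chosen complement, and observing that restricting to $p=d_bu_Y$ is precisely what guarantees all three conjugates are finite (for a general $p\in\mathfrak{a}_Y^*$ the same computation still yields $u_Y^*(p)=u_\lambda^*(p)=u^*(p+2\lambda)$, but the common value may be $+\infty$).
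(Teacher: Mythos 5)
Your proof is correct, and it takes a genuinely different route from the paper's for the substantive equality $u_Y^*(p)=u_\lambda^*(p)$. The paper starts from $u_Y^*(d_bu_Y)=d_bu_Y(b)-u_Y(b)$, rewrites it as $\lim u_\lambda^*\bigl(d_{t_j\mu_Y+b_j}u_\lambda\bigr)$ using \emph{both} limits of Proposition~\ref{prop_limits_to_facets}, and then must confront the fact that $u_\lambda^*$ is not a priori continuous up to the boundary of its domain; it resolves this by restricting $u_\lambda^*$ to a segment ending at $d_bu_Y$ and using that the gradient map $a\mapsto d_au_\lambda$ is a diffeomorphism onto the interior of the domain (hence strict convexity) to realize the segment points as interior gradient values $d_{t_s\mu_Y+b_s}u_\lambda$ with $t_s\to-\infty$, $b_s\to b$. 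You instead compute $u_\lambda^*(p)$ directly by partial minimization in the $\mu_Y$-direction: since $p(\mu_Y)=0$ under the identification of $\mathfrak{a}_Y^*$ with the annihilator of $\mu_Y$ (which is indeed the identification the paper uses implicitly, as the limiting gradients $d_{t\mu_Y+b}u_\lambda$ annihilate $\mu_Y$), and since $t\mapsto u_\lambda(t\mu_Y+b')$ is convex with finite limit $u_Y(b')$ at $-\infty$, hence non-decreasing, the inner infimum equals $u_Y(b')$ and the conjugate collapses to $u_Y^*(p)$. This buys you several things: you use only the first (value) limit of Proposition~\ref{prop_limits_to_facets} and not the gradient limits, you never need strict convexity or the gradient diffeomorphism, the boundary-continuity issue the paper works around simply never arises, and you get the identity for every $p\in\mathfrak{a}_Y^*$ as an equality of extended real numbers, the hypothesis $p=d_bu_Y$ serving only to guarantee finiteness. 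The paper's argument, conversely, stays closer to the machinery (gradient images and their limits) that is reused in the Mabuchi computation later, but as a proof of this proposition yours is shorter and more robust.
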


\begin{proof}
The second inequality follows from elementary properties of 
the convex conjugate.
For the other equality, we have 
\begin{align*}
u_Y^*(d_bu_Y) & = d_bu_Y(b)-u_Y(b) \\
& = \lim d_{t_j \mu_Y+b_j}u_{\lambda}(t_j \mu_Y+b_j)-u_{\lambda}(t_j \mu_Y+b_j) \\
\intertext{for any sequences such that $\lim t_j=-\infty$ and $\lim b_j=b$ by Proposition~\ref{prop_limits_to_facets}}
& =\lim u_{\lambda}^*(d_{t_j \mu_Y+b_j}u_{\lambda}).  
\end{align*}
Note that the convex conjugate $u_{\lambda^*}$ is not \emph{a priori}
continuous up to the boundary, hence it is not enough to conclude. 
On the other hand, if we choose $t\in \mathbb{R}$ we have 
\[
u_{\lambda}^*(d_bu_Y) = \lim_{s\rightarrow 1}
u_{\lambda}^*(sd_bu_y+(1-s)d_{t\mu_Y+b}u_{\lambda}).
\]
We may find $t_s$ and $b_s$ such that 
$sd_bu_y+(1-s)d_{t\mu_Y+b}u_{\lambda}=d_{t_s\mu_Y+b_s}u_{\lambda}$.
The fact that $\lim_{s\rightarrow 1}d_{t_s\mu_Y+b_s}u_{\lambda}=d_bu_Y$
ensures that $\lim t_s=-\infty$ and that $\lim b_s=b$ (it certainly 
ensures that $b_s$ is bounded, then considering converging subsequences 
yields the limit since $u_Y$ is smooth and strictly convex), 
hence the statement.
\end{proof}
 
It is clear from the point of view of the toric subvariety $Z$ that 
the domain of $u_Y^*$ is the toric polytope $-2\Delta^t_Y+2\lambda$  
of 
the restriction of $\mathcal{L}$ to $Y$, translated, 
which is the facet of 
$-2\Delta^t+2\lambda$ whose outer normal is $\mu_Y$.  
Concerning moment polytopes, we have:

\begin{prop}[{\cite{Bria}}]
\label{prop_moment_facet}
The moment polytope of $Y$ is the codimension one face of $\Delta^+$ whose 
outer normal in the affine space $\chi+\mathcal{M}_{\mathbb{R}}$ is $-\mu_Y$.
\end{prop}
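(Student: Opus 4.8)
The plan is to deduce this from Brion's description of the moment polytope of a $G$-stable subvariety of a spherical variety, combined with the bookkeeping that relates the valuation $\mu_Y$, the special function $v_{\mathcal{L}}$, and the moment polytope through Proposition~\ref{prop_moment_vs_special}. First I would invoke the one nontrivial external input: since $Y$, being horosymmetric and hence spherical, is normal, for $k$ divisible enough the restriction map $H^0(X,\mathcal{L}^k)\to H^0(Y,\mathcal{L}^k|_Y)$ is surjective (this follows from the $\mathcal{O}_X(-Y)$-twisted exact sequence together with a Kawamata--Viehweg type vanishing, or from Frobenius splitting of spherical varieties; it is exactly what \cite{Bria,Bri89} establish). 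Consequently the highest weights of the $B$-modules $H^0(Y,\mathcal{L}^k|_Y)$ are precisely the weights of those $B$-eigensections of $\mathcal{L}^k$ that do not vanish identically on $Y$, and $\Delta^+$ of $Y$ is the closure of the set of such weights normalized by $k$.

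Next I would compute the order of vanishing along $Y$ of a $B$-eigensection $s_\lambda$ of $\mathcal{L}^k$ of weight $\lambda$. Writing $s_\lambda=f\,s_0$ with $s_0$ the special section of $\mathcal{L}^k$ and $f\in\mathbb{C}(X)$ a $B$-eigenfunction, one has $\mathrm{div}(s_\lambda)=\mathrm{div}(s_0)+\mathrm{div}(f)$, so by the description of Cartier divisors recalled in Section~\ref{subsec_special_function} the quantity $\mathrm{ord}_Y(s_\lambda)$ is affine in $\lambda$ with nonlinear part $k\,v_{\mathcal{L}}(\mu_Y)$; since the $B$-weight of $s_0$ restricts trivially to $T_s$ and hence kills $\mu_Y$, this gives $\mathrm{ord}_Y(s_\lambda)=\lambda(\mu_Y)+k\,v_{\mathcal{L}}(\mu_Y)$. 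By Corollary~\ref{cor_gg}, $-v_{\mathcal{L}}(\mu_Y)=\min\{m(\mu_Y):m\in\Delta_{\mathcal{L}}\}=\min\{p(\mu_Y):p\in\Delta^+\}$, the last equality because $\chi$ vanishes on $T_s$. Hence $s_\lambda$ does not vanish on $Y$ exactly when $\lambda/k$ lies on $\{p\in\Delta^+:p(\mu_Y)\text{ minimal}\}$, which is by definition the codimension one face of $\Delta^+$ with outer normal $-\mu_Y$ in $\chi+\mathcal{M}_{\mathbb{R}}$ (it is genuinely codimension one since $\mu_Y$ is a ray of the colored fan of the ample $\mathcal{L}$). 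Together with the first paragraph this identifies the moment polytope of $Y$ with that face.

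Finally I would cross-check the identification intrinsically: by Proposition~\ref{prop_isotropy_facets} the facet $Y$ has open orbit $G/H_{\mu_Y}$, and in the notation of Section~\ref{subsec_metric_facet} the section $s_\lambda|_Y$ with $\lambda(\mu_Y)=-n_Y=-k\,v_{\mathcal{L}}(\mu_Y)$ is a special section of $\mathcal{L}^k|_Y$, so applying Proposition~\ref{prop_moment_vs_special} to $G/H_{\mu_Y}$ reproduces the same face and also pins down the isotropy character of $\mathcal{L}|_Y$ at $x_Y$. The only real content is the surjectivity of restriction of global sections; the rest is tracking signs --- the orientation of the colored fan, the normalization of the moment polytope, and the convention for outer normals --- which in particular accounts for the sign flip making the outer normal $-\mu_Y$ here, as opposed to $\mu_Y$ for the toric polytope of $Y$ sitting inside $-2\Delta^t$ as used in Propositions~\ref{prop_limits_to_facets} and \ref{prop_legendre_facet}. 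I expect no genuine obstacle beyond citing Brion for the surjectivity and being careful with these conventions.
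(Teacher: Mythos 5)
Your argument is correct, but note that the paper itself offers no proof of this statement: it is quoted directly from Brion's lectures \cite{Bria}, so what you have written is essentially a reconstruction of the standard argument behind that citation rather than an alternative to anything in the text. Your two main ingredients are exactly the right ones: (i) surjectivity of the restriction $H^0(X,\mathcal{L}^k)\to H^0(Y,\mathcal{L}^k|_Y)$ for $G$-stable $Y$ (Frobenius splitting or the vanishing argument, as in Brion), which is indeed the only genuinely nontrivial input, and (ii) the computation $\mathrm{ord}_Y(s_\lambda)=\lambda(\mu_Y)+k\,v_{\mathcal{L}}(\mu_Y)$ for a $B$-eigensection of weight $\lambda$, using that the special section's weight vanishes on $\mathfrak{a}_s$ and that $\chi(\mu_Y)=0$, so that non-vanishing along $Y$ picks out the face of $\Delta^+$ where $p(\mu_Y)$ is minimal, i.e.\ outer normal $-\mu_Y$. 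Your sign bookkeeping is consistent (one can sanity-check it on $\mathbb{P}^1\times\mathbb{P}^1$ with $\mathcal{O}(k,m)$ from Example~\ref{exa_pic_P1xP1}: the invariant section of $\mathcal{O}(1,1)$ vanishes to order one on the diagonal, matching the formula, and the moment polytope of the diagonal is the extreme point $\tfrac{k+m}{2}\alpha_{1,2}$). Two small caveats: the equality $\min\{p(\mu_Y):p\in\Delta^+\}=-v_{\mathcal{L}}(\mu_Y)$ via Corollary~\ref{cor_gg}, the surjectivity of restriction, and the assertion that the face is genuinely of codimension one all require $\mathcal{L}$ (or a power) to be globally generated, indeed ample for the last point --- harmless here since that is the situation in which the proposition is used, but worth stating; and your phrase ``nonlinear part'' should read ``constant part''. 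The closing cross-check via Proposition~\ref{prop_isotropy_facets} and Proposition~\ref{prop_moment_vs_special} is only a sketch (it needs the special divisor of $\mathcal{L}|_Y$ on the facet), but since it is offered as a consistency check rather than the proof, that is fine.
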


The special polytope $\Delta_Y$ of $\mathcal{L}|_Y$ is then 
$\Delta^+_Y-\chi-\lambda$ 
and we have $\chi_Y=\chi+\lambda$ under the usual identifications.

\subsection{Volume of a polarized horosymmetric variety}

Before applying the results from this section combined with our computation 
of the Monge-Ampère operator to get an integration formula on horosymmetric 
varieties, we recall the formula for the volume of a horosymmetric variety, 
consequence of a general result of Brion. 

\begin{prop}[{\cite[Théorème 4.1]{Bri89}}] 
\label{prop_volume}
Let $X$ be a projective horosymmetric variety, and $\mathcal{L}$ be a $G$-linearized 
ample line line bundle on $X$. Then 
\[
\mathcal{L}^n= n! \int_{\Delta^+_{\mathcal{L}}} 
\prod_{\Phi^+\setminus E}\frac{\kappa(\alpha,p)}{\kappa(\alpha,\rho)} dp 
\]
where $E$ is the set of roots $\alpha\in \Phi^+$ that are orthogonal to  
$\Delta^+_{\mathcal{L}}$ with respect to $\kappa$ and $dp$ is the Lebesgue 
measure on the affine span of $\Delta^+_{\mathcal{L}}$, normalized by the 
translated lattice $\chi+\mathfrak{X}(T/T\cap H)$.
\end{prop}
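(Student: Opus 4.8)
The plan is to reduce Proposition~\ref{prop_volume} to the special case of the anticanonical setup via a continuity/density argument, or rather to deduce it directly from Brion's general volume formula for spherical varieties combined with the identification of the combinatorial data carried out in Proposition~\ref{prop_colored_data}. Concretely, Brion's result (in \cite{Bri89}) states that for an ample $G$-linearized line bundle $\mathcal{L}$ on any projective spherical variety, the top self-intersection number $\mathcal{L}^n$ equals $n!$ times the integral over the moment polytope of the product $\prod_{\alpha\in\Phi^+\setminus E}\kappa(\alpha,p)/\kappa(\alpha,\rho)$, where $E$ is the set of positive roots orthogonal to the moment polytope and $dp$ is the appropriately normalized Lebesgue measure. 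So the first step is simply to invoke this statement; the only content specific to the horosymmetric case is the normalization of the lattice, and the identification of $\Delta^+_{\mathcal{L}}$ inside $\mathfrak{X}(T)\otimes\mathbb{R}$, both of which follow from Section~\ref{sec_bundles}, in particular Proposition~\ref{prop_moment_vs_special} (the moment polytope is $\chi+\Delta_{\mathcal{L}}$) together with the fact, recorded in Proposition~\ref{prop_colored_data}, that the spherical lattice $\mathcal{M}$ is $\mathfrak{X}(T/T\cap H)$.

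First I would recall that $\mathcal{L}^n$ is a purely numerical invariant computed by the Hilbert polynomial of $\mathcal{L}$: for large $k$, $\dim H^0(X,\mathcal{L}^k)$ grows like $(\mathcal{L}^n/n!)k^n$. Then I would use the decomposition of $H^0(X,\mathcal{L}^k)$ as a $G$-representation, which for a spherical variety is multiplicity-free; the highest weights occurring are exactly the lattice points of $k\Delta^+_{\mathcal{L}}$ intersected with the appropriate translated lattice $\chi+\mathfrak{X}(T/T\cap H)$. The dimension of each irreducible summand with highest weight $\lambda$ is given by the Weyl dimension formula $\prod_{\alpha\in\Phi^+}\kappa(\alpha,\lambda+\rho)/\kappa(\alpha,\rho)$. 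Summing over lattice points of $k\Delta^+_{\mathcal{L}}$ and letting $k\to\infty$, the Riemann-sum converges to the integral $\int_{\Delta^+_{\mathcal{L}}}\prod_{\alpha\in\Phi^+}\kappa(\alpha,p)/\kappa(\alpha,\rho)\,dp$ — but here one must be careful: the factors $\kappa(\alpha,p)$ for $\alpha$ orthogonal to the affine span of $\Delta^+_{\mathcal{L}}$ are constant (equal to $\kappa(\alpha,\rho)$ after the lower-order $\rho$ is dropped in the limit), hence contribute nothing, and the factors $\kappa(\alpha,p)$ for $\alpha$ restricting to zero on the polytope need to be excluded to keep the leading term nonzero. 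This is precisely why the product runs over $\Phi^+\setminus E$ rather than all of $\Phi^+$. The normalization of $dp$ by $\chi+\mathfrak{X}(T/T\cap H)$ is dictated by which lattice the weights $\lambda$ live in, namely $\chi+\mathcal{M}=\chi+\mathfrak{X}(T/T\cap H)$.

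The remaining step is bookkeeping: to match exactly the statement in \cite{Bri89} with the notation of this paper, one identifies $E$ with the set of positive roots orthogonal to $\Delta^+_{\mathcal{L}}$ with respect to the Killing form $\kappa$, and checks that in the horosymmetric case the moment polytope $\Delta^+_{\mathcal{L}}$ genuinely lies in the affine subspace $\chi+\mathfrak{X}(T/T\cap H)\otimes\mathbb{R}$, which is Proposition~\ref{prop_moment_vs_special}. Since $G$, $\mathcal{L}$, and $X$ satisfy all the hypotheses of Brion's theorem (spherical variety, ample linearized line bundle), the formula applies verbatim.

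I expect the only real subtlety — and it is minor here since we are quoting an established result — to be keeping the normalizations consistent: the choice of Lebesgue measure on the affine span of $\Delta^+_{\mathcal{L}}$, the role of the factor $n!$, and the precise description of $E$ as roots orthogonal to the polytope (as opposed to roots vanishing on the direction space of the polytope, which would coincide only after translating by $\chi$). Since Proposition~\ref{prop_volume} is stated as a direct consequence of Brion's general theorem, the proof is essentially a one-line citation together with the observation that $\mathcal{M}=\mathfrak{X}(T/T\cap H)$ and $\Delta^+_{\mathcal{L}}=\chi+\Delta_{\mathcal{L}}$ from the previous subsections.
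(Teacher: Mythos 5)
Your proposal is correct and matches the paper's treatment: Proposition~\ref{prop_volume} is stated there purely as a citation of Brion's Th\'eor\`eme 4.1 in \cite{Bri89}, with the only horosymmetric input being the identifications $\mathcal{M}=\mathfrak{X}(T/T\cap H)$ and $\Delta^+_{\mathcal{L}}=\chi+\Delta_{\mathcal{L}}$ (and the observation that $E=\Phi^+\cap\Phi_L^{\sigma}$), exactly as you indicate. Your additional sketch via the Hilbert polynomial, the multiplicity-free decomposition of $H^0(X,\mathcal{L}^k)$ and the Weyl dimension formula is the standard argument underlying Brion's theorem and is consistent with it, so no further comparison is needed.
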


In the case of horosymmetric varieties, the set $E$ is exactly $\Phi^+\cap \Phi_L^{\sigma}$. 
We will use the notation  
\[
P_{DH}(p)=\prod_{\Phi_{Q^u}\cup \Phi_s^+}\frac{\kappa(\alpha,p)}{\kappa(\alpha,\rho)}
=\prod_{\Phi_{Q^u}\cup \Phi_s^+}\frac{\kappa(\alpha,\alpha)}{2\kappa(\alpha,\rho)} p(\alpha^{\vee})
\]

\subsection{Integration on horosymmetric varieties}

Let $\mathcal{L}$ be a globally generated and big 
$G$-linearized line bundle on a 
horosymmetric variety $X$. 
We assume in this subsection that $h$ is a locally bounded, 
non-negatively curved metric on $\mathcal{L}$, 
smooth and positive on the restriction of $\mathcal{L}$ to $G/H$. 
We denote by $\omega$ its curvature current. 
Assume furthermore that $\chi$ vanishes on $[\mathfrak{l},\mathfrak{l}]$. 

Let $\psi$ denote a $K$-invariant function on $X$,  
integrable with respect to $\omega^n$, 
and continuous on $G/H$. 
To simplify notations, we denote by $\psi(a)$ the image by $\psi$ 
of $\exp(a)H$ for $a\in \mathfrak{a}_s$. 
Let $\Delta'$ denote the polytope $-2\Delta^t\cap \bar{C}^-$.

\begin{prop}
\label{prop_inthoro}
Let $dq$ denote the Lebesgue measure on the affine span of $\Delta^+$,
normalized by the lattice $\chi+\mathcal{M}$, let $dp$ denote 
the Lebesgue measure on $\mathcal{M}\otimes \mathbb{R}$ normalized by 
$\mathcal{M}$. Then there exist a constant 
$C_H'$, independent of $h$ and $\psi$, such that 
\begin{align*}
\int_X\psi\omega^n & =\frac{C_H'}{2^n}\int_{\Delta'} \psi(d_pu^*)
P_{DH}(2\chi-p) dp
\\ & = 
C_H'\int_{\chi+\Delta^t\cap \bar{C}^+} \psi(d_{2\chi-2q}u^*)
P_{DH}(q) dq. 
\end{align*}
\end{prop}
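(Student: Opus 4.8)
The plan is to combine three ingredients already established in the paper: the expression of the Monge–Ampère measure $\omega^n/dV_H$ from Corollary~\ref{cor_MA}, the integration formula on the homogeneous space from Proposition~\ref{prop_integration_horosymmetric_space}, and the change of variables $a\mapsto d_au$ furnished by Proposition~\ref{prop_positive_metrics}. First I would reduce to the toroidal case via discoloration, as in the proof of Proposition~\ref{prop_positive_metrics}: the open orbit is dense of full measure for $\omega^n$ (the complement being a closed analytic subset), so $\int_X\psi\,\omega^n=\int_{G/H}\psi\,\omega^n$, and the birational morphism $d'$ preserves integrals. Since $h$ is smooth and positive on $G/H$ and $\chi$ vanishes on $[\mathfrak{l},\mathfrak{l}]$, Corollary~\ref{cor_MA} applies pointwise at every $\exp(a)H$ with $\beta(a)\neq 0$ for all $\beta\in\Phi_s$, and such $a$ form a full-measure subset of $\mathfrak{a}_s$.

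Next I would write, using Proposition~\ref{prop_integration_horosymmetric_space} and then Corollary~\ref{cor_MA},
\begin{align*}
\int_{G/H}\psi\,\omega^n
&= C_H\int_{\mathfrak{a}_s^+}\psi(a)\,\frac{\omega^n}{dV_H}(a)\,J_H(a)\,da\\
&= \frac{n!\,C_H}{2^{2r+|\Phi_{Q^u}|}}\int_{\mathfrak{a}_s^+}\psi(a)\,
\det\big((d^2_au)(l_j,l_k)\big)\prod_{\alpha\in\Phi_{Q^u}}(2\chi-d_au)(\alpha^\vee)
\prod_{\beta\in\Phi_s^+}|d_au(\beta^\vee)|\,da,
\end{align*}
where the $J_H(a)$ cancels the denominator appearing in Corollary~\ref{cor_MA}. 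Now I apply the diffeomorphism $a\mapsto p:=d_au$ from $\mathrm{Int}(\mathfrak{a}_s^+)$ onto $\mathrm{Int}(-2\Delta^t\cap\bar C^+)=\mathrm{Int}(\Delta')$ of Proposition~\ref{prop_positive_metrics}(4). Its Jacobian is $\det(d^2_au)$ computed in the basis $(l_j)$, which is precisely the factor appearing above; so after the substitution the determinant factor disappears and each remaining factor, being linear in $p$, becomes $(2\chi-p)(\alpha^\vee)$ or $|p(\beta^\vee)|$. Since $a$ and hence the image point $p$ lie in the (restricted) positive chamber, $p(\beta^\vee)\geq 0$ and the absolute values can be dropped; collecting the product $\prod_{\alpha\in\Phi_{Q^u}\cup\Phi_s^+}(2\chi-p)(\alpha^\vee)$ (noting $\Phi_s^+$-factors contribute $p(\beta^\vee)=(2\chi-p)(\beta^\vee)$ because $\chi$ vanishes on $\mathfrak{a}_s$) recognizes it, up to the explicit normalizing constants from Proposition~\ref{prop_volume}'s $P_{DH}$, as $P_{DH}(2\chi-p)$. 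This yields the first displayed equality with $\psi(d_pu^*)=\psi(a)$, using $d_pu^*=a$ on the interior (the standard Legendre duality recalled after Proposition~\ref{prop_positive_metrics}).

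Finally I would pass from $\Delta'$ to the moment polytope via the affine change $q=\chi-p/2$, i.e. $p=2(\chi-q)=2\chi-2q$; under this map $\Delta'=-2\Delta^t\cap\bar C^-$ maps to $\chi+\Delta^t\cap\bar C^+$, which by Proposition~\ref{prop_toric_vs_special} and Proposition~\ref{prop_moment_vs_special} equals $\chi+\Delta=\Delta^+$ (more precisely $\chi+\Delta_{\mathcal L}$; the statement phrases this as $\chi+\Delta^t\cap\bar C^+$). The Jacobian of this affine map contributes the factor $2^n$ relating $dp$ and $dq$, and the lattice normalizations match because $\mathcal M=\mathfrak{X}(T/T\cap H)$ and $q$ ranges over a translate by $\chi$ of $\mathcal M\otimes\mathbb R$. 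Absorbing $n!$, $C_H$, and the $2$-power constants into a single $C_H'$ gives the second equality. The main obstacle I anticipate is the careful bookkeeping of the various factors of $2$ and the normalizing constants (the $2^{2r+|\Phi_{Q^u}|}$ from Corollary~\ref{cor_MA}, the $2^n$ from the affine substitution with $n=\dim X$, and the constants hidden in $P_{DH}$ versus the raw linear forms), together with checking that the measure-zero boundary sets — where $\beta(a)=0$ or where $d_au$ hits $\partial\Delta'$ — genuinely contribute nothing, which follows from $\psi\in L^1(\omega^n)$ and dominated convergence.
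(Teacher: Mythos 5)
Your strategy is the same as the paper's: restrict to the open orbit, combine Proposition~\ref{prop_integration_horosymmetric_space} with Corollary~\ref{cor_MA}, change variables via the gradient map of Proposition~\ref{prop_positive_metrics}, then perform an affine substitution to reach the moment-polytope side and absorb constants. However, your chamber bookkeeping contains a genuine sign error. The image of $\mathrm{Int}(\mathfrak{a}_s^+)$ under $a\mapsto d_au$ is $\mathrm{Int}(-2\Delta^t\cap \bar{C}^+)$, which is \emph{not} $\mathrm{Int}(\Delta')$, since the paper defines $\Delta'=-2\Delta^t\cap \bar{C}^-$. Correspondingly, since $\chi(\beta^{\vee})=0$ for $\beta\in\Phi_s^+$, one has $(2\chi-p)(\beta^{\vee})=-p(\beta^{\vee})$, so your claimed identity $p(\beta^{\vee})=(2\chi-p)(\beta^{\vee})$ is wrong by a sign: on your (positive) chamber the density you actually obtain is $\prod_{\alpha\in\Phi_{Q^u}}(2\chi-p)(\alpha^{\vee})\prod_{\beta\in\Phi_s^+}p(\beta^{\vee})$, which differs from $P_{DH}(2\chi-p)$ by $(-1)^{|\Phi_s^+|}$, and the substitution $p=2\chi-2q$ then lands on $\chi+\Delta^t\cap\bar{C}^-$ rather than the stated $\chi+\Delta^t\cap\bar{C}^+$. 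The paper sidesteps this by integrating over the \emph{negative} chamber $-\mathfrak{a}_s^+$ (legitimate by $\bar{W}$-invariance of $\psi$, $J_H$ and the Monge--Ampère density), where $(-d_au)(\beta^{\vee})\geq 0$, all factors are nonnegative as written, and $q=\chi-p$ gives directly the stated domain. Your route is repairable, but only by adding the missing step: a $\bar{W}$-invariance (or longest-element) argument converting the integral back to the chamber appearing in the statement.

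Two smaller points. The factor $2^n$ relating the two displayed integrals is not, as you say, just ``the Jacobian of the affine map'': that Jacobian contributes $2^r$ with $r=\dim\mathfrak{a}_s$, and the remaining $2^{n-r}$ comes from the homogeneity of $P_{DH}$, whose degree is $|\Phi_{Q^u}|+|\Phi_s^+|=n-r$. Also, the identification $\chi+\Delta^t\cap\bar{C}^+=\Delta^+$ is not available under the hypotheses of this proposition: by Corollary~\ref{cor_ASSE} it is equivalent to the existence of a $Q$-semi-invariant section, an assumption made only in Corollary~\ref{cor_identify_constant}; here the domain must be kept as written.
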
 

\begin{proof}
Since $\omega^n$ puts no mass on $X\setminus G/H$, we may first note that 
\[
\int_X\psi\omega^n = \int_{G/H} \psi \omega^n 
\]
Then by $K$-invariance and 
Proposition~\ref{prop_integration_horosymmetric_space}, 
this is equal to 
\[
C_H\int_{-\mathfrak{a}_s^+}\psi(a)J_H(a)\frac{\omega^n}{dV_H}(\exp(a)H)da. 
\]
Now by definition of $J_H$ and Corollary~\ref{cor_MA}, this is equal to 
\[
C_H\int_{-\mathfrak{a}_s^+}  \frac{n!}{2^{2r+|\Phi_{Q^u}|}} \psi(a)
\prod_{\alpha\in\Phi_{Q^u}}(2\chi-d_au)(\alpha^{\vee})   
\prod_{\beta\in\Phi_s^+}(-d_au)(\beta^{\vee}) \det(d^2u)(a)da 
\]
We then use the change of variables $2p=d_au$ and thanks to 
Proposition~\ref{prop_positive_metrics} 
we obtain 
\[
\int_X\psi\omega^n = 
C_H\int_{(-\Delta^t)\cap \bar{C}^-}  n! 2^{|\Phi_s^+|-r} \psi(d_{2p}u^*)
\prod_{\alpha\in\Phi_{Q^u}}(\chi-p)(\alpha^{\vee})   
\prod_{\beta\in\Phi_s^+}(-p)(\beta^{\vee}) dp 
\]
where $dp$ is for the moment a Lebesgue measure independent of $\psi$. 
Actually by considering a constant $\psi$, hence the volume of $\mathcal{L}$, 
we see that the Lebesgue measure is further independent of the choice 
of $q$. 

The assumption that $\chi$ vanishes on $[\mathfrak{l},\mathfrak{l}]$ 
ensures that $\chi(\beta^{\vee})=0$ for all $\beta\in \Phi_s^+$, 
hence using the change of variables $q=\chi-p$, we get 
\[
\int_X\psi\omega^n = 
n! 2^{|\Phi_s^+|-r} C_H\int_{\chi-(-\Delta^t)\cap \bar{C}^-}  \psi(d_{2\chi-2q}u^*)
\prod_{\alpha\in\Phi_{Q^u}\cup \Phi_s^+}q(\alpha^{\vee}) dq. 
\]
Taking the constant $C_H'$ to be the covolume of the lattice 
$\mathfrak{X}(T/T\cap H)$ under $dp$ times the constant 
\[
n! 2^{2|\Phi_s^+|+|\Phi_{Q^u}|-r} C_H \prod_{\alpha\in\Phi_{Q^u}\cup \Phi_s^+}
\frac{\kappa(\alpha,\rho)}{\kappa(\alpha,\alpha)}
\]
we obtain the result.
\end{proof}

\begin{cor}
\label{cor_identify_constant}
Assume that $\mathcal{L}^m$ admits a global $Q$-semi-invariant 
section for some $m>0$. Then the constant $C_H'$ in Proposition~\ref{prop_inthoro} 
is equal to $n!(2\pi)^n$ and the integration is over $\Delta^+$ in the 
second equality.
\end{cor}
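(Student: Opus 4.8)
The plan is to identify the universal constant $C_H'$ by testing the identity of Proposition~\ref{prop_inthoro} on the constant function $\psi\equiv 1$, and to read off the domain of integration directly from Corollary~\ref{cor_ASSE}.

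First, for the domain: the hypothesis that $\mathcal{L}^m$ carries a global holomorphic $Q$-semi-invariant section for some $m>0$ is, by the equivalence of conditions (1) and (4) in Corollary~\ref{cor_ASSE}, exactly the statement that $\Delta^t_{\mathcal{L}}\cap\bar C^+=\Delta_{\mathcal{L}}=-\chi+\Delta^+$, hence $\chi+\Delta^t_{\mathcal{L}}\cap\bar C^+=\Delta^+$. So the domain in the second displayed formula of Proposition~\ref{prop_inthoro} is $\Delta^+$, which settles half of the statement.

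For the constant, I would plug $\psi\equiv 1$ into Proposition~\ref{prop_inthoro} (this $\psi$ is $K$-invariant, continuous on $G/H$, and $\omega^n$-integrable since $\omega^n$ has finite mass): the right-hand side becomes $C_H'\int_{\Delta^+}P_{DH}(q)\,dq$. For the left-hand side I would evaluate $\int_X\omega^n$ cohomologically. The metric $h$ is locally bounded, hence globally bounded relative to any smooth metric on $\mathcal{L}$ by compactness of $X$, so writing $\omega=\omega_0+dd^c\phi$ with $\omega_0$ a smooth closed form in the right class and $\phi$ a bounded potential, a telescoping Stokes argument shows that the Bedford--Taylor measure $\omega^n$ has total mass $\int_X\omega_0^n$, i.e. it is a cohomological invariant. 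With the normalization $\omega=i\partial\bar\partial\varphi$ fixed in Section~\ref{sec_curvature} one has $[\omega]=2\pi c_1(\mathcal{L})$, so $\int_X\omega^n=(2\pi)^n\mathcal{L}^n$. Finally Brion's volume formula, Proposition~\ref{prop_volume}, gives $\mathcal{L}^n=n!\int_{\Delta^+}P_{DH}(q)\,dq$ --- here the exceptional set $E$ equals $\Phi^+\cap\Phi^\sigma_L$, so that $\prod_{\Phi^+\setminus E}\kappa(\alpha,p)/\kappa(\alpha,\rho)=P_{DH}(p)$. Comparing the two evaluations of $\int_X\omega^n$ yields $C_H'=n!\,(2\pi)^n$.

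I expect the one point needing attention to be that Proposition~\ref{prop_volume} is quoted for \emph{ample} line bundles, while in this subsection $\mathcal{L}$ is only globally generated and big. I would deal with this by using the fact that Brion's degree formula holds in the globally generated case as well (the moment polytope being the image of the moment map, and the degree being $\mathcal{L}^n=\operatorname{vol}(\mathcal{L})$ since $\mathcal{L}$ is nef and big), or, alternatively, by exploiting that $C_H'$ --- as it is constructed in the proof of Proposition~\ref{prop_inthoro} --- depends only on $G$, $H$ and the normalizing lattice $\mathfrak{X}(T/T\cap H)$, so that it may be computed on any convenient ample polarized horosymmetric variety with open orbit $G/H$ satisfying the standing assumptions. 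Beyond this bookkeeping, the argument is purely formal.
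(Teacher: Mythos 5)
Your proposal is correct and follows essentially the same route as the paper's proof: test Proposition~\ref{prop_inthoro} on a constant function, use Corollary~\ref{cor_ASSE} to identify the domain as $\Delta^+$, and compare with Brion's degree formula (Proposition~\ref{prop_volume}). The only difference is that you make explicit two points the paper leaves implicit --- that $\int_X\omega^n=(2\pi)^n\mathcal{L}^n$ holds for a locally bounded non-negatively curved metric, and how to bridge the gap between ``globally generated and big'' and the ampleness hypothesis in Brion's formula --- and your handling of both is sound.
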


\begin{proof}
It follows from applying Proposition~\ref{prop_inthoro} to the constant function 
$\psi=\frac{1}{(2\pi)^n}$, using Corollary~\ref{cor_ASSE} to check that 
the integration is over $\Delta^+$, and comparing 
with Brion's formula for the degree of a line bundle 
(Proposition~\ref{prop_volume}). 
\end{proof}

\section{Mabuchi Functional and coercivity criterion}
\label{sec_mabuchi}

\subsection{Setting}

We fix in this section a $\mathbb{Q}$-line bundle $\mathcal{L}$ on a 
$n$-dimensional smooth horosymmetric variety $X$. We assume that there 
exists a positive integer $m$ such that $\mathcal{L}^m$ is an ample 
line bundle, with a fixed $G$-linearization, and that it admits a 
global holomorphic $Q$-semi-invariant section. Let $\Delta^+$ 
denote the moment polytope of $\mathcal{L}$, and 
$\Delta'=-2(\Delta^+-\chi)$ where $\chi$ is the isotropy character 
of $\mathcal{L}$. 
As a consequence of Corollary~\ref{cor_ASSE}, we may fix a point $\lambda_0$ 
in the relative interior of $\Delta^+\cap \mathfrak{X}(T/T\cap [L,L])\otimes \mathbb{R}$. 
The point $2(\chi-\lambda_0)$ is then in the interior of  
$\Delta'\cap \mathfrak{X}(T_s/T_s\cap [L,L])\otimes \mathbb{R}$. 

We will make the following additional assumptions:
\begin{itemize}
\item[(T)] the horosymmetric variety $X$ is toroidal, and 
whenever a facet of $\Delta^+$ intersects a Weyl 
wall, either the facet is fully contained in the wall or its 
normal belongs to the wall, furthermore, $\Delta^+$ intersects 
only walls defined by roots in $\Phi_L$,
\item[(R)] for any restricted Weyl wall, there are at least 
two roots in $\Phi^+_s$ that vanish on this Weyl wall.
\end{itemize}

Unlike the assumption that $\mathcal{L}$ is trivial on the symmetric 
fibers, these assumptions are very likely not meaningful. We use these 
to provide a rather general application of the setting we developped 
for Kähler geometry on horosymmetric varieties in a paper with reasonnable 
length. We have no claim of giving the most general statement, and 
expect that at least assumption (R) can be removed 
without too much difficulties. 
Once this is achieved, removing assumption (T) should require an analysis similar to that 
given by Li-Zhou-Zhu in \cite{LZZ} to treat non-toroidal group 
compactifications. 
Finally, removing the assumption that $\mathcal{L}$ is trivial on the symmetric 
fibers appears to be a much more challenging problem in view of the 
expression of the curvature form, as convex 
conjugacy in this generality seems to be a bit less helpful.

Note that these assumptions are satisfied in a large variety of situations. 
We expect that the second part of assumption (T), in terms of the moment 
polytope, is actually implied by the assumption that $X$ is toroidal. 
This is true at least for symmetric varieties and horospherical varieties.  
Assumption (R) is satisfied for example 
when the symmetric fiber is of group type, of 
type AIII($r$, $n>2r$), of type AII($p$), but unfortunately not 
when the symmetric fiber is of type AI($m$). It is obviously satisfied is 
the variety $X$ is horospherical. 

Recall that we fixed an anticanonical $\mathbb{Q}$-divisor
$D^{ac}$ with a decomposition $D^{ac}=D_f^{ac}+D_{P}^{ac}$ in 
Section~\ref{subsec_ac} such that 
$\mathcal{O}(D_{P}^{ac})=f^*K^{-1}_{G/P}$ on the complement $\hat{X}$ of 
codimension $\geq 2$ orbits, and 
$D_f^{ac}=\sum_Y n_Y Y$ 
where 
\[
n_Y=1-\sum_{\alpha\in\Phi_{Q^U}\cup \Phi_s^+}\alpha\circ \mathcal{P}(\mu_Y).
\]

Let $\Theta$ be a $G$-stable $\mathbb{Q}$-divisor on $X$ with 
simple normal crossing support. 
Write $\Theta=\sum c_YY$ and set
\[
n_{Y,\Theta}=-c_Y+1-\sum_{\alpha\in\Phi_{Q^U}\cup \Phi_s^+}\alpha\circ \mathcal{P}(\mu_Y).
\]
We assume all coefficients $c_Y$ satisfy $c_Y< 1$. 
Recall that we denote $\sum_{\alpha\in\Phi_{Q^U}}\alpha\circ \mathcal{H}$
by $\chi^{ac}$ and this is the isotropy character associated to the 
anticanonical line bundle on $G/H$.

Fix a smooth positive reference metric $h_{\mathrm{ref}}$ on $\mathcal{L}$, 
 and denote its curvature form by $\omega_{\mathrm{ref}}$. 
Let $\mathrm{rPSH}^K(X,\omega_{\mathrm{ref}})$ denote the space of  
smooth $K$-invariant strictly $\omega_{\mathrm{ref}}$-plurisubharmonic potentials on $X$.  
The functions in $\mathrm{rPSH}^K(X,\omega_{\mathrm{ref}})$ are in one-to-one 
correspondence with smooth positive hermitian metrics on $\mathcal{L}$. 
We denote by $h_{\phi}$ the metric 
corresponding to $\phi\in \mathrm{rPSH}^K(X,\omega_{\mathrm{ref}})$ and 
we write $\omega_{\phi}=\omega_{\mathrm{ref}} + i\partial \bar{\partial} \phi$
for the curvature of $h_{\phi}$, which depends on $\phi$ only up to 
an additive constant.

To any $\phi\in \mathrm{rPSH}^K(X,\omega_{\mathrm{ref}})$ is associated 
a toric potential $u$: the toric potential of $h_{\phi}$. Note that under our 
assumptions ($X$ is smooth and toroidal) and by Proposition~\ref{prop_smoothness}, 
$X$ admits a smooth toric 
submanifold $Z$, and $u$ is the toric potential of the restriction of 
$h_{\phi}$ to the restricted ample $\mathbb{Q}$-line bundle $\mathcal{L}|_Z$, 
hence the convex potential $u^*$ of $u$ satisfy the Guillemin-Abreu 
regularity conditions in terms of the polytope $-\Delta^t$.

\subsection{Scalar curvature}

The scalar curvature $S$ of 
a smooth Kähler form $\omega$ is defined by the 
formula 
\[
S=\frac{n\mathrm{Ric}(\omega)\wedge \omega^{n-1}}{\omega^n}
\]
Note that $\mathrm{Ric}(\omega)$ is the curvature form of the metric 
on $K_X^{-1}$ corresponding to the volume form 
$\omega^n$,  
whose toric potential we denote by $\tilde{u}$. 
Assuming that $\omega$ is the curvature form of a metric on 
$\mathcal{L}$, we can determine this toric 
potential using Theorem~\ref{thm_curv}.
Using the liberty to choose the multiplicative constant for the section 
defining the toric potential (a multiple of the dual of 
$\bigwedge_{\diamondsuit}\gamma_{\diamondsuit}$), 
we may assume that, 
\begin{align*}
\tilde{u}(a)& = -\ln\det d^2_au
-\sum_{\alpha\in\Phi_{Q^u}\cup\Phi_s^+}\ln((2\chi-d_au)(\alpha^{\vee}))
\\ & \quad 
+\sum_{\beta\in \Phi_s^+}\ln \sinh(-2\beta(a))
-\sum_{\alpha\in\Phi_{Q^u}}2\alpha(a),
\end{align*}
for $a\in -\mathfrak{a}_s^+$.

We will, for the rest of the section, fix a choice of 
orthonormal basis $(l_j)_j$ of $\mathfrak{a}_s$ (with respect to some 
fixed scalar product whose corresponding Lebesgue measure is normalized 
by $\mathcal{M}$) and 
corresponding dual basis $(l_j^*)$ of $\mathfrak{a}_s^*$.
We write $\alpha^{\vee,j}$ for the 
coordinates of $\alpha^{\vee}$. 
We use the notations $d_au=\sum_ju_j(a)l_j^*$, $(u^{j,k})$ for the 
inverse matrix of $(u_{j,k})$, etc.
To simplify notations, we sometimes omit summation symbols in which case 
we sum over repeated indices in a given term.
We then have 
\[
\tilde{u}=-\ln\det (u_{l,m})
-\sum_{\alpha\in\Phi_{Q^u}\cup\Phi_s^+}\ln((2\chi_l-u_l)\alpha^{\vee,l})
+I_H
\]
where we set $I_H(a)=\sum_{\beta\in \Phi_s^+}\ln \sinh(-2\beta(a))
-\sum_{\alpha\in\Phi_{Q^u}}2\alpha(a)$. 
We set $p=d_au$ and consider both $a$ and $p$ as variables in the dual 
spaces $\mathfrak{a}_s$ and $\mathfrak{a}_s^*$.

\begin{prop}
\label{prop_scalar_curvature}
The scalar curvature at $\exp(a)H$ is equal to 
\begin{align*}
& -u^{*,i,j}_{i,j}(p)
+\Big{(}-2u^{*,i,j}_j(p)+(I_H)_i(a)\Big{)}\frac{P_{DH,i}'}{P_{DH}'}(p)
+u^*_{i,j}(p)I_{H,i,j}(a)
\\ & 
 -u^{*,i,j}(p)\frac{P_{DH,i,j}'}{P_{DH}'}(p)
+\sum_{\alpha\in \Phi_{Q^u}}\frac{2\chi^{ac}(\alpha^{\vee})}{(2\chi-p)(\alpha^{\vee})}
\end{align*}
\end{prop}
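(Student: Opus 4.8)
The strategy is to compute $S$ directly from its defining formula $S = n\,\mathrm{Ric}(\omega)\wedge\omega^{n-1}/\omega^n$, which, in terms of toric potentials, amounts to the standard identity $S = -\Delta_\omega \tilde u$ where $\Delta_\omega$ is the (complex) Laplacian of $\omega$ acting on functions pulled back to $\mathfrak{a}_s$, and $\tilde u$ is the toric potential of the volume form $\omega^n$ computed just above the statement. More precisely, since $\mathrm{Ric}(\omega)$ is the curvature of the metric on $K_X^{-1}$ with toric potential $\tilde u$, Theorem~\ref{thm_curv} (or rather its simplified form Corollary~\ref{cor_curv}, valid because $\chi^{ac}$ vanishes on $[\mathfrak{l},\mathfrak{l}]$ so the anticanonical bundle is trivial on the symmetric fiber) expresses both $\omega$ and $\mathrm{Ric}(\omega)$ in the common pointwise basis $\omega_{\diamondsuit,\bar\diamondsuit}$. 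First I would write $\omega$ and $\mathrm{Ric}(\omega)$ as diagonal (away from the off-diagonal terms which vanish under the triviality assumption) in this basis, with entries
\[
\omega:\quad \tfrac14 d^2_au(l_j,l_k),\quad \tfrac{-e^{2\alpha}}{2}(d_au-2\chi)(\alpha^\vee),\quad \tfrac{d_au(\beta^\vee)}{\sinh(2\beta(a))},
\]
and analogously for $\mathrm{Ric}$ with $u$ replaced by $\tilde u$ and $\chi$ by $\chi^{ac}$. The ratio of wedge products then becomes a sum of quotients of corresponding diagonal entries, weighted by the $\omega$-entries, giving $S$ as a sum of a ``toric'' part over the $l_j$-directions plus contributions from the $\alpha\in\Phi_{Q^u}$ and $\beta\in\Phi_s^+$ directions.

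The toric part is the classical Abreu computation: with $p = d_au$ and $u^*$ the convex conjugate, one has $d^2_au = (d^2_pu^*)^{-1}$, and $-\Delta_\omega$ applied to a function of $a$ produces $-u^{*,i,j}\partial_i\partial_j$ of the relevant quantity when expressed in $p$-coordinates; differentiating $\tilde u = -\ln\det(u_{l,m}) - \sum \ln((2\chi-p)(\alpha^\vee)) + I_H$ twice and contracting with $u^{*,i,j}$ yields, after the standard Legendre-transform bookkeeping (using $\partial_{p_i}$ versus $\partial_{a_i}$ and the chain rule), precisely the terms $-u^{*,i,j}_{i,j}$, the terms involving $(I_H)_i$, $I_{H,i,j}$, and the logarithmic-derivative factors of $P'_{DH}$. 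The key point is that $P'_{DH}(p) = \prod_{\alpha\in\Phi_{Q^u}\cup\Phi_s^+}(2\chi-p)(\alpha^\vee)$ up to a constant — this is exactly the product appearing in $\tilde u$ and in Corollary~\ref{cor_MA} — so that the terms $-\sum\ln((2\chi-p)(\alpha^\vee))$ in $\tilde u$ contribute the $P'_{DH,i}/P'_{DH}$ and $P'_{DH,i,j}/P'_{DH}$ pieces, while the extra $-2u^{*,i,j}_j \cdot P'_{DH,i}/P'_{DH}$ cross term arises from the interaction between the $\ln\det$ term (whose $p$-derivative is $-u^{*,i,j}_j$ by Jacobi's formula and the Legendre relation) and those logarithmic terms.

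Finally, the contributions from the $\Phi_{Q^u}$ and $\Phi_s^+$ directions must be assembled. For each $\alpha\in\Phi_{Q^u}$ the ratio of the $\mathrm{Ric}$-entry to the $\omega$-entry in direction $\alpha$ is $\dfrac{(d_a\tilde u - 2\chi^{ac})(\alpha^\vee)}{(d_au-2\chi)(\alpha^\vee)}$, but since the full Laplacian already accounts for the $a$-derivatives of $\tilde u$ through the toric part, what survives is the purely algebraic ``zeroth order'' piece, namely $\dfrac{2\chi^{ac}(\alpha^\vee)}{(2\chi-p)(\alpha^\vee)}$ after rewriting in $p = d_au$ — this gives the last displayed sum. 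The $\Phi_s^+$ directions contribute the $I_H$-related terms: the factor $d_au(\beta^\vee)/\sinh(2\beta(a))$ for $\omega$ versus $d_a\tilde u(\beta^\vee)/\sinh(2\beta(a))$ for $\mathrm{Ric}$, combined with the $\ln\sinh(-2\beta(a))$ terms inside $\tilde u$, reorganizes (using $\beta^\vee$ in $a$-coordinates, i.e. $\alpha^{\vee,l}$, and the relation between $\partial_a$ and $\partial_p$) into the $(I_H)_i \cdot P'_{DH,i}/P'_{DH}$ and $u^*_{i,j}I_{H,i,j}$ contributions already listed. The main obstacle I anticipate is purely bookkeeping: carefully tracking which derivatives are taken with respect to $a$ and which with respect to $p$, correctly applying the Legendre identities $d^2_pu^* = (d^2_au)^{-1}$ and $\partial_{p_i}(\text{function of }a) = u^{*,i,j}\partial_{a_j}$, and verifying that all the off-diagonal curvature terms from Theorem~\ref{thm_curv} genuinely drop out of the wedge-product ratio to leading order — this requires knowing they do not contribute to $\mathrm{Ric}\wedge\omega^{n-1}/\omega^n$, which follows because in the chosen basis both forms are block-diagonal with the same off-diagonal pattern and the trace of $\omega^{-1}\mathrm{Ric}$ picks up only diagonal entries when the off-diagonal entries of $\omega$ vanish (the triviality-on-the-fiber assumption). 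No single computation is deep, but the reorganization into the stated closed form is the crux.
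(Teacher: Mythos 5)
Your overall framework is the same as the paper's: apply Corollary~\ref{cor_curv} both to $\omega$ and to $\mathrm{Ric}(\omega)$ (the latter being the curvature of the metric on $K_X^{-1}$ with toric potential $\tilde u$ and isotropy character $\chi^{ac}$, which is indeed trivial on the symmetric fiber), take the blockwise trace of $\mathrm{Ric}$ against $\omega$ in the common basis, pass to the Legendre variable $p=d_au$ using $d^2_au=(d^2_pu^*)^{-1}$ and Jacobi's formula, and reorganize via the logarithmic-derivative identities for $P_{DH,i}'/P_{DH}'$ and $P_{DH,i,j}'/P_{DH}'$. (Your opening shorthand $S=-\Delta_\omega\tilde u$ is not literally correct because of the character contribution, but you correct it when you invoke the curvature formula for $\mathrm{Ric}$ with $\chi^{ac}$.)

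There is, however, a concrete error in how you allocate the terms, and executed as written your plan would give a wrong formula. The trace splits into independent contributions: the toric block $\mathrm{Tr}\big((\tilde u_{l,m})(u^{l,m})\big)$, and, for each $\alpha\in\Phi_{Q^u}$ (resp.\ $\beta\in\Phi_s^+$), the \emph{full} ratio of diagonal entries $\frac{(d_a\tilde u-2\chi^{ac})(\alpha^{\vee})}{(d_au-2\chi)(\alpha^{\vee})}$ (resp.\ $\frac{d_a\tilde u(\beta^{\vee})}{d_au(\beta^{\vee})}$). Nothing in the $d_a\tilde u(\alpha^{\vee})$ part of the $\Phi_{Q^u}$-ratios is ``already accounted for through the toric part'': these ratios contribute $\sum_{\alpha}\frac{-\tilde u_l\alpha^{\vee,l}}{(2\chi-p)(\alpha^{\vee})}$ in addition to the $\chi^{ac}$-piece, and after substituting $\tilde u_j(a)=-u^{*,j,i}_i(p)+\sum_{\alpha}\frac{u^{*,l,j}(p)\alpha^{\vee,l}}{(2\chi-p)(\alpha^{\vee})}+I_{H,j}(a)$ they supply (together with the analogous $\beta$-terms) one of the two copies making up the coefficient $2$ in $-2u^{*,i,j}_jP_{DH,i}'/P_{DH}'$, the $\Phi_{Q^u}$-parts of $-u^{*,i,j}P_{DH,i,j}'/P_{DH}'$, and the $\Phi_{Q^u}$-part of $(I_H)_iP_{DH,i}'/P_{DH}'$; keeping only the ``zeroth order'' piece $2\chi^{ac}(\alpha^{\vee})/(2\chi-p)(\alpha^{\vee})$ loses all of these. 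A second, smaller misattribution: the term $u^*_{i,j}I_{H,i,j}$ comes from the toric-block trace (the Hessian of the $I_H$-summand of $\tilde u$, contracted with $u^{l,m}=u^*_{l,m}$), not from the $\Phi_s^+$-directions. So the route is sound, but the bookkeeping must be done by the direct expansion of the three blocks, as in the paper, rather than by the cancellation you invoke.
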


\begin{proof}
We compute, using Jacobi's formula, 
\[
\tilde{u}_j=-u^{l,m}u_{m,l,j}
-\sum_{\alpha\in\Phi_{Q^u}\cup\Phi_s^+}\frac{-u_{l,j}\alpha^{\vee,l}}{(2\chi-u_l)\alpha^{\vee,l}}
+I_{H,j}
\]
Using the variable $p=d_au$ and convex conjugate, we have 
$d_pu^*=a$, $d^2_au=(d^2_pu^*)^{-1}$ and thus 
$u^{*,i,j}_j(p)=u^{k,j}(a)u_{j,k,i}(a)$.
We may then give another expression of $\tilde{u}_j$:
\[
\tilde{u}_j(a)=-u^{*,j,i}_i(p)
-\sum_{\alpha\in\Phi_{Q^u}\cup\Phi_s^+}\frac{-u^{*,l,j}(p)\alpha^{\vee,l}}{(2\chi-p)(\alpha^{\vee})}
+I_{H,j}(d_pu^*).
\]
then 
\begin{align*}
\tilde{u}_{j,k}(a)& = -u^{*,j,i}_{i,s}(p)u^{*,s,k}(p)
-\sum_{\alpha\in\Phi_{Q^u}\cup\Phi_s^+}
\left(\frac{-u^{*,l,j}\alpha^{\vee,l}}{(2\chi-p)(\alpha^{\vee})}\right)_s(p)u^{*,s,k}(p) \\
& \quad +I_{H,j,k}(a).
\end{align*}
 
We now compute for $a\in -\mathfrak{a}_s^+$,
\begin{align*}
\frac{n\mathrm{Ric}(\omega_{\phi})\wedge \omega_{\phi}^{n-1}}{\omega_{\phi}^n} & =
\mathrm{Tr}((\tilde{u}_{l,m}(a))(u^{l,m}(a))) 
+\sum_{\alpha\in \Phi_{Q^u}\cup\Phi_s^+} \frac{-\tilde{u}_l(a)\alpha^{\vee,l}}{(2\chi-p)(\alpha^{\vee})} \\ 
& \quad +\sum_{\alpha\in \Phi_{Q^u}}\frac{2\chi^{ac}(\alpha^{\vee})}{(2\chi-p)(\alpha^{\vee})} 
\end{align*}
which, by using the previous expressions, 
is equal to 
\begin{align*} 
& -u^{*,i,j}_{i,j} 
+\sum_{\alpha\in \Phi_{Q^u}\cup\Phi_s^+}
2\frac{u^{*,i,j}_j\alpha^{\vee,i}}{(2\chi-p)(\alpha^{\vee})} 
-\sum_{\alpha,\beta\in \Phi_{Q^u}\cup\Phi_s^+} 
\frac{u^{*,i,j}\alpha^{\vee,i}\beta^{\vee,j}}{(2\chi-p)(\alpha^{\vee})(2\chi-p)(\beta^{\vee})} \\
& -\sum_{\alpha\in \Phi_{Q^u}\cup\Phi_s^+}
\frac{u^{*,i,j}\alpha^{\vee,i}\alpha^{\vee,j}}{((2\chi-p)(\alpha^{\vee}))^2}
+u^*_{i,j}(I_H)_{i,j}(a)
 \\
& -\sum_{\alpha\in \Phi_{Q^u}\cup\Phi_s^+} \frac{(I_H)_i(a)\alpha^{\vee,i}}{(2\chi-p)(\alpha^{\vee})}
+\sum_{\alpha\in \Phi_{Q^u}}\frac{2\chi^{ac}(\alpha^{\vee})}{(2\chi-p)(\alpha^{\vee})}
\end{align*}

Let $P_{DH}'(p)=P_{DH}(2\chi-p)$, then note that 
\[
P_{DH,i}'(p)=P_{DH}'(p)\sum_{\alpha}\frac{-\alpha^{\vee,i}}{(2\chi-p)(\alpha^{\vee})}
\]
and 
\[
P_{DH,i,j}'(p)=P_{DH}'(p)\left(\sum_{\alpha,\beta}\frac{\alpha^{\vee,i}\beta^{\vee,j}}{(2\chi-p)(\alpha^{\vee})(2\chi-p)(\beta^{\vee})}+\sum_{\alpha}\frac{\alpha^{\vee,i}\alpha^{\vee,j}}{(2\chi-p)(\alpha^{\vee})^2}\right).
\]
Plugging this into the last expression of the scalar curvature yields the result.
\end{proof}

\begin{rem}
The computation of the scalar curvature here is only on the homogeneous space, 
hence holds under weaker hypothesis than in the setting: we only need to assume 
that $\mathcal{L}$ is a line bundle on $G/H$ whose restriction to the symmetric 
fiber is trivial, and that $h$ is a smooth and positive metric on $\mathcal{L}$.
\end{rem}

Denote by $\bar{S}$ the average scalar curvature, defined as 
\[
\bar{S}=\frac{\int_{X} n\mathrm{Ric}(\omega)\wedge \omega^{n-1}}{\int_{X} \omega^{n}}.
\]

\subsection{The functionals}

\subsubsection{The $J$-functional}

The $J$-functional is defined (up to a constant) 
on the space $\mathrm{rPSH}^K(X,\omega_{\mathrm{ref}})$ by its variations as follows:
if $\phi_t$ is a smooth path in 
$\mathrm{rPSH}^K(X,\omega_{\mathrm{ref}})$ 
between the origin and $\phi$, then 
\[
J(\phi)=\int_0^1 \int_{X} 
\dot{\phi}_t \frac{\omega_{\mathrm{ref}}^n-\omega_{\phi_t}^n}{(2\pi)^n\mathcal{L}^n} dt.
\]

\begin{prop}
\label{prop_J_functional}
Let $u$ denote the toric potential of $h_{\phi}$. Then
\[
\left|J(\phi)-u(0)-\frac{n!}{\mathcal{L}^n}\int_{\Delta^+}
u^*(2\chi-2q)P_{DH}(q)dq\right| 
\]
is bounded independently of $\phi$.
\end{prop}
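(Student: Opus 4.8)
The plan is to compute the variation of $J$ explicitly and then integrate, reducing everything to the integration formula from Proposition~\ref{prop_inthoro} (in the normalized form provided by Corollary~\ref{cor_identify_constant}, so that $C_H'=n!(2\pi)^n$ and the integration is over $\Delta^+$). First I would differentiate along the path: writing $u_t$ for the toric potential of $h_{\phi_t}$, we have $\dot\phi_t = \phi_t - \phi_{\mathrm{ref}}$ plus a constant, hence $\dot\phi_t$ evaluated on $\exp(a)H$ equals $\tfrac12(u_t(a)-u_{\mathrm{ref}}(a))$ up to an additive constant (which contributes nothing once paired against $\omega_{\mathrm{ref}}^n-\omega_{\phi_t}^n$, since the two volume forms have the same total mass $(2\pi)^n\mathcal{L}^n$). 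The term $\int_X \dot\phi_t \omega_{\mathrm{ref}}^n/((2\pi)^n\mathcal{L}^n)\,dt$ is, after integrating in $t$, bounded independently of $\phi$ up to $u(0)$: indeed by Proposition~\ref{prop_positive_metrics}(2) the toric potential $u$ differs from the fixed piecewise-linear function $w_{-2\Delta^t}$ by a bounded amount once we pin down the additive normalization by the value at $0$, so $\int_X \dot\phi_t \omega_{\mathrm{ref}}^n$ contributes only $u(0)$ modulo bounded terms.

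The main term is $-\int_0^1\int_X \dot\phi_t\,\omega_{\phi_t}^n/((2\pi)^n\mathcal{L}^n)\,dt$. Here I would apply Proposition~\ref{prop_inthoro}/Corollary~\ref{cor_identify_constant} to $\psi = \dot\phi_t$, which is $K$-invariant and integrable against $\omega_{\phi_t}^n$, obtaining
\[
\int_X \dot\phi_t\,\omega_{\phi_t}^n = n!(2\pi)^n\int_{\Delta^+}\dot\phi_t(d_{2\chi-2q}u_t^*)P_{DH}(q)\,dq.
\]
Now $\dot\phi_t(d_{2\chi-2q}u_t^*)$ is, up to the harmless additive constant, $\tfrac12\big(u_t(d_{2\chi-2q}u_t^*)-u_{\mathrm{ref}}(d_{2\chi-2q}u_t^*)\big)$, and by the Legendre identity $u_t(d_{2\chi-2q}u_t^*) = (2\chi-2q)(d_{2\chi-2q}u_t^*) - u_t^*(2\chi-2q)$ whenever $2\chi-2q$ lies in the interior of $-2\Delta^t$. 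The standard trick (as in the toric and group-compactification cases) is to recognize that $\tfrac{d}{dt}\int_{\Delta^+} u_t^*(2\chi-2q)P_{DH}(q)dq = \int_{\Delta^+}\dot u_t^*(2\chi-2q)P_{DH}dq$, and that $\dot u_t^*(p) = -\dot u_t(d_pu_t^*)$ by differentiating the Legendre relation, so that the $t$-integral telescopes: up to bounded error the main term becomes $-\tfrac{n!}{\mathcal{L}^n}\int_{\Delta^+}\big(u^*(2\chi-2q)-u_{\mathrm{ref}}^*(2\chi-2q)\big)P_{DH}(q)dq$, and the reference contribution is a fixed constant absorbed into the bound. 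Matching signs and the factor $(2\pi)^n$ against the normalization of $J$ then yields the claimed formula modulo a $\phi$-independent constant.

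I expect the main obstacle to be the careful bookkeeping of what exactly is "bounded independently of $\phi$": one must check that the error between $\dot\phi_t(\exp(a)H)$ and $\tfrac12(u_t(a)-u_{\mathrm{ref}}(a))$ integrates to something uniformly bounded (this uses that the potentials differ by a globally bounded function on $X$, not merely on $G/H$, which follows from local boundedness of the metrics and compactness of $X$), and that the boundary behavior of $u_t^*$ near $\partial(-2\Delta^t)$ does not spoil the interchange of $\tfrac{d}{dt}$ and $\int_{\Delta^+}$ nor the validity of the Legendre identities on the relevant domain — here assumption that $\mathcal{L}^m$ has a $Q$-semi-invariant section, via Corollary~\ref{cor_ASSE}, guarantees $\Delta^t\cap\bar C^+ = \Delta_{\mathcal{L}} = -\chi+\Delta^+$ so that the change of variables $q\mapsto 2\chi-2q$ genuinely maps $\Delta^+$ onto $\Delta'\cap\bar C^-$ as needed. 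A secondary point is that $\dot u_t^*$ must be interpreted in the a.e. sense and one should invoke the Guillemin--Abreu regularity of $u_t^*$ on the interior of the polytope (noted at the end of the Setting subsection) together with dominated convergence to justify differentiating under the integral sign; since $P_{DH}$ is a bounded polynomial on the compact polytope $\Delta^+$, this causes no integrability trouble.
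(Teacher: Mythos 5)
Your treatment of the main term follows essentially the same route as the paper (apply Proposition~\ref{prop_inthoro}/Corollary~\ref{cor_identify_constant} to $\psi=\dot\phi_t$, use $\dot u_t^*(p)=-\dot u_t(d_pu_t^*)$, integrate in $t$, absorb the fixed $u_{\mathrm{ref}}^*$ contribution), and that part is sound up to bookkeeping slips: with the paper's conventions the quasipotential is already $-\ln|s|^2$, so $\dot\phi_t(\exp(a)H)=\dot u_t(a)$ with no factor $\tfrac12$ (and for the linear path $\dot\phi_t=\phi$, not $\phi_t-\phi_{\mathrm{ref}}$); moreover the correct outcome of your own computation is that the term $-\int_0^1\int_X\dot\phi_t\,\omega_{\phi_t}^n\,dt/((2\pi)^n\mathcal{L}^n)$ equals $+\frac{n!}{\mathcal{L}^n}\int_{\Delta^+}(u^*-u_{\mathrm{ref}}^*)(2\chi-2q)P_{DH}(q)dq$, whereas you state it with a minus sign, which would contradict the formula being proved. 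These are fixable, but check them.

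The genuine gap is in the term $\int_X\phi\,\omega_{\mathrm{ref}}^n$. You claim it "contributes only $u(0)$ modulo bounded terms" because, by Proposition~\ref{prop_positive_metrics}(2), $u$ differs from $w_{-2\Delta^t}$ by a bounded amount after normalizing at $0$. But the constant in that two-sided comparison (coming from Proposition~\ref{prop_loc_bounded_behavior}) depends on the metric, hence on $\phi$; the only $\phi$-uniform statements are the convexity bound $u\leq w_{-2\Delta^t}+u(0)$ and $\sup_{\mathfrak{a}_s}(u-w_{-2\Delta^t})=u(0)$. So your argument does not yield a bound independent of $\phi$, and in any case a pointwise comparison of $u$ with $w_{-2\Delta^t}$ does not by itself control the integral of $\phi$ against the fixed measure $\omega_{\mathrm{ref}}^n$. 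The paper closes exactly this gap by combining $\sup_X\phi=\sup_{\mathfrak{a}_s}(u-u_{\mathrm{ref}})=u(0)+O(1)$ (using the two-sided bound only for the \emph{fixed} potential $u_{\mathrm{ref}}$) with the uniform sup-versus-mean estimate for $\omega_{\mathrm{ref}}$-plurisubharmonic functions from \cite[Proposition 2.7]{GZ05}; some input of this kind (or, alternatively, applying the integration formula with the fixed measure $\omega_{\mathrm{ref}}^n$ and checking that the resulting error $\max(w_{-2\Delta^t}(\pm d_{2\chi-2q}u_{\mathrm{ref}}^*))$ is integrable against $P_{DH}\,dq$) is indispensable and is missing from your proposal.
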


\begin{proof}
We have 
\[
J(\phi) = \int_{X} \phi \frac{\omega_{\mathrm{ref}}^n}{(2\pi)^n\mathcal{L}^n} 
- \int_0^1 \int_{X} \dot{\phi}_t \frac{\omega_{\phi_t}^n}{(2\pi)^n\mathcal{L}^n} dt
\]
The definition of convex conjugate yields $\dot{u}_t^*(d_au)=-\dot{u}_t(a)$ hence 
by Proposition~\ref{prop_inthoro} and Corollary~\ref{cor_identify_constant}, 
we have
\begin{align*}
\int_0^1 \int_X \dot{\phi}_t \omega_{\phi_t}^n dt & 
= - \int_0^1 C_H'\int_{\chi+\Delta^t\cap\bar{C}^+}\dot{u}_t^*(2\chi-2q)P_{DH}(q)dq 
\\
& = -C_H' \int_{\chi+\Delta^t\cap\bar{C}^+} (u^*-u_{\mathrm{ref}}^*)(2\chi-2q)P_{DH}(q)dq
\\
& = - (2\pi)^n n! \int_{\Delta^+} (u^*-u_{\mathrm{ref}}^*)(2\chi-2q)P_{DH}(q)dq
\end{align*}
On the other hand, it follows from classical results \cite[Proposition 2.7]{GZ05} that  
\[
\left|\frac{1}{(2\pi)^n\mathcal{L}^n}\int_X \phi \omega_{\mathrm{ref}}^n - 
\sup_X(\phi) \right|
\]
is bounded independently of $\phi\in\mathrm{rPSH}^K(X,\omega_{ref})$. 
We have 
\begin{itemize}
\item $\sup_X(\phi)=\sup_{\mathfrak{a}_s}(u-u_{\mathrm{ref}})$, 
\item $w_{-2\Delta^t}-C_1 \leq u_{\mathrm{ref}} \leq w_{-2\Delta^t}+u_{\mathrm{ref}}(0)$ for some constant $C_1$ by Proposition~\ref{prop_positive_metrics},  
and 
\item $\sup_{\mathfrak{a}_s} u-w_{-2\Delta^t} = u(0)$ by convexity, 
\end{itemize}
hence 
\[
\left|\int_X \phi \omega_{\mathrm{ref}}^n-(2\pi)^n\mathcal{L}^n u(0)\right|
\]
is bounded independently of $\phi$.
\end{proof}

\subsubsection{Mabuchi functional}

The (log)-Mabuchi functional $\mathrm{Mab}_{\Theta}$ relative to $\Theta$ is defined also 
by integration along smooth path: $\mathrm{Mab}_{\Theta}(\phi)$ is equal to 
\[
\int_0^1 \Big{\{} \int_{X} 
\dot{\phi}_t(\bar{S}_{\Theta}\omega_{\phi_t}-n\mathrm{Ric}(\omega_{\phi_t}))\wedge \frac{\omega_{\phi_t}^{n-1}}{(2\pi\mathcal{L})^n}  
+2\pi n\sum_Y c_Y \int_{Y}\dot{\phi}_t\frac{\omega_{\phi_t}^{n-1}}{(2\pi\mathcal{L})^n}  
\Big{\}}dt
\] 
where 
$\bar{S}_{\Theta}=\bar{S}-n\sum_Yc_Y\mathcal{L}|_Y^{n-1}/\mathcal{L}^n$.

Let $\tilde{\Delta}^+_Y$ denote the bounded cone with vertex $\lambda_0$ and base $\Delta^+_Y$. 
Note that in general $\Delta^+\neq \bigcup_Y \tilde{\Delta}^+_Y$.
In is however the case under assumption (T), that is if we assume that $X$ is toroidal. 

We set $\Lambda_Y=(n_Y-c_Y)/v_{\mathcal{L}}(\mu_Y)$. 

\begin{thm} 
\label{thm_mabuchi}
Under assumption (T), up to the choice of a normalizing additive constant, 
$\frac{\mathcal{L}^n}{n!} \mathrm{Mab}_{\Theta}(\phi)$ is equal to 
\begin{align*}
&
\sum_Y \Lambda_Y 
\int_{\tilde{\Delta}^+_Y} \Big{(}nu^*(p)-u^*(p)\sum \frac{\chi(\alpha^{\vee})}{q(\alpha^{\vee})} +d_pu^*(p)\Big{)}P_{DH}(q)dq \\
& 
+\int_{\Delta^+} u^*(p)\Big{(}\sum \frac{\chi^{ac}(\alpha^{\vee})}{q(\alpha^{\vee})}-\bar{S}_{\Theta}\Big{)}P_{DH}(q)dq 
-\int_{\Delta^+}I_H(a)P_{DH}(q)dq \\
&
-\int_{\Delta^+}\ln\det (u^*_{i,j})(p)P_{DH}(q)dq 
\end{align*}
\end{thm}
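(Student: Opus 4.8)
The plan is to compute the log-Mabuchi functional term by term, reducing each integral over $X$ to an integral over the moment polytope via the tools developed in the previous sections, and then integrating by parts. First I would handle the Ricci term $\int_X \dot{\phi}_t n\mathrm{Ric}(\omega_{\phi_t})\wedge \omega_{\phi_t}^{n-1}/(2\pi\mathcal{L})^n$: using that $n\mathrm{Ric}(\omega)\wedge\omega^{n-1}/\omega^n$ is the scalar curvature $S$ computed in Proposition~\ref{prop_scalar_curvature}, this becomes $\int_X \dot{\phi}_t S \omega_{\phi_t}^n/(2\pi\mathcal{L})^n$, and I apply the integration formula of Proposition~\ref{prop_inthoro} (with the constant identified in Corollary~\ref{cor_identify_constant}) to turn it into $n!\int_{\Delta^+}\dot{u}_t^*(p)S(p)P_{DH}(q)dq$ (up to sign, using $\dot u_t^*(d_au)=-\dot u_t(a)$). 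Similarly the $\bar{S}_\Theta\omega_{\phi_t}$ term gives $-n!\bar S_\Theta\int_{\Delta^+}\dot u_t^*(p)P_{DH}(q)dq$, which integrates in $t$ directly to the $\bar S_\Theta u^*$ term. The Ricci integrand, after substituting the expression from Proposition~\ref{prop_scalar_curvature}, must then be integrated by parts in $p$ over $\Delta^+$ against $P_{DH}$: the terms $-u^{*,i,j}_{i,j}$ and the $P'_{DH}$-derivative terms are designed to combine, after two integrations by parts, into $-\ln\det(u^*_{i,j})$ plus the boundary contributions — this mirrors exactly the Abreu-type computation on toric manifolds and its generalization by Li-Zhou-Zhu.

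Next I would treat the facet terms $2\pi n\sum_Y c_Y\int_Y \dot\phi_t \omega_{\phi_t}^{n-1}/(2\pi\mathcal{L})^n$. Here I use Section~\ref{subsec_metric_facet}: the restriction of $h_{\phi_t}$ to a facet $Y$ has toric potential $u_{Y,t}$, whose convex conjugate by Proposition~\ref{prop_legendre_facet} is (a translate of) the restriction of $u^*$ to the face $\Delta^+_Y$ of $\Delta^+$. Applying Proposition~\ref{prop_inthoro} to $Y$ (which is itself horosymmetric by Proposition~\ref{prop_facets}, with the isotropy data from Proposition~\ref{prop_isotropy_facets}) turns $\int_Y\dot\phi_t\omega_{\phi_t}^{n-1}$ into an integral of $\dot u^*_t$ over $\Delta^+_Y$ against the appropriate Duistermaat-Heckman density; integrating in $t$ gives a $u^*$ term over $\Delta^+_Y$. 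The point of assumption (T) (toroidality, with the moment polytope meeting Weyl walls nicely) is that $\Delta^+=\bigcup_Y\tilde\Delta^+_Y$ decomposes as a union of the bounded cones, so that the boundary contributions from the integration by parts in the Ricci term — which are integrals over the faces $\Delta^+_Y$ — can be rewritten, via the coarea/cone formula $\int_{\tilde\Delta^+_Y}(\text{radial derivative terms})$, exactly as the $\Lambda_Y$-weighted bulk integrals $\int_{\tilde\Delta^+_Y}(nu^* + d_pu^* - u^*\sum\chi(\alpha^\vee)/q(\alpha^\vee))P_{DH}dq$. The precise matching of boundary terms with these cone integrals, and the bookkeeping of the various $P_{DH}$ versus $P'_{DH}$ conventions and the $2(\chi-q)$ versus $p$ substitution, is where assumption (T) and the explicit form of $n_{Y,\Theta}$, $v_{\mathcal{L}}(\mu_Y)$ enter to produce the coefficient $\Lambda_Y=(n_Y-c_Y)/v_{\mathcal{L}}(\mu_Y)$.

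The remaining pieces are the $I_H$ term and the $\chi^{ac}$ term. The function $\tilde u$ (toric potential of $\mathrm{Ric}(\omega)$) from the displayed formula contains the summand $I_H(a)$; tracking it through the scalar curvature computation, the contribution $u^*_{i,j}I_{H,i,j}$ together with the $I_{H,i}$ terms integrates by parts against $P_{DH}$ to produce $-\int_{\Delta^+}I_H(d_pu^*)P_{DH}dq$ after one notes $I_H(a)=I_H(d_pu^*)$. The term $\sum_{\alpha\in\Phi_{Q^u}}2\chi^{ac}(\alpha^\vee)/(2\chi-p)(\alpha^\vee)$ in the scalar curvature, when multiplied by $\dot u^*_t$, integrated over $\Delta^+$ and then in $t$, yields the $u^*(p)\sum\chi^{ac}(\alpha^\vee)/q(\alpha^\vee)$ term (here one uses that $2(\chi-q)=p$ so $(2\chi-p)(\alpha^\vee)=2q(\alpha^\vee)$, absorbing the factor $2$). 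The main obstacle will be the integration-by-parts bookkeeping: correctly accounting for all boundary terms on $\partial\Delta^+$, checking that the contributions on faces $\Delta^+_Y$ reassemble into the cone integrals over $\tilde\Delta^+_Y$ with the correct $\Lambda_Y$, and verifying that the $-\ln\det(u^*_{i,j})$ term emerges cleanly — this is exactly the delicate step where the Li-Zhou-Zhu strategy must be adapted, and where assumptions (T) and (R) do their work (assumption (R) guaranteeing the Duistermaat-Heckman density vanishes to high enough order on restricted Weyl walls for the boundary integrals there to vanish).
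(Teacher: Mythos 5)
Your proposal follows essentially the same route as the paper's proof: reduce everything to polytope integrals via Proposition~\ref{prop_inthoro}, Corollary~\ref{cor_identify_constant} and the scalar curvature formula of Proposition~\ref{prop_scalar_curvature}, treat the $\bar{S}_\Theta$ and facet ($c_Y$) terms as in the $J$-functional computation using the facet Legendre-transform results of Section~\ref{subsec_metric_facet}, then apply the divergence theorem (the paper does so three times, the last one to the radial field $u^*p_iP_{DH}'$, which is exactly your ``cone formula'') to produce the $\ln\det(u^*_{i,j})$ and $I_H$ terms and to convert the face integrals into the $\Lambda_Y$-weighted integrals over $\tilde{\Delta}^+_Y$ under assumption (T). One small correction: assumption (R) plays no role in this theorem --- the paper's boundary terms on restricted Weyl walls are killed by the plain vanishing of $P_{DH}'$ there, and (R) is only needed later, in the weak estimate for the nonlinear part of the Mabuchi functional.
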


\begin{proof}
The summands 
\[
\int_0^1 \int_X \dot{\phi}_t\bar{S}_{\Theta}  \omega_{\phi_t}^ndt/(2\pi\mathcal{L})^n
\]
and 
\[
2\pi n \sum_Y c_Y \int_{Y}\dot{\phi}_t\omega_{\phi_t}^{n-1}dt/(2\pi\mathcal{L})^n
\] 
may be dealt with as in the computation for $J$, yielding respectively 
\[
\bar{S}_{\Theta} \frac{n!}{\mathcal{L}^n} \int_{\Delta^+}(u_{\mathrm{ref}}^*-u^*)(2\chi-2q)P_{DH}(q)dq
\] 
and 
\[
\sum_Y c_Y\frac{n!}{\mathcal{L}^n} \int_{\Delta_Y^+}(u_{\mathrm{ref}}^*-u^*)(2\chi-2q)P_{DH}(q)dq_Y.
\]

The harder part is 
\[
\int_0^1 \int_{X} -n \dot{\phi}_t \mathrm{Ric}(\omega_{\phi_t}) \wedge \omega_{\phi_t}^{n-1}/(2\pi\mathcal{L})^n dt.
\]

Using the integration formula as well as the formula for the 
scalar curvature, we have 
\begin{align*}
I:= &\int_{X} -n \dot{\phi}_t \mathrm{Ric}(\omega_{\phi_t}) \wedge \omega_{\phi_t}^{n-1} = 
\int_{X} -n \dot{\phi}_t \frac{\mathrm{Ric}(\omega_{\phi_t}) \wedge \omega_{\phi_t}^{n-1}}{\omega_{\phi_t}^n}\omega_{\phi_t}^n \\
= & \frac{C_H'}{2^n} \int_{\Delta'} \dot{u}^*_t  \Big{(}
-u^{*,i,j}_{t,i,j}P_{DH}'
-2u^{*,i,j}_{t,j}P_{DH,i}'  
-u^{*,i,j}_{t}P_{DH,i,j}' \\
&
+u^*_{t,i,j}I_{H,i,j}(a)P_{DH}'
+I_{H,i}(a)P_{DH,i}'
+\sum_{\alpha\in \Phi_{Q^u}}\frac{2\chi^{ac}(\alpha^{\vee})}{(2\chi-p)(\alpha^{\vee})}P_{DH}'
\Big{)}
dp
\end{align*}
where the variable, if omitted, is $p=d_au$.

Apart from the last term, the situation is analogous to Li-Zhou-Zhu \cite{LZZ}
hence we may follow the same steps. However note that translation by $\chi$ 
in the Duistermaat-Heckman polynomial will sometimes introduce extra terms. 
Denote by $\nu$ the unit outer normal to $\partial \Delta'$. 
On a codimension one face corresponding to 
a facet $Y$ of $X$, $\nu$ is a positive multiple of $\mu_Y$.
 
We will apply several times the divergence theorem
as follows, without writing the details every time. 
For $0<s<1$, let $\Delta'_{s}$ denote the bounded 
cone with vertex $2(\chi-\lambda_0)$ and base the dilation by $s$ of 
the boundary $s\partial \Delta'$. We may apply the divergence theorem 
on $\Delta'_{s}$ to smooth functions on the interior of $-2\Delta^t$, 
then take the limit as $s\rightarrow 1$, 
applying dominated convergence and using the appropriate 
convergence result. We let $d\sigma$ denote the area measure on 
all boundaries. 
 
It follows from Section~\ref{subsec_metric_facet} 
that for $p\in \Delta'_Y$ and any $i$,
\[
\lim_{s\rightarrow 1} u_{i,j}(d_{sp}u^*)\nu_j = 0 
\] 
and 
\[
\lim_{s\rightarrow 1} \tilde{u}_{j}(d_{sp}u^*)(\mu_Y)_j = n_Y. 
\]
Recall that 
\[
\tilde{u}_j(a)=-u^{*,j,i}_i(p)
-\sum_{\alpha\in\Phi_{Q^u}\cup\Phi_s^+}\frac{-u^{*,l,j}(p)\alpha^{\vee,l}}{(2\chi-p)(\alpha^{\vee})}
+I_{H,j}(d_pu^*).
\]
We deduce from these facts, and the fact that $P_{DH}'$ vanishes 
on restricted Weyl walls, that 
\[
\lim_{s\rightarrow 1}
\int_{\partial \Delta'_{s}} \dot{u}_t^*
(-u^{*,i,j}_{t,j}+I_{H,i}(a))\nu_iP_{DH}'d\sigma
= \sum_Y \int_{\Delta'_Y} 2n_Y \frac{\nu}{\mu_Y} \dot{u}_t^*P_{DH}'d\sigma
\]
where $\Delta'_Y$ denotes the facet of $\Delta'$ whose outer normal is 
$\mu_Y$.
 
A first application of the divergence theorem then yields, by passing to the limit, 
that the above quantity is equal to 
\[
\int_{\Delta'}\Big{(}\dot{u}_t^*(-u_{t,j}^{*,i,j}+I_{H,i}(a))P_{DH}'\Big{)}_idp.
\]
We may compute 
\begin{align*}
\Big{(}\dot{u}_t^*(-u_{t,j}^{*,i,j}+I_{H,i}(a))P_{DH}'\Big{)}_i = &
-\dot{u}^*_{t,i}u^{*,i,j}_{t,j}P_{DH}'
+\dot{u}^*_{t,i}I_{H,i}(a)P_{DH}'
 \\ &
-\dot{u}^*_{t}u^{*,i,j}_{t,i,j}P_{DH}'
+\dot{u}^*_{t}u^*_{j,i}I_{H,i,j}(a)P_{DH}'
 \\ &
-\dot{u}^*_{t}u^{*,i,j}_{t,j}P_{DH,i}'
+\dot{u}^*_{t}I_{H,i}(a)P_{DH,i}'
\end{align*}
and 
\[
\dot{u}^*_{t,i}I_{H,i}(a)P_{DH}'=\frac{d}{dt}(I_H(a)P_{DH}').
\]

Consider now the vector field 
$(\dot{u}_{t,i}^*P_{DH}'-\dot{u}_t^*P_{DH,i}')u_t^{*,i,j}$. 
Applying the divergence theorem to this vector field yields 
\begin{align*}
0=& 
\int_{\Delta'} (
\dot{u}^*_{t,i,j}u_t^{*,i,j}P_{DH}'
+\dot{u}^*_{t,i}u_{t,j}^{*,i,j}P_{DH}'
-\dot{u}^*_{t}u_{t,j}^{*,i,j}P_{DH,i}'
-\dot{u}^*_{t}u_t^{*,i,j}P_{DH,i,j}'
)dp
\end{align*}
Note here that 
\[
\dot{u}^*_{t,i,j}u_t^{*,i,j}=\frac{d}{dt}(\ln \det (u^*_{t,i,j})).
\]

From these two applications of the divergence theorem and the expression 
of the scalar curvature, we deduce by taking the sum that $I$ is $C_H'/2^n$ times 
\emph{the derivative with respect to} $t$ of 
\begin{align*}
& \sum_Y \int_{\Delta'_Y} 2n_Y \frac{\nu}{\mu_Y} u_t^*P_{DH}'d\sigma
-\int_{\Delta'} I_H(a)P_{DH}'dp
-\int_{\Delta'} \ln \det (u^*_{t,i,j})P_{DH}'dp \\ &
+\int_{\Delta'} u_t^* \sum_{\alpha\in \Phi_{Q^u}}\frac{2\chi^{ac}(\alpha^{\vee})}{(2\chi-p)(\alpha^{\vee})}P_{DH}'dp
\end{align*}
hence the value of the above expression at $t=1$ is the corresponding term of the Mabuchi 
functional, up to a constant independent of $\phi$. 

We finally use the divergence theorem one last time, applied to 
the vector field $u_t^*p_iP_{DH}'$
to obtain, for every $Y$,  
\begin{align*}
\int_{\Delta'_Y} p_i\nu_i u_t^*P_{DH}'d\sigma
& = 
\int_{\tilde{\Delta}'_Y} (u_t^*P_{DH,i}'p_i+ru_t^*P_{DH}+u^*_{t,i}p_iP_{DH}')dp \\
& = 
\int_{\tilde{\Delta}'_Y} (nu_t^*-u_t^*\sum_{\alpha}\frac{2\chi_i\alpha^{\vee,i}}{(2\chi-p)(\alpha^{\vee})}+u^*_{t,i}p_i)P_{DH}'dp 
\end{align*}
Since $2n_Y\frac{\nu}{\mu_Y}=\frac{n_Y}{v_{\mathcal{L}}(\mu_Y)}p(\nu)$
and $dp_Y=\frac{p(\nu)}{v_{\mathcal{L}}(\mu_Y)}d\sigma$, where $dp_Y$ is the 
image of the measure $dq_Y$ under translation by $-\chi$ then dilation by $2$. 
This allows to transform 
the remaining integrals on $\Delta'_Y$ or $\Delta_Y^+$ to 
integrals on $\Delta^+$, after the change of variable from $p$ to $q$.
Putting everything together gives the result.
\end{proof}

\begin{rem}
\label{rem_barS}
As a corollary of the proof, by applying the same steps,  
we can compute $\bar{S}_{\Theta}$. We obtain 
\[
\bar{S}_{\Theta}= \sum_Y \int_{\tilde{\Delta}_Y^+}
(n \Lambda_Y P_{DH}(q) + d_qP_{DH}(\chi^{ac}-\Lambda_Y\chi))dq\Big{/}\int_{\Delta^+}P_{DH}(q)dq
\]
\end{rem}

\subsection{Action of $Z(L)^0$}

Following Donaldson \cite{Don02}, let us write the Mabuchi functional as the sum 
of a linear and non-linear part 
$\mathrm{Mab}_{\Theta}=\mathrm{Mab}^l_{\Theta}+\mathrm{Mab}^{nl}_{\Theta}$ 
by setting
\begin{align*}
\frac{\mathcal{L}^n}{n!} \mathrm{Mab}^l_{\Theta}(\phi) = &
\sum_Y \Lambda_Y 
\int_{\tilde{\Delta}^+_Y} (nu^*(p)-u^*(p)\sum \frac{\chi(\alpha^{\vee})}{q(\alpha^{\vee})} +d_pu^*(p))P_{DH}dq \\
& 
+\int_{\Delta^+} u^*(p)(\sum \frac{\chi^{ac}(\alpha^{\vee})}{q(\alpha^{\vee})}-\bar{S}_{\Theta})P_{DH}dq 
+\int_{\Delta^+}4\rho_H(a)P_{DH}(q)dq
\end{align*}
where $2\rho_H=\sum_{\alpha\in\Phi_{Q^u}\cup\Phi_s^+}\alpha\circ\mathcal{P}$.
We will use the notation $M^l(u^*)$ to denote the above as a function of 
$u^*$, where $u^*$ is the convex conjugate of the toric potential of $h_{\phi}$.
Similarly, we use the notation 
$M^{nl}(u^*)=\frac{\mathcal{L}^n}{n!} \mathrm{Mab}_{\Theta}(\phi)-M^l(u^*)$.

\begin{rem}
\label{rem_alt_Ml}
In the last step of the proof of Theorem~\ref{thm_mabuchi},  
if we apply the divergence theorem to the 
vector field $4u^*P_{DH}'\rho_H$ instead of $u_t^*p_iP_{DH}'$ we obtain 
another expression of the linear part of the Mabuchi functional 
$\frac{\mathcal{L}^n}{n!} \mathrm{Mab}_{\Theta}^l(\phi)$:  
\begin{align*}
M^l(u^*)=&
\sum_Y \frac{1-c_Y}{v_{\mathcal{L}}(\mu_Y)}
\int_{\Delta'_Y} p(\nu)u^*(p)P_{DH}'(p)d\sigma 
-\int_{\Delta^+} d_pP_{DH}'(4\rho_H)u^*(p)dp \\
& 
+\int_{\Delta^+} u^*(p)(\sum \frac{\chi^{ac}(\alpha^{\vee})}{q(\alpha^{\vee})}-\bar{S}_{\Theta})P_{DH}dq.  
\end{align*}
\end{rem}

\subsubsection{Invariance of Mabuchi functional and log-Futaki invariant}

Consider the connected center $\mathcal{Z}(L)^0$ of $L$. 
It acts on the right on $G/H$ and the action extends to $X$. 
The induced action on $K$-invariant singular hermitian metrics on $\mathcal{L}$ 
stabilizes the set $\mathrm{rPSH}^K(X,\omega_{\mathrm{ref}})$. 
More precisely, for $b\in\mathfrak{a}_s\cap \mathfrak{z}(\mathfrak{l})$, 
and $h$ a $K$-invariant, non-negatively curved singular hermitian metric on $\mathcal{L}$ with 
toric potential $u$, the toric potential of the image by $\exp(b)$ 
of $h$ is $a\mapsto u(a+b)$.
This translates on convex conjugates as replacing $u^*$ by 
$u_b^*=u^*-b$.
Note that it is enough to consider only elements of $\mathfrak{a}_s\cap \mathfrak{z}(\mathfrak{l})$
since $\mathcal{Z}(L)^0=T\cap \mathcal{Z}(L)^0$, and $T\cap H$ as well 
as $T\cap K$ act trivially.
Since 
$du^*_b=du^*-b$, $\alpha(b)=0$ for $\alpha\in\Phi_L$, 
$\chi(b)=0$ and $d^2u_b^*=d^2u^*$, 
we have the following proposition.

\begin{prop}
\label{prop_log_Futaki}
The Mabuchi functional is invariant under the action of $\mathcal{Z}(L)^0$ on 
the right if and only if 
\begin{align*}
0 = &
\sum_Y \int_{\tilde{\Delta}_Y^+}
-2q(b)\big{(}((n+1)\Lambda_Y-\bar{S}_{\Theta})P_{DH}(q)
+d_qP_{DH}(\chi^{ac}-\chi) \big{)}dq  \\ &
+\int_{\Delta^+}2\sum_{\alpha\in\Phi_{Q^u}}\alpha(b)P_{DH}(q)dq
\end{align*}
for all $b\in\mathfrak{a}_s\cap \mathfrak{z}(\mathfrak{l})$.
\end{prop}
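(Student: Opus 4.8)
The plan is to test invariance of $\mathrm{Mab}_{\Theta}$ on Donaldson's decomposition $\mathrm{Mab}_{\Theta}=\mathrm{Mab}^{l}_{\Theta}+\mathrm{Mab}^{nl}_{\Theta}$: I will show that the nonlinear part is automatically $\mathcal{Z}(L)^{0}$-invariant, so that invariance of $\mathrm{Mab}_{\Theta}$ reduces to the vanishing of the linear part on one explicit test function, which is exactly the displayed condition. As recalled just before the statement, it is enough to consider the action of $\exp(b)$ for $b\in\mathfrak{a}_{s}\cap\mathfrak{z}(\mathfrak{l})$, since $\mathcal{Z}(L)^{0}=T\cap\mathcal{Z}(L)^{0}$ and the subgroups $T\cap H$, $T\cap K$ act trivially. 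This action replaces a toric potential $u$ by $a\mapsto u(a+b)$, hence replaces $u^{*}$ by $u^{*}_{b}=u^{*}-\ell_{b}$ where $\ell_{b}\colon p\mapsto p(b)$, so that $d_{p}u^{*}_{b}=d_{p}u^{*}-b$ and $d^{2}_{p}u^{*}_{b}=d^{2}_{p}u^{*}$. As $\mathcal{Z}(L)^{0}$ is a connected torus, $\mathrm{Mab}_{\Theta}$ is invariant if and only if, for every such $b$ and every $\phi$, $M^{l}(u^{*}_{b})+M^{nl}(u^{*}_{b})=M^{l}(u^{*})+M^{nl}(u^{*})$.

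Next I would check $M^{nl}(u^{*}_{b})=M^{nl}(u^{*})$. From Theorem~\ref{thm_mabuchi} and the definition of $\mathrm{Mab}^{l}_{\Theta}$, one has $M^{nl}(u^{*})=-\int_{\Delta^{+}}\bigl(\ln\det(u^{*}_{i,j})(p)+I_{H}(a)+4\rho_{H}(a)\bigr)P_{DH}(q)\,dq$ with $a=d_{p}u^{*}$. The Hessian term is untouched because $\ell_{b}$ is linear. Moreover, using $I_{H}(a)=\sum_{\beta\in\Phi^{+}_{s}}\ln\sinh(-2\beta(a))-\sum_{\alpha\in\Phi_{Q^{u}}}2\alpha(a)$ together with $4\rho_{H}(a)=2\sum_{\alpha\in\Phi_{Q^{u}}\cup\Phi^{+}_{s}}\alpha(a)$, a one-line rewriting gives $I_{H}(a)+4\rho_{H}(a)=\sum_{\beta\in\Phi^{+}_{s}}\bigl(\ln\sinh(-2\beta(a))+2\beta(a)\bigr)$, an expression involving only roots $\beta\in\Phi^{+}_{s}\subset\Phi_{L}$. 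Since these vanish on $b\in\mathfrak{z}(\mathfrak{l})$, replacing $a$ by $a-b$ leaves this quantity, hence $M^{nl}$, unchanged.

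It then remains to analyse $M^{l}$. Each summand of its defining formula, including $\int_{\Delta^{+}}4\rho_{H}(d_{p}u^{*})P_{DH}(q)\,dq$, is linear in $u^{*}$ (this is even more transparent from the alternative expression in Remark~\ref{rem_alt_Ml}), so $M^{l}(u^{*}_{b})-M^{l}(u^{*})=-M^{l}(\ell_{b})$, which no longer depends on $\phi$. Thus $\mathrm{Mab}_{\Theta}$ is invariant under $\mathcal{Z}(L)^{0}$ if and only if $M^{l}(\ell_{b})=0$ for all $b\in\mathfrak{a}_{s}\cap\mathfrak{z}(\mathfrak{l})$. I would evaluate $M^{l}(\ell_{b})$ by substituting $u^{*}=\ell_{b}$ in the formula of Theorem~\ref{thm_mabuchi}: here $d_{p}\ell_{b}=b$, one uses $\chi(b)=0$ (so that $p(b)=-2q(b)$ under $p=2(\chi-q)$), the decomposition $\Delta^{+}=\bigcup_{Y}\tilde{\Delta}^{+}_{Y}$ afforded by assumption (T), and the identity $d_{q}P_{DH}(v)=P_{DH}(q)\sum_{\alpha\in\Phi_{Q^{u}}\cup\Phi^{+}_{s}}v(\alpha^{\vee})/q(\alpha^{\vee})$ to recombine the $n$, $\bar{S}_{\Theta}$, $\chi$ and $\chi^{ac}$ contributions into the Duistermaat--Heckman derivative terms, while $\int_{\Delta^{+}}4\rho_{H}(b)P_{DH}(q)\,dq$ produces the last summand.

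The final recombination is exactly of the type carried out in the proof of Theorem~\ref{thm_mabuchi} and in Remark~\ref{rem_barS}, and it is the only computational point of the argument; I expect it to be the main obstacle only in the bookkeeping sense (keeping track of signs, of the shift by $\chi$ in $P_{DH}$, and of the factors $\Lambda_{Y}$ on each cone $\tilde{\Delta}^{+}_{Y}$). Once performed, it identifies $M^{l}(\ell_{b})$ with the right-hand side of the displayed equation, so that its vanishing for all $b$ is precisely the asserted criterion for $\mathcal{Z}(L)^{0}$-invariance of $\mathrm{Mab}_{\Theta}$.
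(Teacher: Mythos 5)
Your argument is correct and is essentially the paper's own proof: the proposition is obtained there precisely by substituting $u_b^*=u^*-\ell_b$ into the formula of Theorem~\ref{thm_mabuchi}, using $d^2u_b^*=d^2u^*$, $\alpha(b)=0$ for $\alpha\in\Phi_L$ and $\chi(b)=0$, so that the non-linear part is untouched and the linear part changes by the $\phi$-independent quantity $-M^l(\ell_b)$, whose vanishing for all $b$ is the stated criterion. One caveat on the bookkeeping you defer: carrying out the substitution gives, on each cone $\tilde{\Delta}_Y^+$, the term $d_qP_{DH}(\chi^{ac}-\Lambda_Y\chi)$ (the factor $\Lambda_Y$ coming from the first summand of Theorem~\ref{thm_mabuchi}), which is consistent with the definition of $F_{\mathcal{L}}$ and with Remark~\ref{rem_barS} but not literally with the displayed statement, where the $\Lambda_Y$ multiplying $\chi$ appears to have been dropped; so you should perform the substitution explicitly and record this discrepancy rather than asserting agreement with the printed right-hand side.
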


The expression in the theorem could naturally be interpreted as a 
log Calabi-Futaki invariant.

\subsubsection{Normalization of potentials}
\label{subsec_normalize}

The action of $Z(L)^0$ allows on the other hand to normalize the 
psh potentials, as follows. 
Given $\phi \in \mathrm{rPSH}(X,\omega_{ref})$, we may add a constant 
and use the action of an element of $Z(L)^0$ to obtain another 
potential $\hat{\phi} \in \mathrm{rPSH}(X,\omega_{ref})$, such that 
if $\hat{u}$ is the corresponding toric potential, and 
$\hat{u}^*$ its convex conjugate, we have  
$\mathrm{min}_{-2\Delta^t}\hat{u}^*=\hat{u}^*(2(\chi-\lambda_0))=0$.

\subsection{Coercivity criterion}

\subsubsection{Statement of the criterion and examples}

\begin{defn}
The Mabuchi functional is \emph{proper modulo the 
action of $Z(L)^0$} if there exists positive constants $\epsilon$ 
and $C$ such that 
for any $\phi\in \mathrm{rPSH}(X,\omega_{ref})$, 
there exists $g\in Z(L)^0$ such that 
\[
\mathrm{Mab}_{\Theta}(\phi)\geq \epsilon J(g\cdot \phi)-C.
\]
\end{defn}

Consider the function $F_{\mathcal{L}}$ defined piecewise by 
\[
F_{\mathcal{L}}(q)=(n+1)\Lambda_Y-\bar{S}_{\Theta}
+\sum\frac{(\chi^{ac}-\Lambda_Y\chi)(\alpha^{\vee})}{q(\alpha^{\vee})}
\]
for $q$ in the unbounded cone with vertex $\lambda_0-\chi$ and generated by 
$\tilde{\Delta}_Y^+$. 
Note that 
$F_{\lambda\mathcal{L}}(q)=\frac{1}{\lambda}F_{\mathcal{L}}(q)$.
The Mabuchi functional for the line bundle $\mathcal{L}$ 
is proper if and only if it is proper for any positive rational multiple of 
$\mathcal{L}$. As an application of this remark, if $F_{\mathcal{L}}>0$, 
we may choose the multiple 
$\lambda \mathcal{L}$ in such a way that 
\[
\int_{\Delta^+}P_{DH}dq = 
\sum_Y \int_{\tilde{\Delta}_Y^+} F_{\lambda\mathcal{L}}(q)
P_{DH}(q)dq.
\]
We now replace $\mathcal{L}$ by its multiple to assume the equality above is 
satisfied with $\lambda=1$.
We then define a barycenter of $\Delta^+$ by: 
\[
\mathrm{bar} = \sum_Y \int_{\tilde{\Delta}_Y^+} q F_{\mathcal{L}}(q)
P_{DH}(q)dq /\int_{\Delta^+}P_{DH}dq.
\]

\begin{thm}
\label{thm_coercivity}
Assume the following: 
\begin{itemize}
\item $F_{\mathcal{L}}>0$, 
\item $(\min_Y \Lambda_Y)(\mathrm{bar}-\chi)-2\rho_H$ 
is in the relative interior of the dual cone of $\mathfrak{a}_s^+$,
\item assumptions (T) and (R) are satisfied.
\end{itemize}
Then the Mabuchi functional is proper modulo the action of 
$Z(L)^0$. 
\end{thm}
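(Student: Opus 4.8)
The plan is to reduce the coercivity statement to a purely convex-analytic estimate on the space of normalized convex functions on $-2\Delta^t$, using the formula of Theorem~\ref{thm_mabuchi} for $\mathrm{Mab}_\Theta$ and Proposition~\ref{prop_J_functional} for $J$. First I would invoke the normalization of Section~\ref{subsec_normalize}: given any $\phi\in\mathrm{rPSH}^K(X,\omega_{\mathrm{ref}})$, replace it by $\hat\phi=g\cdot\phi$ with $g\in Z(L)^0$ so that the convex conjugate $\hat u^*$ of its toric potential satisfies $\min_{-2\Delta^t}\hat u^*=\hat u^*(2(\chi-\lambda_0))=0$. Because both $\mathrm{Mab}_\Theta$ and (after the $Z(L)^0$-action in its definition) $J$ are expressed through $u^*$ alone, it suffices to prove: there are $\epsilon,C>0$ such that $M^l(u^*)+M^{nl}(u^*)\ge \epsilon\,\mathcal{J}(u^*)-C$ for every normalized convex $u^*$ on $-2\Delta^t$ satisfying the Guillemin--Abreu boundary conditions, where $\mathcal{J}(u^*)=\int_{\Delta^+}u^*(2\chi-2q)P_{DH}(q)\,dq$ (up to bounded error and positive constant, by Proposition~\ref{prop_J_functional}).

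Next I would handle the two pieces separately, following the Zhou--Zhu/Li--Zhou--Zhu strategy adapted to our setting. For the non-linear part $M^{nl}(u^*)=-\int_{\Delta^+}\ln\det(u^*_{i,j})P_{DH}dq-\int_{\Delta^+}I_H(d_pu^*)P_{DH}dq$, I would show $M^{nl}$ is bounded below independently of $u^*$: the $-\ln\det$ term is controlled from below by a Jensen/concavity argument against the normalization (this is the classical toric estimate, now with weight $P_{DH}$, which vanishes to the right order on the restricted Weyl walls so the integral converges — this is exactly where assumption (R) enters, guaranteeing $P_{DH}$ vanishes to order $\ge 2$ on each wall so that the divergence-theorem manipulations and the integrability are legitimate), while the $I_H$ term is handled by noting $I_H(a)=\sum_{\beta\in\Phi_s^+}\ln\sinh(-2\beta(a))-\sum_{\alpha\in\Phi_{Q^u}}2\alpha(a)$ grows at most linearly in $a=d_pu^*$, and linear growth in $a$ translates, via convex duality and the normalization, into a term dominated by $\mathcal{J}(u^*)$ up to a constant; so in fact $M^{nl}(u^*)\ge -\epsilon_0\mathcal{J}(u^*)-C_0$ for any preassigned small $\epsilon_0$ after absorbing. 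For the linear part I would use the alternative expression from Remark~\ref{rem_alt_Ml}, rewrite $M^l(u^*)=\int_{\Delta^+}u^*(2\chi-2q)\,G(q)\,dq + (\text{boundary terms})$, and the key algebraic identity is that the chosen scaling of $\mathcal{L}$ makes $\int_{\Delta^+}P_{DH}dq=\sum_Y\int_{\tilde\Delta_Y^+}F_{\mathcal{L}}P_{DH}dq$ and the definition of $\mathrm{bar}$ makes the ``barycenter'' of the effective density equal to $\mathrm{bar}$; after collecting, $M^l(u^*)$ becomes $\big(\min_Y\Lambda_Y\big)\big(\langle\mathrm{bar}-\chi,\text{something}\rangle\big)$-type pairing, and the positivity hypothesis that $(\min_Y\Lambda_Y)(\mathrm{bar}-\chi)-2\rho_H$ lies in the relative interior of the dual cone of $\mathfrak{a}_s^+$ is precisely what forces $M^l(u^*)\ge \epsilon_1\mathcal{J}(u^*)-C_1$: for a normalized convex function supported on $-2\Delta^t$, the positive-definiteness of a linear functional against the moment polytope relative to $2\rho_H$ (which absorbs the $I_H$-linear contribution) yields a coercive lower bound of Zhou--Zhu type.

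The technical heart is the chain of divergence-theorem computations already carried out in the proof of Theorem~\ref{thm_mabuchi}: I would reuse the boundary behaviour $\lim_{s\to1}u_{i,j}(d_{sp}u^*)\nu_j=0$ and $\lim_{s\to1}\tilde u_j(d_{sp}u^*)(\mu_Y)_j=n_Y$ from Section~\ref{subsec_metric_facet} and Proposition~\ref{prop_limits_to_facets}, together with the vanishing of $P_{DH}'$ on restricted Weyl walls, to justify passing all integrations to $\Delta^+$ cleanly; assumption (T) is what guarantees $\Delta^+=\bigcup_Y\tilde\Delta_Y^+$ so that the piecewise definition of $F_{\mathcal{L}}$ assembles into a genuine function on $\Delta^+$, and that $\Delta^+$ meets only walls from $\Phi_L$ so that no extra colored boundary contributions appear. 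Combining the two lower bounds with $\epsilon=\min(\epsilon_1-\epsilon_0,\ \text{small})>0$ gives $\mathrm{Mab}_\Theta(\hat\phi)\ge\epsilon J(\hat\phi)-C$, i.e.\ properness modulo $Z(L)^0$.

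The main obstacle I anticipate is the linear-part estimate: extracting from the abstract hypothesis ``$(\min_Y\Lambda_Y)(\mathrm{bar}-\chi)-2\rho_H$ in the relative interior of the dual cone of $\mathfrak{a}_s^+$'' a \emph{quantitative} coercivity constant $\epsilon_1$ against $\mathcal{J}(u^*)$, uniformly over all normalized convex functions. This requires a careful reduction — as in Zhou--Zhu — showing that for such functions the linear functional $\int u^*\,d(\text{signed measure})$ is bounded below by a multiple of $\int u^*\,d(\text{a genuine positive measure, e.g.\ }P_{DH})$ precisely when the relevant ``barycenter'' inequality is strict, and controlling the facet (boundary-cone) integrals $\int_{\Delta'_Y}p(\nu)u^*P_{DH}'d\sigma$ against the interior integral; the intrusion of $2\rho_H$ (from the $I_H$ linear term) and of $\chi$ (from the non-triviality of the isotropy character, translating the polytope) is the delicate bookkeeping that makes our criterion \emph{a priori} inequivalent to the one in \cite{LZZ}.
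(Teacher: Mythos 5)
Your overall architecture (normalize via the $Z(L)^0$-action, split $\mathrm{Mab}_\Theta=M^l+M^{nl}$, estimate the two pieces against a convex-analytic functional of $u^*$) is the same as the paper's, but the two estimates you assert are not established, and the one that does the real work is missing. For the non-linear part you first claim a lower bound independent of $u^*$, then weaken it to $M^{nl}(u^*)\geq-\epsilon_0\int_{\Delta'}u^*P_{DH}'\,dp-C_0$ for arbitrarily small $\epsilon_0$, offering only ``Jensen/concavity''. Neither is proved, and the second has the wrong shape: linearizing $-\ln\det$ at $u^*_{\mathrm{ref}}$ and integrating by parts unavoidably produces a boundary integral of $u^*$ over $\partial\Delta'$, and for normalized convex functions the boundary integral is \emph{not} dominated by the interior integral (a convex spike concentrated along a facet makes the ratio of boundary to interior integral arbitrarily large), so it cannot be absorbed into a small multiple of $\int_{\Delta'}u^*P_{DH}'\,dp$. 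What the paper actually proves (Proposition~\ref{prop_weak_Mnl}) is the weak bound $M^{nl}(u^*)\geq-C_1\int_{\Delta'}u^*P_{DH}'\,dp-C_2\int_{\partial\Delta'}u^*-C_3$ with \emph{fixed} constants and an explicit boundary term (this is where assumption (R) enters, to kill wall contributions in the divergence theorem), and the required smallness is recovered not by improving this bound but by the rescaling trick $M^{nl}(u^*)\geq M^{nl}(\epsilon u^*)-C_4$ in the final assembly.

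Correspondingly, the linear-part coercivity one needs is coercivity against the \emph{boundary} integral, not merely against the interior one: Proposition~\ref{prop_coercivity_linear} gives $M^l(u^*)\geq C_l\int_{\partial\Delta'}u^*P_{DH}'\,d\sigma$ for normalized $u^*$, which combined with Lemma~\ref{lem_poltoboundary} controls both negative terms of the weak bound. This is exactly the step you yourself flag as ``the main obstacle'' and leave open. The paper does not obtain it by any direct algebraic rearrangement of the barycenter hypothesis; it runs a compactness/contradiction argument: for normalized $u_j^*$ with unit boundary integral and $M^l(u_j^*)\to0$, one bounds $M^l$ from below using the convexity inequality $d_pu^*(p-p_0)\geq u^*(p)-u^*(p_0)$ centered at $p_0=2\chi-2\,\mathrm{bar}$, kills one term by the formula for $\bar S_\Theta$ (Remark~\ref{rem_barS}) and another by the definition of $\mathrm{bar}$, uses the dual-cone hypothesis only to make the term $\int d_pu^*(\Lambda_Y p_0+4\rho_H)P_{DH}'\,dp$ nonnegative, concludes that the limit $v_\infty$ is affine with linear part in $\mathfrak{Y}(T_s/[L,L])$, hence $v_\infty\equiv0$ by the normalization at the interior point $2(\chi-\lambda_0)$, and finally contradicts this with the alternative expression of $M^l$ from Remark~\ref{rem_alt_Ml}, whose boundary term is bounded below by $\delta=\min_Y(1-c_Y)/v_{\mathcal{L}}(\mu_Y)>0$. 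Without this argument (or a genuine substitute producing a quantitative constant), your two claimed inequalities do not combine into properness: as written, the proposal assumes precisely the estimates that constitute the proof.
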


\begin{exa}
We have determined in previous examples everything that is necessary 
to check when the criterion apply for the example of the wonderful 
compactification $X$ of, say, the symmetric space of type AIII($2$, $5$).
We consider the ample Cartier divisors $(1+b)Y_1+Y_2$ for 
$0<b<1$ rational, and corresponding uniquely $G$-linearized 
$\mathbb{Q}$-line bundles. They run over all ample divisors on $X$ 
that are trivial on the open orbit, up to rational multiple. 
We illustrate in Figure~\ref{fig_pol_AIII} the corresponding 
polytopes and subdivision by $\tilde{\Delta}^+_{Y_1}$ and 
$\tilde{\Delta}^+_{Y_2}$.
Then it is easy, with computer assistance, to check when the criterion 
is satisfied in terms of $b$, and we obtain bounds $b^-\simeq 0.31$ 
and $b^+\simeq 0.54$ such that when $b^-<b<b^+$, the Mabuchi 
functional (for $\mathcal{L}=\mathcal{O}((1+b)Y_1+Y_2)$ and 
$\Theta=0$) is proper modulo the action of $Z(L)^0$.
\end{exa}

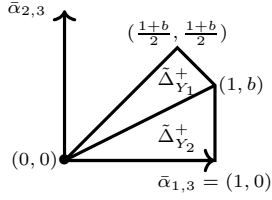
\begin{figure}
\centering
\caption{
Moment polytopes for type AIII($2$, $5$)}
\label{fig_pol_AIII}
\begin{tikzpicture}  
\draw (0,0) node{$\bullet$};
\draw [very thick, ->] (0,0) -- (2,0);
\draw (2,-.3) node{\scriptsize $\bar{\alpha}_{1,3}=(1,0)$};
\draw [very thick, ->] (0,0) -- (0,2);
\draw (-.5,2) node{\scriptsize $\bar{\alpha}_{2,3}$};

\draw [very thick] (0,0) -- (2,0) -- (2,1) -- (3/2,3/2) -- (0,0);
\draw (2.35,1) node{\scriptsize $(1,b)$};
\draw (3/2,3/2+.2) node{\scriptsize $(\frac{1+b}{2},\frac{1+b}{2})$};
\draw (-0.4,0) node{\scriptsize $(0,0)$};
\draw [very thick] (0,0) -- (2,1);
\draw (1.5,0.3) node{\scriptsize $\tilde{\Delta}^+_{Y_2}$};
\draw (1.5,1.05) node{\scriptsize $\tilde{\Delta}^+_{Y_1}$};
\end{tikzpicture}
\end{figure}

\begin{rem}
The two first assumptions imply readily, from the last section, that 
the Mabuchi functional is invariant under the action of 
$Z(L)^0$.
\end{rem}

\begin{rem}
In the case of group compactifications, assumption (R) 
is automatically satisfied, and we may use \cite{LZZ} in the 
later stages of the proof to remove assumption (T). 
\end{rem}

If $\mathcal{L}=K_{X,\Theta}^{-1}$ then 
$\Lambda_Y=1$ for all $Y$, $\bar{S}_{\Theta}=n$, and 
$\chi=\chi^{ac}$ (up to changing the linearization of 
$\mathcal{L}$ by a character of $G$).
Furthermore, 
\[
\chi^{ac}+2\rho_H= \sum_{\alpha\in \Phi_{Q^u}\cup\Phi_s^+}\alpha\circ\mathcal{H}
+\sum_{\alpha\in \Phi_{Q^u}\cup\Phi_s^+}\alpha\circ\mathcal{P}
=\sum_{\alpha\in \Phi_{Q^u}\cup\Phi_s^+}\alpha.
\]
We then have the corollary:

\begin{cor}
\label{cor_log}
If $\mathcal{L}=K_{X,\Theta}^{-1}$ 
then the Mabuchi functional is proper modulo the action 
of $Z(L)^0$ if and only if 
assumptions (T) and (R) are satisfied and the translated barycenter 
$
\mathrm{bar}-\sum_{\alpha\in \Phi_{Q^u}\cup\Phi_s^+}\alpha
$
is in the relative interior of the dual cone of $\mathfrak{a}_s^+$. 
\end{cor}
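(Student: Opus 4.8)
The plan is to obtain Corollary~\ref{cor_log} from Theorem~\ref{thm_coercivity} for the sufficiency, and from the Mabuchi formula of Theorem~\ref{thm_mabuchi} together with the $J$-estimate of Proposition~\ref{prop_J_functional} for the necessity; assumptions (T) and (R) are kept in force throughout, as they underlie the formulas of this section. The first step is pure bookkeeping: specialize every combinatorial quantity to $\mathcal{L}=K_{X,\Theta}^{-1}$. Since $\Theta$ is supported on the boundary, $\mathcal{L}|_{G/H}=K_{G/H}^{-1}$ after twisting the $G$-linearization by a character of $G$, so Example~\ref{exa_isotropy_ac} gives $\chi=\chi^{ac}$. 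Since $-K_X$ is linearly equivalent to $\mathcal{L}+\Theta$, intersecting with $\mathcal{L}^{n-1}$ gives $\bar{S}=n+n\sum_Y c_Y\,\mathcal{L}|_Y^{n-1}/\mathcal{L}^n$, hence $\bar{S}_\Theta=n$ (equivalently this drops out of Remark~\ref{rem_barS} once $\Lambda_Y=1$ and $\chi=\chi^{ac}$). Finally the special divisor of $K_{X,\Theta}^{-1}$ computed in Section~\ref{subsec_ac} has $Y$-coefficient $n_{Y,\Theta}=n_Y-c_Y$, so $v_{\mathcal{L}}(\mu_Y)=n_{Y,\Theta}$ and hence $\Lambda_Y=1$ for every facet $Y$. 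Substituting into $F_{\mathcal{L}}$ yields $F_{\mathcal{L}}\equiv(n+1)-n+\sum_\alpha(\chi^{ac}-\chi)(\alpha^{\vee})/q(\alpha^{\vee})=1>0$.

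With these identifications the sufficiency is essentially automatic. The hypothesis $F_{\mathcal{L}}>0$ of Theorem~\ref{thm_coercivity} holds, and since $F_{\mathcal{L}}\equiv1$ while assumption (T) gives $\Delta^+=\bigcup_Y\tilde{\Delta}^+_Y$ up to a null set, the normalizing equality $\int_{\Delta^+}P_{DH}\,dq=\sum_Y\int_{\tilde{\Delta}^+_Y}F_{\mathcal{L}}(q)P_{DH}(q)\,dq$ already holds with $\lambda=1$; no rescaling of $\mathcal{L}$ is needed, and $\mathrm{bar}$ is the genuine barycenter of $\Delta^+$ for the measure $P_{DH}\,dq$. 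The second hypothesis of Theorem~\ref{thm_coercivity}, that $(\min_Y\Lambda_Y)(\mathrm{bar}-\chi)-2\rho_H=\mathrm{bar}-(\chi^{ac}+2\rho_H)$ lie in the relative interior of the dual cone of $\mathfrak{a}_s^+$, then becomes exactly the hypothesis of the corollary once one uses $\chi^{ac}+2\rho_H=\sum_{\alpha\in\Phi_{Q^u}\cup\Phi_s^+}\alpha$. This proves that the stated condition is sufficient for properness.

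For the converse, assume $\mathrm{Mab}_\Theta$ is proper modulo $Z(L)^0$, and argue with two families of test metrics. Along a $Z(L)^0$-orbit $\exp(tb)\cdot\phi_0$ with $b\in\mathfrak{a}_s\cap\mathfrak{z}(\mathfrak{l})$, taking $g=\exp(-tb)$ keeps $J(g\cdot\exp(tb)\phi_0)$ bounded, so properness forces $\mathrm{Mab}_\Theta$ bounded below along the orbit; since $\mathrm{Mab}_\Theta$ is affine in $t$ along that orbit, with constant slope the log-Futaki invariant, the latter must vanish, and by Proposition~\ref{prop_log_Futaki} this pins down $\mathrm{bar}-\sum_\alpha\alpha$ modulo the lineality space of the dual cone. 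For the remaining directions, if $\mathrm{bar}-\sum_{\alpha\in\Phi_{Q^u}\cup\Phi_s^+}\alpha$ did not lie in the relative interior of the dual cone of $\mathfrak{a}_s^+$ there would be a nonzero $\mu$ in the valuation cone $-\mathfrak{a}_s^+\cap\mathfrak{Y}(T_s)$, not in that lineality space, along which, by a boundary computation from Theorem~\ref{thm_mabuchi} in the spirit of Remark~\ref{rem_barS}, the asymptotic slope of $\mathrm{Mab}_\Theta$ is non-positive, while $\inf_{g\in Z(L)^0}J(g\cdot\phi_t)$ grows linearly with strictly positive rate (since $\mu$ is transverse to the $Z(L)^0$-action); this contradicts $\mathrm{Mab}_\Theta\ge\epsilon J-C$. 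Hence the barycenter condition holds, and the equivalence follows.

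The main obstacle is the necessity direction: making the degenerating one-parameter family and the computation of asymptotic slopes rigorous, in particular controlling the $I_H$ and $\ln\det(u^*_{i,j})$ contributions and the bounded error terms of Propositions~\ref{prop_J_functional} and~\ref{prop_inthoro}, and verifying that $\inf_{g\in Z(L)^0}J(g\cdot\phi_t)$ diverges linearly. The sufficiency, in contrast, is only a specialization of Theorem~\ref{thm_coercivity}.
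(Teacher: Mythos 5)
Your sufficiency argument is exactly the paper's proof: the whole content of Corollary~\ref{cor_log} in the text is the specialization $\Lambda_Y=1$, $\bar{S}_{\Theta}=n$, $\chi=\chi^{ac}$ (hence $F_{\mathcal{L}}\equiv 1>0$ and $(\min_Y\Lambda_Y)(\mathrm{bar}-\chi)-2\rho_H=\mathrm{bar}-\sum_{\alpha\in\Phi_{Q^u}\cup\Phi_s^+}\alpha$), followed by an application of Theorem~\ref{thm_coercivity}; your bookkeeping ($n_{Y,\Theta}=n_Y-c_Y=v_{\mathcal{L}}(\mu_Y)$, the intersection-theoretic computation of $\bar S_\Theta$, and the observation that under (T) no rescaling $\lambda\neq 1$ is needed so $\mathrm{bar}$ is the honest $P_{DH}$-barycenter) is correct and matches it.

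Concerning the ``only if'' direction: be aware that the paper does not actually prove it either. Its proof of the corollary stops at the specialization above, and the necessity is only asserted in the remark that follows, where it is said to follow from a computation of log-Donaldson-Futaki invariants along special equivariant test configurations -- a computation not carried out in the paper. Your sketch is in the same spirit, and its first half is sound: properness modulo $Z(L)^0$ bounds the Mabuchi functional below along $Z(L)^0$-orbits, along which it is affine (this is precisely what Proposition~\ref{prop_log_Futaki} encodes, since $\beta(b)=0$ for $\beta\in\Phi_s$ and the $\ln\det$ term is unchanged), forcing the Futaki-type expression to vanish. But as you note, this only controls the component of $\mathrm{bar}-\sum\alpha$ along $\mathfrak{a}_s\cap\mathfrak{z}(\mathfrak{l})$, i.e.\ the lineality space of the dual cone; the strict inequalities in the transverse directions require computing the asymptotic slope of $\mathrm{Mab}_\Theta$ along degenerations associated to $\mu$ in the valuation cone and comparing with $\inf_{g\in Z(L)^0}J(g\cdot\phi_t)$, including control of the nonlinear terms $I_H$ and $\ln\det(u^*_{i,j})$. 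That is exactly the test-configuration computation the paper defers to, so your admitted gap coincides with what the paper itself leaves unproved; relative to the paper's own proof your proposal is complete, and honestly flags the remaining direction rather than hiding it.
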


\begin{rem}
This generalizes the criterion for existence of Kähler-Einstein metrics 
obtained in \cite{DelKSSV} in the sufficient direction. In fact the 
condition is also necessary in this case, as would follow from a computation 
of log-Donaldson-Futaki invariants along special equivariant test configurations 
for example. 
\end{rem}

\begin{rem}
Again, for group compactifications, even assumption (T) 
may be removed by using \cite{LZZ} in the later stages of the 
proof.
\end{rem}

We prove Theorem~\ref{thm_coercivity} in the following subsections 
so unless otherwise stated we make the assumptions as in the statement. 

\subsubsection{Inequality for $\mathrm{Mab}^l_{\Theta}$}

Assume $u^*=\hat{u}^*$ is normalized as in Section~\ref{subsec_normalize}. 
Note the following elementary lemma, following directly from the convexity and 
normalization of $\hat{u}^*$. 

\begin{lem}
\label{lem_poltoboundary}
Assume $u^*$ is normalized, then 
\[
\int_{\Delta'} u^*(p)P_{DH}'(p)dp \leq 
C_{\partial} \int_{\partial \Delta'} u^*P_{DH}'(p)d\sigma
\]
for some constant $C_{\partial}$ independent of $u^*$.
\end{lem}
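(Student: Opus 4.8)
The plan is to prove the estimate by a radial (cone) decomposition of $\Delta'$ with apex the distinguished point $p_0:=2(\chi-\lambda_0)$, using the convexity and normalization of $u^*$ together with the affine factorization of $P_{DH}'$. Recall first, by Proposition~\ref{prop_positive_metrics}, that $u^*$ is a finite convex function on $-2\Delta^t\supset\Delta'$, and after the normalization of Section~\ref{subsec_normalize} it is nonnegative with $u^*(p_0)=0$. Write $P_{DH}'(p)=\prod_{\alpha\in\Phi_{Q^u}\cup\Phi_s^+}L_\alpha(p)$ where $L_\alpha(p)=\frac{\kappa(\alpha,\alpha)}{2\kappa(\alpha,\rho)}(2\chi-p)(\alpha^{\vee})$ is affine in $p$ and nonnegative on $\Delta'$ (indeed $2\chi-p=2q$ with $q\in\Delta^+\subset C^+$). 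The two structural facts I will use are: (i) since $2\chi-p_0=2\lambda_0$ and $\lambda_0\in\mathfrak{X}(T/T\cap[L,L])\otimes\mathbb{R}$ annihilates every coroot of $\Phi_L$, one has $L_\beta(p_0)=0$ for all $\beta\in\Phi_s^+\subset\Phi_L$; (ii) by assumption (T), $\Delta^+$ meets no Weyl wall $\{q(\alpha^{\vee})=0\}$ with $\alpha\in\Phi_{Q^u}$, so by compactness there exist constants $0<c\le M$ with $c\le\prod_{\alpha\in\Phi_{Q^u}}L_\alpha\le M$ on $\Delta'$. The same computation as in (i) also shows $\lambda_0$, hence $p_0$, lies on every restricted Weyl wall, and therefore on every facet of $\Delta'$ contained in such a wall.

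Next I set up the decomposition. For each facet $F$ of $\Delta'$ with $p_0\notin\overline{F}$, put $\mathcal{C}_F=\mathrm{conv}(\{p_0\}\cup F)$; a standard convexity argument (the segment from $p_0$ through an interior point of $\Delta'$, when prolonged, exits $\Delta'$ through the relative interior of some such facet) shows that $\Delta'$ is, up to a set of measure zero, the union of the $\mathcal{C}_F$. Since $X$ is toroidal, the description of the special/toric polytope (Proposition~\ref{prop_toric_vs_special}, Corollary~\ref{cor_ASSE}) shows that the facets of $\Delta'$ are the $G$-invariant ones $\Delta'_Y$ (outer normal $\mu_Y$) together with facets contained in restricted Weyl walls; since the latter all contain $p_0$, only the cones $\mathcal{C}_{\Delta'_Y}$ (for those $Y$ with $p_0\notin\overline{\Delta'_Y}$) occur in the decomposition. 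On each such cone I use the parametrization $(\sigma,t)\in\Delta'_Y\times(0,1)\mapsto p=(1-t)p_0+t\sigma$, for which $dp=h_Y\,t^{r-1}\,dt\,d\sigma$ with $h_Y=\mathrm{dist}(p_0,\mathrm{aff}\,\Delta'_Y)>0$, $r=\dim\mathfrak{a}_s$, and $d\sigma$ the area measure on $\Delta'_Y$.

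Now I estimate along radial segments. Convexity of $u^*$ and $u^*(p_0)=0$ give $u^*((1-t)p_0+t\sigma)\le t\,u^*(\sigma)$, while affineness of $L_\beta$ and fact (i) give $L_\beta((1-t)p_0+t\sigma)=t\,L_\beta(\sigma)$ for $\beta\in\Phi_s^+$, so that, using fact (ii),
\[
P_{DH}'((1-t)p_0+t\sigma)=t^{|\Phi_s^+|}\Big(\prod_{\beta\in\Phi_s^+}L_\beta(\sigma)\Big)\prod_{\alpha\in\Phi_{Q^u}}L_\alpha((1-t)p_0+t\sigma)\le\frac{M}{c}\,t^{|\Phi_s^+|}\,P_{DH}'(\sigma).
\]
Substituting these two bounds and carrying out the elementary integral in $t$ over $(0,1)$ yields
\[
\int_{\mathcal{C}_{\Delta'_Y}}u^*P_{DH}'\,dp\le\frac{h_Y\,M}{c\,(r+|\Phi_s^+|+1)}\int_{\Delta'_Y}u^*P_{DH}'\,d\sigma,
\]
and summing over $Y$, then bounding the sum of these nonnegative facet integrals by the integral over the full boundary $\partial\Delta'$, gives the lemma with $C_\partial=\max_Y h_Y M/\big(c\,(r+|\Phi_s^+|+1)\big)$, which depends only on $\Delta'$, the root datum, and $\lambda_0$, not on $u^*$.

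The work is concentrated in the two structural facts (i)–(ii); everything else is Fubini plus a one-variable integral. The point that makes the weight transport correctly from the interior to the boundary is the exact identity $L_\beta(p_0)=0$ for $\beta\in\Phi_s^+$, which forces $P_{DH}'((1-t)p_0+t\sigma)\asymp t^{|\Phi_s^+|}P_{DH}'(\sigma)$ and produces exactly the power of $t$ needed for the radial integral to converge and match the boundary integrand, while assumption (T) is what keeps the $\Phi_{Q^u}$-factors from degenerating on $\Delta'$. The only mild subtlety is that $p_0$ may lie on $\partial\Delta'$, but the cone-covering argument is insensitive to this — and in fact this is precisely what lets the decomposition automatically discard the boundary facets on which $P_{DH}'$ vanishes.
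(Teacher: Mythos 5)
Your proof is correct, and it is worth noting that the paper itself offers no argument for this lemma beyond the remark that it ``follows directly from the convexity and normalization of $\hat{u}^*$''; your write-up supplies exactly the missing content, and it is consistent with the paper's own geometry, since the cone decomposition of $\Delta'$ from the vertex $p_0=2(\chi-\lambda_0)$ is the same one the paper uses (via the cones $\tilde{\Delta}^+_Y$) in the proof of Theorem~\ref{thm_mabuchi} and in the coercivity argument. The two structural facts you isolate are the real substance and both check out: the factors of $P_{DH}'$ indexed by $\beta\in\Phi_s^+$ are affine and vanish at $p_0$ because $\lambda_0$ kills every coroot of $\Phi_L$ (these coroots lie in $\mathfrak{t}\cap[\mathfrak{l},\mathfrak{l}]$), giving the exact scaling $L_\beta((1-t)p_0+t\sigma)=tL_\beta(\sigma)$, while assumption~(T) — a standing hypothesis of Section~\ref{sec_mabuchi} — keeps the $\Phi_{Q^u}$-factors uniformly pinched between positive constants on the compact polytope $\Delta'$; combined with $u^*\geq 0$, $u^*(p_0)=0$ and convexity, the radial Fubini computation with Jacobian $h_F\,t^{r-1}$ is exactly right, including the handling of the degenerate case $p_0\in\partial\Delta'$. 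One small observation: the digression classifying the facets of $\Delta'$ (that the non-wall facets are the $\Delta'_Y$ and that wall facets contain $p_0$) is not logically needed for the lemma — since $u^*P_{DH}'\geq 0$ on all of $\partial\Delta'$, you may simply run the cone estimate over every facet not containing $p_0$ and dominate the resulting sum by the integral over the full boundary — so your argument is in fact slightly more robust than stated, and the claim that wall facets contain $p_0$ (which does hold, because $\Delta'$ lies on one side of each restricted Weyl wall and $p_0$ lies on all of them) is only needed if one wants to recover the paper's identity $\Delta^+=\bigcup_Y\tilde{\Delta}^+_Y$ under assumption~(T).
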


\begin{prop}
\label{prop_coercivity_linear}
Under the assumptions of Theorem~\ref{thm_coercivity}, 
there exists a positive 
constant $C_l$ such that for any normalized $u^*$, 
\[
M^l(u^*)\geq 
C_l \int_{\partial \Delta'} u^*P_{DH}'(p)d\sigma. 
\]
\end{prop}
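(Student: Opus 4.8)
The plan is to start from the alternative expression of $M^l(u^*)$ given in Remark~\ref{rem_alt_Ml}, namely
\[
M^l(u^*)=
\sum_Y \frac{1-c_Y}{v_{\mathcal{L}}(\mu_Y)}
\int_{\Delta'_Y} p(\nu)u^*(p)P_{DH}'(p)d\sigma
-\int_{\Delta^+} d_pP_{DH}'(4\rho_H)u^*(p)dp
+\int_{\Delta^+} u^*(p)\Big(\sum \frac{\chi^{ac}(\alpha^{\vee})}{q(\alpha^{\vee})}-\bar{S}_{\Theta}\Big)P_{DH}dq,
\]
and to rewrite everything as a single integral over $\partial\Delta'$ plus an interior integral which is harmless because $u^*\ge 0$. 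First I would convert the two interior integrals over $\Delta^+$ (equivalently over $\Delta'$, after the affine change of variables $p=2(\chi-q)$) into boundary integrals by applying the divergence theorem to a well-chosen vector field. The natural candidate is $u^*(p)\,p\,P_{DH}'(p)$, which was already used in the last step of the proof of Theorem~\ref{thm_mabuchi}: its divergence produces the radial derivative term, the $d_pP_{DH}'(4\rho_H)$ term (using that $d_qP_{DH}$ hits the coroots appearing in $2\rho_H$), and a term proportional to $n u^*P_{DH}'$, all integrated over $\Delta'$, while the boundary contribution is $\int_{\partial\Delta'} p(\nu)u^*P_{DH}'d\sigma$. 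Carrying this out cone-by-cone over the $\tilde\Delta'_Y$ and using $2n_Y\nu/\mu_Y = \Lambda_Y\, p(\nu)$ on $\Delta'_Y$, the net effect is that $M^l(u^*)$ becomes a boundary integral whose density on the face $\Delta'_Y$ is, up to the positive factor $p(\nu)P_{DH}'(p)$, a combination of $\Lambda_Y$, $\bar S_\Theta$ and the $1/q(\alpha^\vee)$ terms — precisely $F_{\mathcal{L}}(q)$ evaluated on that face (this is exactly the piecewise function in the statement before the theorem) — plus an interior integral of $u^*$ against a (signed) multiple of $F_{\mathcal{L}}P_{DH}$.

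Next I would use the hypothesis $F_{\mathcal{L}}>0$: on each face $\Delta'_Y$ the boundary density is a strictly positive multiple of $u^*P_{DH}'$, so the boundary part is bounded below by $c\int_{\partial\Delta'}u^*P_{DH}'d\sigma$ for some $c>0$ depending only on $\min F_{\mathcal{L}}$ over the compact faces. The interior integral $\int_{\Delta^+}u^*(p)\,(\text{something})\,F_{\mathcal{L}}(q)P_{DH}(q)dq$ that remains has a definite sign issue — the "something" need not be nonnegative — but since $u^*\ge 0$ by normalization and $F_{\mathcal{L}}P_{DH}\ge 0$, I can absorb it: the absolute value of this interior term is bounded by $C\int_{\Delta'}u^*P_{DH}'dp$, and Lemma~\ref{lem_poltoboundary} converts that back to $C\,C_\partial\int_{\partial\Delta'}u^*P_{DH}'d\sigma$. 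Choosing the rescaling of $\mathcal{L}$ made just before the statement of Theorem~\ref{thm_coercivity} (which normalizes $\int_{\Delta^+}P_{DH}dq=\sum_Y\int_{\tilde\Delta_Y^+}F_{\mathcal{L}}P_{DH}dq$ and makes $\mathrm{bar}$ a genuine barycenter), the interior density is in fact $F_{\mathcal{L}}$ times the mean-zero function $n+1-\bar S_\Theta/((n+1)\Lambda_Y)\cdots$ — so the worst case is controlled and does not overwhelm the boundary term provided $c$ is replaced by a smaller but still positive constant $C_l$.

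The main obstacle, and the step requiring genuine care rather than bookkeeping, is controlling the interior integral so that its (possibly negative) contribution is strictly dominated by the boundary term. The key is that after the divergence-theorem manipulation the interior density is $F_{\mathcal{L}}$ weighted against a bounded function, and on the faces it is $F_{\mathcal{L}}$ weighted against the strictly positive $p(\nu)$; so one needs $\min_Y(\min_{\Delta'_Y}F_{\mathcal{L}})>0$, which is exactly the first hypothesis, together with Lemma~\ref{lem_poltoboundary} to relate the interior $L^1$-norm of $u^*$ to its boundary $L^1$-norm — and here the normalization $\hat u^*(2(\chi-\lambda_0))=0=\min\hat u^*$ is essential because convexity then forces $u^*$ to be comparable to its linear-interpolation-from-the-boundary, which is what makes $C_\partial$ independent of $u^*$. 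One must also keep track of the constant $C_H'/n!$ (or the normalization in Remark~\ref{rem_barS}) so that the additive constants discarded along the way are indeed independent of $\phi$; this is routine. Assembling these, $M^l(u^*)\ge (c - C\,C_\partial)\int_{\partial\Delta'}u^*P_{DH}'d\sigma$, and after the rescaling this lower bound is strictly positive, giving the claimed $C_l$.
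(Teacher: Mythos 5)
There is a genuine gap in your absorption step, and it is fatal. After your divergence-theorem manipulation you are left with a boundary term bounded below by $c\int_{\partial\Delta'}u^*P_{DH}'d\sigma$ (with $c$ coming from $\min F_{\mathcal{L}}$ on the faces) and an interior error term whose absolute value you bound by $C\int_{\Delta'}u^*P_{DH}'dp\leq C\,C_{\partial}\int_{\partial\Delta'}u^*P_{DH}'d\sigma$ via Lemma~\ref{lem_poltoboundary}. To conclude you need $c>C\,C_{\partial}$, and nothing provides this: the interior term is of exactly the same order as the boundary term, the ``mean--zero'' property of the interior density only says the term vanishes for constant $u^*$ (for a general normalized convex $u^*$ the negative part of the density can sit where $u^*$ is large), and rescaling $\mathcal{L}$ by $\lambda$ scales $F_{\mathcal{L}}$, the polytope and both sides of the inequality compatibly, so it cannot manufacture smallness. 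A structural symptom of the problem is that your argument never uses the second hypothesis of Theorem~\ref{thm_coercivity}, namely that $(\min_Y\Lambda_Y)(\mathrm{bar}-\chi)-2\rho_H$ lies in the relative interior of the dual cone of $\mathfrak{a}_s^+$. This hypothesis is essential: in the log-Fano case $\mathcal{L}=K_{X,\Theta}^{-1}$ one has $F_{\mathcal{L}}\equiv 1>0$ automatically, yet the conclusion fails when the barycenter condition fails (the paper even notes after Corollary~\ref{cor_log} that the condition is necessary there), so no proof using only $F_{\mathcal{L}}>0$ can be correct.

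For comparison, the paper's proof is not a direct absorption but a compactness/contradiction argument: assume normalized $u_j^*$ with $\int_{\partial\Delta'}u_j^*P_{DH}'d\sigma=1$ and $M^l(u_j^*)\to 0$, extract a locally uniform limit $v_\infty$ by convexity, and decompose $M^l(u^*)$ using the convexity inequality $d_pu^*(p-p_0)\geq u^*(p)-u^*(p_0)$ at the specific point $p_0=2\chi-2\,\mathrm{bar}$. The barycenter choice makes the term linear in $p-p_0$ vanish, the formula for $\bar S_{\Theta}$ from Remark~\ref{rem_barS} kills the $u^*(p_0)$ term, $F_{\mathcal{L}}>0$ makes the convexity-deficit term nonnegative, and the dual-cone hypothesis makes $\int d_pu^*(\Lambda_Y p_0+4\rho_H)P_{DH}'dp$ nonnegative (this is where the second hypothesis enters). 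Vanishing of $M^l$ in the limit then forces $v_\infty$ to be affine with restricted linear part, hence $v_\infty\equiv 0$ by the normalization, so $\int_{\Delta'}u_j^*P_{DH}'dp\to 0$; plugging this into the alternative expression of $M^l$ from Remark~\ref{rem_alt_Ml} gives $M^l(u_j^*)\geq\delta>0$ for large $j$ with $\delta=\min_Y(1-c_Y)/v_{\mathcal{L}}(\mu_Y)$, a contradiction. If you want to salvage your route, you would have to replace the crude bound on the interior term by precisely this convexity-at-the-barycenter device, at which point you are essentially reproducing the paper's argument (and the uniform constant $C_l$ still comes most naturally from the contradiction/compactness step rather than explicitly).
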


\begin{proof}
Suppose there exists a sequence of normalized $u_j^*$ such that 
\[
\int_{\partial \Delta'} u_j^*P_{DH}'(p)d\sigma =1
\] 
and $M^l(u)$ decreases to $0$.
By compactness 
we may assume that $u^*_j$ converges locally uniformly on 
$-2\Delta^t$ to a convex 
function $v_{\infty}$, still satisfying 
$v_{\infty}(2(\chi-\lambda_0))=\min v_{\infty} =0$.

Let $p_0=2\chi-2\mathrm{bar}$. 
By convexity of $u^*$ we have 
$d_pu^*(p-p_0)\geq u^*(p)-u^*(p_0)$, 
hence 
\begin{align*}
\mathrm{M}^l(u^*) \geq  & 
\sum_Y \int_{\tilde{\Delta}_Y'} 
F_{\mathcal{L}}(q)
(u^*(p)-u^*(p_0)-d_{p_0}u^*(p-p_0))P_{DH}'(p)dp \\
& +
\sum_Y \int_{\tilde{\Delta}_Y'} d_pu^*(\Lambda_Yp_0+4\rho_H)P_{DH}'(p)dp
\\ & 
+
\sum_Y \int_{\tilde{\Delta}_Y'} 
F_{\mathcal{L}}(q)
d_{p_0}u^*(p-p_0)P_{DH}'(p)dp
\\
& +
\sum_Y \int_{\tilde{\Delta}_Y'} 
\left( n\Lambda_Y-\bar{S}_{\Theta}
+\sum\frac{(\chi^{ac}-\Lambda_Y\chi)(\alpha^{\vee})}{q(\alpha^{\vee})} \right)
u^*(p_0)P_{DH}'(p)dp
\end{align*}

The last summand vanishes by the expression of $\bar{S}_{\Theta}$ 
obtained in Remark~\ref{rem_barS}. 
The third summand, on the other hand, vanishes by definition of $p_0$.
The second term is non-negative by the assumptions of Theorem~\ref{thm_coercivity}  
and the first term is non-negative by convexity of $u^*$.

Then the fact that $M^l(u_j)$ converges to zero implies that 
$v_{\infty}$ is an affine function by the first term, and 
that its linear part is given by an element of 
$\mathfrak{Y}(T_s/[L,L])$ by the second term. 
Since $v_{\infty}$ is normalized and $2(\chi-\lambda_0)$ is in the interior 
of $\mathfrak{Y}(T_s/Z(L))\cap \Delta'$, 
this means that $v_{\infty}=0$.
As a consequence, we have 
\[
\int_{\Delta'}u_j^*(p)P_{DH}'(p)dp\rightarrow 0.
\]

Let $\delta=\min_Y (1-c_Y)/v_{\mathcal{L}}(\mu_Y)$, it is 
positive by assumption. 
By the expression of $M^l$ given in Remark~\ref{rem_alt_Ml}, 
and using again that $u^*_j$ converges to $0$, we obtain 
that for $j$ large enough, 
\[
M^l(u_j^*)\geq \delta \int_{\partial \Delta'}u^*_jP_{DH}'(p)d\sigma=\delta>0,
\]
which provides a contradiction hence proves the proposition. 
\end{proof}

\begin{rem}
Assumption (R) was not used at all here.
\end{rem}

\subsubsection{Inequality for $\mathrm{Mab}^{nl}_{\Theta}$}

The strategy to transfer the coercivity result on the linear part 
to the full fonctional now follows a general strategy already used by 
Donaldson in \cite{Don02}. 
The first step is to get a rather weak estimate on the non-linear part.

\begin{prop}
\label{prop_weak_Mnl}
There exists uniform positive constants $C_1$, $C_2$, $C_3$, such that 
\[
M^{nl}(u^*)\geq 
-C_1\int_{\Delta'} u^*(p)P_{DH}'(p)dp
-C_2\int_{\partial \Delta'}u^*(p)dp
-C_3
\]
\end{prop}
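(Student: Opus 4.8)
The goal is a lower bound for the non-linear part $M^{nl}(u^*) = M^{nl}_\Theta(u^*)$ of the Mabuchi functional that is only mildly negative — controlled by the "boundary $L^1$-norm" of $u^*$ together with the full $L^1$-norm and a constant. Recall from Theorem~\ref{thm_mabuchi} that, after subtracting the linear part, the non-linear part consists of the entropy-type term $-\int_{\Delta^+}\ln\det(u^*_{i,j})P_{DH}dq$ together with the $I_H$-term $-\int_{\Delta^+}I_H(a)P_{DH}dq$ (plus terms that have been absorbed into $M^l$). So the first task is to identify precisely what $M^{nl}$ is: writing it out, $\frac{\mathcal L^n}{n!}M^{nl}(u^*)$ equals, up to a $\phi$-independent constant,
\[
-\int_{\Delta^+}\ln\det(u^*_{i,j})(p)\,P_{DH}(q)\,dq - \int_{\Delta^+}I_H(d_pu^*)\,P_{DH}(q)\,dq,
\]
where $p=2(\chi-q)$. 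The plan is to bound each of the two integrals from below.

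\textbf{The entropy term.} For the $-\ln\det(u^*_{i,j})$ integral I would use the classical convexity trick of Donaldson: $\ln\det$ is concave on positive definite symmetric matrices, so $\ln\det(u^*_{i,j}) \le \operatorname{tr}(u^*_{i,j}) \le$ (a linear expression in the second derivatives), and more usefully $-\ln\det(u^*_{i,j}) \ge -\operatorname{tr}(A^{-1}u^*_{i,j}) + \ln\det A + \mathrm{const}$ for any fixed positive definite $A$; choosing $A$ well and integrating by parts (the divergence theorem on $\Delta'$, exactly as in the proof of Theorem~\ref{thm_mabuchi}, moving derivatives off $u^*$ and onto $P_{DH}'$, which is smooth and bounded on the faces thanks to the assumptions (T) and the fact that $P_{DH}'$ vanishes to first order on restricted Weyl walls) converts $\int \operatorname{tr}(A^{-1}u^*_{i,j})P_{DH}'dp$ into boundary integrals of $u^*$ against $P_{DH}'$ or its derivatives, plus an interior integral $\int u^* \cdot(\text{second derivatives of }P_{DH}')\,dp$. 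This is precisely the source of the terms $-C_1\int_{\Delta'}u^*P_{DH}'dp$ and $-C_2\int_{\partial\Delta'}u^*\,dp$ in the statement; the constant $-C_3$ collects $\ln\det A$, Jensen-type error terms, and all $\phi$-independent pieces. One has to be a little careful that the boundary terms produced by integration by parts are of the correct sign or can be absorbed: where the sign is wrong one bounds $u^*\ge 0$ (after the normalization of Section~\ref{subsec_normalize}) from below by $0$ is useless, so instead one keeps these as $-C_2\int_{\partial\Delta'}u^*$, which is allowed by the statement.

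\textbf{The $I_H$ term.} For $-\int_{\Delta^+}I_H(d_pu^*)P_{DH}dq$, recall $I_H(a) = \sum_{\beta\in\Phi_s^+}\ln\sinh(-2\beta(a)) - \sum_{\alpha\in\Phi_{Q^u}}2\alpha(a)$. On $-\mathfrak a_s^+$ the function $a\mapsto \ln\sinh(-2\beta(a))$ is bounded above by $-2\beta(a)+\mathrm{const}$ and below by $2\beta(a)+\mathrm{const}$ (using $\frac{e^x}{2}(1-e^{-2x})\le \sinh x\le \frac{e^x}{2}$ for $x>0$), so $I_H(a)$ differs from a linear functional $\ell_H(a)$ in $a$ by a uniformly bounded quantity; hence $-\int I_H(d_pu^*)P_{DH}dq \ge -\int \ell_H(d_pu^*)P_{DH}dq - C$. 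The linear-in-$d_pu^*$ integral is then handled by the divergence theorem once more: $\int \ell_H(d_pu^*)P_{DH}'dp = \int d_pu^*(\text{constant vector})P_{DH}'dp$ integrates by parts to boundary terms $\int_{\partial\Delta'}u^*(\text{bdry data})d\sigma$ plus $\int u^*(\text{derivative of }P_{DH}')dp$, again of the form controlled by $C_1\int_{\Delta'}u^*P_{DH}'$ and $C_2\int_{\partial\Delta'}u^*$. This is exactly where assumption (R) enters: at a restricted Weyl wall we need $P_{DH}'$ to vanish to order at least two (there being at least two roots of $\Phi_s^+$ vanishing there) so that after one differentiation $P_{DH}'$ still vanishes on the wall and the boundary contributions from the walls drop out, leaving only the facet contributions $\partial\Delta'_Y$; otherwise we would get uncontrolled boundary terms along the Weyl walls. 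Assembling the two estimates and collecting constants gives the claimed inequality.

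\textbf{Main obstacle.} I expect the genuinely delicate point to be the boundary analysis in the integration-by-parts steps: $u^*$ is only known to be smooth on the \emph{interior} of $-2\Delta^t$ (it satisfies Guillemin–Abreu regularity, so it blows up logarithmically / has prescribed singular behavior at the faces), so the divergence theorem must be applied on the dilated cones $\Delta'_s$ of Section~\ref{subsec_metric_facet} and one passes to the limit $s\to 1$ using dominated convergence together with the explicit vanishing orders of $P_{DH}'$ on walls versus facets. Getting all the limiting boundary integrals to be either of the right sign, absorbed into $-C_2\int_{\partial\Delta'}u^*$, or absorbed into the constant $-C_3$ — and checking that the Guillemin–Abreu boundary behavior of $u^*$ makes every limit finite — is the technical heart of the proof. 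The algebraic bookkeeping mirrors Donaldson's toric computation and the Li–Zhou–Zhu group-compactification argument, so the novelty is entirely in handling the extra root-theoretic factor $P_{DH}$ and the $I_H$ term, which is precisely where (R) is needed.
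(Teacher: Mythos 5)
Your overall architecture (convexity of $-\ln\det$, divergence theorem on the dilated cones, assumption (R) to kill Weyl-wall fluxes) is the right one, but the step carrying the entropy term fails as written. You bound $-\ln\det(u^*_{i,j})\geq -\operatorname{tr}(A^{-1}u^*_{i,j})+\ln\det A+\mathrm{const}$ ``for any fixed positive definite $A$''. With a constant matrix $A$ this is vacuous: by the Guillemin--Abreu boundary behavior the normal-normal second derivative of $u^*$ blows up like the reciprocal of the distance to the facets $\Delta'_Y$, while $P_{DH}'$ does \emph{not} vanish there, so $\int_{\Delta'}\operatorname{tr}(A^{-1}u^*_{i,j})P_{DH}'\,dp=+\infty$; equivalently, the boundary flux your integration by parts produces contains $\int_{\partial\Delta'}A^{ij}u^*_i\nu_j P_{DH}'\,d\sigma$, which diverges because $d_pu^*\cdot\nu\to+\infty$ on the facets. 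The missing idea is that $A$ must be the Hessian of the \emph{reference} potential: the paper bounds $-\ln\det(u^*_{i,j})\geq -u^{*,i,j}_{\mathrm{ref}}u^*_{i,j}+\mathrm{const}$ and applies the divergence theorem to the specific field $u^{*,i,j}_{\mathrm{ref},i}u^*P_{DH}'-u^{*,i,j}_{\mathrm{ref}}u^*_iP_{DH}'+u^{*,i,j}_{\mathrm{ref}}u^*P_{DH,j}'$, whose facet contributions converge (thanks to $u^{*,i,j}_{\mathrm{ref}}\nu_j\to 0$, i.e. the limits of Section~\ref{subsec_metric_facet}) to exactly $\sum_Y\int_{\Delta'_Y}u^*P_{DH}'\,d\sigma$, while (R) makes $P_{DH,j}'$ vanish on the Weyl walls. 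A second omission: after integrating by parts you are left with interior integrals of $u^*$ against merely \emph{bounded} densities (second derivatives of $P_{DH}'$, $u^{*,i,j}_{\mathrm{ref},i,j}$, \ldots); these are not directly dominated by $C_1\int_{\Delta'}u^*P_{DH}'\,dp$ since $P_{DH}'$ vanishes on part of the domain. The paper closes this with \cite[Lemma~4.6]{LZZ} under assumption (T); your proposal never addresses this comparison, which is precisely why the right-hand side of the statement mixes a $P_{DH}'$-weighted interior term with an unweighted boundary term.

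On the $I_H$ term, note also that your identification of $M^{nl}$ drops the piece $-\int_{\Delta^+}4\rho_H(a)P_{DH}\,dq$ (the term $+\int_{\Delta^+}4\rho_H(a)P_{DH}\,dq$ belongs to $M^l$ by definition). Because of this you propose a further integration by parts, whereas the paper's first step is simply the pointwise inequality $-I_H(a)-4\rho_H(a)\geq 0$ on $-\mathfrak{a}_s^+$, which disposes of this whole part of $M^{nl}$ at once. Your omission is harmless in direction (the dropped term is nonnegative), and only the upper bound $\ln\sinh x\leq x-\ln 2$ is needed; but your claimed two-sided comparison of $I_H$ with a linear functional is false, since $\ln\sinh(-2\beta(a))\to-\infty$ at the restricted Weyl walls. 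Finally, (R) is really needed where derivatives of $P_{DH}'$ enter the wall fluxes, i.e. in the entropy step, not primarily in the $I_H$ step where you place it.
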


\begin{proof}
First note that $-I_H(a)-4\rho_H(a)\geq 0$ for $a\in-\mathfrak{a}_s^+$. 
Using this inequality and the convexity of $-\ln\det$, we have 
\[
M^{nl}(u^*)\geq 
-\int_{\Delta'}u^{*,i,j}_{\mathrm{ref}}u^*_{i,j}P_{DH}'dp
-C_2
\]
We now apply the divergence theorem to the vector field 
\[
u^{*,i,j}_{\mathrm{ref},i}u^*P_{DH}'-u^{*,i,j}_{\mathrm{ref}}u^*_iP_{DH}'
+u^{*,i,j}_{\mathrm{ref}}u^*P_{DH,j}'
\]
to obtain 
\begin{align*}
\sum_Y \int_{\Delta'_Y}u^*P_{DH}'d\sigma 
= & 
\int_{\Delta'} 
\Big{(}
u^*u^{*,i,j}_{\mathrm{ref},i,j}P_{DH}'
+u^*2u^{*,i,j}_{\mathrm{ref},j}P_{DH,i}' \\ & \qquad 
+u^*u^{*,i,j}_{\mathrm{ref}}P_{DH,i,j}'
-u^{*,i,j}_{\mathrm{ref}}u^*_{i,j}P_{DH}'
\Big{)}.
dp
\end{align*}
Here we used assumption (R) to check that $P_{DH,j}'$ 
vanishes on restricted Weyl walls. 
Note that $u^{*,i,j}_{\mathrm{ref},i,j}$ is bounded.
Finally, since $u^{*,i,j}_{\mathrm{ref},j}P_{DH,i}'$ 
and $u^{*,i,j}_{\mathrm{ref}}P_{DH,i,j}'$ are bounded, and 
thanks to assumption (T), we may apply \cite[Lemma~4.6]{LZZ} 
to obtain the statement (Even though \cite[Lemma~4.6]{LZZ} is proved for the particular case 
of $P_{DH}'$ coming from a group compactification, it applies 
much more generally for a positive measure with a product of 
linear functions as density and polytope contained in a chamber 
defined by these linear functions). 
\end{proof}

\subsubsection{Conclusion of the proof}

\begin{proof}[Proof of Theorem~\ref{thm_coercivity}]
Let $0<\epsilon<1$. 
We have 
$M^{nl}(u^*)\geq M^{nl}(\epsilon u) - C_4$ 
for some constant $C_4$,
with the obvious definition for $M^{nl}(\epsilon u)$ (it is not 
important here that $\epsilon u^*$ does not appear as the 
convex conjugate of the toric potential of a smooth metric).
Furthermore, the proof of Proposition~\ref{prop_weak_Mnl} applies just 
as well to $\epsilon u^*$ and we obtain 
\begin{align*}
M(u^*)& \geq 
M^l(u^*)-\epsilon(C_1\int_{\Delta'} u^*(p)P_{DH}'(p)dp
+C_2\int_{\partial \Delta'}u^*(p)dp
+C_3)-C_4 \\
& \geq \epsilon \int_{\Delta'}u^*P_{DH}' 
+M^l(u^*) 
-\epsilon\Big{(}(C_1+1)\int_{\Delta'}u^*P_{DH}'\\ &  \qquad 
+
C_2\int_{\partial \Delta'}u^*(p)dp+C_3\Big{)}-C_4 \\
& \geq \epsilon \int_{\Delta'}u^*P_{DH}'
+M^l(u^*)-\epsilon((C_1+1)C_{\partial}C_l+C_2C_l)M^l(u^*)
\\ &  \qquad  -\epsilon C_3-C_4 \\
& \geq \epsilon \int_{\Delta'}u^*P_{DH}'
-\epsilon C_3-C_4, 
\end{align*}
by choosing $\epsilon=((C_1+1)C_{\partial}C_l+C_2C_l)^{-1}$.

This is enough to conclude: let $\phi\in \mathrm{rPSH}^K(X,\omega_\mathrm{ref})$, 
let $\hat{\phi}=g\cdot \phi + C$ be the normalization of $\phi$, obtained 
\emph{via} the action of some $g\in Z(L)^0$ and addition of a constant, 
then since the assumptions imply that the Mabuchi functional is 
invariant under the action of $Z(L)^0$, we have 
$\mathrm{Mab}(\phi)=\mathrm{Mab}(\hat{\phi})$. 
By Proposition~\ref{prop_J_functional}, 
since the toric potential of a normalized $\hat{\phi}$ satisfies 
$u(0)=0$, we have 
$\int_{\Delta'}\hat{u}^*P_{DH}'\geq \frac{\mathcal{L}^n}{n!}J(\hat{\phi})-C_5$ 
for some constant $C_5$ independent of $\phi$.
Hence we have 
\begin{align*}
\mathrm{Mab}_{\Theta}(\phi) & = \frac{n!}{\mathcal{L}^n}M^l(\hat{u}^*) \\
& \geq \epsilon \frac{n!}{\mathcal{L}^n}\int_{\Delta'}\hat{u}^*P_{DH}' -\frac{n!}{\mathcal{L}^n}(\epsilon C_3+C_4) \\
& \geq \epsilon J(\hat{\phi})-\frac{n!}{\mathcal{L}^n}(C_5+\epsilon C_3+C_4)
\end{align*}
\end{proof}

\bibliographystyle{alpha}
\bibliography{biblio}
\end{document}